\documentclass[11pt]{article}

\usepackage[margin=1in]{geometry}
\usepackage{amsthm,amsmath,amsfonts,amssymb}
\usepackage[numbers,sort&compress]{natbib}
\usepackage[colorlinks,citecolor=blue,urlcolor=blue]{hyperref}
\usepackage{graphicx}

\usepackage{mathtools}
\usepackage{float}
\usepackage{array}
\usepackage{mathbbol}
\usepackage{comment}
\usepackage{xcolor}
\usepackage{listings}

\usepackage{etoc}
\newcommand{\ignore}[1]{}

\theoremstyle{plain}

\newtheorem{theorem}{Theorem}[section]
\newtheorem{lemma}[theorem]{Lemma}
\newtheorem{corollary}[theorem]{Corollary}

\theoremstyle{definition}

\title{Minimaxity and Admissibility of Bayesian Neural Networks}

\author{
Daniel Andrew Coulson\\
Department of Statistics and Data Science, Cornell University\\
\texttt{dac382@cornell.edu}
\and
Martin T. Wells\\
Department of Statistics and Data Science, Cornell University\\
\texttt{mtw1@cornell.edu}
}

\date{} % leave empty, or write \date{\today}

\begin{document}

\maketitle

\begin{abstract}
Bayesian neural networks (BNNs) offer a natural probabilistic formulation for inference in deep learning models. Despite their popularity, their optimality has received limited attention through the lens of statistical decision theory. In this paper, we study decision rules induced by deep, fully connected feedforward ReLU BNNs in the normal location model under quadratic loss. We show that, for fixed prior scales, the induced Bayes decision rule is not minimax. We then propose a hyperprior on the effective output variance of the BNN prior that yields a superharmonic square-root marginal density, establishing that the resulting decision rule is simultaneously admissible and minimax. We further extend these results from the quadratic loss setting to the predictive density estimation problem with Kullback--Leibler loss. Finally, we validate our theoretical findings numerically through simulation.
\end{abstract}

\noindent\textbf{Keywords:} Bayesian neural network; Bayes estimate; minimaxity; multivariate normal mean; proper Bayes; quadratic loss.

\section{Introduction}
Neural networks have grown rapidly in popularity over the past several years and have demonstrated strong performance across a wide range of tasks including image classification, time-series forecasting, and language modeling. Their success is largely attributable to their modeling flexibility as well as advances in computational hardware that have improved their tractability, such as the widespread adoption of graphics processing units. Bayesian neural networks (BNNs) extend standard neural networks by placing prior distributions over the weights, thereby enabling probabilistic modeling and uncertainty quantification, for example through credible intervals \cite{arbel2023primer, papamarkouposition}. Due to their flexibility and ability to represent uncertainty, BNNs are widely used in uncertainty-critical applications such as medicine, finance, and weather forecasting. For example, \cite{lisboa2003bayesian} uses a BNN for the prognosis in patients after breast surgery. Similarly, \cite{chandra2021bayesian} use BNNs to forecast the stock price before and during the COVID-19 pandemic. In \cite{marzban2001bayesian}, two BNNs are developed: one for predicting the size of severe hail and another for classifying the size of hail. 

A substantial body of work has examined the theoretical properties of BNNs. For example, \cite{neal2012bayesian} show that, in the limit, BNNs with infinitely many hidden units converge to a Gaussian process. Subsequent studies have explored this Gaussian process behavior in greater depth, including \cite{matthews2018gaussian}. \cite{gal2016dropout} shows that the application of dropout during both training and inference approximately corresponds to Bayesian inference in a deep Gaussian process. Other lines of work establish posterior concentration results. For example, \cite{polson2018posterior} proves such results for a spike-and-slab prior. Similarly, \cite{egels2025posterior} establish posterior contraction results for BNNs with heavy-tailed prior distributions and extend these results to a variational Bayes analog. Nevertheless, many theoretical studies rely on highly technical and unrealistic assumptions, which limit their applicability. For example, unlike much of the theoretical Bayesian deep learning literature, our analysis does not require any architectural scaling regime in which depth or width grows with sample size. The results hold for arbitrary fixed, finite architectures, which makes them directly relevant to the settings used in practice. 

Despite the abundance of theoretical work on BNNs, there has been little work from a statistical decision-theoretic perspective. Statistical decision theory provides a principled framework for choosing estimators under uncertainty. This perspective could help explain the strong empirical performance of BNNs across tasks and provide guidance for architectural choices, such as prior distributions and network depth. In this work, we study the performance of BNNs from a decision-theoretic perspective. 

In particular, we study their risk in the normal location model under quadratic loss. Although the normal location problem is somewhat simplistic, it already allows us to identify which BNN modeling choices yield well performing estimators in the minimax sense. The minimax optimality of estimators in the normal location problem has a rich literature, encompassing a variety of minimax criteria and proof techniques that highlight the impact of the prior distribution, the induced posterior distribution, and the resulting decision rule. Therefore, the normal location problem provides a magnifying glass that highlights which aspects of BNNs work well and where standard BNN priors could be improved, for example by introducing shrinkage priors. 

A central challenge in BNNs is the construction of informative proper priors on network weights that are both computationally tractable and favor solutions with desirable frequentist properties. Indeed, \cite{papamarkouposition} identifies prior specification as one of the foremost unresolved problems in Bayesian deep learning, emphasizing that the prior over network parameters induces function-space behavior that ultimately governs generalization. In this context, our contribution is to provide a sharp decision-theoretic account of this issue in a canonical setting. Specifically, we show that the Bayes rules induced by standard BNNs are not minimax for the normal location problem under quadratic loss, demonstrating that widely used Bayesian specifications can fail to satisfy this basic criterion of optimality. Crucially, this deficiency is not inherent to Bayesian neural modeling itself, but rather arises from the choice of hyperprior: with an appropriate hyperprior, the induced Bayes rule is both minimax and admissible. By extending these results to predictive density estimation, we further show that the consequences of prior design persist beyond point estimation and directly affect predictive performance. More broadly, these findings suggest that the future of Bayesian deep learning depends not only on making neural Bayesian procedures expressive, but also on ensuring that the priors they employ induce decision-theoretically sound rules. In this way, the paper addresses a central concern in contemporary Bayesian deep learning by replacing heuristic prior selection with a theoretically justified criterion for determining when neural procedures are, and are not, decision-theoretically sound. 

This perspective is especially timely given the emergence of Prior-Data Fitted Networks (PFNs), introduced by \cite{mullertransformers}, which train transformers to approximate Bayesian prediction over tasks sampled from a prior. Approaches such as TabPFN \cite{hollmanntabpfn} demonstrate that this paradigm can be exceptionally powerful in practice, with a transformer-based PFN achieving state-of-the-art performance on small tabular prediction problems. Precisely because PFNs learn to approximate the predictive distribution induced by a chosen prior, our results imply that prior specification is not merely a modeling convenience but the central determinant of whether the learned predictor is decision-theoretically well founded. In this sense, the rise of PFNs makes the present analysis especially consequential: as PFN-style methods become increasingly prominent, understanding when the underlying prior yields minimax and admissible rules becomes essential.

The paper is organized as follows. In Section 1, we introduce notation, a review of statistical decision-theoretic results for the normal location model used throughout, describe the general form of the prior density induced by a fixed-scale ReLU BNN, and derive a more convenient representation of this prior as a scale mixture of normals. In Section 2, we show that the square root of the marginal density induced by a fixed-scale ReLU BNN prior is not superharmonic. We then derive the corresponding decision rule and show that it is not minimax. In Section 3, we introduce a hyperprior on the scales of the BNN prior and show that the resulting prior induces a minimax decision rule. In Section 4, we extend these results to predictive density estimation under Kullback-Leibler loss, showing that the proposed hyperprior likewise induces a minimax rule in that setting. Finally, in Section 5, we illustrate our theoretical results through simulation, comparing the fixed-scale BNN prior and the proposed hierarchical BNN prior with decision rules induced by other popular priors, including a BNN with dropout and the horseshoe prior. Proof sketches are provided in the main text, with full proofs deferred to the Supplementary Material. 

\subsection{Notation}
Boldface letters denote vectors; uppercase Latin letters denote certain functions and matrices. The symbol $\|\cdot\|$ denotes the Euclidean norm and $\boldsymbol{x}$ is a fixed covariate. Finally, $\gtrsim$ and $\lesssim$ denote inequality up to a positive constant, and $a \asymp b$ indicates that $a$ and $b$ are bounded by each other up to positive multiplicative constants. 

\subsection{Introduction to decision theory and minimax optimality}

Decision theory studies the choice of estimators (also called decision rules) for estimating a quantity of interest. Let $D$ denote the class of all estimators of $\boldsymbol{\theta}$. For a given decision rule, we incur a loss depending on how far its output is from the true value of $\boldsymbol{\theta}$. To quantify this, we use a loss function $L(\boldsymbol{\theta}, \boldsymbol{\delta})$. Because the loss depends on the (random) data, we consider the risk function 
\begin{equation*}
    R(\boldsymbol{\theta}, \boldsymbol{\delta}) = \mathbb{E}_{\boldsymbol{\theta}}[L(\boldsymbol{\theta},\boldsymbol{\delta})].
\end{equation*}
This raises the question: which decision rule should we use? There are many ways to answer this question, but a common criterion is to choose a decision rule that achieves minimax risk. That is, it minimizes the maximum risk over $\boldsymbol{\theta}$. Formally, a decision rule $\boldsymbol{\delta}_{*}$ is minimax if \begin{equation*}
    \sup\limits_{\boldsymbol{\theta} \in \Theta} R(\boldsymbol{\theta}, \boldsymbol{\delta}_{*}) = \inf\limits_{\boldsymbol{\delta} \in D} \sup\limits_{\boldsymbol{\theta} \in \Theta} R(\boldsymbol{\theta}, \boldsymbol{\delta}). 
\end{equation*} 

\subsection{Decision Problem }

We study the performance of BNNs in the normal location model with $\boldsymbol{\theta} \in \mathbb{R}^{p}$. In particular, the normal location model is 
\begin{equation*}
    \boldsymbol{Y}\mid\boldsymbol{\theta}\sim N_{p}(\boldsymbol{\theta}, I_{p}). 
\end{equation*}
This is the classical normal location model, with the prior distribution specified in Section~\ref{sec:1.4}. We study the estimation of the mean vector under quadratic loss.  For a decision rule $\boldsymbol{\delta}$, we measure loss by \begin{equation*}
    L(\boldsymbol{\theta}, \boldsymbol{\delta}) = ||\boldsymbol{\theta} - \boldsymbol{\delta}||^{2}.  
\end{equation*}
Quadratic loss is the canonical choice for studying estimator performance in the normal location model. Many decision-theoretic results and minimax criteria are explicitly stated for quadratic loss, and we exploit these results in this work. It is known that non-trivial improper Bayes minimax estimators exist in dimension $p \geq 3$ (e.g. \cite{fourdrinier1998construction}), and that proper Bayes minimax estimators exist in dimension $p\geq 5$ (see \cite{strawderman1971proper}). In Section 2, we consider $p \geq 3$ since that section focuses on proving non-minimaxity and the distinction between $p \geq 3$ and $p \geq 5$ does not arise. For the remainder of the paper, we assume $p \geq 5$. It is known that the minimax risk in the normal location model equals $p$. Moreover, a Bayes estimator (minimizing posterior risk) corresponding to a prior $\pi(\boldsymbol{\theta})$ has the form \begin{equation*}
    \boldsymbol{\delta}_{\pi} (\boldsymbol{Y}) = \boldsymbol{Y} + \frac{\nabla m(\boldsymbol{Y})}{m(\boldsymbol{Y})}, 
\end{equation*} where $m(\boldsymbol{Y})$ denotes the marginal density (see \cite{brown1971admissible} or \cite{fourdrinier2018shrinkage}). Based on this, we use two results to establish minimaxity and non-minimaxity. To show non-minimaxity, we use Stein's unbiased risk estimator (SURE, see \cite{stein1981estimation}), which provides an unbiased estimate of the risk in the normal location model. The approach assumes that the decision rule can be written in Baranckik \cite{baranchik1970family} form as $\delta(\boldsymbol{y}) = \boldsymbol{y} + g(\boldsymbol{y})$, where $g(\cdot)$ is weakly differentiable. Then the unbiased risk estimate is \begin{equation*}
    \text{SURE}(\delta) = p + ||g(\boldsymbol{Y})||^{2}+ 2\operatorname{div} g
\end{equation*} where $\operatorname{div}$ denotes the divergence operator. Taking expectations then yields the risk of any estimator of this form. As illustrated in the proof of Theorem 2.6, such arguments can be lengthy. However, a sufficient condition (see \cite{stein1981estimation} or Theorem 3.1 in \cite{fourdrinier2018shrinkage}) provides a quicker route to prove minimaxity. For a Bayes estimator of the form above, suppose that 
\begin{equation*}
    \mathbb{E}_{\boldsymbol{\theta}}[||\frac{\nabla m(\boldsymbol{Y})}{m(\boldsymbol{Y})}||^{2}]< \infty. 
\end{equation*}
Then the decision rule is minimax provided $\Delta \sqrt{m(\boldsymbol{y})} \leq 0$. That is, under this regularity condition, if $\sqrt{m(\boldsymbol{y})}$ is superharmonic, then the associated decision rule is minimax. This implication follows from the SURE representation above. The minimax risk in the normal location model is $p$. Therefore, by SURE, any estimator with risk strictly greater than $p$ cannot be minimax. Substituting the Bayes form above into the SURE representation, the excess risk over $p$ can be expressed in terms of $\Delta \sqrt{m(\boldsymbol{y})}$. In particular, the estimator is minimax if \begin{equation*}
    \Delta\sqrt{m(\boldsymbol{y})} \leq 0, \forall \boldsymbol{y}, \text{ such that } m(\boldsymbol{y})>0. 
\end{equation*}

\subsection{Prior}
\label{sec:1.4}
In this work, we use the probability distribution induced by a deep ReLU BNN as the prior distribution for $\boldsymbol{\theta}$. To define this neural network, we adopt the notation of \cite{zavatone2021exact}, where the neural network is defined recursively. Consider a neural network $f: \mathbb{R}^{n_{0}} \rightarrow \mathbb{R}^{p}$ with $d$ layers and $n_{i}$ denoting the width of layer $i \in \{1, \dots, d \}$. Let the input to the network be a fixed covariate $\boldsymbol{x}$, the layer outputs be $\boldsymbol{h}_{\ell}$, the weight matrices be $W_{\ell}$, and activation functions $\phi_{\ell}$ for $\ell \in \{0, 1,\dots, d \} $
\begin{align*}
    \boldsymbol{h}_{0} &= \boldsymbol{x},\\ 
    \boldsymbol{h}_{\ell} &= W_{\ell}\phi_{\ell}(\boldsymbol{h}_{\ell -1}), \ell = 1,\dots, d, \\ 
    [W_{\ell}]_{ij} & \overset{\text{i.i.d.}}{\sim} N(0, \sigma_{\ell}^{2}).  
\end{align*} Note that $\phi_{\ell}(\cdot) = \text{ReLU}(\cdot)$ for $\ell = 1, \dots, d-1$, and that $\phi_{d} = I_{d}(\cdot)$, is the identity map, so that $\boldsymbol{h}_{d} = \boldsymbol{\theta}$ with $n_{d} = p$. Furthermore, the scales $\sigma_{\ell}$ are fixed. 
The prior density of a depth $d$ Bayesian ReLU neural network, as derived in \cite{zavatone2021exact} is given by 
\begin{align} \label{bnnprior}
   &p_{d}(\boldsymbol{h}_{d}; \sigma_{1}\dots\sigma_{d}||\boldsymbol{x}||; n_{1}, \dots, n_{d} ) = (1- \frac{(2^{n_{1}}-1)\dots(2^{n_{d-1}}-1)}{2^{n_{1} + \dots n_{d-1}}})\mathbf{1}\{\boldsymbol{h}_{d}=0\}\\ 
   &+ \frac{1}{2^{n_{1}+\dots+n_{d-1}}}\sum\limits_{k_{1}=1}^{n_{1}}\dots\sum\limits_{k_{d-1} =1}^{n_{d-1}}\begin{pmatrix}
n_{1} \nonumber \\
k_{1}
\end{pmatrix} \dots\begin{pmatrix}
n_{d-1}  \\
k_{d-1}
\end{pmatrix} \frac{\prod\limits_{\ell = 1}^{d-1}\frac{1}{\Gamma(\frac{k_{\ell}}{2})}}{(2^{d}\pi\sigma_{1}^{2}\dots\sigma_{d}^{2}||\boldsymbol{x}||^{2})^{\frac{n_{d}}{2}}} \nonumber \\ &\times G_{0,d}^{d,0}(\frac{||\boldsymbol{h}_{d}||^{2}}{2^{d}\sigma_{1}^{2}\dots\sigma_{d}^{2}||\boldsymbol{x}||^{2}}| \begin{matrix}
-\\
0, \frac{k_{1}-n_{d}}{2}, \dots, \frac{k_{d-1}-n_{d}}{2}
\end{matrix} ). \nonumber
\end{align}
Note that $\mathbf{1}\{\boldsymbol{h}_{d}=0\}$ denotes the Dirac measure at $\boldsymbol{0}_{p}$ and that $G_{m,n}^{p,q}$ denotes the Meijer-G function. The Meijer-G function arises naturally when working with products of independent mean-zero normal random variables, as in a standard BNN. For example, the product of two such variables has a density expressible in terms of a modified Bessel function of the second kind, which is itself a special case of the Meijer-G function. More generally, the density of a product of arbitrarily many independent mean-zero normal random variables can be written in terms of a Meijer-G function.  For our results, we will only use the continuous part of the prior, that is, we do not place any prior mass at $\boldsymbol{h}_{d} = \boldsymbol{0}_{p}$. This corresponds to a neural network prior that does not output the zero vector. 

However, we derive a more convenient form of the prior distribution, which reveals that it is a scale mixture of normal distributions. 
\begin{lemma}
The prior density of a depth $d$ Bayesian ReLU neural network given in (\ref{bnnprior}) can represented as
\begin{align*}
    \pi(\boldsymbol{h}_{d}) &= \int\limits_{0}^{\infty} \phi_{p}(\boldsymbol{h}_{d}, \boldsymbol{0}_{p}, vI_{p})g(v) dv,
    \end{align*}
where $g(v) = \sum\limits_{k} w_{k} g_{k}(v), w_{k} = \frac{1}{2^{n_{1}+\dots+n_{d-1}}} \prod\limits_{\ell = 1}^{d-1} \binom{n_{\ell}}{k_{\ell}}, k_{\ell} \in \{ 1, \dots, n_{\ell} \} $ and $g_{k}(v)$
is the density function of the random variable $V_{\boldsymbol{k}} = 2^{d-1} ||\boldsymbol{x}||^{2}(\prod\limits_{\ell = 1}^{d} \sigma_{\ell}^{2})(\prod\limits_{\ell = 1}^{d-1}T_{\ell})$ for  $T_{\ell} \sim \Gamma(\frac{k_{\ell}}{2},1).$
 
\end{lemma}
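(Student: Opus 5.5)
Since both sides are mixtures over the same index set $\boldsymbol{k}=(k_1,\dots,k_{d-1})$ with the same weights $w_{k}$, I will match them term by term. It suffices to show, for each fixed $\boldsymbol{k}$, that
\begin{equation*}
\frac{\prod_{\ell=1}^{d-1}\Gamma(\tfrac{k_\ell}{2})^{-1}}{(2^{d}\pi s^{2})^{p/2}}\,G^{d,0}_{0,d}\Bigl(\frac{\|\boldsymbol{h}_d\|^2}{2^{d}s^{2}}\Bigm|\begin{matrix}-\\ 0,\frac{k_1-p}{2},\dots,\frac{k_{d-1}-p}{2}\end{matrix}\Bigr)=\int_0^\infty \phi_p(\boldsymbol{h}_d,\boldsymbol{0}_p,vI_p)\,g_{k}(v)\,dv,
\end{equation*}
where $s^2=\sigma_1^2\cdots\sigma_d^2\|\boldsymbol{x}\|^2$. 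The point mass at $\boldsymbol{0}_p$ is discarded, as the paper states. The conditional-Gaussian structure of the network gives a direct proof of this identity. Given $W_1,\dots,W_{d-1}$, we have $\boldsymbol{h}_d=W_d\phi_d(\boldsymbol{h}_{d-1})\sim N_p(\boldsymbol{0},\sigma_d^2\|\phi_d(\boldsymbol{h}_{d-1})\|^2 I_p)$, so $\boldsymbol{h}_d$ is a scale mixture of normals with mixing variable $V=\sigma_d^2\|\phi_d(\boldsymbol{h}_{d-1})\|^2$. Iterating as in \cite{zavatone2021exact}, and conditioning on the number $k_\ell$ of active ReLU units in each layer (which gives the binomial weights $w_k$), the squared norm factorises into a product of independent scaled $\chi^2_{k_\ell}$ variables. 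Writing $\chi^2_{k}=2T$ with $T\sim\Gamma(k/2,1)$ yields $V_{\boldsymbol{k}}=\|\boldsymbol{x}\|^2\prod_\ell\sigma_\ell^2\prod_{\ell=1}^{d-1}(2T_\ell)=2^{d-1}\|\boldsymbol{x}\|^2(\prod\sigma_\ell^2)\prod T_\ell$, which is exactly the claimed form.

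To make this rigorous starting from the displayed formula rather than from the probabilistic heuristic, I will verify the identity analytically through Mellin transforms. First, I compute the Mellin transform of the density $g_{k}$. For a product of independent Gammas, $\mathbb{E}[V_{\boldsymbol{k}}^{s-1}]=c^{s-1}\prod_{\ell}\Gamma(\tfrac{k_\ell}{2}+s-1)/\Gamma(\tfrac{k_\ell}{2})$ with $c=2^{d-1}s^2$. Next, I substitute $u=r^2/(2v)$, where $r=\|\boldsymbol{h}_d\|$. Then
\begin{equation*}
\int_0^\infty(2\pi v)^{-p/2}e^{-r^2/(2v)}g_{k}(v)\,dv
\end{equation*}
becomes a Mellin convolution, namely the density of $r^2/(2V)$ evaluated against the $\Gamma(p/2)$-type kernel $u^{p/2-1}e^{-u}$. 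Taking Mellin transforms in $z=r^2/(2c)$ gives the product
\begin{equation*}
\Gamma(s)\prod_\ell\Gamma\bigl(\tfrac{k_\ell-p}{2}+s\bigr)
\end{equation*}
up to powers of $c$ and $\pi$. By definition, this product is the Mellin transform of $G^{d,0}_{0,d}(z\mid 0,\tfrac{k_1-p}{2},\dots)$. Finally, matching the constants $(2^{d}\pi s^2)^{-p/2}$ and $\prod\Gamma(k_\ell/2)^{-1}$ completes the identity.

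I expect the main obstacle to be the bookkeeping of the constants and of the $2^{d}$ versus $2^{d-1}$ factors, that is, checking that the argument $\|\boldsymbol{h}_d\|^2/(2^{d}s^2)$ is exactly $r^2/(2c)$. A second concern is ensuring the Mellin inversion is valid: the strip must be chosen with $\Re s>\max(0,(p-k_\ell)/2)$, and Fubini must be justified, which holds because all integrands are nonnegative. Summing the term-by-term identity over $\boldsymbol{k}$ with weights $w_k$ and exchanging the finite sum with the integral then gives the lemma.
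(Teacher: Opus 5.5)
Your argument is correct, but its key step differs from the paper's. The paper never verifies a Meijer-$G$ identity. It imports from the supplement of \cite{zavatone2021exact} the representation $G^{d,0}_{0,d}(z\mid 0,\nu_1,\dots,\nu_{d-1})=\int\prod_{\ell}t_\ell^{\nu_\ell-1}e^{-t_\ell}\,e^{-z/(t_1\cdots t_{d-1})}\,d\boldsymbol{t}$ with $\nu_\ell=(k_\ell-p)/2$, and then only does bookkeeping. It splits $t_\ell^{\nu_\ell-1}=t_\ell^{k_\ell/2-1}t_\ell^{-p/2}$, recognizes the $\Gamma(k_\ell/2,1)$ densities, and absorbs $(t_1\cdots t_{d-1})^{-p/2}$ and the exponential into $\phi_p(\boldsymbol{h}_d;\boldsymbol{0}_p,2^{d-1}\kappa_d^2\,t_1\cdots t_{d-1}I_p)$, where $\kappa_d^2=\sigma_1^2\cdots\sigma_d^2\|\boldsymbol{x}\|^2$. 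It then pushes forward to $V_{\boldsymbol{k}}$ and sums the finitely many $\boldsymbol{k}$-terms.

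Your Mellin computation is exactly a proof of that imported identity. With $c=2^{d-1}\kappa_d^2$, $S=\prod_\ell T_\ell$ and $z=r^2/(2c)$, the right-hand side equals $(2\pi c)^{-p/2}\,\mathbb{E}[S^{-p/2}e^{-z/S}]$. Its Mellin transform in $z$ is $(2^d\pi\kappa_d^2)^{-p/2}\,\Gamma(s)\,\mathbb{E}[S^{s-p/2}]=(2^d\pi\kappa_d^2)^{-p/2}\prod_\ell\Gamma(\tfrac{k_\ell}{2})^{-1}\cdot\Gamma(s)\prod_\ell\Gamma(\tfrac{k_\ell-p}{2}+s)$ on your strip. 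Uniqueness of the Mellin transform applies because $\prod_j\Gamma(b_j-s)$ decays exponentially along vertical lines and both sides are continuous in $z$. Working in this form is cleaner than your substitution $u=r^2/(2v)$. Under that substitution the kernel is really $u^{p/2}e^{-u}$ against the law of $r^2/(2V)$, with an $r$-dependent prefactor $(\pi r^2)^{-p/2}$.

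Your route makes the lemma self-contained, resting only on the Mellin--Barnes definition of $G$ and Mellin inversion. The paper's route is shorter but depends entirely on the cited formula.

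Your first paragraph is itself a complete proof, not merely a heuristic; it is essentially the derivation in \cite{zavatone2021exact}. It becomes rigorous once you add two facts. First, conditional on the previous layers, $\|\mathrm{ReLU}(W_\ell\boldsymbol{a})\|^2/(\sigma_\ell^2\|\boldsymbol{a}\|^2)$ has a law not depending on $\boldsymbol{a}$, which yields independence across layers. Second, (\ref{bnnprior}) is the density of the network output.

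Finally, rename either your Mellin variable or your $s^2=\sigma_1^2\cdots\sigma_d^2\|\boldsymbol{x}\|^2$, since you use $s$ for both.
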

We provide a full proof of Lemma 1.1 in the Supplementary Material. 

\section{Deep Bayesian ReLU network with fixed scales is not minimax} 

In this section, we investigate the minimax optimality of Bayes decision rules induced by standard deep ReLU BNNs. We now use the generic notation $\boldsymbol{\theta}$ for the parameter of interest instead of the layer-wise $\boldsymbol{h}_{d}$  notation in Section 1.4. We first study the superharmonicity of the induced marginal density by establishing a stretched exponential upper bound, then apply Stein's Unbiased Risk Estimate (SURE) to analyze the risk of the induced Bayes decision rule, especially in the regime where  $||\boldsymbol{\theta}||$ is large. 

To prove that the square root of the induced marginal density is not superharmonic, we first establish a key property of a certain class of radial functions.  We then show that the marginal density induced by a fixed scale ReLU BNN admits a stretched exponential upper bound. Using a proof by contradiction, we then show that induced marginal density does not satisfy this key property that holds for analogous superharmonic functions. 

A radial function on Euclidean space is a function whose value at a point depends only on its distance from a fixed center. It is well known that radial functions satisfy the following differential equation. 

\begin{lemma}
    Let $p \geq 2$ and $u(\boldsymbol{x}) = \phi(r), r= ||\boldsymbol{x}||, \boldsymbol{x} \in \mathbb{R}^{p}$ and $\phi \in C^{2}(0,\infty)$. Then for $r>0$ \begin{equation*}
        \Delta u(\boldsymbol{x}) = \phi''(r) + \frac{p-1}{r}\phi'(r). 
    \end{equation*}
\end{lemma}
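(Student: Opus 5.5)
The plan is a direct chain-rule computation, done coordinatewise on $\mathbb{R}^{p}\setminus\{\boldsymbol{0}\}$. Since $r(\boldsymbol{x}) = \|\boldsymbol{x}\| = (\sum_{i=1}^{p} x_i^2)^{1/2}$ is smooth away from the origin, the composition $u = \phi\circ r$ is $C^{2}$ there. The Laplacian is therefore defined pointwise for $r>0$, and we may differentiate term by term.

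The first step is to record the derivatives of $r$. For $\boldsymbol{x}\neq \boldsymbol{0}$ we have $\partial_i r = x_i/r$. Differentiating again gives
\begin{equation*}
\partial_i^2 r = \frac{1}{r} - \frac{x_i^2}{r^3}.
\end{equation*}
Summing over $i$ yields $\sum_i (\partial_i r)^2 = 1$ and $\Delta r = \frac{p}{r} - \frac{r^2}{r^3} = \frac{p-1}{r}$.

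Next we apply the chain rule to $u$. First, $\partial_i u = \phi'(r)\,\partial_i r$. Differentiating once more,
\begin{equation*}
\partial_i^2 u = \phi''(r)(\partial_i r)^2 + \phi'(r)\,\partial_i^2 r.
\end{equation*}
Summing over $i = 1,\dots,p$ and substituting the two identities from the first step gives
\begin{equation*}
\Delta u = \phi''(r)\cdot 1 + \phi'(r)\,\frac{p-1}{r},
\end{equation*}
which is the claimed formula.

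There is no real obstacle here. The only point requiring care is to restrict to $r>0$, since $r$ is not differentiable at the origin; this is exactly why the statement is made for $r>0$. The hypothesis $\phi\in C^2(0,\infty)$ justifies each application of the chain rule. The assumption $p\ge 2$ plays no essential role in the computation, which is valid for any $p\ge 1$.
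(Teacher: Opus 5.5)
Your proposal is correct and follows essentially the same route as the paper: a coordinatewise chain-rule computation using $\partial_i r = x_i/r$ and $\partial_i^2 r = 1/r - x_i^2/r^3$, followed by summing over $i$. Your remark that the computation also holds for $p = 1$ is accurate, since then $\partial^2 r = 0$ and the formula reduces to $\phi''$.
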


This is a well-established result, and we provide a proof in the Supplementary Material. This shows that the Laplacian depends only on how $\phi(r)$ varies with radius. The second derivative captures the radial curvature. In $\mathbb{R}^{p}$ there are $p-1$ linearly independent tangential directions along the sphere at a given point. The added first derivative term captures the contributions of these tangential directions to the divergence of the gradient. We use Lemma 2.1 in the proof of Lemma 2.2.

\begin{lemma}
    Let $q:[0,\infty) \rightarrow (0, \infty)$ be $C^{2}$ on $(0,\infty)$ and radial in $\mathbb{R}^{p}$ with $p \geq 3$. Suppose \begin{align*}
        &\text{(a) } q(r) \rightarrow 0 \text { as } r\rightarrow
\infty\\
& \text{(b) } \Delta q(r) = q''(r) + \frac{p-1}{r}q'(r) \leq 0,  \forall r \geq R_{0}, \text{ for some } R_{0}>0. 
    \end{align*}Then, $\exists R \geq R_{0}$ for every $r \geq R, q(r) \geq cr^{2-p}, c:= -\frac{R^{p-1}q'(R)}{p-2}, q'(R) <0$. 
\end{lemma}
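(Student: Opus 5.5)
The plan is to rewrite condition (b) as a statement about a monotone quantity. For $r>0$,
\begin{equation*}
\frac{d}{dr}\bigl(r^{p-1}q'(r)\bigr) = r^{p-1}\Bigl(q''(r)+\frac{p-1}{r}q'(r)\Bigr) = r^{p-1}\Delta q(r).
\end{equation*}
Hence by (b) the function $F(r):=r^{p-1}q'(r)$ is nonincreasing on $[R_0,\infty)$. The other ingredient is the radial harmonic function $r^{2-p}$: we have $(r^{2-p})'=(2-p)r^{1-p}$, so $r^{p-1}(r^{2-p})'$ is constant. The strategy is a comparison between $q$ and a suitable multiple of $r^{2-p}$, carried out by integrating $q'$ out to infinity and using (a) to kill the boundary term.

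\textbf{Step 1: choose $R$ with $q'(R)<0$.} Suppose instead that $q'(r)\ge 0$ for all $r\ge R_0$. Then $q$ is nondecreasing on $[R_0,\infty)$, so $q(r)\ge q(R_0)>0$ for every $r\ge R_0$. This contradicts (a). Therefore some $R\ge R_0$ has $q'(R)<0$, which makes $c=-R^{p-1}q'(R)/(p-2)$ strictly positive because $p\ge3$.

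\textbf{Step 2: bound $q'$ beyond $R$.} Since $F$ is nonincreasing on $[R_0,\infty)$ and $R\ge R_0$, for every $s\ge R$ we get $s^{p-1}q'(s)\le R^{p-1}q'(R)=-(p-2)c$. That is,
\begin{equation*}
q'(s)\le -(p-2)c\,s^{1-p}, \qquad s\ge R.
\end{equation*}

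\textbf{Step 3: integrate to infinity.} For $R\le r<t$, the fundamental theorem of calculus (valid since $q\in C^1$ on $(0,\infty)$) gives
\begin{equation*}
q(r)-q(t)=-\int_r^t q'(s)\,ds \ge (p-2)c\int_r^t s^{1-p}\,ds = c\,(r^{2-p}-t^{2-p}).
\end{equation*}
Let $t\to\infty$. By (a), $q(t)\to0$, and $t^{2-p}\to0$ because $p\ge3$. This leaves $q(r)\ge c\,r^{2-p}$ for all $r\ge R$, as claimed.

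\textbf{Main obstacle.} The only genuinely non-routine step is Step 1, producing a point where $q'<0$; it is handled by the contradiction with decay (a) described there. The rest is the monotonicity identity for $F$ together with (a) applied at infinity. I would also note that (a) and positivity of $q$ are essential in two places: positivity rules out the nondecreasing alternative in Step 1, and decay removes the boundary term in Step 3.
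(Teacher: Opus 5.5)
Your proof is correct and follows essentially the same argument as the paper. The paper also makes $r^{p-1}q'(r)$ nonincreasing via $\frac{d}{dr}\bigl(r^{p-1}q'(r)\bigr)=r^{p-1}\Delta q$. It then uses decay and positivity to rule out $q'\ge 0$ on $[R_0,\infty)$, and integrates the bound $q'(s)\le R^{p-1}q'(R)\,s^{1-p}$ from $r$ to $T$ before letting $T\to\infty$ to remove the boundary term.
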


We provide a full proof of this result in the Supplementary Material. The proof relies primarily on the decay assumption, which allows us to show the existence of $R_{0}$ such that the result holds. 

This lemma shows that if a positive radial function in $C^{2}$ is superharmonic outside the ball centered at $0$ with radius $R_{0}$ and tends to zero at infinity, then it cannot decay arbitrarily fast: its tails are bounded above, up to a constant by $r^{2-p}.$ We use this to show that the induced marginal density of the fixed scale BNN is not superharmonic, since its tails decay faster. Shrinkage priors often perform well for estimation. Two key aspects of a shrinkage prior are that it shrinks estimates towards a target value (such as zero) and assigns a sufficient amount of probability mass to large signals. Thus, exponential tails assign too little probability mass to signals far from zero, leading to poor estimation due to over shrinkage. As discussed above, improper Bayes minimax estimators require $p \geq 3$. Lemma 2.2 makes this condition natural. This is because one can show that \begin{equation*}
    q(r) \geq -q'(R)R^{p-1} \int\limits_{0}^{\infty}s^{1-p}ds,
\end{equation*}
and the integral is finite only if $p\geq 3$. We now establish the following result. 

\begin{lemma}
    The marginal (prior predictive) density $m(\boldsymbol{y})$ induced by the fixed scale BNN prior in (\ref{bnnprior}) satisfies $m(\boldsymbol{y}) \leq C\exp \{ - \kappa ||\boldsymbol{y}||^{\frac{2}{d}} \}$, for some constants $C, \kappa >0$.  
\end{lemma}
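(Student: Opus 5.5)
Using Lemma 1.1, the marginal is itself a scale mixture of normals: since $\boldsymbol{Y}\mid\boldsymbol{\theta}\sim N_p(\boldsymbol{\theta},I_p)$ and $\boldsymbol{\theta}\mid v\sim N_p(\boldsymbol{0},vI_p)$,
\begin{equation*}
m(\boldsymbol{y})=\sum_{\boldsymbol{k}} w_{\boldsymbol{k}}\,\mathbb{E}\big[\phi_p(\boldsymbol{y};\boldsymbol{0},(1+V_{\boldsymbol{k}})I_p)\big],
\end{equation*}
where we use only the continuous part of the prior, so the weights sum to less than one. There are finitely many $\boldsymbol{k}$, so it suffices to prove the stretched exponential bound for each fixed $\boldsymbol{k}$ and then take the largest $C$ and smallest $\kappa$. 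Write $V=a\prod_{\ell=1}^{d-1}T_\ell$ with $a=2^{d-1}\|\boldsymbol{x}\|^2\prod\sigma_\ell^2$, and set $r=\|\boldsymbol{y}\|$.

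For fixed $\boldsymbol{k}$, I split the expectation on the event $\{V\le t\}$, with the threshold $t=r^{2(d-1)/d}$ to be optimized.

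On $\{V\le t\}$ the Gaussian factor is at most $(2\pi)^{-p/2}\exp\{-r^2/(2(1+t))\}$. Since $r^2/(1+t)\gtrsim r^{2/d}$ for $r\ge1$, this piece is bounded by $C\exp\{-\kappa r^{2/d}\}$.

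On $\{V>t\}$ I bound the Gaussian density by $(2\pi)^{-p/2}(1+V)^{-p/2}\le(2\pi)^{-p/2}$. This piece is then at most $P(V>t)$, which needs a tail bound for a product of $d-1$ independent Gamma variables. The event $\prod T_\ell>t/a$ forces some $T_\ell>(t/a)^{1/(d-1)}$, so a union bound and the Gamma tail $P(T_\ell>s)\lesssim e^{-s/2}$ give
\begin{equation*}
P(V>t)\lesssim \exp\{-c\,t^{1/(d-1)}\}=\exp\{-c\,r^{2/d}\}.
\end{equation*}
This choice of $t$ is exactly what balances the two exponents, and it produces the rate $2/d$.

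For $r<1$, $m$ is bounded by $(2\pi)^{-p/2}$, and enlarging $C$ covers this range. Combining the terms over $\boldsymbol{k}$ finishes the proof.

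The main obstacle is the tail bound for the product of Gammas. Its exponent must be $t^{1/(d-1)}$ and not anything weaker, since that is what yields the exact rate $2/d$. Polynomial prefactors of the form $s^{k/2-1}e^{-s}\le Ce^{-s/2}$ need some care, but they are absorbed by shrinking $\kappa$. An alternative route is the Meijer-G asymptotics $G^{d,0}_{0,d}(z)\sim z^{\alpha}e^{-dz^{1/d}}$, but the elementary union bound avoids special-function asymptotics and I would use it.
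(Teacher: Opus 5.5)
Your argument is correct, and it takes a genuinely different route from the paper's.

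The paper works in $\boldsymbol{\theta}$-space. It splits $m(\boldsymbol{y})=\int\phi_p(\boldsymbol{y}-\boldsymbol{\theta})\pi(\boldsymbol{\theta})\,d\boldsymbol{\theta}$ over $\{\|\boldsymbol{\theta}\|\le r/2\}$ and its complement. The first piece is bounded by $(2\pi)^{-p/2}e^{-r^2/8}$, and the second by $(2\pi)^{-p/2}\mathbb{P}_\pi(\|\boldsymbol{\theta}\|>r/2)$. It controls that prior tail with three tools:
\begin{itemize}
\item the large-argument Meijer-G asymptotic $G^{d,0}_{0,d}(z)\sim c\,z^{\mu}e^{-dz^{1/d}}$, which gives $\pi(\boldsymbol{\theta})\lesssim\|\boldsymbol{\theta}\|^{2\mu_{\max}}e^{-c\|\boldsymbol{\theta}\|^{2/d}}$;
\item polar coordinates;
\item an integration-by-parts bound on an incomplete gamma integral.
\end{itemize}

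You instead carry out the Gaussian convolution first, so that $m(\boldsymbol{y})=\sum_{\boldsymbol{k}}w_{\boldsymbol{k}}\mathbb{E}[\phi_p(\boldsymbol{y};\boldsymbol{0},(1+V_{\boldsymbol{k}})I_p)]$. You then split in the mixing variance at $t=r^{2(d-1)/d}$. After that, only a union bound and the Chernoff bound $\mathbb{P}(T_\ell>s)\le 2^{k_\ell/2}e^{-s/2}$ are needed. Since $1+t\le 2t$ for $r\ge 1$, you can even take $\kappa=\min\{1/4,\tfrac12 a^{-1/(d-1)}\}$ explicitly.

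What your route buys:
\begin{itemize}
\item It is self-contained and avoids special-function asymptotics.
\item It shows where the exponent comes from: $2/d$ is the balance point between the Gaussian decay $r^2/t$ and the $\exp\{-c\,t^{1/(d-1)}\}$ tail of a product of $d-1$ Gammas.
\item It generalizes directly: any Gaussian scale mixture whose mixing tail is $\exp\{-ct^{\beta}\}$ has marginal tail $\exp\{-c'r^{2\beta/(1+\beta)}\}$.
\end{itemize}

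What the paper's route buys: at the cost of importing the Meijer-G asymptotic, it gives as a by-product a pointwise stretched-exponential bound on the prior density itself. That is a fact of independent interest.

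One small remark: your exponent $1/(d-1)$ requires $d\ge 2$. For $d=1$ the variance $V$ is deterministic, the marginal is exactly Gaussian, and the bound is immediate.
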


We provide a full proof of Lemma 2.3 in the Supplementary Material. The proof starts from the Gaussian mixture representation of the marginal density and then splits the integral into the regions $||\boldsymbol{\theta}|| \leq \frac{r}{2}$ and $||\boldsymbol{\theta}|| > \frac{r}{2}$, where $r = ||\boldsymbol{y}||.$ The first integral is bounded by a Gaussian tail via the reverse triangle inequality, and the second part is bounded by the prior tail probability $\mathbb{P}_{\pi} (||\boldsymbol{\theta}|| > \frac{r}{2}).$ Using a known Meijer-G asymptotic (see Appendix A of \cite{gaunt2025variance}), the prior tail is stretched exponential of order $\exp \{-cr^{\frac{2}{d}} \}$; combining the two bounds yields the desired upper bound. 

This lemma characterizes the prior predictive tail behavior of the fixed scale BNN; in particular, the marginal density is light tailed, with at most stretched exponential decay. Intuitively, this means that extreme observations are exponentially unlikely on the scale $r^{\frac{2}{d}}$ and the prior predictive places most of its mass on regions of $\mathbb{R}^{p}$ with moderate radius. The exponent $\frac{2}{d}$ highlights a result from \cite{zavatone2021exact}: increasing depth corresponds to heaver tails, but fixed d still yields stretched exponential decay which is typically too fast for minimaxity. Having established the above results, we can then prove Theorem 2.4. 

\begin{theorem}
    The square root of the marginal density induced by a BNN with fixed scale prior in (\ref{bnnprior}) is not superharmonic in $\mathbb{R}^{p} \text{ for } p \geq 3$.
\end{theorem}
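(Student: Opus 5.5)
The plan is a proof by contradiction that combines Lemma 2.2 with Lemma 2.3. Suppose $\sqrt{m}$ were superharmonic on $\mathbb{R}^{p}$, $p\geq 3$.

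The first step is to check that $q:=\sqrt{m}$ satisfies the hypotheses of Lemma 2.2. Since the prior (continuous part only) is a scale mixture of normals by Lemma 1.1, so is the marginal:
\begin{equation*}
m(\boldsymbol{y})=\int_0^\infty \phi_p(\boldsymbol{y},\boldsymbol{0}_p,(1+v)I_p)\,g(v)\,dv .
\end{equation*}
Hence $m$ is radial, strictly positive and $C^\infty$; differentiation under the integral is justified because $(1+v)^{-p/2}$ and its derivatives in $\boldsymbol{y}$ are bounded uniformly in $v$. So $q(r)=\sqrt{m}$ is positive and $C^2$ on $(0,\infty)$. By Lemma 2.3, $q(r)\leq \sqrt{C}\exp\{-\tfrac{\kappa}{2}r^{2/d}\}\to 0$, which is hypothesis (a). Superharmonicity on all of $\mathbb{R}^p$ gives hypothesis (b) with any $R_0>0$, via the radial Laplacian formula of Lemma 2.1.

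The second step applies Lemma 2.2. It yields $R\geq R_0$ with $q'(R)<0$ and
\begin{equation*}
q(r)\geq c\,r^{2-p}\quad\text{for all } r\geq R,\qquad c=-\frac{R^{p-1}q'(R)}{p-2}>0 .
\end{equation*}
Combining this with the upper bound from Lemma 2.3 gives
\begin{equation*}
c\,r^{2-p}\leq \sqrt{C}\exp\{-\tfrac{\kappa}{2}r^{2/d}\},\qquad\text{equivalently}\qquad r^{p-2}\exp\{-\tfrac{\kappa}{2}r^{2/d}\}\geq c/\sqrt{C}>0,
\end{equation*}
for all $r\geq R$. Since $2/d>0$, the stretched exponential beats any polynomial, so the left side tends to $0$ as $r\to\infty$. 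This is a contradiction. Hence $\Delta\sqrt{m}>0$ somewhere, and by the radial symmetry of $m$ this happens on a whole sphere. In fact the argument gives more: $\sqrt{m}$ fails to be superharmonic outside every ball.

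The main obstacle is verifying the hypotheses of Lemma 2.2 carefully, especially strict positivity of $c$. This requires $q'(R)<0$, which Lemma 2.2 itself provides, but it should be confirmed that the lemma's proof produces such an $R$. It does, since $q\to 0$ and $q>0$ force $q$ to decrease somewhere beyond $R_0$. Two regularity points also need checking. First, if the mixing density $g$ has mass near $v=0$, smoothness of $m$ still holds, because the $+1$ in the variance $1+v$ keeps every Gaussian kernel uniformly smooth. Second, dropping the Dirac mass at $\boldsymbol{0}_p$ only affects normalization, which changes the constant $C$ but not the decay rate in Lemma 2.3.
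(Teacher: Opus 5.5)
Your proof is correct and follows the paper's argument: assume $\sqrt{m}$ is superharmonic, use Lemma 2.2 to get the polynomial lower bound $q(r)\geq c\,r^{2-p}$ with $c>0$, and contradict it with the stretched-exponential upper bound $\sqrt{C}\exp\{-\tfrac{\kappa}{2}r^{2/d}\}$ from Lemma 2.3. You also check, via the scale-mixture representation, that $\sqrt{m}$ is positive, radial and $C^2$, which are hypotheses of Lemma 2.2 the paper uses without comment, and you correctly note that the argument only needs superharmonicity outside a ball, a slight strengthening of the statement.
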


We provide a full proof of Theorem 2.4 in the Supplementary Material. The proof proceeds by contradiction: assuming that $q=\sqrt{m(\boldsymbol{y})}$ is superharmonic, Lemma 2.2 then forces a polynomial lower tail bound $q(r) \gtrsim r^{2-p}$, while Lemma 2.3 implies a stretched-exponential upper tail bound $q(r) \lesssim \exp \{ - cr^{\frac{2}{d}} \}$. The stretched exponential decay dominates the polynomial bound, yielding a contradiction. Thus, superharmonicity is incompatible with the tail behavior of the fixed-scale BNN prior predictive density.

The obstruction arises from a fundamental barrier: positive radial superharmonic functions vanishing at infinity cannot decay faster than $r^{2-p}$. Intuitively, the square root of the fixed-scale BNN marginal density is too light-tailed to be superharmonic in dimension $p \geq 3$. We note that replacing ReLU activations with identity maps yields a linear network; nevertheless, the resulting network still induces a Meijer-G type density and therefore does not admit a superharmonic square root marginal density.

Minimaxity provides a uniform bound on worst-case risk, whereas superharmonicity is a pointwise criterion on the square root of the marginal density. When superharmonicity fails, it indicates only that the local quantity entering the density becomes unfavorable in some region of the sample space, which would tend to inflate risk in that region. However, since risk averages over all data values, a region where the criterion is violated may carry negligible probability mass and thus need not by itself preclude minimaxity. Nevertheless, the failure of superharmonicity is a warning sign, and a separate argument is required to conclude that the induced decision rule is not minimax.

\begin{theorem}
    The induced Bayes decision rule of the fixed scale BNN is given by 
    \begin{equation*}
    \delta_{\text{BNN, fixed}}(\boldsymbol{y}) = \mathbb{E}[\frac{V}{V+1}|\;||\boldsymbol{Y}||^{2} = ||\boldsymbol{y}||^{2}] \; \boldsymbol{y} \text{ where } V \text{ is as defined in Lemma 1.1}. 
\end{equation*}
\end{theorem}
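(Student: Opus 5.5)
The plan is to use the scale-mixture representation of Lemma 1.1 and condition on the latent variance $V$; Tweedie's formula $\boldsymbol{\delta}_{\pi}(\boldsymbol{Y}) = \boldsymbol{Y} + \nabla m(\boldsymbol{Y})/m(\boldsymbol{Y})$ is not needed. We work with the continuous part of the prior only, as stipulated in Section 1.4, and normalize the mixing density $g$ to integrate to one. The hierarchy is then
\begin{equation*}
\boldsymbol{Y}\mid\boldsymbol{\theta} \sim N_p(\boldsymbol{\theta}, I_p), \qquad \boldsymbol{\theta}\mid V=v \sim N_p(\boldsymbol{0}_p, vI_p), \qquad V \sim g.
\end{equation*}
Under quadratic loss the Bayes rule is the posterior mean $\mathbb{E}[\boldsymbol{\theta}\mid\boldsymbol{Y}=\boldsymbol{y}]$. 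By the tower property this equals $\mathbb{E}\bigl[\mathbb{E}[\boldsymbol{\theta}\mid \boldsymbol{Y}, V]\mid \boldsymbol{Y}=\boldsymbol{y}\bigr]$.

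The first step is the inner expectation. Given $V=v$ we are in the normal--normal conjugate model, so $\boldsymbol{\theta}\mid \boldsymbol{Y}=\boldsymbol{y}, V=v \sim N_p\bigl(\tfrac{v}{v+1}\boldsymbol{y}, \tfrac{v}{v+1}I_p\bigr)$. This gives $\mathbb{E}[\boldsymbol{\theta}\mid\boldsymbol{y},v] = \frac{v}{v+1}\boldsymbol{y}$, and hence
\begin{equation*}
\mathbb{E}[\boldsymbol{\theta}\mid\boldsymbol{Y}=\boldsymbol{y}] = \mathbb{E}\Bigl[\tfrac{V}{V+1}\,\Big|\,\boldsymbol{Y}=\boldsymbol{y}\Bigr]\boldsymbol{y}.
\end{equation*}

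The second step identifies the posterior of $V$ given $\boldsymbol{Y}$. Marginalizing $\boldsymbol{\theta}$ gives $\boldsymbol{Y}\mid V=v \sim N_p(\boldsymbol{0}_p,(1+v)I_p)$, so
\begin{equation*}
\pi(v\mid\boldsymbol{y}) \propto (1+v)^{-p/2}\exp\Bigl\{-\frac{\|\boldsymbol{y}\|^2}{2(1+v)}\Bigr\} g(v).
\end{equation*}
This depends on $\boldsymbol{y}$ only through $\|\boldsymbol{y}\|^2$. Since $\|\boldsymbol{Y}\|^2$ is therefore sufficient for $V$, conditioning on $\boldsymbol{Y}=\boldsymbol{y}$ is the same as conditioning on $\|\boldsymbol{Y}\|^2=\|\boldsymbol{y}\|^2$. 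One can check this directly: $\|\boldsymbol{Y}\|^2\mid V=v$ is $(1+v)\chi^2_p$, whose density in $v$ is proportional to the same kernel. This gives the stated formula, and along the way the explicit ratio
\begin{equation*}
\mathbb{E}\Bigl[\tfrac{V}{V+1}\,\Big|\,\|\boldsymbol{Y}\|^2=\|\boldsymbol{y}\|^2\Bigr] = \frac{\int_0^\infty \frac{v}{1+v}(1+v)^{-p/2}e^{-\|\boldsymbol{y}\|^2/(2(1+v))}g(v)\,dv}{\int_0^\infty (1+v)^{-p/2}e^{-\|\boldsymbol{y}\|^2/(2(1+v))}g(v)\,dv}.
\end{equation*}

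The main obstacle is justifying the use of Fubini and the conditioning. We need the marginal $m(\boldsymbol{y})=\int_0^\infty \phi_p(\boldsymbol{y};\boldsymbol{0}_p,(1+v)I_p)g(v)\,dv$ to be finite and positive, and the numerator integrals to converge. Both hold because the integrands are bounded by $(2\pi)^{-p/2}g(v)$, which is integrable, and because $0\le v/(1+v)\le 1$. Positivity of $m$ follows from $g>0$ on a set of positive measure: each $g_{\boldsymbol{k}}$ is the density of a scaled product of Gamma variables, which is positive on $(0,\infty)$.

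If one instead wants to argue through Tweedie's formula, differentiating under the integral gives
\begin{equation*}
\frac{\nabla m(\boldsymbol{y})}{m(\boldsymbol{y})} = -\mathbb{E}\Bigl[\tfrac{1}{1+V}\,\Big|\,\boldsymbol{y}\Bigr]\boldsymbol{y},
\end{equation*}
which yields the same expression since $1-\frac{1}{1+V}=\frac{V}{V+1}$. The differentiation is justified by dominated convergence, using $\|\boldsymbol{y}\|(1+v)^{-1-p/2}e^{-\|\boldsymbol{y}\|^2/(2(1+v))} \le C(1+v)^{-(p+1)/2}$.
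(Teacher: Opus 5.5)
Your proposal is correct and follows essentially the same route as the paper's proof. Normal--normal conjugacy given $V=v$ yields $\mathbb{E}[\boldsymbol{\theta}\mid\boldsymbol{y},v]=\frac{v}{1+v}\boldsymbol{y}$, the law of total expectation gives the mixture form, and Bayes' theorem with $\boldsymbol{Y}\mid V=v\sim N_p(\boldsymbol{0}_p,(1+v)I_p)$ shows that the posterior of $V$ depends on $\boldsymbol{y}$ only through $\|\boldsymbol{y}\|^2$. Your additional integrability remarks and the Tweedie cross-check are valid, but the argument does not need them.
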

The full proof of Theorem 2.5 is provided in the Supplementary Material and relies on the representation of the prior distribution as a normal scale mixture and recognizing 
\begin{equation*}
    \boldsymbol{\theta} | \boldsymbol{Y} = \boldsymbol{y}, V= v \sim N_{p}(\frac{v}{1+v} \boldsymbol{y}, \frac{v}{1+v}I_{p}). 
\end{equation*} 
Using the law of total expectation gives the desired form of the decision rule. By using Bayes' theorem it can be shown that $p(v|\boldsymbol{y})$ only depends on $\boldsymbol{y}$ through $||\boldsymbol{y}||^{2}$. We then prove the following theorem. 

\begin{theorem}
    $\delta_{BNN, fixed}(\boldsymbol{y})$ is not minimax. 
\end{theorem}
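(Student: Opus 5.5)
To show that $\delta_{\text{BNN, fixed}}$ is not minimax, it is enough to find a single $\boldsymbol{\theta}$ with $R(\boldsymbol{\theta},\delta_{\text{BNN, fixed}})>p$, since the minimax risk equals $p$. The natural regime is $\|\boldsymbol{\theta}\|\to\infty$. There the prior is light tailed (Lemma 2.3), so the estimator keeps shrinking by an amount that does not vanish fast enough, and a squared bias term dominates. We write the rule in Baranchik form, $\delta(\boldsymbol{y})=\boldsymbol{y}+g(\boldsymbol{y})$ with
\begin{equation*}
g(\boldsymbol{y})=-\mathbb{E}\Big[\tfrac{1}{1+V}\,\Big|\,\|\boldsymbol{Y}\|^2=\|\boldsymbol{y}\|^2\Big]\boldsymbol{y}=:-\psi(\|\boldsymbol{y}\|^2)\,\boldsymbol{y},
\end{equation*}
using Theorem 2.5. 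Since $\psi\in(0,1)$ is smooth, SURE applies and gives
\begin{equation*}
R(\boldsymbol{\theta},\delta)-p=\mathbb{E}_{\boldsymbol{\theta}}\big[\psi^2 r^2-2p\,\psi-4r^2\psi'(r^2)\big],\qquad r=\|\boldsymbol{Y}\|.
\end{equation*}
The goal is to show that the right-hand side is positive for large $\|\boldsymbol{\theta}\|$.

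\textbf{Step 1: asymptotics of $\psi$.} Here $\psi(s)=\mathbb{E}[(1+V)^{-1}\mid s]$, where the posterior of $V$ is proportional to $(1+v)^{-p/2}e^{-s/(2(1+v))}g(v)$. The mixing density $g$ is a finite mixture of densities of scaled products of $d-1$ gamma variables. Its tail therefore behaves like $v^{a}\exp\{-c\,v^{1/(d-1)}\}$, which is the Meijer-G asymptotic already used for Lemma 2.3. A Laplace argument balances $s/(2v)$ against $c\,v^{1/(d-1)}$. It shows that the posterior of $V$ concentrates at $v^\ast(s)\asymp s^{(d-1)/d}$, so
\begin{equation*}
\psi(s)\asymp s^{-(d-1)/d},\qquad s\,\psi(s)\asymp s^{1/d}\to\infty .
\end{equation*}
This is the main obstacle. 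We need the two-sided estimate uniformly for large $s$, together with a bound $|s\psi'(s)|\lesssim\psi(s)$. For the bound on $\psi'$, use $\psi'(s)=-\tfrac12\operatorname{Cov}\big((1+V)^{-1},(1+V)^{-1}\mid s\big)\le 0$: the $-4r^2\psi'$ term is then nonnegative and only helps. For the lower bound on $\psi$, it suffices to show that the posterior puts mass bounded below on $\{V\le C s^{(d-1)/d}\}$. To get this, compare the integrand on that set with the integrand on its complement, using the stretched-exponential tail of $g$ and a crude polynomial lower bound for $g$ near $v^\ast$.

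\textbf{Step 2: conclude.} Drop the nonnegative derivative term and bound
\begin{equation*}
\psi^2r^2-2p\psi=\psi\,(r^2\psi-2p).
\end{equation*}
By Step 1, $r^2\psi(r^2)\gtrsim r^{2/d}$, which exceeds $2p$ once $r\ge r_0$. Hence the integrand is at least $\psi(r^2)(r^2\psi(r^2)-2p)>0$ on $\{r\ge r_0\}$. On $\{r<r_0\}$ it is bounded below by $-2p$, and that event has probability at most $e^{-c\|\boldsymbol{\theta}\|^2}$ when $\|\boldsymbol{\theta}\|$ is large. Meanwhile, $\|\boldsymbol{Y}\|\ge\|\boldsymbol{\theta}\|/2$ with probability tending to one, and on that event the integrand is $\gtrsim\|\boldsymbol{\theta}\|^{2/d}\psi\to$ a positive quantity in the normalized sense. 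More carefully, $\psi\,(r^2\psi-2p)\gtrsim r^{-2(d-1)/d}\,r^{2/d}$, which is positive for each fixed large $\|\boldsymbol{\theta}\|$. This yields $R(\boldsymbol{\theta},\delta)-p>0$ for $\|\boldsymbol{\theta}\|$ large enough.

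\textbf{Alternative route.} If the sharp lower bound on $\psi$ is hard to get, the argument only needs $s\,\psi(s)\to\infty$ along $s\to\infty$. This amounts to the posterior of $V$ satisfying $\mathbb{E}[(1+V)^{-1}\mid s]\gg 1/s$. That already follows from Jensen's inequality, $\mathbb{E}[(1+V)^{-1}]\ge 1/(1+\mathbb{E}[V\mid s])$, combined with an upper bound $\mathbb{E}[V\mid s]=o(s)$. The latter holds because the stretched-exponential tail of $g$ makes posterior mass on $\{v\ge \epsilon s\}$ negligible relative to mass near $v^\ast(s)\ll s$.
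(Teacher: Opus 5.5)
Your proposal is correct, and its skeleton is the paper's. You use the Baranchik form with $\psi(u)=\mathbb{E}[(1+V)^{-1}\mid U=u]$, SURE with $B(u)=\psi^2u-2p\psi-4u\psi'$, the identity $\psi'=-\tfrac12\operatorname{Var}((1+V)^{-1}\mid U=u)\le 0$, the reduction $B\ge\psi(u\psi-2p)$, and a split of $\mathbb{E}_{\boldsymbol{\theta}}[B(U)]$ into a polynomially positive part and a negative part of size $e^{-c\|\boldsymbol{\theta}\|^2}$.

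You differ from the paper in the key step $u\psi(u)\to\infty$. The paper conditions on $\boldsymbol{k}$ and reparametrizes the gamma vector by its geometric mean $x$ and a shape vector $\boldsymbol{s}$. It proves $H(x)\asymp x^{-(d-2)/2}e^{-(d-1)x}$, then proves two-sided exponential posterior concentration of $X/x_u$ at $1$, i.e.\ $V\asymp u^{1-1/d}$. You use only the prior tail of $V$ and a one-sided fact: the posterior mass of $\{V\le Cs^{(d-1)/d}\}$ is bounded below. Your alternative is even simpler: Jensen together with $\mathbb{E}[V\mid s]=o(s)$.

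Both routes work and are far shorter, but the comparison must be stated correctly. The mass near $v^\ast\asymp s^{(d-1)/d}$ is not polynomially small: $e^{-s/(2(1+v))}$ and the mixing density both contribute factors of order $e^{-Ks^{1/d}}$ there. Meanwhile $\{V>Cs^{(d-1)/d}\}$ has prior mass $\lesssim e^{-c\,C^{1/(d-1)}s^{1/d}}$, so $C$ must be chosen with $cC^{1/(d-1)}>K$. In the Jensen route no such tuning is needed, because the relevant tail is $e^{-c(\epsilon s)^{1/(d-1)}}$ and $s^{1/(d-1)}\gg s^{1/d}$.

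Several things you list as needed are not:
\begin{itemize}
\item The Meijer-G asymptotic is unnecessary. Writing $V=C_0\prod_\ell T_\ell$, the elementary bounds $\mathbb{P}(\prod_\ell T_\ell>w)\le\sum_\ell\mathbb{P}(T_\ell>w^{1/(d-1)})$ and $\mathbb{P}(V\in[v^\ast,2v^\ast])\ge\prod_\ell\mathbb{P}(T_\ell\in[t,2^{1/(d-1)}t])$, with $C_0t^{d-1}=v^\ast$, suffice.
\item The upper half of $\psi\asymp s^{-(d-1)/d}$ is never used.
\item Your bound on $s\psi'$ is not needed. Only the sign of $\psi'$ and $|\psi'|\le\tfrac12$ (for SURE's integrability) are used.
\end{itemize}

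In Step 2, cap $U$ explicitly, as the paper does with $b_{\boldsymbol{\theta}}=\tfrac94\|\boldsymbol{\theta}\|^2$. Once $u\psi(u)\ge 4p$ you have $B(u)\ge 2p\psi(u)\ge 8p^2/u\ge \tfrac{32p^2}{9}\|\boldsymbol{\theta}\|^{-2}$ on $\{u_0\le U\le b_{\boldsymbol{\theta}}\}$. This beats $2pe^{-c\|\boldsymbol{\theta}\|^2}$, whereas ``positive for each fixed large $\|\boldsymbol{\theta}\|$'' by itself does not.

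What each route buys: the paper's heavier argument gives the exact posterior scale of $V$, hence $\psi(u)\asymp u^{-(1-1/d)}$ two-sidedly, which its depth-versus-width discussion relies on. Your one-sided comparison gives the same lower rate, and the Jensen route gives no rate, but both need almost no computation. The Jensen argument also carries over to any normal scale mixture whose mixing law satisfies $-\log\mathbb{P}(V>w)\asymp w^{\beta}$ for some $\beta>0$. That turns the ``too light-tailed'' heuristic into a proof.
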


Note that we can write the estimator in Shrinkage form. In particular, \begin{equation*}
    \delta(\boldsymbol{y}) = a(||\boldsymbol{y}||)\boldsymbol{y} \text{ with } a(||\boldsymbol{y}||) = \mathbb{E}[\frac{V}{V+1}|\;||\boldsymbol{Y}||^{2} = ||\boldsymbol{y}||^{2}]. 
\end{equation*}
From this expression, the estimator has a natural shrinkage form, since it shrinks toward $0$. However, minimaxity depends on the shrinkage profile uniformly over $||\boldsymbol{Y}||$. As we show in the proof, when $||\boldsymbol{\theta}||$ is large, the risk exceeds the minimax level. This is because the fixed scales induce a prior predictive density that is too light tailed, leading to shrinkage that is insufficiently adaptive for large signals. This means fixed scale BNN priors can yield procedures with suboptimal worst case performance, despite behaving well on typical data sets. Therefore, in Section 3 we introduce scale mixtures to recover minimax guarantees and achieve minimax risk (or less) for all $||\boldsymbol{\theta}|| >0. $ We provide a full proof of Theorem 2.6 in the Supplementary Material. We first rewrite the fixed scale BNN  Bayes rule in the Baranckik form: 
\begin{equation*}
    \delta_{\text{BNN,fixed}}(\boldsymbol{y}) = (1-\psi(||\boldsymbol{y}
    ||^{2}))\boldsymbol{y} \text{ with } \psi(u):= \mathbb{E}[\frac{1}{1+V}|\;||\boldsymbol{Y}||^{2} = u]\in (0,1). 
\end{equation*} 
Thus, the procedure is completely characterized by the scalar shrinkage function $\psi$. We then derive a convenient representation of $\psi(u) = N(u)/D(u)$ where $N(u)$ and $D(u)$ are defined as the expectations of relevant functions of $u$ with respect to the measure $\pi(dv)$. This allows us to show that $\psi$ is differentiable, with \begin{equation*}
    \psi'(u) =  - \frac{1}{2}Var(\frac{1}{1+V}|||\boldsymbol{Y}||^{2} = u) \leq 0. 
\end{equation*} 
This implies $0 \geq \psi'(u) \geq - \frac{1}{2}\psi(u)$. These regularity and monotonicity conditions imply that
\begin{equation*}
    g(\boldsymbol{y}) := \delta_{\text{BNN,fixed}}(\boldsymbol{y}) - \boldsymbol{y} = - \psi(||\boldsymbol{y}||^{2})\boldsymbol{y}
\end{equation*}
is weakly differentiable, allowing us to apply Stein's unbiased risk estimator to analyze risk and establish non-minimaxity. By Stein's identity, the risk can be written as $R(\boldsymbol{\theta}, \delta) = p + \mathbb{E}_{\boldsymbol{\theta}}[B(U)]$ where  $U = ||\boldsymbol{Y}||^{2}$ and $B(u) := \psi(u)^{2}u - 2p\psi(u) - 4u\psi'(u)$.  Since $\psi'(u) \leq 0$, we have $B(u)>0$ whenever $u\psi(u) > 2p$. 

Therefore, to prove non-minimaxity, it suffices to show that $u\psi(u) \rightarrow \infty$ as $u \rightarrow \infty$ and then consider a region where $||\boldsymbol{\theta}||$  is large and $U = ||\boldsymbol{Y}||^{2}$ concentrates on values of $u$ for which $u\psi(u) >2p$. To analyze $\psi(u)$, we rewrite the posterior $\pi(t|U=u, \boldsymbol{K} = \boldsymbol{k})$ of the latent Gamma random vector $T = (T_{1}, \dots, T_{d-1})$, where $\boldsymbol{k} = (k_{1}, \dots, k_{d-1})$, using Bayes' theorem. For the next section of the proof we work with realizations of random variables denoted by lower case letters. We then perform a change of variables that separates the scale of the product $\prod\limits_{\ell = 1}^{d-1} t_{\ell}$ from its shape. Conditional on $\boldsymbol{K} = \boldsymbol{k}$, write $x:= (\prod\limits_{\ell=1}^{d-1}t_{\ell})^{\frac{1}{d-1}}$, so that $v= Cx^{d-1},C := 2^{d-1}||\boldsymbol{x}||^{2}(\prod\limits_{\ell = 1}^{d}\sigma_{\ell}^{2})$ and parameterize $t_{\ell} = xs_{\ell}$. The Jacobian contributes a factor of $(d-1)x^{d-2}$, and the posterior kernel factorizes into an $x-$dependent part and an $s-$dependent part. Integrating over $\boldsymbol{s}$ yields 
\begin{align*}
    \pi(x|u, \boldsymbol{k}) &\propto (1+Cx^{d-1})^{-\frac{p}{2}} \exp \{ - \frac{u}{2(1+Cx^{d-1})} \} x^{\alpha_{\cdot}-1}
H(x),\text{where }\\ 
H(x) &:= \int\exp \{-xA(\boldsymbol{s}) \} Q(\boldsymbol{s})d\boldsymbol{s},\\ \alpha_{\cdot} &=\sum\limits_{i=1}^{d-1}\frac{k_{\ell}}{2}, A(\boldsymbol{s}) = \sum\limits_{\ell =1}^{d-1}s_{\ell}, \text{ and } Q(\boldsymbol{s}) = \prod\limits_{\ell =1}^{d-2}s_{\ell}^{\alpha_{\ell}-\alpha_{d-1}-1}.\end{align*} 
We then deduce bounds on $H(x)$. By the Arithmetic Mean–Geometric Mean Inequality we show that $A(\boldsymbol{s}) \geq d-1$ with equality only at the unique minimizer $\boldsymbol{s}^{*} = \boldsymbol{1}_{d-2}$. It can be shown that $A$ is strictly convex and admits a uniform quadratic expansion around $\boldsymbol{s}^{*}$. Specifically,  for some constants $C_{-}$ and $C_{+}$,\begin{equation*}
    (d-1)+ C_{-}||\boldsymbol{s}- \boldsymbol{1}||^{2} \leq A(\boldsymbol{s}) \leq (d-1) + C_{+}||\boldsymbol{s}-\boldsymbol{1}||^{2}. 
\end{equation*} On a small neighborhood of $\boldsymbol{s}^{*}, Q(\boldsymbol{s})$ is bounded above and below. Therefore, the dominant contribution to $H(x)$ comes from this neighborhood. Rescaling via $\boldsymbol{z} = \sqrt{\boldsymbol{x}}(\boldsymbol{s} - \boldsymbol{s}^{*})$  yields a Gaussian integral, producing the polynomial prefactor $x^{-\frac{d-2}{2}}$ and the exponential prefactor $e^{-(d-1)x}$ from the bound on $A(\boldsymbol{s})$. We finally split the integral defining $H$ over a  small ball around $\boldsymbol{s}^{*}$ and its complement, which yields 
\begin{equation*}
    H(x) \asymp x^{-\frac{d-2}{2}}e^{-(d-1)x}, x\rightarrow \infty. 
\end{equation*} 
Using this asymptotic, we obtain 
\begin{align*}
    \pi(x|u, \boldsymbol{k}) & \asymp x^{\alpha_{\cdot}-1-(\frac{d-2}{2})} (1+Cx^{d-1})^{- \frac{p}{2}}\exp \{ - f_{u}(x) \}\; \text{ and } \\   
    f_{u}(x) &:= (d-1) x+\frac{u}{2(1+Cx^{d-1})}.
\end{align*} 
Therefore, the $u$-dependence is only through the function $f_{u}$. Let $m := d-1$ and $x_{u}:= (u/2C)^{\frac{1}{m+1}}$ and write $x = \lambda x_{u}$. Then $f_{u}(\lambda x_{u}) = x_{u}\phi_{u}(\lambda)$, where $\phi_{u}(\lambda) \rightarrow \phi_{\infty}(\lambda) = m\lambda + \lambda^{-m}$ uniformly on compact $\lambda$ intervals as $u \rightarrow \infty$. Since $\phi_{\infty}$ has a unique minimizer at $\lambda = 1$ and is uniformly separated from its minimum outside any fixed neighborhood of $1$, the same separation holds for $\phi_{u}$ for all large $u$. Consequently, for every $\epsilon \in (0,1)$ there exist $u_{0}$ and $c(\epsilon)>0$ such that, for all $u\geq u_{0}$,
\begin{equation*}
    f_{u}(x) \geq f_{u}(x_{u}) + c(\epsilon)x_{u} \text{ whenever } x\notin [(1-\epsilon)x_{u}, (1+\epsilon)x_{u}]. 
\end{equation*}  To control the conditional distribution $\Pi_{u, \boldsymbol{k}}(\cdot) = \mathbb{P}(X\in \cdot|U=u, \boldsymbol{K} = \boldsymbol{k})$, we first show that the small $x$ region carries negligible probability mass. Fix $x_{0}>0$ and note that, on $(0,x_{0})$ the factor $\exp\{ - u/ 2(1+Cx^{m}) \}$ is uniformly bounded by $\exp\{ -c_{0}u \}$ for $c_{0}:= (2(1+Cx_{0}^{m}))^{-1}$. This yields the exponential upper bound 
\begin{equation*}
    \int\limits_{0}^{x_{0}} \pi_{u, \boldsymbol{k}}(x) dx \leq K_{0}e^{-c_{0}u}.
\end{equation*}
This follows after integrating out $H(x)$ using the normalization identity 
\begin{equation*}
    \int Q(\boldsymbol{s}) A(\boldsymbol{s})^{-\alpha_{\cdot}} d\boldsymbol{s} = \frac{\prod\limits_{i=1}^{m}\Gamma(\alpha_{i})}{m\Gamma(\alpha_{\cdot})}. 
\end{equation*} 
On the other hand, by lower bounding the remaining $x-$ dependent factors on a fixed compact interval $I_{\delta} \subset [x_{0}, \infty),$ we obtain $\int\limits_{0}^{\infty} \pi_{u, \boldsymbol{k}}(x) dx \geq K_{1}e^{-c_{1}u}$ for some $c_{1} < c_{0}$. This implies $\Pi_{u, \boldsymbol{k}}((0, x_{0})) \leq (K_{0}/K_{1}) \exp \{ -(c_{0}-c_{1})u \}$. We rescale $x  = \lambda x_{u}$ using $x_{u} = (u/2C)^{\frac{1}{m+1}}$ and write the $\lambda-$ density as $\tilde{\pi}_{u, \boldsymbol{k}}(\lambda) := x_{u}\pi_{u, \boldsymbol{k}}(\lambda x_{u})$ which satisfies the two sided bound
\begin{equation*}
    K_{1}x_{u}^{\gamma + 1} q_{u}(\lambda) \leq \tilde{\pi}_{u, \boldsymbol{k}}(\lambda) \leq K_{2}x_{u}^{\gamma + 1} q_{u}(\lambda) \text{ with } q_{u}(\lambda) = \lambda^{\gamma} (1+A_{u}\lambda^{m})^{-\frac{p}{2}} \exp \{ - x_{u}\phi_{u}(\lambda) \}. 
\end{equation*}
Having isolated the dominant bound, we can establish the desired posterior concentration result. It follows that, for any measurable $S\subseteq [x_{0}/x_{u}, \infty)$, \begin{equation*}
    \Pi_{u, \boldsymbol{k}}(S|X\geq x_{0}) = \frac{\int\limits_{S}\tilde{\pi}_{u, \boldsymbol{k}}(\lambda) d\lambda}{\int\limits_{\frac{x_{0}}{x_{u}}}^{\infty} \tilde{\pi}_{u, \boldsymbol{k}}(\lambda)d\lambda} \leq \frac{K_{2}}{K_{1}} \frac{\int\limits_{S}q_{u}(\lambda)d\lambda}{\int\limits_{\frac{x_{0}}{x_{u}}}^{\infty} q_{u}(\lambda) d\lambda}. 
\end{equation*} 
The denominator is bounded below by restricting to a shrinking neighborhood of the unique minimizer $\lambda_{u}^{*} \rightarrow 1$, which gives 
\begin{equation*}
    \int\limits_{\frac{x_{0}}{x_{u}}}^{\infty} q_{u}(\lambda) d\lambda \geq c_{in}x_{u}^{-\frac{1}{2}} (1+A_{u})^{-\frac{p}{2}}
e^{-x_{u}\phi_{u}(\lambda_{u}^{*})}, c_{in}>0.\end{equation*} 
For the numerator, we split the integral into three fixed parts representing small, moderate, and large values of $\lambda$. On each part, we show that $\phi_{u}(\lambda) \geq \phi_{u}(\lambda_{u}^{*}) + C(\epsilon)$, so the numerator is at most $K_{\epsilon} e^{-x_{u}(\phi_{u}(\lambda_{u}^{*}) + C(\epsilon))}$ up to polynomial factors. Taking the ratio leaves a polynomial factor times $e^{-C(\epsilon)x_{u}}$, and for large u the exponential dominates, yielding
\begin{equation*}
    \Pi_{u, \boldsymbol{k}}(|\frac{X}{x_{u}}-1| \geq \epsilon |X \geq x_{0}) \leq e^{-C(\epsilon)x_{u}}. 
\end{equation*} 
An unconditional bound then follows:
\begin{equation*}
    \Pi_{u, \boldsymbol{k}}(|\frac{X}{x_{u}}-1| \geq \epsilon) \leq e^{-C(\epsilon)x_{u}} \text{ for large}\; u. 
\end{equation*}
After some algebra, this gives, for sufficiently large $u$,
\begin{equation*}
    \mathbb{P}(V \asymp u^{1-\frac{1}{d}}|U=u, \boldsymbol{K} = \boldsymbol{k}) \geq \frac{1}{2}. 
\end{equation*} 
Letting $\mathcal{A}_{u} := \{ c_{1}u^{1-\frac{1}{d}} \leq V \leq c_{2} u^{1-\frac{1}{d}} \}$, for some constants $c_{1},c_{2}>0$ it then follows that,  
\begin{equation*}
    \psi(u) = \mathbb{E}[\frac{1}{1+V}|U=u] \geq \mathbb{E}[\frac{1}{1+V}1_{\mathcal{A}_{u}}|U=u] \geq \frac{1}{2(1+c_{2}u^{1-\frac{1}{d}})},
\end{equation*} 
for sufficiently large $u$. From this it follows that $u\psi(u) \geq (4c_{2})^{-1} u^{\frac{1}{d}} \rightarrow \infty$ as $u \rightarrow \infty$.  Therefore, we can choose $u_{0}$ such that $u\psi(u) \geq 4p$ for all $u \geq u_{0}.$ Combined with the inequality $B(u) \geq \psi(u) (u\psi(u)-2p)$, this yields a uniform lower bound $B(u) \geq (p/2c_{2})u^{\frac{1}{d}-1}$ on $[u_{0}, \infty)$. We therefore restrict the expectation to the event $A_{\boldsymbol{\theta}} = \{ u_{0} \leq U \leq b_{\boldsymbol{\theta}} \}$, where $B(u)$ is controlled. Taking $b_{\theta} = 2.25||\boldsymbol{\theta}||^{2}$ and using the reverse triangle inequality shows that $A_{\boldsymbol{\theta}}$ contains $E_{\boldsymbol{\theta}} = \{ ||\boldsymbol{Z}|| \leq ||\boldsymbol{\theta}|| /2 \}$ whenever $||\boldsymbol{\theta}||$ is sufficiently large with $\boldsymbol{Z}\sim N_{p}(\boldsymbol{0}_{p}, I_{p})$. Therefore, $\mathbb{P}_{\boldsymbol{\theta}}(A_{\boldsymbol{\theta}})$ is bounded below by a positive constant. Using Markov's inequality for $S = ||\boldsymbol{Z}||^{2} \sim \chi_{p}^{2}$ gives a lower bound for $\mathbb{P}(E_{\boldsymbol{\theta}})$ for sufficiently large $||\boldsymbol{\theta}||. $ This implies that $\mathbb{E}_{\boldsymbol{\theta}}[B(U)] >0$ for large $||\boldsymbol{\theta}||$ and hence $R(\boldsymbol{\theta}, \boldsymbol{\delta}) > p$ eventually, which shows the non-minimaxity. 

Theorem 2.6 highlights that, despite the attractive radial form and monotone shrinkage profile, minimaxity is dependent on how $\psi(u)$ behaves uniformly over the full range of $||\boldsymbol{y}||^{2}$. In particular, for large $||\boldsymbol{\theta}||^{2}, ||\boldsymbol{Y}||^{2}$ is typically of the same order as $||\boldsymbol{\theta}||^{2}$, and the growth rate $u\psi(u) \gtrsim u^{\frac{1}{d}}$ indicates that increasing depth does not eliminate non-minimaxity but pushes its appearance to progressively larger signal norms. While this result is specific to the normal location model, it offers a mathematical heuristic for why deeper neural networks may appear to perform better in typical regimes. It is worth noting that, unlike much of the Bayesian deep learning literature, we do not impose any growth conditions on depth as a function of sample size. Rather, our results hold for any fixed, finite depth of the kind used in practice. 
 
It is important to note that width plays a different role from depth in our results. For fixed depth, increasing width changes the polynomial prefactors in the posterior concentration argument, and hence modifies the constants appearing in the lower bound for $\psi(u)$, but it does not change the governing power law $u\psi(u) \gtrsim u^{\frac{1}{d}}$. In this sense, greater width does not alter the asymptotic mechanism behind non-minimaxity: it can shift the threshold at which the excess risk becomes visible, but it does not remove the eventual risk inflation for sufficiently large signal norms. Thus, whereas depth affects the exponent controlling how slowly the shrinkage vanishes, width only affects lower order terms. This suggests that wider networks may improve behavior in moderate signal regimes by changing finite sample constants, but they do not change the fundamental large signal asymptotics established here. As with depth, this conclusion holds for arbitrary fixed, finite widths of the the kind used in practice. 
 
The proof shows that the mariginal density is too light tailed, which yields a shrinkage function that makes the excess risk integrand positive on a non-negligible set where $||\boldsymbol{Y}||^{2}$ concentrates when $||\boldsymbol{\theta}||$ is large. That is, shrinkage does not diminish sufficiently rapidly for large signals, causing excess shrinkage that drives risk above the minimax level, despite benign behavior for small $||\boldsymbol{\theta}||$ as illustrated in Figures 1-3 of Section 5. This motivates the scale hyperprior introduced in next section, which enriches tail behavior through mixing, preventing the excess-risk integrand from accumulating over regions of non-negligible mass.
 
\section{Example of a scale hyperprior that induces a minimax Bayes decision rule}

From Section 2, we know that the marginal density induced by a fixed scale Bayesian Neural Network (BNN) prior is not superharmonic and that the induced decision rule is not minimax. Intuitively, this follows from the light tails of the BNN prior density. Therefore, we place a hyperprior on the scales to induce sufficiently heavy tails, which can yield a minimax decision rule: when $||\boldsymbol{y}||$ is large, the decision rule is pushed toward $\boldsymbol{y}$. We choose a BetaPrime($1, \frac{p}{2}-1$) prior, since it has heavy tails and allows large signals to be explained by increased variance rather than being forcibly shrunk. This yields the following theorem. 

\begin{theorem}
    The Bayes decision rule induced by a ReLU, depth d Bayesian neural network prior with output dimension $p\geq 5$ is minimax when it's effective output variance is given the hyperprior $W \sim \mathrm{Beta Prime}(1,\frac{p}{2}-2)$ where $W:= 2^{d-1} ||\boldsymbol{x}||^{2} \prod\limits_{\ell = 1}^{d} \sigma_{\ell}^{2} \prod\limits_{\ell=1}^{d-1}T_{\ell} , T_{\ell} \sim \Gamma(\frac{k_{\ell}}{2},1), k_{\ell} \in \{ 1, \dots, n_{\ell} \}, \ell \in \{ 1, \dots,d-1 \}$. 
\end{theorem}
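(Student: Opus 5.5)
Under the hyperprior, $\boldsymbol{\theta}\mid W=w\sim N_p(\boldsymbol{0}_p,wI_p)$ with $W\sim\mathrm{BetaPrime}(1,\tfrac{p}{2}-2)$. This is the same normal scale-mixture structure as in Lemma 1.1, except that the mixing density $g$ is replaced by the BetaPrime density
\begin{equation*}
\pi(w)\propto (1+w)^{-(1+\frac{p}{2}-2)}=(1+w)^{1-\frac{p}{2}}.
\end{equation*}
For $p\geq 5$ we have $\tfrac{p}{2}-2>0$, so this density is proper. The plan is to verify the sufficient condition recalled in Section 1.3: $\Delta\sqrt{m(\boldsymbol{y})}\leq 0$ together with $\mathbb{E}_{\boldsymbol{\theta}}\|\nabla m/m\|^2<\infty$. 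Minimaxity then follows from the SURE representation.

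First, compute the marginal. Integrating out $\boldsymbol{\theta}$ gives
\begin{equation*}
m(\boldsymbol{y})=\int_0^\infty (2\pi(1+w))^{-p/2}\exp\Bigl\{-\frac{\|\boldsymbol{y}\|^2}{2(1+w)}\Bigr\}\pi(w)\,dw.
\end{equation*}
Substitute $\lambda=1/(1+w)\in(0,1)$, so that $dw=\lambda^{-2}d\lambda$ and $\pi(w)\propto\lambda^{p/2-1}$. Then
\begin{equation*}
m(\boldsymbol{y})\propto\int_0^1 \lambda^{p/2}\,\lambda^{p/2-1}\,\lambda^{-2}\,e^{-\lambda\|\boldsymbol{y}\|^2/2}\,d\lambda=\int_0^1\lambda^{p-3}e^{-\lambda r^2/2}\,d\lambda,
\end{equation*}
where $r=\|\boldsymbol{y}\|$. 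This is a Strawderman-type mixture, $\int_0^1\lambda^{a}e^{-\lambda r^2/2}d\lambda$ with mixing exponent $a=p-3$. For large $r$ it behaves like $r^{-2(p-2)}$, so $\sqrt{m}\sim r^{-(p-2)}=r^{2-p}$. This is exactly the borderline rate identified in Lemma 2.2, which suggests the hyperparameter was chosen to make $\sqrt{m}$ harmonic at infinity.

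Second, show that $q=\sqrt{m}$ is superharmonic using Lemma 2.1. Write $m=M(t)$ with $t=r^2/2$. Then
\begin{equation*}
\Delta\sqrt{m}=\frac{1}{2\sqrt{m}}\Bigl(\Delta m-\frac{\|\nabla m\|^2}{2m}\Bigr),
\end{equation*}
with $\Delta m=2tM''+pM'$ and $\|\nabla m\|^2=2tM'^2$. So the condition becomes
\begin{equation*}
2tM''+pM'-\frac{tM'^2}{M}\leq 0 .
\end{equation*}
Define $I_j=\int_0^1\lambda^{p-3+j}e^{-\lambda t}d\lambda$, so that $M=I_0$, $M'=-I_1$ and $M''=I_2$. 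The condition then reads
\begin{equation*}
2tI_2I_0-pI_1I_0-tI_1^2\leq 0.
\end{equation*}
Integration by parts gives $tI_{j+1}=(p-2+j)I_j-e^{-t}$. Using this with $j=1$ to eliminate $tI_2$, the expression becomes
\begin{equation*}
I_0\bigl((p-2)I_1-2e^{-t}\bigr)-tI_1^2 .
\end{equation*}
Applying the same identity with $j=0$ for $tI_1$ reduces it further to
\begin{equation*}
I_1e^{-t}-2e^{-t}I_0=e^{-t}(I_1-2I_0)\leq 0,
\end{equation*}
since $I_1\leq I_0$ because $\lambda\leq 1$. This algebraic reduction is the step I expect to be the main obstacle: the exponents must be tracked exactly, and the sign only closes because the BetaPrime second parameter is $\tfrac{p}{2}-2$, which makes the leading terms cancel. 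I would redo the computation carefully, possibly via Fourdrinier et al.'s general criterion for mixtures $\int\lambda^{a}e^{-\lambda t}h(\lambda)\,d\lambda$, as a cross-check.

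Third, verify the finiteness condition. Since $\nabla m/m=-\boldsymbol{y}\,I_1/I_0$, we have $\|\nabla m/m\|\leq\|\boldsymbol{y}\|$, and indeed $\|\nabla m/m\|=O(1/\|\boldsymbol{y}\|)$ for large $\|\boldsymbol{y}\|$. Hence $\mathbb{E}_{\boldsymbol{\theta}}\|\nabla m/m\|^2\leq p+\|\boldsymbol{\theta}\|^2<\infty$. The required smoothness of $m$ follows by differentiating under the integral sign, justified by dominated convergence. Combining the three steps gives minimaxity via the sufficient condition, with risk at most $p$ uniformly in $\boldsymbol{\theta}$. Because the prior on $W$ is proper, the estimator is also proper Bayes.
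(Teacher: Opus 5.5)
Your proposal is correct. Its architecture matches the paper's:
\begin{itemize}
\item the same reduction of the marginal to $m\propto\int_0^1 u^{p-3}e^{-u\|\boldsymbol{y}\|^2/2}\,du$;
\item the same equivalent form $I_0(2tI_2-pI_1)\le tI_1^2$ of $\Delta\sqrt{m}\le 0$;
\item the same bound $\|\nabla m/m\|\le\|\boldsymbol{y}\|$ for the moment condition;
\item the same appeal to Stein's sufficient condition (Theorem 3.1 of \cite{fourdrinier2018shrinkage}).
\end{itemize}

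The difference is in how you close the key inequality. The paper rewrites the $I_j$ as lower incomplete gamma functions with $a=p-2$. It uses $\gamma(a+1,\lambda)=a\gamma(a,\lambda)-\lambda^a e^{-\lambda}$ to reduce the condition to $F(\lambda)=\lambda^a e^{-\lambda}+(2\lambda-a)\gamma(a,\lambda)\ge 0$. It proves this via $F'>0$ and $F(0^{+})=0$, and then needs a separate continuity argument (using $m\in C^2$) to extend $\Delta\sqrt{m}\le 0$ to $\boldsymbol{y}=\boldsymbol{0}_p$.

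Your identity $tI_{j+1}=(p-2+j)I_j-e^{-t}$ is the same recurrence, but you push the elimination one step further, to $e^{-t}(I_1-2I_0)$. In the paper's notation ($\lambda=\|\boldsymbol{y}\|^2/2$, integration variable $u$), one has exactly $F=\lambda^{a+1}(2I_0-I_1)$. Its positivity therefore follows at once from $0<u\le 1$, with no monotonicity argument. Your route is shorter and yields strict superharmonicity. It also covers the origin directly, since both the identity and $\Delta m=2tM''+pM'$ hold at $t=0$.

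The computation also clarifies the role of the hyperparameter. For $W\sim\mathrm{BetaPrime}(1,b)$ the same elimination leaves an extra term $(b-\tfrac{p}{2}+2)I_0I_1$. So $\sqrt{m}$ is superharmonic for every $0<b\le\tfrac{p}{2}-2$. Your remark that the sign ``only closes'' at $b=\tfrac{p}{2}-2$ overstates things: that value is simply the lightest-tailed choice, the one where the extra term vanishes.

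One step you take for granted is that $\boldsymbol{\theta}\mid W\sim N_p(\boldsymbol{0}_p,WI_p)$ with $W$ BetaPrime is actually realized by the network. The paper spells this out by placing the hyperprior on the deterministic scale factor, setting $U=W/S$ conditionally on $(S,\boldsymbol{k})$. After that, the mixture over subnetwork indices $\boldsymbol{k}$ collapses to a constant multiple $N$ of a single scale mixture, which leaves the Bayes rule unchanged.
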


A full proof of Theorem 3.1 is provided in the Supplementary Material. The proof starts from the exact prior density of the output of a deep ReLU BNN, which represents the density of the network output as a mixture of linear subnetworks indexed by $k = \{ k_{1}, \dots, k_{d-1}\}$ and latent Gamma distributed random variables $T_{\ell}$, yielding a Gaussian scale mixture. In particular, conditional on $S = \prod\limits_{\ell = 1}^{d-1} T_{\ell}$ and the deterministic scale factor $U = 2^{d-1}||\boldsymbol{x}||^{2} \prod\limits_{\ell = 1}^{d}\sigma_{\ell}^{2},$ the effective output variance is $US$, and
\begin{equation*}
    \boldsymbol{\theta} |(U,S,k) \sim N_{p}(\boldsymbol{0}_{p}, (US)I_{p}).
\end{equation*}
The key step is to choose a hyperprior such that $W := US$ has a Beta-prime distribution. Implementing this via $U = W/S$ makes the induced marginal prior 
\begin{equation*}
    \pi(\boldsymbol{\theta}) = \int\limits_{0}^{\infty} \phi_{p}(\boldsymbol{\theta}; \boldsymbol{0}_{p}, wI_{p})h(w) dw
\end{equation*}
which is independent of the subnetwork index $k$, so the combinatorial mixture weights sum to a finite constant $N$ and conveniently cancels.
With this prior, the marginal density \begin{equation*}
    m(\boldsymbol{y}) = N\int \phi_{p}(\boldsymbol{y} - \boldsymbol{\theta}; \boldsymbol{0}_{p}, I_{p}) \pi(\boldsymbol{\theta}) d\boldsymbol{\theta}
\end{equation*}
reduces, by Gaussian convolution and a one dimensional change of variables, to a radial integral. Writing $r= ||\boldsymbol{y}||$,
\begin{equation*}
    m(r) = C\int\limits_{0}^{1} u^{p-3}e^{-\frac{r^{2}}{2}u}du,C>0.
\end{equation*}
Differentiating $m(r)$ and computing $\Delta m$ shows that the minimax condition $\Delta \sqrt{m} \leq 0$ is equivalent to an inequality among these radial integrals, which can be rewritten in terms of incomplete gamma functions. This then reduces to checking positivity of an auxiliary function $F(\lambda)$. Verifying monotonicity and showing $\lim\limits_{\lambda \downarrow0} F(\lambda) = 0$ implies that $F(\lambda) >0$ for all $\lambda >0$. Hence $\Delta \sqrt{m} \leq 0$ for $p \geq 5$. Using a standard dominated convergence theorem argument to justify differentiation under the integral sign, we show that
\begin{equation*}
    \mathbb{E}_{\boldsymbol{\theta}} [|| \frac{\nabla m(\boldsymbol{Y})}{m(\boldsymbol{Y})}||^{2}]< \infty.
\end{equation*}
Therefore, the induced Bayes estimator is minimax. In particular, the Bayes rule can be written as
\begin{equation*}
    \delta_{\text{BNN, hyper}}(\boldsymbol{Y}) = \mathbb{E}[\frac{W}{1+W}|\boldsymbol{Y}]\boldsymbol{Y}  = (1-\mathbb{E}[(1+W)^{-1}|\boldsymbol{Y}])\boldsymbol{Y}. 
\end{equation*} 

Once we condition on $S$, the ReLU BNN output induces a Gaussian normal means prior with variance multiplier $W$, which can be interpreted as the effective output variance. Thus, the network architecture yields a particular scale mixture representation. Therefore, the choice of the distribution of $W$ is crucial for ensuring appropriate tail behavior. The chosen BetaPrime prior calibrates tail thickness and is heavy tailed enough that shrinkage decreases at the correct rate for large signals (unlike the fixed scale BNN prior) while still providing meaningful shrinkage near the origin. This shows that minimaxity is achieved by focusing on $W$ rather than fine tuning individual layer scales. Depth and width affect $W$ only through the factor $2^{d-1}||\boldsymbol{x}||^{2}\prod\limits_{\ell=1}^{d}\sigma_{\ell}^{2}$ and the latent product $\prod\limits_{\ell = 1}^{d-1}T_{\ell}$. The BetaPrime hyperprior effectively absorbs this complexity. It is also worth highlighting that this is an exact finite-width mixture representation; thus, the minimax guarantee is not asymptotic and is compatible with finite neural networks. 

It is worth noting that, although heavy tailed distributions are theoretically motivated in this paper, they have also been observed to arise practically in Bayesian deep learning. In particular, it is known that, during the training of feedforward neural networks using stochastic gradient descent, the weights become increasingly heavy tailed. Therefore, increasing the depth and, more importantly, introducing an appropriate variance hyperprior may help mitigate the effects of a likely misspecified prior, such as the standard Gaussian prior on the weights, as discussed in Section 1.4. For further information on this behavior and its relation to the cold posterior effect, see \cite{fortuinbayesian}. 

Given the form of the mixing density, we can also deduce admissibility for the induced Bayes decision rule.
\begin{corollary}
   The proper Bayes estimate 
   \begin{equation*}
       \delta(\boldsymbol{Y}) = \mathbb{E}[\frac{W}{1+W}|\boldsymbol{Y}]\; \boldsymbol{Y}, W \sim BetaPrime(1,\frac{p}{2}-2) \text{ is admissible. }
   \end{equation*}
\end{corollary}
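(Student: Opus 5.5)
The plan is to use the standard fact that a Bayes rule for a proper prior with finite Bayes risk is admissible under quadratic loss. Here the relevant prior is the one from Theorem 3.1: the hierarchical BNN prior in which the effective output variance $W$ has the hyperprior $\mathrm{BetaPrime}(1,\frac{p}{2}-2)$. As noted in the proof sketch of Theorem 3.1, this prior reduces (after the combinatorial weights cancel) to the Gaussian scale mixture
\begin{equation*}
\pi(\boldsymbol{\theta})=\int_0^\infty \phi_p(\boldsymbol{\theta};\boldsymbol{0}_p,wI_p)h(w)\,dw ,
\end{equation*}
where $h$ is the BetaPrime density.

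First I will check properness. For $p\geq 5$ the second parameter $\frac{p}{2}-2$ is positive, so the BetaPrime law is a genuine probability distribution. Each $\phi_p(\cdot;\boldsymbol{0}_p,wI_p)$ integrates to one, so Tonelli's theorem gives $\int\pi=1$. Thus $\pi$ is a proper prior, and $\delta(\boldsymbol{Y})=\mathbb{E}[\frac{W}{1+W}\mid\boldsymbol{Y}]\boldsymbol{Y}$ is its posterior mean by Theorem 2.5 applied with $W$ in place of $V$. This is therefore the unique (a.e.) Bayes rule under quadratic loss.

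Second, I will show the Bayes risk is finite. There are two routes. The quick one uses Theorem 3.1: $\delta$ is minimax with risk bounded by $p$ everywhere, so the Bayes risk is $r(\pi,\delta)=\int R(\boldsymbol{\theta},\delta)\pi(d\boldsymbol{\theta})\le p<\infty$. Alternatively, the Bayes risk of the posterior mean is at most that of $\boldsymbol{Y}$, which equals $p$. Note that this second route does not require $\mathbb{E}_\pi\|\boldsymbol{\theta}\|^2<\infty$, which would fail since $\mathbb{E}W=\infty$ for this BetaPrime.

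Third, I will run the uniqueness argument. Suppose $\delta'$ satisfies $R(\boldsymbol{\theta},\delta')\le R(\boldsymbol{\theta},\delta)$ for all $\boldsymbol{\theta}$, with strict inequality somewhere. Integrating against $\pi$ gives $r(\pi,\delta')\le r(\pi,\delta)<\infty$, so $\delta'$ is also Bayes. By strict convexity of quadratic loss, the Bayes rule is unique $m$-a.e., and $m>0$ everywhere, so $\delta'=\delta$ Lebesgue-a.e. Since the normal family is dominated by Lebesgue measure, the two rules then have identical risk functions, which is a contradiction.

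The main obstacle is mild: making sure strictness of improvement on a single point suffices. If instead one starts from an improvement on a set of positive $\pi$-measure, the Bayes risk would strictly drop and the contradiction is immediate. The a.e.-uniqueness argument above handles the general case, relying on the fact that risk functions are continuous in $\boldsymbol{\theta}$ for normal models with finite risk. Should an alternative argument be desired, Brown's (1971) criterion — with the mixing density $h$ giving $m$ tails of order $r^{-(p-4)}$... — could be cited, but the proper-Bayes route is simpler.
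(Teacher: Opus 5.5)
Your proof is correct, but it takes a different route from the paper's. The paper writes the prior as $N(2\pi)^{-p/2}\int_0^\infty w^{-p/2}e^{-\|\boldsymbol{\theta}\|^2/(2w)}h(w)\,dw$ and notes that $h(w)\sim b\,w^{-(b+1)}$ with $b=\frac{p}{2}-2$. It then cites a tail-based admissibility criterion for Gaussian scale-mixture priors (Theorem 3.15 of Fourdrinier, Strawderman and Wells), checking that the exponent $a=1-\frac{p}{2}$ satisfies $a<-1$ exactly when $p>4$.

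You instead use the classical argument that a unique Bayes rule for a proper prior with finite Bayes risk is admissible. The ingredients are properness of $\mathrm{BetaPrime}(1,\frac{p}{2}-2)$ for $p\geq 5$, the bound $r(\pi,\delta)\le r(\pi,\boldsymbol{Y})=p$, and strict convexity of the loss. The last step forces any dominating rule, which must then also be Bayes, to coincide with $\delta$ Lebesgue-a.e. (since $m>0$).

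Your argument is self-contained and uses nothing about $h$ beyond integrability. In fact, the paper's condition $a<-1$ is itself just integrability of $h$ at infinity, so in this proper case the cited theorem is more machinery than needed. What a tail criterion of that type buys is applicability to improper (generalized Bayes) mixing densities, where your argument is unavailable.

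Three asides in your write-up are inaccurate, though none is used in the proof:
\begin{itemize}
\item $\mathbb{E}W=2/(p-6)<\infty$ for $p\geq 7$, so $\mathbb{E}_\pi\|\boldsymbol{\theta}\|^2=\infty$ only when $p\in\{5,6\}$.
\item Continuity of risk functions plays no role. Once $\delta'=\delta$ Lebesgue-a.e., $R(\boldsymbol{\theta},\delta')=R(\boldsymbol{\theta},\delta)$ for every $\boldsymbol{\theta}$, because each $N_p(\boldsymbol{\theta},I_p)$ is absolutely continuous.
\item From $m(r)=C\int_0^1u^{p-3}e^{-r^2u/2}\,du$ one gets $m(r)\asymp r^{-2(p-2)}$, not $r^{-(p-4)}$.
\end{itemize}
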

The complete proof is provided in the Supplementary Material, with the key component recognizing $h(w) \sim bw^{-(b+1)}$ and using Theorem 3.15 in \cite{fourdrinier2018shrinkage}.

This corollary shows that the proposed construction is not only minimax but also admissible, and therefore cannot be uniformly improved upon within the normal location problem. In particular, the result is constructive: the BetaPrime mixing distribution yields an explicit Bayes shrinkage rule with a transparent posterior shrinkage factor $\mathbb{E}[W/(1+W)|\boldsymbol{Y}]$. Thus, the corollary strengthens the main result of this section by showing that an appropriate hyperprior does not merely recover minimaxity, but in fact produces a fully decision-theoretically justified procedure. 

The Strawderman prior \cite{strawderman1971proper} places a Beta hyperprior on a shrinkage factor in the normal means problem, inducing a proper Bayes estimator that is both minimax and admissible. The key mechanism is that the induced marginal density has sufficiently heavy tails — specifically, its square root is superharmonic — which is precisely the condition that guarantees minimaxity. In the BNN setting, a Beta-Prime hyperprior on the output variance likewise yields a superharmonic square root marginal, and hence a minimax admissible Bayes rule. The Beta-Prime construction is thus the natural analog of the Strawderman prior, adapted to the parameterization arising from the network architecture.

\section{Minimaxity and admissibility in the predictive density problem }
In this section we consider the problem of estimating a predictive density discussed in \cite{brown2008admissible} and \cite{george2006improved}.  Particularly, we can extend our admissible Bayes decision rule for the Normal location model under quadratic loss, to the predictive density estimation setting. 
\subsection{Decision problem}
From \cite{george2006improved} let $\boldsymbol{X}|\boldsymbol{\theta} \sim N_{p}(\boldsymbol{\theta}, v_{x}I_{p})$ and $\boldsymbol{Y}|
\boldsymbol{\theta} \sim N_{p}(\boldsymbol{\theta}, v_{y}I_{p})$ be independent with common unknown mean $\boldsymbol{\theta}$ and known $v_{x}$ and $v_{y}$. Based on only observing $\boldsymbol{X} = \boldsymbol{x}$ we wish to estimate the density $p(\boldsymbol{y}|\boldsymbol{\theta})$. We measure the proximity of a density estimate $\hat{p}(\boldsymbol{y}|\boldsymbol{x})$ by the Kullback-Leibler loss \begin{equation*}
    L(\boldsymbol{\theta}, \hat{p}(\boldsymbol{y}|\boldsymbol{x})) = \int p(\boldsymbol{y}|\boldsymbol{\theta}) \log(\frac{p(\boldsymbol{y}|\boldsymbol{\theta})}{\hat{p}(\boldsymbol{y}|\boldsymbol{x})})d\boldsymbol{y}
\end{equation*} and evaluate $\hat{p}$ by its associated risk function
\begin{equation*}
    R_{KL}(\boldsymbol{\theta}, \hat{p}) = \int p(\boldsymbol{x}|\boldsymbol{\theta}) L(\boldsymbol{\theta}, \hat{p}(\boldsymbol{y}|\boldsymbol{x}))d\boldsymbol{x}. 
\end{equation*}
For a given prior distribution $\pi(\boldsymbol{\theta})$, according to Lemma 2 of \cite{george2006improved}, the Bayes predictive density is given by \begin{align*}
    \hat{p}(\boldsymbol{y}|\boldsymbol{x}) &= \frac{m_{\pi}(\boldsymbol{w};v_{w})}{m_{\pi}(\boldsymbol{x};v_{x})}\hat{p}_{U}(\boldsymbol{y}|\boldsymbol{x}), \boldsymbol{W} = \frac{v_{y}\boldsymbol{X} + v_{x}\boldsymbol{Y}}{v_{x} + v_{y}}, v_{w} = \frac{v_{x}v_{y}}{v_{x}+v_{y}} \\
    \hat{p}_{U}(\boldsymbol{y}|\boldsymbol{x}) &=  (2\pi(v_{x}+v_{y}))^{-\frac{p}{2}}\exp\{ - \frac{||\boldsymbol{y}-\boldsymbol{x}||^{2}}{2(v_{x}+v_{y})} \}, \text{ and } m_{\pi}(\boldsymbol{x}) = \int p(\boldsymbol{x}|\boldsymbol{\theta})\pi(\boldsymbol{\theta})d\boldsymbol{\theta}. 
\end{align*}
\subsection{Minimaxity and admissibility}
In this section we show that the Bayes predictive density induced by the prior distribution resulting from a deep ReLU BNN with a BetaPrime hyperprior on the effective output variance is minimax and admissible. 
\begin{theorem}
    The Bayes decision rule $\hat{p}_{\pi}(\boldsymbol{y}|\boldsymbol{x})$ with $\pi(\boldsymbol{\theta})$ given in Section 3 is minimax. 
\end{theorem}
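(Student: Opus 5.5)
The plan is to reduce the predictive density problem to the quadratic-loss problem already handled in Theorem 3.1, using the criterion of George, Liang and Xu \cite{george2006improved}. Their Theorem 2 and its corollary state the following. If $m_{\pi}(\boldsymbol{z}; v)$ is superharmonic, or more generally if $\sqrt{m_{\pi}(\boldsymbol{z};v)}$ is superharmonic, for every $v$ in the interval $[v_{w}, v_{x}]$, then the Bayes predictive density $\hat{p}_{\pi}$ dominates $\hat{p}_{U}$ (or matches it when equality holds). Since $\hat{p}_{U}$ has constant risk and is minimax under KL loss, $\hat{p}_{\pi}$ is then minimax. The underlying mechanism is the identity
\begin{equation*}
R_{KL}(\boldsymbol{\theta},\hat{p}_{U}) - R_{KL}(\boldsymbol{\theta},\hat{p}_{\pi}) = \frac{1}{2}\int_{v_{w}}^{v_{x}} \frac{1}{v^{2}}\left[R_{Q}^{v}(\boldsymbol{\theta},\boldsymbol{Z}) - R_{Q}^{v}(\boldsymbol{\theta},\hat{\boldsymbol{\theta}}_{\pi,v})\right] dv .
\end{equation*}
Here $R_{Q}^{v}$ is the quadratic risk in the model $\boldsymbol{Z}\sim N_{p}(\boldsymbol{\theta}, vI_{p})$. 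The task therefore reduces to showing that, for every noise level $v>0$, the quadratic-loss Bayes rule under the hierarchical BNN prior of Section 3 is minimax. This holds if $\Delta\sqrt{m_{\pi}(\boldsymbol{z};v)}\le 0$ and the finiteness condition $\mathbb{E}_{\boldsymbol{\theta}}\|\nabla m_{\pi}/m_{\pi}\|^{2}<\infty$ is satisfied at that noise level.

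The main step is to extend the superharmonicity computation of Theorem 3.1 from $v=1$ to general $v$. I would do this by writing the prior as $\pi(\boldsymbol{\theta}) = \int_0^\infty \phi_p(\boldsymbol{\theta};\boldsymbol{0},wI_p)h(w)\,dw$ with $h$ the BetaPrime$(1,\tfrac{p}{2}-2)$ density, which gives
\begin{equation*}
m_{\pi}(\boldsymbol{z};v) = \int_{0}^{\infty}\phi_{p}(\boldsymbol{z};\boldsymbol{0},(v+w)I_{p})h(w)\,dw .
\end{equation*}
The substitution $u = v/(v+w)$ used in Section 3 then yields
\begin{equation*}
m_{\pi}(r;v) \propto \int_{0}^{1} u^{p/2-1}e^{-\frac{r^{2}u}{2v}}\, h\!\left(\tfrac{v(1-u)}{u}\right) \frac{v}{u^{2}}\,du .
\end{equation*}
With $h(w)\propto (1+w)^{-(p/2-1)}$, one gets $h(v(1-u)/u) \propto u^{p/2-1}(u+v(1-u))^{-(p/2-1)}$. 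When $v=1$ this collapses to $u^{p-3}$, the form used in Theorem 3.1, but for $v\neq 1$ an extra factor $(u+v(1-u))^{-(p/2-1)}$ remains. I expect this to be the main obstacle: the clean incomplete-gamma argument (positivity of the auxiliary $F(\lambda)$) may not transfer directly.

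I would handle this obstacle in one of two ways. The first is to use the general sufficient condition for Gaussian scale mixtures (Fourdrinier et al.\ \cite{fourdrinier2018shrinkage}, or Strawderman-type conditions). For a mixing density in the shrinkage variable $u$ of the form $u^{p/2-1}g(u)$ with $g$ such that $u g'(u)/g(u)$ is bounded below appropriately, $\sqrt{m}$ is superharmonic. Here $g(u) = u^{p/2-3}(u+v(1-u))^{-(p/2-1)}$, and I would check that $u\,g'(u)/g(u)$ satisfies the required bound, say $\ge -(p/2-2)$ or the analogous threshold, for every $v\in[v_{w},v_{x}]$. The second, simpler route is to note that rescaling $\boldsymbol{z}\mapsto \boldsymbol{z}/\sqrt{v}$ shows that minimaxity at noise level $v$ for prior $\pi$ is equivalent to minimaxity at noise level $1$ for the prior of $\boldsymbol{\theta}/\sqrt{v}$. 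That prior is a scale mixture with mixing variable $W/v$. Verifying the Theorem 3.1 inequality for this family of rescaled BetaPrime densities then reduces to the same kind of one-variable monotonicity argument, with $v$ as a parameter. The finiteness of $\mathbb{E}_{\boldsymbol{\theta}}\|\nabla m/m\|^2$ follows exactly as before by dominated convergence, because $\|\nabla m/m\| \le \|\boldsymbol{z}\|/v$.

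Having established the quadratic-risk dominance for every $v$ in the interval, I would integrate it through the identity above to get $R_{KL}(\boldsymbol{\theta},\hat{p}_{\pi})\le R_{KL}(\boldsymbol{\theta},\hat{p}_{U})$ for all $\boldsymbol{\theta}$. The risk of $\hat{p}_{U}$ is the constant minimax value $\frac{p}{2}\log(1+v_{y}/v_{x})$, so $\hat{p}_{\pi}$ is minimax.
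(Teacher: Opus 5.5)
Your reduction is sound: the Brown--George--Xu identity, the bound $\|\nabla m/m\|\le\|\boldsymbol{z}\|/v$, and the constant risk of $\hat{p}_{U}$ are all fine. (That risk is $\frac{p}{2}\log(1+v_{x}/v_{y})$, not $\frac{p}{2}\log(1+v_{y}/v_{x})$.) You were also right that Theorem 3.1 covers only $v=1$.

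\textbf{The gap.} The step you defer cannot be filled: $\Delta\sqrt{m_{\pi}(\cdot;v)}\le 0$ is false for every $v>1$. Write $m_{\pi}(\boldsymbol{z};v)\propto M(\mu)=\int_{0}^{1}u^{p-3}\ell_{v}(u)e^{-\mu u}\,du$, with $\mu=\|\boldsymbol{z}\|^{2}/(2v)$ and $\ell_{v}(u)=(v+(1-v)u)^{-(p/2-1)}$. The power is $u^{p-3}$; your displayed integrand is missing one factor of $u$. Since $\ell_{v}(1)=1$, two integrations by parts show that $2m\Delta m\le\|\nabla m\|^{2}$ is equivalent to
\[
2\,\mathbb{E}[u\beta]-\mathbb{E}[u]\,\mathbb{E}[\beta]\le\epsilon\,(2-\mathbb{E}[u]),\qquad \beta(u)=\frac{u\,\ell_{v}'(u)}{\ell_{v}(u)},\qquad \epsilon=\frac{e^{-\mu}}{M(\mu)},
\]
with expectations under the density proportional to $u^{p-3}\ell_{v}(u)e^{-\mu u}$ on $(0,1)$.

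At $v=1$, $\beta\equiv0$, and the only slack is the right-hand side, which is exponentially small in $\mu$. The Section 3 prior is exactly the boundary member $u^{p/2-3}$ of Strawderman's family. Its shrinkage profile $r(t)=t\,\mathbb{E}[u]$, with $t=\|\boldsymbol{z}\|^{2}/v$, stays below the James--Stein bound $2(p-2)$ only by an exponentially small margin.

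For $v>1$, $\beta\ge0$ is increasing, with $\beta(u)=\rho u+O(u^{2})$ and $\rho=(\frac{p}{2}-1)\frac{v-1}{v}>0$. The left side is then $\sim\rho p(p-2)/\mu^{2}$, so the inequality fails for large $\|\boldsymbol{z}\|$. Equivalently, $r(t)=2(p-2)+4(p-2)\rho/t+O(t^{-2})$ overshoots the James--Stein bound, and SURE gives an excess-risk integrand $\approx 8p(p-2)\rho/t^{2}>0$. So at every noise level $v>1$ the quadratic Bayes rule is not even minimax.

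Both of your routes break on this:
\begin{itemize}
\item Your first route fails because the Strawderman/Fourdrinier--Strawderman--Wells condition is an upper bound on $u\psi'/\psi$ for $\psi(u)=u^{p/2-3}\ell_{v}(u)$, not a lower bound. Here $u\psi'/\psi=\frac{p-6}{2}+\beta(u)$, which sits exactly on the threshold $\frac{p-6}{2}$ at $v=1$ and exceeds it for $v>1$.
\item Your second route is precisely the paper's argument. The paper rescales to $\pi^{(v)}$ and then cites Theorem 3.1, but that theorem concerns only $\pi$. The rescaled prior $\pi^{(v)}$, with mixing variable $W/v$, is a different prior, so your suspicion about the extra factor was justified.
\end{itemize}

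\textbf{What survives.} For $v\le1$, $\beta\le0$ is nonincreasing. By Chebyshev's association inequality the left side is then at most $\mathbb{E}[u]\,\mathbb{E}[\beta]\le0<\epsilon(2-\mathbb{E}[u])$. Thus $\sqrt{m_{\pi}(\cdot;v)}$ is superharmonic for all $v\le1$, and your argument proves the theorem when $v_{x}\le1$.

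When $[v_{w},v_{x}]\subset(1,\infty)$, for example $v_{x}=v_{y}=4$, feeding the above into the identity you quote gives $R_{KL}(\boldsymbol{\theta},\hat{p}_{\pi})>R_{KL}(\boldsymbol{\theta},\hat{p}_{U})$ for all large $\|\boldsymbol{\theta}\|$. So $\hat{p}_{\pi}$ is not minimax there, and the statement for arbitrary known $v_{x},v_{y}$ cannot be proved. It needs either a restriction such as $v_{x}\le1$, or a rescaled hyperprior such as $W/v_{x}\sim\mathrm{BetaPrime}(1,\frac{p}{2}-2)$, which reduces every $v\in[v_{w},v_{x}]$ to the case $v/v_{x}\le1$.
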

We provide a full proof in the Supplementary Material. First, for a fixed $v>0$ we show the Gaussian mixture marginal 
\begin{equation*}
    m_{\pi}(\boldsymbol{z};v) = \int \phi_{p}(\boldsymbol{z} - \boldsymbol{\theta}; 0 , vI_{p})\pi(d\boldsymbol{\theta})
\end{equation*}
can be reduced to the unit variance case by a scaling argument. In particular, we define the push forward measure $\pi^{(v)}(A) = \pi(\sqrt{v}A)$ and the rescaled variable $\boldsymbol{w} = \boldsymbol{z}/\sqrt{v}$. Then a change of variables gives \begin{equation*}
    m_{\pi}(\boldsymbol{z};v) = v^{-\frac{p}{2}} m_{\pi^{(v)}}(\boldsymbol{w};1). 
\end{equation*}
By taking square roots and differentiating, the Laplacians satisfy
\begin{equation*}
    \Delta_{z}\sqrt{m_{\pi}(\boldsymbol{z};v)} = v^{-\frac{p}{4}-1}\Delta_{w} \sqrt{m_{\pi^{(v)}}(\boldsymbol{w};1)}.
\end{equation*}
Since the multiplicative factor is positive, superharmonicity for general $v$ is equivalent to superharmonicity in the unit variance case. By Theorem 3.1, $\Delta_{w} \sqrt{m_{\pi^{(v)}}(\boldsymbol{w};1)} \leq 0, \forall \boldsymbol{w},$ hence $\Delta_{z}\sqrt{m_{\pi^{(v)}}(\boldsymbol{z};v)}\leq 0, \forall \boldsymbol{z}, \text{ and } \forall v>0$. Finally, by bounding the Gaussian density function by $(2\pi v)^{-\frac{p}{2}}$ shows $0 \leq m_{\pi}(\boldsymbol{z};v) \leq (2\pi v)^{-\frac{p}{2}} < \infty$, so the marginal is finite everywhere. Therefore by Theorem 1 (ii) of \cite{george2006improved}, the corresponding Bayes predictive density estimator $\hat{p}_{\pi}(\boldsymbol{y}|\boldsymbol{x})$ is minimax.

We have established that the prior resulting from a deep ReLU Bayesian neural network with a BetaPrime hyperprior induces a minimax decision rule in both estimating the mean of a normal location model under quadratic risk and estimating the predictive density in a normal location model setting under Kullback-Leibler risk. Our result shows that the minimax property is stable under variance rescaling. That is, the predictive improvement is not tied to a particular noise level and avoids us needing to re-check the superharmonicity condition for each predictive variance combination. In particular, in view of Lemma 2 in \cite{george2006improved}, the prior distribution induces a similar shrinkage behavior in both problems, by shrinking the default estimator. In the quadratic risk setting, this is the maximum likelihood estimator $\boldsymbol{Y}$, and in the Kullback-Leibler risk setting it is the Bayes predictive density under the uniform prior. We also prove the induced Bayes predictive density is admissible. 

\begin{corollary}
    The induced Bayes predictive density from the deep ReLU BNN with BetaPrime hyperprior on the effective output variance is admissible with respect to the set of all proper densities on $\mathbb{R}^{p}$ given by $A_{0} = \{ g: \mathbb{R}^{p} \rightarrow \mathbb{R} \text{ such that } g(\boldsymbol{y}) \geq 0 \text{ and } \int g(\boldsymbol{y}) d\boldsymbol{y} = 1 \}$. 
\end{corollary}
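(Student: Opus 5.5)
The plan is to deduce admissibility from the fact that the predictive density is a \emph{proper} Bayes rule with finite Bayes risk. Admissibility then follows from the standard uniqueness argument for Bayes rules under Kullback--Leibler loss. The prior of Section 3 is a probability measure: the BetaPrime$(1,\frac{p}{2}-2)$ mixing density $h(w)$ integrates to one, and, once the constant $N$ is cancelled, so does $\pi(\boldsymbol{\theta})=\int_0^\infty \phi_p(\boldsymbol{\theta};\boldsymbol{0}_p,wI_p)h(w)\,dw$.

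\textbf{Step 1: the Bayes risk is finite.} I would bound the Bayes risk $r(\pi,\hat{p}_\pi)=\int R_{KL}(\boldsymbol{\theta},\hat{p}_\pi)\pi(d\boldsymbol{\theta})$ using Theorem 4.1. Minimaxity gives $R_{KL}(\boldsymbol{\theta},\hat{p}_\pi)\le R_{KL}(\boldsymbol{\theta},\hat{p}_U)$. The risk of the uniform-prior predictive density is the constant $\frac{p}{2}\log(1+v_x/v_y)$. Hence $r(\pi,\hat{p}_\pi)\le \frac{p}{2}\log(1+v_x/v_y)<\infty$.

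\textbf{Step 2: the Bayes rule is essentially unique.} For any $g\in A_0$ (with $g$ allowed to depend on $\boldsymbol{x}$), Fubini gives
\begin{equation*}
r(\pi,g)-r(\pi,\hat{p}_\pi)=\int m_\pi(\boldsymbol{x};v_x)\,\mathrm{KL}\big(\hat{p}_\pi(\cdot|\boldsymbol{x})\,\|\,g(\cdot|\boldsymbol{x})\big)\,d\boldsymbol{x}\ \ge 0 .
\end{equation*}
The key fact behind this identity is that the posterior predictive $\int p(\boldsymbol{y}|\boldsymbol{\theta})\pi(d\boldsymbol{\theta}|\boldsymbol{x})$ equals $\hat{p}_\pi(\boldsymbol{y}|\boldsymbol{x})$ (Lemma 2 of \cite{george2006improved}). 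The expression vanishes only if $g(\cdot|\boldsymbol{x})=\hat{p}_\pi(\cdot|\boldsymbol{x})$ for $m_\pi$-almost every $\boldsymbol{x}$. Since $m_\pi>0$ everywhere (it is a Gaussian mixture), this means Lebesgue-almost every $\boldsymbol{x}$. So $\hat{p}_\pi$ is the unique Bayes rule up to null sets. Finiteness of the Bayes risk from Step 1 is what makes the subtraction legitimate.

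\textbf{Step 3: admissibility.} Suppose some $g\in A_0$ satisfied $R_{KL}(\boldsymbol{\theta},g)\le R_{KL}(\boldsymbol{\theta},\hat{p}_\pi)$ for all $\boldsymbol{\theta}$, with strict inequality somewhere. Integrating against $\pi$ gives $r(\pi,g)\le r(\pi,\hat{p}_\pi)$, so $g$ is also Bayes. By Step 2, $g=\hat{p}_\pi$ almost everywhere, hence the two risk functions coincide for every $\boldsymbol{\theta}$. This contradicts the strict inequality. Note that no continuity of risk functions is needed, because uniqueness is used directly.

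\textbf{Expected obstacle.} The main delicacy is Step 2. One has to justify exchanging integrals with possibly infinite KL values for an arbitrary $g$, and ensure that $r(\pi,g)$ is well defined. I would handle this by noting that all integrands are nonnegative, so Tonelli applies. For $g$ with $r(\pi,g)=\infty$ there is nothing to prove, since such a $g$ cannot dominate $\hat{p}_\pi$: its Bayes risk would be at most the finite $r(\pi,\hat{p}_\pi)$. As an alternative route, one could mirror Corollary 3.2 and cite the Brown--George--Xu admissibility result in \cite{brown2008admissible} for proper priors with finite Bayes risk.
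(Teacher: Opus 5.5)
Your proposal follows the paper's proof essentially step for step. In both, $\hat{p}_\pi$ minimizes the Bayes KL risk, so a dominating rule must have equal Bayes risk, which is finite by Theorem 4.1. The difference then reduces to $\int m_\pi(\boldsymbol{x})\,\mathrm{KL}(\hat{p}_\pi(\cdot|\boldsymbol{x}),\tilde{p}(\cdot|\boldsymbol{x}))\,d\boldsymbol{x}=0$, and positivity of $m_\pi$ forces a.e.\ equality and hence identical risk functions.

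One correction to your Step 2: the pointwise integrand $p(\boldsymbol{y}|\boldsymbol{\theta})\log\{\hat{p}_\pi(\boldsymbol{y}|\boldsymbol{x})/\tilde{p}(\boldsymbol{y}|\boldsymbol{x})\}$ can take both signs, so Tonelli alone does not justify the interchange of integrals. The paper instead proves absolute integrability, using $(\log q)^{+}\le q$ and a bound on $(\log q)^{-}$ in terms of $R_{KL}(\boldsymbol{\theta},q)$. That bound is exactly where your finite-Bayes-risk estimate from Step 1 is needed.
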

We provide a full proof of Corollary 4.2 in the Supplementary Material. The Bayes predictive density 
\begin{equation*}
    \hat{p}_{\pi}(\boldsymbol{y}|\boldsymbol{x}) = \int p(\boldsymbol{y}|\boldsymbol{\theta}) \pi (d\boldsymbol{\theta}|\boldsymbol{x})
\end{equation*}
is well defined because the Gaussian likelihood is uniformly bounded and the prior density is integrable, ensuring finiteness. By \cite{aitchison1975goodness} (expression 2.6), $\hat{p}_{\pi}$ minimizes the Bayes Kullback-Leibler risk 
\begin{equation*}
    B_{KL}(\pi, \hat{p}) = \int R_{KL}(\boldsymbol{\theta}, \hat{p}) \pi (d \boldsymbol{\theta}). 
\end{equation*}
Suppose, for contradiction, that another predictive rule $\tilde{p}$ dominates $\hat{p}_{\pi}$. Integrating the resulting pointwise risk inequality against $\pi$ yields equality due to the Bayes risk minimization of $\hat{p}_{\pi}$.  Subtracting the two Bayes risks and rearranging gives 
\begin{equation*}
    \int m_{\pi}(\boldsymbol{x}) KL(\hat{p}_{\pi}(\boldsymbol{y}|\boldsymbol{x}), \tilde{p}(\boldsymbol{y}|\boldsymbol{x})) dx = 0, 
\end{equation*}
where $m_{\pi}(\boldsymbol{x})$ is the marginal density of $\boldsymbol{x}$. Since KL divergence is non-negative, it follows that 
\begin{equation*}
    KL(\hat{p}_{\pi}(\boldsymbol{y}|\boldsymbol{x}), \tilde{p}(\boldsymbol{y}|\boldsymbol{x})) = 0, m_{\pi}-a.e. \boldsymbol{x}, 
\end{equation*}
and hence $\tilde{p}(\boldsymbol{y}|\boldsymbol{x}) = \hat{p} (\boldsymbol{y}|\boldsymbol{x})$ almost everywhere. This implies \begin{equation*}
    R_{KL}(\boldsymbol{\theta}, \tilde{p}) = R_{KL}(\boldsymbol{\theta}, \hat{p}_{\pi}), \forall \boldsymbol{\theta}, 
\end{equation*}
contradicting strict domination. Therefore, $\hat{p}_{\pi}$ is admissible. 

We have shown that the Bayes predictive density induced by a strictly positive prior density - such as the deep ReLU BNN with BetaPrime hyperprior - is admissible under KL risk over the full action space $A_{0}$ of all proper densities on $\mathbb{R}^{p}$. Consequently, it is not dominated by any competing proper predictive density, making it a globally non-dominated rule in the canonical predictive problem. This extends our results from Section 3 by showing that hyperprior design governs not only the quality of point decisions, but also the decision-theoretic validity of the full predictive distribution. The result is also relevant for prior-fitted methodologies, where a neural predictor is trained to emulate the Bayesian predictive distribution induced by a chosen prior: in that context, our theory identifies a setting in which the predictive target itself is well founded. 
\section{Simulated example}
In this section, we empirically examine the risks of $\delta_{\text{BNN,fixed}}(\boldsymbol{y})$ and $\delta_{\text{BNN,hyper}}(\boldsymbol{y})$ using simulation. We also compare these decision rules with the decision induced by a fixed scale BNN prior with dropout, $\delta_{\text{BNN,dropout}}(\boldsymbol{y})$, and with the decision rule induced by a Horseshoe prior \cite{carvalho2010horseshoe}, $\delta_{HS}(\boldsymbol{y})$. Since the Horseshoe prior depends on the sparsity regime of the mean vector, we consider that case separately.
\subsection{Radial decision rule simulation}
Recall from Section 2 that the decision rule induced by a fixed scale Bayesian Neural Network (BNN) is
\begin{equation*}
    \delta_{\text{BNN, fixed}}(\boldsymbol{y}) = \mathbb{E}[\frac{V}{V+1}|||\boldsymbol{Y}||^{2} = ||\boldsymbol{y}||^{2}]\boldsymbol{y}, V = 2^{d-1}(\prod\limits_{\ell = 1}^{d}\sigma_{\ell}^{2})(\prod\limits_{\ell = 1}^{d-1}T_{\ell}), T_{\ell} \sim \Gamma(\frac{k_{\ell}}{2},1). 
\end{equation*} Similarly, the decision rule induced by a BNN with a BetaPrime hyperprior on the effective output variance is 
\begin{equation*}
    \delta_{\text{BNN,hyper}}(\boldsymbol{y}) = \mathbb{E}[\frac{W}{W+1}|||\boldsymbol{Y}||^{2}=||\boldsymbol{y}||^{2}]\boldsymbol{y}, W\sim BetaPrime(1, \frac{p}{2}-2).
\end{equation*} 
We also consider a fixed scale BNN prior with dropout. Let $q_{\ell}$ denote the probability of keeping in the hidden layer $\ell$, and let $N_{\ell}$ denote the number of active units in layer $\ell$. The corresponding decision rule is 
\begin{equation*}
    \delta_{\text{BNN, fixed, dropout}}(\boldsymbol{y}) = \mathbb{E}[\frac{V}{V+1}|||\boldsymbol{Y}||^{2} = ||\boldsymbol{y}||^{2}]\boldsymbol{y} , 
    \end{equation*}
where, 
    \begin{equation*}
        V= 2^{d-1}(\prod\limits_{\ell = 1}^{d}\sigma_{\ell}^{2})(\prod\limits_{\ell=1}^{d-1} q_{\ell}^{-1}) (\prod\limits_{\ell = 1}^{d-1}T_{\ell})\boldsymbol{1} \{ N_{1}>0, \dots, N_{d-1}>0\},
    \end{equation*}
with 
\begin{equation*}
    N_{\ell} \sim Bin(n_{\ell}, q_{\ell}) \text{ and }T_{\ell}|k_{\ell} \sim \Gamma(\frac{k_{\ell}}{2},1). 
\end{equation*}

The simulations are conducted in the R programming language \cite{rlanguage}, and all code is available in a GitHub repository at the following link: \href{https://github.com/DanielCoulson/Risk_simulations}{Risk\_simulations}. We consider the normal location model with $p= 5, 50$, and $100$, where $\boldsymbol{\theta} \in \mathbb{R}^{p}$ is fixed but unknown, and performance is evaluated under quadratic risk. We compare four estimators: (i) the maximum likelihood estimator $\delta(\boldsymbol{Y}) = \boldsymbol{Y}$, whose risk is equal to $p$; (ii) the decision rule induced by a fixed scaled BNN, $\delta_{\text{BNN,fixed}}(\boldsymbol{y})$; (iii) the decision rule induced by a BNN with a minimax BetaPrime hyperprior on the effective output variance, $\delta_{\text{BNN, hyper}}(\boldsymbol{y})$; and (iv) the decision rule induced by a fixed scale BNN with dropout $\delta_{\text{BNN,fixed,dropout}}(\boldsymbol{y})$. The resulting Bayes rules are radial shrinkage estimators and admit the form $\delta(\boldsymbol{Y}) = a(||\boldsymbol{Y}||^{2})\boldsymbol{Y}$ where $a(\cdot)$ is a scalar shrinkage function discussed below.

For the fixed scale BNN prior, we generate Monte Carlo draws from the effective output variance $V$. The network depth is set to $d = 3$, with hidden layer widths $n_{1} = n_{2} = 20$, layer scales $\sigma_{1} = \sigma_{2} = \sigma_{3} = 1$, and $||\boldsymbol{x}|| = 1$. We draw
\begin{equation*}
    V = 2^{d-1}\prod\limits_{\ell = 1}^{d}\sigma_{\ell}^{2} \prod\limits_{\ell = 1}^{d-1} T_{\ell},
\end{equation*}
where each $T_{\ell}$ is Gamma-distributed conditional on an index $k_{\ell}$. Rather than summing over all $k_{\ell} \in \{1, \dots, n_{\ell} \}$, we sample $k_{\ell}$ with probability proportional to 
\begin{equation*}
\binom{n_{\ell}}{k_{\ell}}.     
\end{equation*}
We then sample $T_{\ell}|k_{\ell} \sim \Gamma(k_{\ell}/2, 1)$. The product $\prod\limits_{\ell = 1}^{2}T_{\ell}$ is then multiplied by the constant $2^{2}\prod\limits_{\ell =1}^{3}\sigma_{\ell}^{2}$ to obtain a single draw of $V$. We generate $M_{v} = 200,000$ i.i.d. draws $\{ V_{m} \}_{m=1}^{M_{v}}$ to approximate the mixing distribution $V$.
Given the scale mixture prior $\boldsymbol{\theta}|V \sim N_{p}(\boldsymbol{0}_{p}, vI_{p})$, the posterior mean takes the form 
\begin{equation*}
    \mathbb{E}[\boldsymbol{\theta} |\boldsymbol{Y}] = a(||\boldsymbol{Y}||^{2})\boldsymbol{Y} \text{ with } a(s) = \mathbb{E}[\frac{V}{1+V}|||\boldsymbol{Y}||^{2} = s].
\end{equation*}
For the fixed scale BNN decision rule, $a(s)$ is computed by Monte Carlo importance weighting over the sampled $\{V_{m}\}$. For each $s$ the weights $w_{m}(s)$ are proportional to 
\begin{equation*}
    (1+V_{m})^{-\frac{p}{2}} \exp \{ -\frac{s}{2(1+V_{m})} \}, 
\end{equation*}
and we approximate the shrinkage factor by
\begin{equation*}
    \sum\limits_{m}\frac{w_{m}(s)V_{m}}{1+V_{m}}. 
\end{equation*}
For computational efficiency, the shrinkage factor is precomputed on a grid $s \in [0,s_{max}]$ with 2500 points, where $s_{max} = (500 + 6\sqrt{p})^{2}$, and then evaluated at arbitrary $s$, by linear interpolation.  

For the decision rule induced by the BNN with the BetaPrime hyperprior, the posterior mean shrinkage factor admits the closed form expression
\begin{equation*}
    a(s) = 1- \frac{\gamma(p-1,\frac{s}{2})}{\frac{s}{2}\gamma(p-2, \frac{s}{2})}. 
\end{equation*}
This expression is implemented using gamma distribution functions, with the case $s = 0$ handled by its limiting value. For numerical stability, the resulting shrinkage factors are constrained to lie in $[0,1]$. 

For the fixed scale BNN with dropout, we again generate Monte Carlo draws from the effective output variance $V$, now under random dropout of the hidden units. The network depth is set to $d=3$, with hidden layer widths $n_{1} = n_{2} =20$, layer scales $\sigma_{1} = \sigma_{2} = \sigma_{3} =1$ and keep probabilities $q_{1} = q_{2} = 0.8$, and $||\boldsymbol{x}|| =1$. Using inverted dropout, each Monte Carlo draw is constructed as follows. For each hidden layer $\ell = 1,2$, we first sample the number of active units $N_{\ell} \sim Bin(n_{\ell}, q_{\ell})$.  If $N_{\ell} = 0$ for any hidden layer, the network is taken to be inactive and we set $V=0$. Otherwise, conditional on $N_{\ell}$, we sample an index $k_{\ell} \in \{ 1, \dots, N_{\ell} \}$ with probability proportional to $\binom{N_{\ell}}{k_{\ell}},$ and then sample $ T_{\ell} |k_{\ell} \sim \Gamma(k_{\ell}/2,1).$
%\begin{equation*}
 %   T_{\ell} |k_{\ell} \sim \Gamma(\frac{k_{\ell}}{2},1). 
%\end{equation*}
Then one draw of the effective output variance is
\begin{equation*}
    V = 2^{2} \prod\limits_{j=1}^{3}\sigma_{j}^{2} \prod\limits_{\ell = 1}^{2}q_{\ell}^{-1} \prod\limits_{\ell = 1}^{2} T_{\ell} = 6.25 \prod\limits_{\ell =1}^{2} T_{\ell}. 
\end{equation*}
We generate $M_{v}=200,000$ i.i.d. draws, $\{ V_{m} \}_{m=1}^{M_{v}}$, to approximate the dropout induced mixing distribution of $V$. Given the scale mixture prior $\boldsymbol{\theta}|V \sim N_{p}(\boldsymbol{0}_{p}, VI_{p})$, the posterior mean again has the form $\mathbb{E}[\boldsymbol{\theta}|\boldsymbol{Y}] = a(||\boldsymbol{Y}||^{2})\boldsymbol{Y}$, where  
%\begin{equation*}
%    \mathbb{E}[\boldsymbol{\theta}|\boldsymbol{Y}] = a(||\boldsymbol{Y}||^{2})\boldsymbol{Y}, 
%\end{equation*}
\begin{equation*}
    a(s) = \mathbb{E}[\frac{V}{1+V}\||\boldsymbol{Y}\|^{2} =s]. 
\end{equation*}
For the dropout BNN decision rule, $a(s)$ is computed using the same Monte Carlo importance-weighting scheme as for the fixed-scale BNN. As the sampled values $\{ V_{m} \}$ include the event $V_{m} = 0$, the approximation automatically accounts for the point mass at zero arising from inactive network realizations. As in the other fixed-scale cases, the resulting shrinkage factors are truncated to $[0,1]$ for numerical stability and are then evaluated by linear interpolation from a precomputed grid. 

Risk is estimated as a function of the signal strength $r = ||\boldsymbol{\theta}||$ on the grid $r = 0,1, \dots, 500$. For each $r$, the direction of $\boldsymbol{\theta}$ is randomized by drawing $\boldsymbol{u} \sim N_{p}(\boldsymbol{0}_{p}, I_{p})$ and normalizing it to unit length, $\boldsymbol{u} \leftarrow \frac{\boldsymbol{u}}{||\boldsymbol{u}||}$, which yields a uniform direction on the sphere. We then set $\boldsymbol{\theta} = r\boldsymbol{u}$.  For each such $\boldsymbol{\theta}$, we generate $N_{mc}= 50,000$ i.i.d. samples $\boldsymbol{Y}^{(i)} \sim N_{p}(\boldsymbol{\theta}, I_{p})$, compute the corresponding estimates $\boldsymbol{\delta}(\boldsymbol{Y}^{(i)})$, and average $||\boldsymbol{\delta}(\boldsymbol{Y}^{(i)}) - \boldsymbol{\theta}||^{2}$ across draws to approximate $R(\boldsymbol{\theta}, \boldsymbol{\delta})$. To reduce Monte Carlo variability due to the random direction, we repeat this procedure over $K_{dir} = 10$ independent directions at each $r$ and average the resulting risk estimates. The results are displayed in Figures 1, 2, and 3. 
\begin{figure}[!htbp] 
    \centering
    \includegraphics[width=\linewidth]{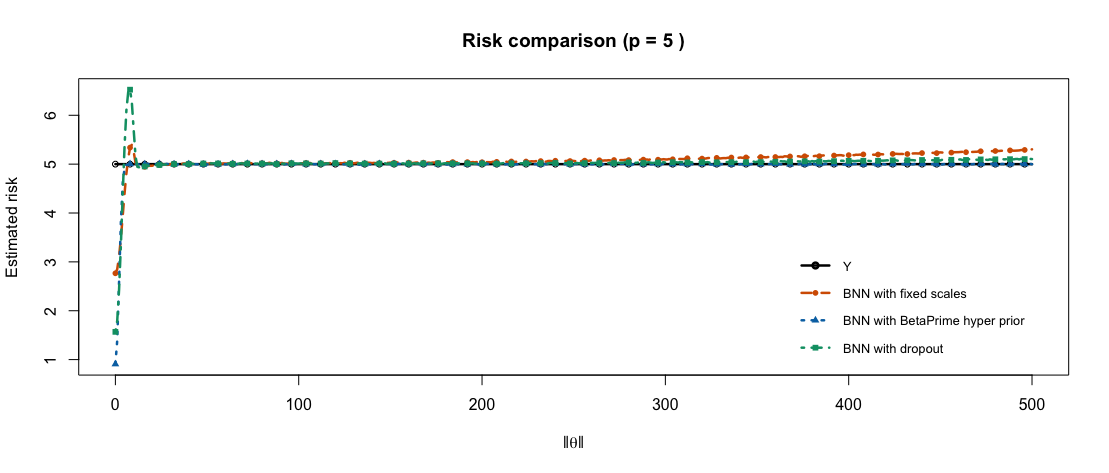} 
    \caption{Estimated risk for several decision rules in dimension $p=5$ as a function of $||\boldsymbol{\theta}||$. The plotted rules are the MLE, the fixed-scale BNN rule, the Beta-prime minimax shrinkage rule, and the dropout-BNN rule. For the BNN-based rules, the network depth is $d=3$, the hidden layer widths are $n_1=n_2=20$, and the layer scales are $\sigma_1=\sigma_2=\sigma_3=1$; for the dropout-BNN rule, the keep probabilities are $q_1=q_2=0.8$ with inverted dropout.}
    \label{fig:risk} 
\end{figure}
\begin{figure}[!htbp] 
    \centering
    \includegraphics[width=\linewidth]{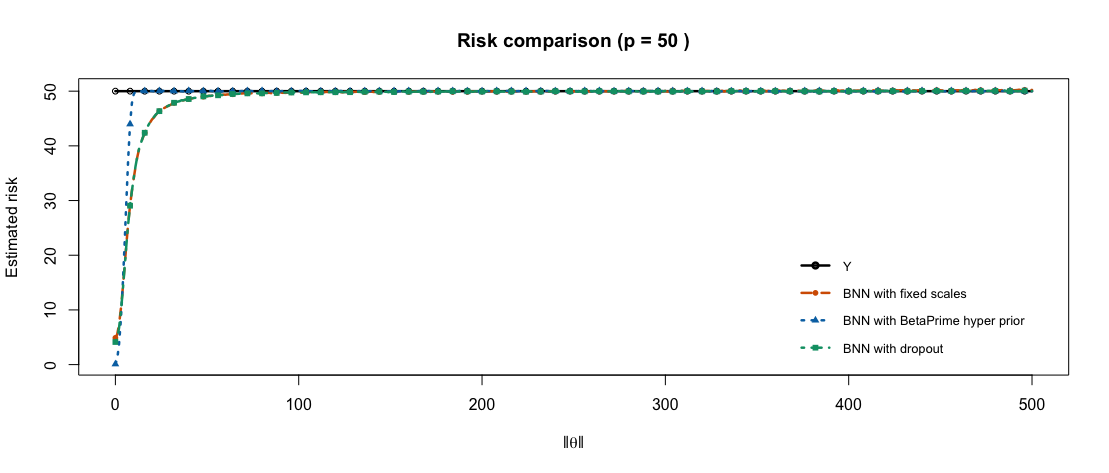} 
    \caption{Estimated risk for several decision rules in dimension $p=50$ as a function of $||\boldsymbol{\theta}||$. The plotted rules are the MLE, the fixed-scale BNN rule, the Beta-prime minimax shrinkage rule, and the dropout-BNN rule. For the BNN-based rules, the network depth is $d=3$, the hidden layer widths are $n_1=n_2=20$, and the layer scales are $\sigma_1=\sigma_2=\sigma_3=1$; for the dropout-BNN rule, the keep probabilities are $q_1=q_2=0.8$ with inverted dropout.}
    \label{fig:risk} 
\end{figure}
\begin{figure}[!htbp] 
    \centering
    \includegraphics[width=\linewidth]{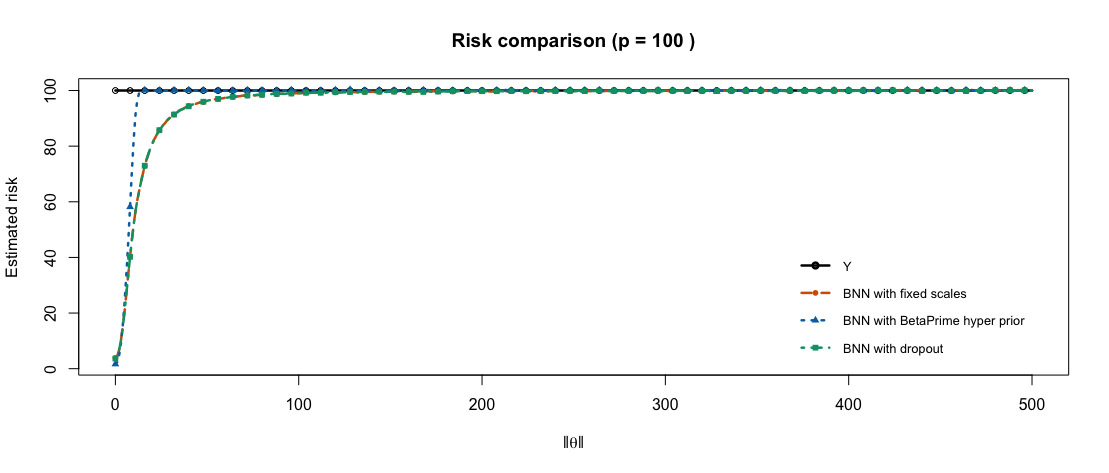} 
    \caption{Estimated risk for several decision rules in dimension $p=100$ as a function of $||\boldsymbol{\theta}||$. The plotted rules are the MLE, the fixed-scale BNN rule, the Beta-prime minimax shrinkage rule, and the dropout-BNN rule. For the BNN-based rules, the network depth is $d=3$, the hidden layer widths are $n_1=n_2=20$, and the layer scales are $\sigma_1=\sigma_2=\sigma_3=1$; for the dropout-BNN rule, the keep probabilities are $q_1=q_2=0.8$ with inverted dropout.}
    \label{fig:risk} 
\end{figure}

Across Figures 1-3, we compare the risk of the induced decision rules from different BNN priors with the minimax benchmark, which is equal to $p$ and is attained by the maximum likelihood estimator. The main observation is that the risk of $\delta_{\text{BNN, hyper}}$ tracks the minimax boundary almost exactly. For $p = 5$, its risk begins well below 5 near the origin, rises to the minimax level (up to Monte Carlo error), and then remains essentially flat at that level over the remainder of the plot. The same pattern appears for $p = 50 $ and $p = 100$, where its risk approaches $p$ from below and does not exhibit any systematic exceedance. In contrast, $\delta_{\text{BNN, fixed}}$ clearly violates the minimax bound for $ p = 5$. After improving on the maximum likelihood estimator near $||\boldsymbol{\theta}|| = 0$, its risk rises above $5$, likely due in part to numerical approximation error, then returns below $5$, and finally drifts upward again, ending noticeably above the benchmark for large $||\boldsymbol{\theta}||$. The dropout version, $\delta_{\text{BNN, fixed, dropout}}$, exhibits similar behavior, although the upward drift above the minimax level is milder. For $p = 50$ and $p =100$, these non-minimax departures are much smaller on the scale of the plots, but the qualitative distinction remains the same: the risk of $\delta_{\text{BNN, hyper}}$ approaches the constant risk level $p$ from below, whereas the risks of $\delta_{\text{BNN, fixed}}$ and $\delta_{\text{BNN, fixed, dropout}}$ can exceed it. Thus, the simulations are consistent with the theory: the BetaPrime hyperprior yields a minimax Bayes rule, whereas the fixed scale prior and its dropout augmented variant do not.
 
\subsection{Sparsity dependent simulation}
In this simulation, we compare the risk of $\delta_{\text{BNN, hyper}}$ with that of the decision rule induced by the Horseshoe prior \cite{carvalho2010horseshoe} in both sparse and dense normal location settings. Since $\delta_{\text{BNN, hyper}}$ is radial, its risk depends only on the signal magnitude $||\boldsymbol{\theta}||$, whereas the decision rule induced by the Horseshoe prior is sensitive to the sparsity structure of $\boldsymbol{\theta}$, and is therefore evaluated separately over a range of sparsity regimes. 

To study the effect of sparsity explicitly, for each $p$ and signal strength $r = ||\boldsymbol{\theta}||$, we consider $k$-sparse mean vectors of the form 
\begin{equation*}
    \boldsymbol{\theta}_{r,k} = (\frac{r}{\sqrt{k}}, \dots, \frac{r}{\sqrt{k}}, 0 , \dots,0), 
\end{equation*}
with exactly $k$ nonzero entries, so that $||\boldsymbol{\theta}_{r,k}|| = r$. The magnitude of the signal is varied over the interval $[0,2.5\sqrt{p}]$, implemented numerically as six equally spaced points. For each dimension, we consider the collection of sparsity levels
\begin{equation*}
    k \in \{ 1,2,5,10, \left\lfloor{0.1p} \right \rfloor, \left\lfloor{0.2p} \right \rfloor, \left\lfloor{0.5p} \right \rfloor, p \},
\end{equation*}
after truncation to $\{1, \dots, p\}$ and removal of duplicates. This allows the comparison to span regimes ranging from highly sparse signals to fully dense signals. 

As in the previous simulation, we estimate the risk of $\delta_{\text{BNN, hyper}}$ by Monte Carlo using $N_{mc} = 5000$ independent draws from $N_{p}(\boldsymbol{\theta}, I_{p})$ at each value of $r$. Since this decision rule is radial, its risk depends only on $||\boldsymbol{\theta}||$, and not on the support pattern of $\boldsymbol{\theta}$. Consequently, for each $r$, we compute a single risk estimate using a reference 1-sparse vector with norm $r$, and use this same estimate for all values of $k$. 

For the Horseshoe prior, the posterior mean is computed separately for each observed vector $\boldsymbol{y}$ using a Gibbs sampler based on the standard scale-mixture representation of the prior. Conditional on the current local- and global scale parameters, the coordinates of the mean vector are updated from their Gaussian full conditional distributions, after which the local- and global variance components are updated via inverse-gamma latent variable steps. To improve numerical efficiency, we use a Rao-Blackwellized estimator of the posterior mean at each iteration, averaging the conditional posterior means rather than the raw sampled coefficients themselves. For each observation, the Markov chain is run for 3,000 iterations, with the first 1,000 discarded as burn-in and every second draw retained thereafter. In repeated risk calculations, the final scale values from one Monte Carlo replication are used to initialize the next chain, providing a warm start and reducing computational cost. The risk under the Horseshoe is then estimated by averaging squared error over $N_{mc} = 500$ independent draws $\boldsymbol{Y}^{(i)} \sim N_{p}(\boldsymbol{\theta}_{r,k}, I_{p})$ for each pair $(r,k)$. 

\begin{figure}[!htbp] 
    \centering
    \includegraphics[width=\linewidth]{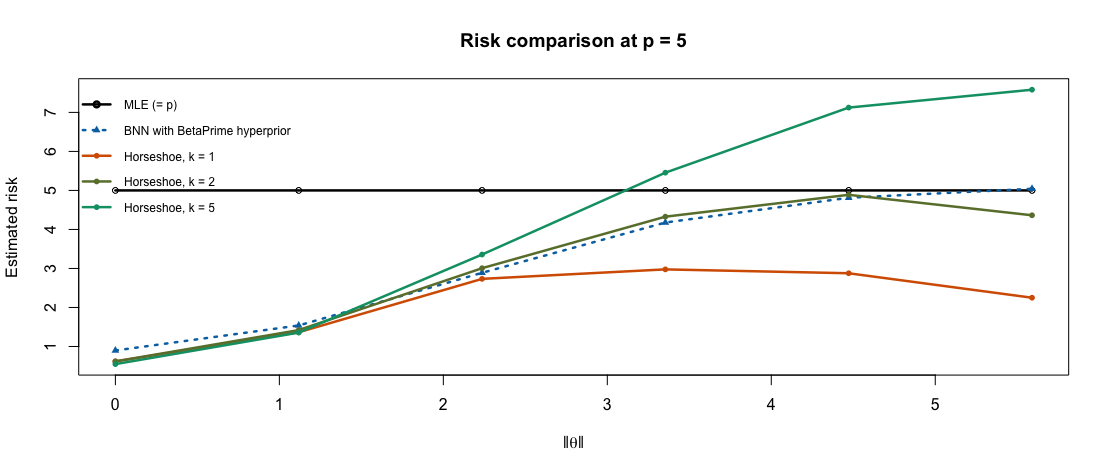} 
    \caption{Estimated risk for several decision rules in dimension $p=5$ as a function of $\|\boldsymbol{\theta}\|$ under several sparsity regimes. The plotted rules are the MLE, the Beta-prime minimax shrinkage rule, and the horseshoe posterior mean. The true sparsity levels considered are $1$, $2$, and $5$.}
    \label{fig:risk} 
\end{figure}
\begin{figure}[!htbp] 
    \centering
    \includegraphics[width=\linewidth]{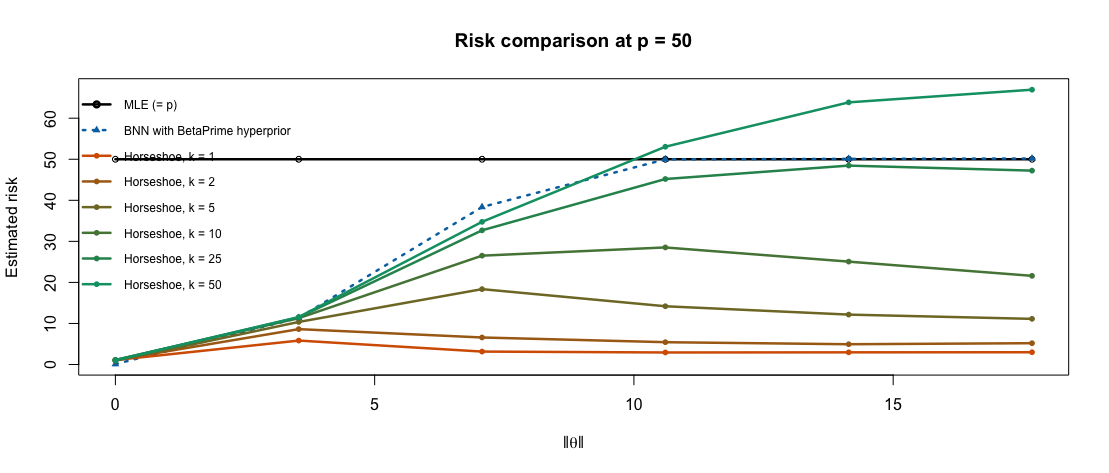} 
    \caption{Estimated risk for several decision rules in dimension $p=50$ as a function of $\|\boldsymbol{\theta}\|$ under several sparsity regimes. The plotted rules are the MLE, the Beta-prime minimax shrinkage rule, and the horseshoe posterior mean. The true sparsity levels considered are $1$, $2$, $5$, $10$, $25$, and $50$.}
    \label{fig:risk} 
\end{figure}
\begin{figure}[!htbp] 
    \centering
    \includegraphics[width=\linewidth]{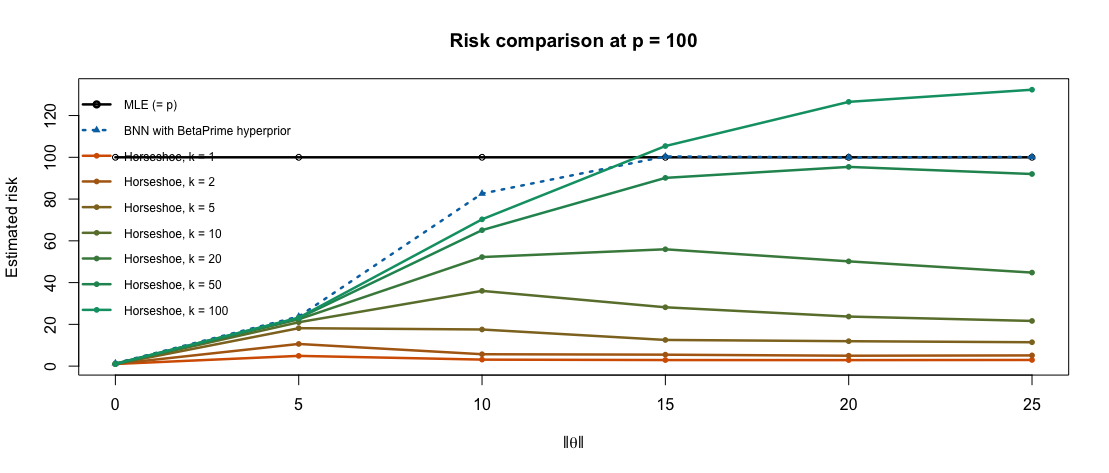} 
    \caption{Estimated risk for several decision rules in dimension $p=100$ as a function of $\|\boldsymbol{\theta}\|$ under several sparsity regimes. The plotted rules are the MLE, the Beta-prime minimax shrinkage rule, and the horseshoe posterior mean. The true sparsity levels considered are $1$, $2$, $5$, $10$, $20$, $50$, and $100$.}
    \label{fig:risk} 
\end{figure}
In Figures 4-6, $\delta_{\text{BNN, hyper}}$ depends only on $||\boldsymbol{\theta}||$ and remains close to the constant benchmark $p$, approaching it from below in a manner consistent with its theoretical minimax behavior. By contrast, the risk of the Horseshoe estimator strongly depends on the sparsity level $k$. Although the risk can lie below that of $\delta_{\text{BNN, hyper}}$ for sparse configurations, it increases substantially as $k$ grows and, for sufficiently dense signals, exceeds the minimax benchmark by a wide margin. In particular, in the dense case $k = p$, the Horseshoe risk increases to approximately 7.5 for $p=5$, $66$ for $p =50$, and $130$ for $p=100$. Thus, while the Horseshoe estimator is highly sensitive to the underlying sparsity pattern, $\delta_{\text{BNN, hyper}}$ exhibits stable uniform risk control across the full range considered, making it the strongest procedure from a minimax perspective. 
\section{Conclusion}
We have shown that the decision rule induced by a deep, fixed-scale ReLU BNN is not minimax in the normal location model under quadratic loss, because the prior predictive density has stretched exponential tails that apply overly conservative shrinkage to large signals. We then proposed a Beta-Prime hyperprior on the effective output variance of the network prior, which recovers minimaxity by inducing sufficiently heavy tails: it shrinks strongly toward the origin for weak signals and reduces shrinkage sufficiently rapidly for large signals. We further established admissibility of the induced Bayes rule and extended both the minimaxity and admissibility results to predictive density estimation under Kullback--Leibler loss. These theoretical properties were validated in a numerical simulation study under quadratic loss. An interesting direction for future work is to characterize broader families of hyperpriors that induce minimax decision rules; one natural avenue is to exploit Fox-H functions \cite{mathai2009h}, which encompass many hyperprior families, including the Beta-Prime hyperprior proposed here.
\clearpage
\appendix

% Reset counters for supplement
\setcounter{section}{0}
\setcounter{subsection}{0}
\setcounter{subsubsection}{0}
\setcounter{equation}{0}
\setcounter{figure}{0}
\setcounter{table}{0}

% Supplement numbering
\renewcommand{\thesection}{S\arabic{section}}
\renewcommand{\thesubsection}{S\arabic{section}.\arabic{subsection}}
\renewcommand{\thesubsubsection}{S\arabic{section}.\arabic{subsection}.\arabic{subsubsection}}

\numberwithin{equation}{section}
\numberwithin{figure}{section}
\numberwithin{table}{section}

% Start supplement as its own container
\part*{Supplementary Material}
\etocsetlocaltop.toc{part}

% Local TOC for supplement only
\etocsettocstyle{\section*{Contents of the Supplement}}{}
\localtableofcontents
\clearpage

\section{Proof of Section 1}
\subsection{Proof of Lemma 1.1}
\begin{proof}
From \cite{zavatone2021exact} we know that \begin{align*}
   &p_{d}(\boldsymbol{h}_{d}; \sigma_{1}\dots\sigma_{d}||\boldsymbol{x}||; n_{1}, \dots, n_{d} ) = (1- \frac{(2^{n_{1}}-1)\dots(2^{n_{d-1}}-1)}{2^{n_{1} + \dots n_{d-1}}})\delta(\boldsymbol{h}_{d})\\ &+ \frac{1}{2^{n_{1}+\dots+n_{d-1}}}\sum\limits_{k_{1}=1}^{n_{1}}\dots\sum\limits_{k_{d-1} =1}^{n_{d-1}}\begin{pmatrix}
n_{1}\\
k_{1}
\end{pmatrix} \dots\begin{pmatrix}
n_{d-1}\\
k_{d-1}
\end{pmatrix}  p_{d}^{lin}(\boldsymbol{h}_{d}; \kappa_{d}; k_{1},\dots, k_{d-1}, n_{d}) 
\end{align*} 
where we will utilize the continuous component of the prior and \begin{equation*}
    p_{d}^{lin}(\boldsymbol{h}_{d}|\boldsymbol{x}) = \frac{\gamma_{d}}{(2^{d}\pi \kappa_{d}^{2})^{\frac{n_{d}}{2}}}G_{0,d}^{d,0}(\frac{||\boldsymbol{h}_{d}||^{2}}{2^{d} \kappa_{d}^{2}}|\begin{matrix}
        - \\ 
        0, \frac{n_{1}-n_{d}}{2}, \dots, \frac{n_{d-1}-n_{d}}{2}
    \end{matrix}), 
\end{equation*}    
\begin{equation*}    
    \kappa_{d}:= \sigma_{1}\dots\sigma_{d}||\boldsymbol{x}|| \text{ and } \gamma_{d} := \prod\limits_{\ell =1}^{d-1} \frac{1}{\Gamma(\frac{n_{\ell}}{2})}. 
\end{equation*} 
Fix $\boldsymbol{k} = ( k_{1},\dots, k_{d-1} )$ where $k_{\ell} \in \{ 1,\dots, n_{\ell} \}, \ell \in \{ 1, \dots, d-1 \}$. We know from the supplemental material of \cite{zavatone2021exact} that \begin{equation*}
    p_{d}^{lin}(\boldsymbol{h}_{d}|\boldsymbol{x}, \boldsymbol{k}) = \gamma_{d}(2^{d}\pi \kappa_{d}^{2})^{-\frac{n_{d}}{2}} f_{d-1}(\frac{||\boldsymbol{h}_{d}||^{2}}{2^{d}\kappa_{d}^{2}}; \nu_{1},\dots,\nu_{d-1}), 
\end{equation*} where 
\begin{equation*}
     \nu_{\ell} = \frac{k_{\ell}-p}{2} \text{ and } f_{d-1} (z; \nu_{1},\dots, \nu_{d-1}) := [\prod\limits_{\ell = 1}^{d-1} \int\limits_{0}^{\infty}dt_{\ell} t_{\ell}^{\nu_{\ell}-1}\exp \{ - t_{\ell} \}]\exp \{-\frac{z}{t_{1}\dots t_{d-1}} \}.
\end{equation*} Since for each $\ell$, $\nu_{\ell} = \frac{k_{\ell}-p}{2}$ we have $t_{\ell}^{\nu_{\ell}-1} = t_{\ell}^{\frac{k_{\ell}-p}{2}-1} = t_{\ell}^{\frac{k_{\ell}}{2}-1}t_{\ell}^{-\frac{p}{2}}$. Let $z = \frac{||\boldsymbol{h}_{d}||^{2}}{2^{d}\kappa_{d}^{2}}$. Then, 
\begin{equation*}
    p_{d}^{lin}(\boldsymbol{h}_{d}|\boldsymbol{x}, \boldsymbol{k}) = \gamma_{d}(2^{d}\pi \kappa_{d}^{2})^{- \frac{p}{2}} [\prod\limits_{\ell = 1}^{d-1} \int\limits_{0}^{\infty} dt_{\ell} t_{\ell}^{\frac{k_{\ell}}{2}-1}e^{- t_{\ell}}t_{\ell}^{-\frac{p}{2}}] \exp \{ - \frac{||\boldsymbol{h}_{d}||^{2}}{2^{d}\kappa_{d}^{2}t_{1}\dots t_{d-1}} \}. 
\end{equation*}
Observe that $\prod\limits_{\ell = 1}^{d-1} t_{\ell}^{-\frac{p}{2}}  = (t_{1}\dots t_{d-1})^{-\frac{p}{2}}$. Then, 
\begin{equation*}
    p_{d}^{lin} (\boldsymbol{h}_{d}|\boldsymbol{x}, \boldsymbol{k}) = \gamma_{d} [\prod\limits_{\ell =1}^{d-1}\int\limits_{0}^{\infty} dt_{\ell} t_{\ell}^{\frac{k_{\ell}}{2}-1} e^{-t_{\ell}}  ](2^{d}\pi \kappa_{d}^{2})^{-\frac{p}{2}} (t_{1}\dots t_{d-1})^{- \frac{p}{2}} \exp \{ - \frac{||\boldsymbol{h}_{d}||^{2}}{2^{d}\kappa_{d}^{2}t_{1}\dots t_{d-1}} \}.
\end{equation*} Recall, $\gamma_{d} = \prod\limits_{\ell = 1}^{d-1} \Gamma(\frac{k_{\ell}}{2})^{-1}$. Therefore, 
\begin{equation*}
    p_{d}^{lin}(\boldsymbol{h}_{d}|\boldsymbol{x}, \boldsymbol{k}) = [\prod\limits_{\ell=1}^{d-1} \int_{0}^{\infty} dt_{\ell} \frac{t_{\ell}^{\frac{k_{\ell}}{2}-1} e^{-t_{\ell}}}{\Gamma(\frac{k_{\ell}}{2})}] (2^{d}\pi \kappa_{d}^{2})^{-\frac{p}{2}}(t_{1}\dots t_{d-1})^{-\frac{p}{2}} \exp \{- \frac{||\boldsymbol{h}_{d}||^{2}}{2^{d}\kappa_{d}^{2}t_{1}\dots t_{d-1}}\}, 
\end{equation*}
observing $ T_{\ell} \sim \Gamma(\frac{k_{\ell}}{2},1)$. Let $S_{k}:= \prod\limits_{\ell = 1}^{d-1} T_{\ell}$ and $U:= 2^{d-1} \kappa_{d}^{2}$. Then $2^{d} \kappa_{d}^{2} S_{k} = 2US_{k}$. So, $(2^{d} \pi \kappa_{d}^{2})^{-\frac{p}{2}}S_{k}^{-\frac{p}{2}} = (2\pi U S_{k})^{-\frac{p}{2}}$. This implies, 
\begin{equation*}
    \exp \{ - \frac{||\boldsymbol{h_{d}}||^{2}}{2^{d} \kappa_{d}^{2}S_{k}} \} = \exp \{ - \frac{||\boldsymbol{h}_{d}||^{2}}{2US_{k}} \}. 
\end{equation*}
Therefore, 
\begin{equation*}
    (2\pi US_{k})^{-\frac{p}{2}} \exp \{ - \frac{||\boldsymbol{h}_{d}||^{2}}{2US_{k}} \} = \phi_{p}(\boldsymbol{h}_{d}, \boldsymbol{0}_{p}, US_{k}I_{p}). 
\end{equation*}
Hence, 
\begin{align*}
    p_{d}^{lin}(\boldsymbol{h}_{d}|\boldsymbol{x}, \boldsymbol{k}) &= \int\limits_{0}^{\infty} \dots \int\limits_{0}^{\infty} \phi_{p}(\boldsymbol{h}_{d}; \boldsymbol{0}_{p}, (U\prod\limits_{\ell = 1}^{d-1}t_{\ell})I_{p}) \prod\limits_{\ell = 1}^{d-1} \frac{t_{\ell}^{\frac{k_{\ell}}{2}-1}e^{-t_{\ell}}}{\Gamma(\frac{k_{\ell}}{2})} dt_{1}\dots dt_{d-1} \\ 
    &= \mathbb{E}[\phi_{p}(\boldsymbol{h}_{d}; \boldsymbol{0}_{p}, US_{k} I_{p})]. 
\end{align*}
That is, 
\begin{equation*}
    \boldsymbol{h}_{d}|S_{k} , \boldsymbol{k} \sim N_{p}(\boldsymbol{0}_{p}, US_{k}I_{p}), \text{ where } S_{k} = \prod\limits_{\ell = 1}^{d-1} T_{\ell} , T_{\ell} \sim \Gamma(\frac{k_{\ell}}{2},1). 
\end{equation*}
Let $f_{S_{k}}$ denote the density of $S_{k}$. Then, 
\begin{equation*}
    p_{d}^{lin}(\boldsymbol{h}_{d}|\boldsymbol{x}, \boldsymbol{k}) = \int\limits_{0}^{\infty} \phi_{p}(\boldsymbol{h}_{d}, \boldsymbol{0}_{p}, UsI_{p}) f_{S_{k}}(s)ds. 
\end{equation*}
Let $V_{k} := US_{k}$, then $V_{k}>0$ and its density is 
\begin{equation*}
    g_{k}(v) = \frac{1}{U} f_{S_{k}} (\frac{v}{U}), v > 0. 
\end{equation*}
Then, 
\begin{equation*}
    p_{d}^{lin}(\boldsymbol{h}_{d}|\boldsymbol{x}, \boldsymbol{k}) = \int\limits_{0}^{\infty} \phi_{p}(\boldsymbol{h}_{d}; \boldsymbol{0}_{p}, vI_{p})g_{k}(v) dv. 
\end{equation*}
So, 
\begin{equation*}
    \pi(\boldsymbol{h}_{d}) = \sum\limits_{k}w_{k}p_{d}^{lin}(\boldsymbol{h}_{d}| \boldsymbol{x}, \boldsymbol{k}), \text{ where } w_{k} = \frac{1}{2^{n_{1}+\dots+n_{d-1}}}\prod\limits_{\ell = 1}^{d-1}\binom{n_{\ell}}{k_{\ell}}, k_{\ell} \in \{ 1, \dots, n_{\ell} \}. 
\end{equation*}
Then 
\begin{equation*}
    \pi(\boldsymbol{h}_{d}) = \sum\limits_{k}w_{k} \int\limits_{0}^{\infty} \phi_{p}(\boldsymbol{h}_{d}; \boldsymbol{0}_{p}, vI_{p}) g_{k}(v) dv. 
\end{equation*}
Since the sum is finite, by the linearity of integration, 
\begin{equation*}
    \pi(\boldsymbol{h}_{d}) = \int \limits_{0}^{\infty} \phi_{p} (\boldsymbol{h}_{d}; \boldsymbol{0}_{p}, vI_{p}) (\sum\limits_{k}w_{k}g_{k}(v))dv. 
\end{equation*}
So 
\begin{equation*}
    \pi (\boldsymbol{h}_{d}) = \int\limits_{0}^{\infty} \phi_{p} (\boldsymbol{h}_{d}; \boldsymbol{0}_{p}, vI_{p}) g(v) dv,
\end{equation*}
where $g(v) = \sum\limits_{k} w_{k} g_{k}(v)$ is the finite mixture of the laws of $V_{k}$. 
\end{proof}
\section{Proofs of Section 2 }
We now use $\boldsymbol{\theta}$ instead of $\boldsymbol{h}_{d}$ in view of Section 1.4.
\subsection{Proof of Lemma 2.1 }
\begin{proof}
    Let $r = (x_{1}^{2}+\dots x_{p}^{2})^{\frac{1}{2}}$ and $S = \sum\limits_{i=1}^{p}x_{i}^{2}$. Then, \begin{equation*}
        \frac{\partial r}{\partial x_{i}} = \frac{1}{2}S^{-\frac{1}{2}}\cdot 2x_{i} = \frac{x_{i}}{S^{\frac{1}{2}}} = \frac{x_{i}}{r}. 
    \end{equation*}So, \begin{equation*}
        \frac{\partial u}{\partial x_{i}} = \frac{\partial \phi}{\partial r}\frac{\partial r}{\partial x_{i}} = \phi'(r)\frac{x_{i}}{r}. 
    \end{equation*} Similarly, 
    \begin{align*}
        \frac{\partial^{2}u}{\partial x_{i}^{2}} &= \frac{\partial }{\partial x_{i}} \phi'(r)\frac{x_{i}}{r}\\ 
        & = \frac{x_{i}}{r}\frac{\partial}{\partial x_{i}} \phi'(r) + \phi'(r) \frac{\partial}{\partial x_{i}} \frac{x_{i}}{r}\\
        & = \frac{x_{i}}{r}\frac{\partial \phi'}{\partial r}\frac{\partial r}{\partial x_{i}} + \phi'(r) \frac{\partial}{\partial x_{i}}(x_{i}r^{-1})\\
        &= \frac{x_{i}}{r}\phi''(r)\frac{x_{i}}{r} + \phi'(r) [\frac{1}{r} + x_{i}\frac{\partial}{\partial x_{i}}\frac{1}{r}]\\ 
        &= \frac{x_{i}^{2}}{r^{2}}\phi''(r) + \phi'(r) [\frac{1}{r} + x_{i}(-r^{-2})\frac{x_{i}}{r}]\\
        &= \frac{x_{i}^{2}}{r^{2}} \phi''(r) + \phi'(r) [\frac{1}{r} - \frac{x_{i}^{2}}{r^{3}}]. 
    \end{align*}Then, \begin{align*}
        \Delta u &= \sum\limits_{i=1}^{p} \frac{\partial^{2}u}{\partial x_{i}^{2}} \\ 
        & = \sum\limits_{i=1}^{p}\frac{x_{i}^{2}}{r^{2}}\phi''(r) + \phi'(r) [\frac{1}{r} - \frac{x_{i}^{2}}{r^{3}}]\\
        &= \frac{\phi''(r)}{r^{2}} \sum\limits_{i=1}^{p}x_{i}^{2} + \phi'(r) [\sum\limits_{i=1}^{p}\frac{1}{r} - \frac{1}{r^{3}}\sum\limits_{i=1}^{p}x_{i}^{2}]\\
        & = \frac{\phi''(r)}{r^{2}}r^{2}  + \phi'(r) [\frac{p}{r} - \frac{1}{r^{3}}r^{2}]\\
        &= \phi''(r) + \phi'(r) [\frac{p}{r} - \frac{1}{r}]\\ 
        & = \phi''(r) +(\frac{p-1}{r})\phi'(r). 
    \end{align*}
\end{proof}
\subsection{Proof of Lemma 2.2}
\begin{proof}
Let $w(r) := r^{p-1}q'(r)$. Then, \\
\begin{align*}
    w'(r) &= \frac{d}{dr} \{ r^{p-1} q'(r) \}\\
    &= q'(r) (p-1)r^{p-2} + r^{p-1}q''(r)\\
    &= r^{p-1}(q''(r) + \frac{p-1}{r}q'(r))\\
    &=r^{p-1}\Delta q(r) \leq 0. 
\end{align*}
Let $R_{0}$ be such that $\Delta q(r) \leq 0, \forall r \geq R_{0}$. Then, $w'(r) \leq 0$ for $r \geq R_{0}$. That is $w$ is non-increasing on $[R_{0}, \infty )$. \\
\\ 
Now assume $w(r) \geq 0, \forall r \geq R_{0}$. Then,
\begin{equation*}
    q'(r) = \frac{w(r)}{r^{p-1}} \geq 0 ,\forall r\geq R_{0}. 
\end{equation*}So, q is non-decreasing on $[R_{0}, \infty )$. But, $q(R_{0})>0$ which would imply $q(r) \geq q(R_{0}) >0, \forall r \geq R_{0}$. This would mean $q(r) \not\rightarrow 0$, a contradiction. \\
\\
Therefore, $\exists R \geq R_{0}$ with $w(R) < 0$ which implies $q'(R) <0$. This means $w(r) \leq w(R)<0$ implies \begin{equation*}
    q'(r) = \frac{w(r)}{r^{p-1}}<0. 
\end{equation*}
Now, $\forall s \geq R$ we know $w$ is non-increasing. That is, $w(s) \leq w(R)$. This implies, \begin{equation*}
    q'(s) = \frac{w(s)}{s^{p-1}} \leq \frac{w(R)}{s^{p-1}}. 
\end{equation*} Then by  the fundamental theorem of calculus, $q(T) - q(r) = \int\limits_{r}^{T} q'(s) ds$, which implies $- q(r) = \int\limits_{r}^{T} q'(s) ds - q(T)$. Now, taking the limit $T \rightarrow \infty$ gives
\begin{align*}
    - q(r) &= \lim\limits_{T \rightarrow \infty} \{ \int \limits_{r}^{T}q'(s)ds - q(T) \} \\ 
    &= \int\limits_{r}^{\infty}q'(s) ds\\
    & \leq \int\limits_{r}^{\infty} \frac{w(R)}{s^{p-1}}ds\\
    & = w(R) \int\limits_{r}^{\infty} s^{1-p}ds\\
    & = w(R)[\frac{s^{2-p}}{2-p}]_{s=r}^{\infty}\\ 
    & = w(R) \lim\limits_{S \rightarrow \infty} [\frac{S^{2-p}}{2-p} - \frac{r^{2-p}}{2-p}]\\ 
    & = - w(R) \frac{r^{2-p}}{2-p}\\
    & = w(R) \frac{r^{2-p}}{p-2}. 
\end{align*}
That is, 
\begin{align*}
    -q(r) \leq \frac{w(R)}{p-2}r^{2-p} \iff q(r) &\geq -\frac{w(R)}{p-2}r^{2-p}\\
    &= - \frac{R^{p-1}q'(R)}{p-2}r^{2-p}\\
    &=cr^{2-p}, c:= - \frac{R^{p-1}q'(R)}{p-2}.
\end{align*}
\end{proof}
\subsection{Proof of Lemma 2.3}
\begin{proof}
    Let $\boldsymbol{y} \in \mathbb{R}^{p}$ and $r = ||\boldsymbol{y}||$. Then the marginal density is given by, 
    \begin{equation*}
        m(\boldsymbol{y})  = \int\limits_{||\boldsymbol{\theta}|| \leq \frac{r}{2}} \phi_{p}(\boldsymbol{y}|\boldsymbol{\theta})\pi(\boldsymbol{\theta})d\boldsymbol{\theta} + \int\limits_{||\boldsymbol{\theta}||> \frac{r}{2}} \phi_{p}(\boldsymbol{y}|\boldsymbol{\theta})\pi(\boldsymbol{\theta}) d\boldsymbol{\theta}. 
    \end{equation*}
Note that if $||\boldsymbol{\theta}|| \leq \frac{r}{2}$. Then, 
\begin{equation*}
    ||\boldsymbol{y}- \boldsymbol{\theta}|| \geq |||\boldsymbol{y}|| - ||\boldsymbol{\theta}||| \geq \frac{r}{2}, \text{ by the reverse triangle inequality}.
\end{equation*}
Therefore, $\phi_{p}(\boldsymbol{y}|\boldsymbol{\theta}) \leq (2\pi)^{- \frac{p}{2}} \exp \{ - \frac{1}{2}(\frac{r}{2})^{2}  \} = (2\pi)^{- \frac{p}{2}} \exp \{ - \frac{r^{2}}{8} \}$. This implies,  
\begin{align*}
    \int\limits_{||\boldsymbol{\theta}|| \leq \frac{r}{2}} \phi_{p}(\boldsymbol{y}|\boldsymbol{\theta}) \pi(\boldsymbol{\theta}) d\boldsymbol{\theta} &\leq (2\pi)^{- \frac{p}{2}} \exp\{- \frac{r^{2}}{8} \}\int\limits_{||\boldsymbol{\theta}|| \leq \frac{r}{2}} \pi(\boldsymbol{\theta}) d\boldsymbol{\theta}\\ 
    & \leq (2\pi)^{- \frac{p}{2}} \exp \{ - \frac{r^{2}}{8} \}\cdot1, \text{ since} \int\limits_{||\boldsymbol{\theta}|| \leq \frac{r}{2}} \pi(\boldsymbol{\theta}) d\boldsymbol{\theta} \leq1 \\ 
    & = (2\pi)^{- \frac{p}{2}}\exp \{ - \frac{r^{2}}{8} \}.
\end{align*}Similarly, 
\begin{align*}
    \int\limits_{||\boldsymbol{\theta}||> \frac{r}{2}} \phi_{p}(\boldsymbol{y} | \boldsymbol{\theta}) \pi(\boldsymbol{\theta})d \boldsymbol{\theta} &\leq \int\limits_{||\boldsymbol{\theta}||> \frac{r}{2}}(2\pi)^{- \frac{p}{2}}\pi(\boldsymbol{\theta})d\boldsymbol{\theta} \\
    & = (2\pi)^{- \frac{p}{2}}\int\limits_{||\boldsymbol{\theta}||> \frac{r}{2}} \pi(\boldsymbol{\theta}) d\boldsymbol{\theta}\\ 
    &= (2\pi)^{- \frac{p}{2}} \mathbb{P}_{\pi}(||\boldsymbol{\theta}||> \frac{r}{2}).
\end{align*} 
Thus, \begin{equation*}
    m(\boldsymbol{y}) \leq (2\pi)^{-\frac{p}{2}}e^{- \frac{r^{2}}{8}}+ (2\pi)^{- \frac{p}{2}}\mathbb{P}_{\pi}(||\boldsymbol{\theta}||> \frac{r}{2}). 
\end{equation*}
Now it remains to bound $\mathbb{P}_{\pi}(||\boldsymbol{\theta}||> \frac{r}{2})$. 
From Appendix A of \cite{gaunt2025variance} we know that
\begin{equation}
    G_{p,q}^{q,0}(x| \begin{matrix}
a_{1}, \dots, a_{p}\\
b_{1},\dots, b_{q}\end{matrix}) \sim \frac{(2\pi)^{\frac{\sigma-1}{2}}}{\sigma^{\frac{1}{2}}}x^{{\theta}}\exp \{ - \sigma x^{\frac{1}{\sigma}} \} \text{ as } x \rightarrow \infty, 
\end{equation}
where $\sigma = q-p$ and ${\theta} = \sigma^{-1} \{ \frac{1-\sigma}{2} + \sum\limits_{i=1}^{q}b_{i} - \sum\limits_{i=1}^{p}a_{i} \}$. Applying this result to a given Meijer-G function in the prior density gives, 
\begin{equation*}
    G_{0,d}^{d,0}(\frac{||\boldsymbol{\theta}||^{2}}{B}|\begin{matrix}
-\\
0, \frac{k_{1}-n_{d}}{2}, \dots, \frac{k_{d-1}-n_{d}}{2})
\end{matrix}) \sim (2\pi)^{\frac{d-1}{2}}d^{-\frac{1}{2}}t^{\mu}\exp\{ -dt^{\frac{1}{d}} \}, \text{ as } t \rightarrow \infty 
\end{equation*}\begin{equation*}
t = \frac{||\boldsymbol{\theta}||^{2}}{B}, \mu = \frac{1}{d}(\frac{1-d}{2} + \sum\limits_{j=1}^{d-1}\frac{k_{j}-n_{d}}{2}), \text{ and } B= 2^{d}\sigma_{1}^{2}\dots\sigma_{d}^{2}||\boldsymbol{x}||^{2}. 
\end{equation*}
That is, 
\begin{equation*}
    \frac{G_{0,d}^{d,0}(t|\begin{matrix}
-\\
0, \frac{k_{1}-n_{d}}{2}, \dots, \frac{k_{d-1}-n_{d}}{2}
\end{matrix})}{(2\pi)^{\frac{d-1}{2}}d^{-\frac{1}{2}}t^{\mu}\exp\{ -dt^{\frac{1}{d}} \}} \rightarrow 1, \text{ as } t \rightarrow \infty. 
\end{equation*}
By the definition of limit, 
\begin{equation*}
    |\frac{f(t)}{g(t)} - 1| < \epsilon. 
\end{equation*}
Let $\epsilon = 1$. Then, 
\begin{equation*}
    -1 < \frac{f(t)}{g(t)} -1 <1.
\end{equation*} This implies, 
\begin{equation*}
    0 < \frac{f(t)}{g(t)}<2. 
\end{equation*}Hence, eventually $f(t) \leq 2g(t)$. 
Therefore, we have \begin{align*}
    G_{0,d}^{d,0}(t|\begin{matrix}
-\\
0, \frac{k_{1}-n_{d}}{2}, \dots, \frac{k_{d-1}-n_{d}}{2}
\end{matrix}) &\leq 2Ct^{\mu}\exp\{ -dt^{\frac{1}{d}} \}, , \text{ where } C= (2\pi)^{\frac{d-1}{2}}d^{- \frac{1}{2}}\\ 
& = Kt^{\mu}\exp\{-d{t}^{\frac{1}{d}} \}, K =2C \\
&= K(\frac{||\boldsymbol{\theta}||^{2}}{B})^{\mu}\exp \{ - d(\frac{||\boldsymbol{\theta}||^{2}}{B})^{\frac{1}{d}} \}\\ 
& = KB^{- \mu}||\boldsymbol{\theta}||^{2\mu} \exp \{ - dB^{-\frac{1}{d}}||\boldsymbol{\theta}||^{\frac{2}{d}} \}\\ 
&= K_{\mu} ||\boldsymbol{\theta}||^{2\mu}\exp \{ -c||\boldsymbol{\theta}||^{\frac{2}{d}} \}, c := dB^{- \frac{1}{d}}, K_{\mu}:= KB^{-\mu}. 
\end{align*}
Substituting this upper bound into the prior density gives, 
\begin{equation*}
    \pi(\boldsymbol{\theta}) \leq \frac{1}{2^{n_{1} + \dots n_{d-1}}}\sum\limits_{k_{1}=1}^{n_{1}}\dots\sum\limits_{k_{d-1}=1}^{n_{d-1}}\frac{\prod\limits_{\ell = 1}^{d-1} \frac{1}{\Gamma(\frac{k_{\ell}}{2})}}{(\pi B)^{\frac{n_{d}}{2}}}k_{\mu(k_{1},..., k_{d-1})}||\boldsymbol{\theta}||^{2\mu(k_{1},..k_{d-1})}\exp \{ -c||\boldsymbol{\theta}||^{\frac{2}{d}} \}. 
\end{equation*}
Let $\mu_{\text{max}} = \max\limits_{1\leq k_{1}\leq n_{1}, \dots, 1\leq k_{d-1} \leq n_{d-1}} \mu(k_{1},\dots, k_{d-1}) = \mu(n_{1}, \dots ,n_{d-1}) $. Since $t \rightarrow \infty$, we know $\boldsymbol{\theta} \rightarrow \infty$, so $||\boldsymbol{\theta}||$ is large. Therefore, $||\boldsymbol{\theta}||^{2\mu_{\text{max}}}$ is increasing in $\mu$. Since $\mu(k_{1},\dots, k_{d}) \leq \mu_{\text{max}}$ for every possible $k_{1}, \dots ,k_{d}, ||\boldsymbol{\theta}||^{2\mu(k_{1},..., k_{d-1})} \leq ||\boldsymbol{\theta}||^{2\mu_{\text{max}}}$. Therefore, 
\begin{align*}
    \pi(\boldsymbol{\theta}) &\leq ||\boldsymbol{\theta}||^{2 \mu_{\text{max}}} \exp \{ - c||\boldsymbol{\theta}||^{\frac{2}{d}} \}\\ &\times \frac{1}{2^{n_{1} + \dots n_{d-1}}} 
     \sum\limits_{k_{1} = 1}^{n_{1}}\dots\sum\limits_{k_{d-1}}^{n_{d-1}}\begin{pmatrix}
n_{1}\\
k_{1}
\end{pmatrix} \dots\begin{pmatrix}
n_{d-1}\\
k_{d-1}
\end{pmatrix} \frac{\prod\limits_{\ell = 1}^{d-1}\frac{1}{\Gamma(\frac{k_{\ell}}{2})}}{(2^{d}\pi\sigma_{1}^{2}\dots\sigma_{d}^{2}||\boldsymbol{x}||^{2})^{\frac{n_{d}}{2}}}\\
&= C_{1}||\boldsymbol{\theta}||^{2\mu_{\text{max}}}\exp\{-c||\boldsymbol{\theta}||^{\frac{2}{d}} \}, \text{ where} 
\end{align*}
\begin{equation*}
    C_{1} := \frac{1}{2^{n_{1} + \dots n_{d-1}}} \sum\limits_{k_{1} = 1}^{n_{1}}\dots\sum\limits_{k_{d-1}}^{n_{d-1}}\begin{pmatrix}
n_{1}\\
k_{1}
\end{pmatrix} \dots\begin{pmatrix}
n_{d-1}\\
k_{d-1}
\end{pmatrix} \frac{\prod\limits_{\ell = 1}^{d-1}\frac{1}{\Gamma(\frac{k_{\ell}}{2})}}{(\pi B)^{\frac{n_{d}}{2}}}
\end{equation*}

Now, let $\boldsymbol{\theta} = \rho \boldsymbol{\omega}$, where $\rho = ||\boldsymbol{\theta}|| \in (0, \infty), \boldsymbol{\omega} = \frac{\boldsymbol{\theta}}{||\boldsymbol{\theta}||} \in \mathcal{S}^{p-1}$,the unit sphere with $d\boldsymbol{\theta} = \rho^{p-1} d \boldsymbol{\omega} d \rho$ where $d\boldsymbol{\omega}$ is the surface measure on $\mathcal{S}^{p-1}$. Then the integral becomes \begin{align*}
    \mathbb{P}_{\pi}(||\boldsymbol{\theta}||>u) &= \int\limits_{u}^{\infty}\int_{\mathcal{S}^{p-1}} \pi(\rho\boldsymbol{\omega})\rho^{p-1} d\boldsymbol{\omega} d \rho.\\
    &= \int\limits_{u}^{\infty} \rho^{p-1} \int\limits_{\mathcal{S}^{p-1}}\pi(\rho \boldsymbol{\omega}) d\boldsymbol{\omega} d\rho\\
    & = \int\limits_{u}^{\infty} \rho ^{p-1} \tilde{\pi}(\rho) \int\limits_{\mathcal{S}^{p-1}}d\boldsymbol{\omega} d\rho, \text{ as } \tilde{\pi}(\rho) = \pi(\rho \boldsymbol{\omega}) \text{ is radial}  \\
     &= |\mathcal{S}^{p-1}| \int\limits_{u}^{\infty} \rho^{p-1} \tilde{\pi}(\rho ) d \rho\\
    &\leq |\mathcal{S}^{p-1}|C_{1} \int\limits_{u}^{\infty} \rho^{p-1}\rho^{2\mu_{\text{max}}}\exp \{ -c\rho^{\frac{2}{d}} \} d\rho\\
    &= K_{2} \int\limits_{u}^{\infty}\rho^{p-1+2\mu_{\text{max}}}\exp \{ -c\rho^{\frac{2}{d}} \} d\rho, \text{ where } K_{2} = |\mathcal{S}^{p-1}|C_{1}. 
\end{align*}
Let $\nu = \rho^{\frac{2}{d}} \implies \rho = \nu^{\frac{d}{2}} \implies \frac{d\rho}{d\nu} =\frac{d}{2}\nu^{\frac{d}{2}-1} \implies d\rho = \frac{d}{2}\nu^{\frac{d}{2}-1} d\nu$. Then, 
\begin{align*}
    \mathbb{P}_{\pi}(||\boldsymbol{\theta}||> u) &\leq  K_{2} \int\limits_{u^{\frac{2}{d}}}^{\infty}(\nu^{\frac{d}{2}})^{p-1+2\mu_{\text{max}}}\exp\{-c(\nu^{\frac{d}{2}})^{\frac{2}{d}} \}\frac{d}{2}\nu^{\frac{d}{2}-1}d\nu\\
    & = K_{3} \int\limits_{u^{\frac{2}{d}}}^{\infty} \nu^{\frac{d}{2}(p-1+2\mu_{\text{max}})}\exp\{-c \nu\}\nu^{\frac{d}{2}-1}d\nu, K_{3}= \frac{K_{2}d}{2}\\
    &= K_{3}\int\limits_{u^{\frac{2}{d}}}^{\infty}\nu^{\frac{d}{2}(p+2\mu_{\text{max}})-1}\exp\{ - c\nu \}d\nu\\ 
    &= K_{3}\int\limits_{u^{\frac{2}{d}}}^{\infty}\nu^{A-1}\exp\{-c\nu \} d\nu, A := \frac{d}{2}(p+2\mu_{\max})\\
    &\leq K_{3}C_{A}(u^{\frac{2}{d}})^{A-1}\exp\{ - cu^{\frac{2}{d}} \}, \text{ for } C_{A}>0 \text{ and large } u^{\frac{2}{d}}, 
\end{align*}
since under such conditions, 
\begin{equation*}
    \int\limits_{x}^{\infty} \nu^{A-1} e^{-c\nu}d\nu \leq C_{A}x^{A-1}e^{-cx}. 
\end{equation*} Particularly, let $I(x) := \int\limits_{x}^{\infty} \nu^{A-1}e^{-c\nu} d\nu, A>1, c>0$. Let \begin{equation*}
    u = \nu^{A-1} \text{ and } \frac{dv}{d\nu} = e^{-c\nu}. 
\end{equation*}
Then, \begin{equation*}
    du = (A-1)\nu^{A-2} d\nu \text{ and } v = \int e^{-c\nu} d\nu = - \frac{1}{c} e^{-c\nu}. 
\end{equation*} Therefore, 
\begin{align*}
    I(x) &= [-\frac{1}{c} \nu ^{A-1} e^{-c \nu}]_{x}^{\infty} + \frac{A-1}{c} \int\limits_{x}^{\infty}\nu^{A-2}e^{-c\nu}d\nu\\ 
    & = \frac{1}{c} x^{A-1}e^{-cx} + \frac{A-1}{c} \int\limits_{x}^{\infty} \nu^{A-2} e^{-c\nu} d\nu. 
\end{align*}
For $\nu > x, \nu^{A-2} = \nu^{-1}\nu^{A-1} \leq x^{-1}\nu^{A-1}$. This implies \begin{equation*}
    \int\limits_{x}^{\infty} \nu ^{A-2} e^{-c\nu} d\nu \leq \frac{1}{x} \int\limits_{x}^{\infty} \nu^{A-1}e^{-c \nu} d\nu = \frac{1}{x}I(x). 
\end{equation*} Therefore, \begin{equation*}
    I(x) \leq \frac{1}{c} x^{A-1} e^{-cx} + \frac{A-1}{c} \cdot \frac{1}{x}I(x). 
\end{equation*} Hence, \begin{equation*}
    (1- \frac{A-1}{cx}) I(x) \leq \frac{1}{c} x^{A-1}e^{-cx}. 
\end{equation*} So for large $x$, $I(x) \leq C_{A}x^{A-1} e^{-cx}$ for some $C_{A}>0$. Therefore, 
\begin{equation*}
    \mathbb{P}_{\pi}(||\boldsymbol{\theta}||> u) \leq K_{4} u^{\frac{2}{d}(A-1)}\exp\{ -c u^{\frac{2}{d}}\}, K_{4}:= K_{3}C_{A}.
\end{equation*}
Now, 
\begin{align*}
    u^{\frac{2}{d}(A-1)}\exp\{ -cu^{\frac{2}{d}} \} &= \exp\{ \log(u^{\frac{2}{d}(A-1)}) \}\exp\{-cu^{\frac{2}{d}}\}\\
    &= \exp \{ \frac{2}{d}(A-1) \log(u)-cu^{\frac{2}{d}}\}. 
\end{align*}
Note that \begin{equation*}
    \frac{\log(u)}{u^{\frac{2}{d}}} \rightarrow 0 \text{ as } u \rightarrow \infty. 
\end{equation*}
Fix $\epsilon \in (0,c)$ and let \begin{equation*}
    \delta := \frac{\epsilon}{\frac{2}{d}(A-1)}. 
\end{equation*} By the definition of limit \begin{align*}
    \frac{\log(u)}{u^{\frac{2}{d}}} \leq \delta &\iff  \frac{2}{d}(A-1)\frac{\log(u)}{u^{\frac{2}{d}}} \leq \epsilon\\ 
    &\iff \frac{2}{d}(A-1) \log(u) \leq \epsilon u^{\frac{2}{d}}\\ 
    & \iff - \frac{2}{d}(A-1)\log(u) \geq -\epsilon u^{\frac{2}{d}}\\ 
    & \iff cu^{\frac{2}{d}} - \frac{2}{d}(A-1)\log(u) \geq c u^{\frac{2}{d}} - \epsilon u^{\frac{2}{d}} \\ 
    & \iff cu^{\frac{2}{d}} - \frac{2}{d} (A-1) \log(u) \geq (c-\epsilon)u ^{\frac{2}{d}}. 
\end{align*}
So, $\forall u \geq u_{0}$ for some $u_{0}$, \begin{equation*}
    cu^{\frac{2}{d}} - \frac{2}{d}(A-1) \log(u) \geq (c- \epsilon)u^{\frac{2}{d}}. 
\end{equation*} Therefore, 
\begin{equation*}
    \frac{2}{d}(A-1)\log(u) - cu^{\frac{2}{d}}  
 \leq -(c-\epsilon)u^{\frac{2}{d}}.     
\end{equation*}
Thus, \begin{equation*}
    u^{\frac{2}{d}(A-1)}\exp\{-cu^{\frac{2}{d}} \} \leq \exp \{ - (c-\epsilon)u^{\frac{2}{d}} \}, u \geq u_{0}. 
\end{equation*}
This implies, \begin{equation*}
    \mathbb{P}_{\pi} (||\boldsymbol{\theta}||>u) \leq K_{4} \exp\{ -c'u^{\frac{2}{d}} \}, \text{ where } c'= c-\epsilon.
\end{equation*} This gives, 
\begin{align*}
    \mathbb{P}_{\pi}(||\boldsymbol{\theta}||> \frac{r}{2}) &\leq K_{4}\exp \{ -c'(\frac{r}{2})^{\frac{2}{d}} \}\\
    &= K_{4} \exp \{ -c''r^{\frac{2}{d}} \}, c'' = c'2^{-\frac{2}{d}}. 
\end{align*} Then, for large $r$, 
\begin{equation*}
    m(\boldsymbol{y}) \leq (2\pi)^{- \frac{p}{2}}e^{-\frac{r^{2}}{8}} + (2\pi)^{-\frac{p}{2}}K_{4}\exp \{-c''r^{\frac{2}{d}}\}. 
\end{equation*}
For $r,d\geq1, e^{-\frac{r^{2}}{8}} \leq e^{-\frac{r^{\frac{2}{d}}}{8}}$. This implies, 
\begin{equation*}
    m(\boldsymbol{y}) \leq (2\pi)^{-\frac{p}{2}}\exp\{- \frac{r^{\frac{2}{d}}}{8} \} + (2\pi)^{-\frac{p}{2}}K_{4}\exp \{ -c''r^{\frac{2}{d}} \}
\end{equation*}
Let $\kappa:= \min \{ \frac{1}{8} , c'' \}$. Then, 
\begin{align*}
    m(\boldsymbol{y}) &\leq (2\pi)^{-\frac{p}{2}} \exp \{ -\kappa r^{\frac{2}{d}}  \} + (2\pi)^{-\frac{p}{2}}K_{4}\exp \{ - \kappa r^{\frac{2}{d}} \}\\
    & = [(2\pi)^{-\frac{p}{2}} + (2\pi)^{-\frac{p}{2}}K_{4}] \exp \{ - \kappa r^{\frac{2}{d}} \}\\
    & = K_{5}\exp\{ - \kappa r^{\frac{2}{d}} \}, K_{5}:= (2\pi)^{-\frac{p}{2}} + (2\pi)^{-\frac{p}{2}}K_{4}
\end{align*}
\end{proof}
\subsection{Proof of Theorem 2.4}
\begin{proof}
    Let $q := \sqrt{m(\boldsymbol{y})}$. Assume $q$ is superharmonic. \\
    Since $q$ is radial, this implies $\exists R >0$ such that $\forall r \geq R$ \begin{equation*}
        Q''(r) + \frac{p-1}{r}Q'(r) \leq 0, Q(r) = q(\boldsymbol{y}), \text{ where } r = ||\boldsymbol{y}||, \text{ in view of Lemma 2.2.}
    \end{equation*}
Note $Q(r) \rightarrow 0$ as $r \rightarrow \infty$. Therefore, we can apply Lemma 2.2. That is, $\exists c>0$ such that $\forall r \geq R, Q(r) \geq cr^{2-p}$. By lemma 2.3, $\exists C >0, \kappa >0, R_{0} >0$ such that $\forall r \geq R_{0}, Q(r) \leq \sqrt{C}\exp \{ - \frac{\kappa}{2} r^{\frac{2}{d}} \}$. Thus, 
\begin{equation*}
\forall r \geq \max \{ R, R_{0} \}, cr^{2-p} \leq \sqrt{C} \exp \{ - \frac{\kappa}{2} r^{\frac{2}{d}} \} \text{ iff }    \log(c) + (2-p)\log(r) \leq \log(\sqrt{C}) - \frac{\kappa}{2} r^{\frac{2}{d}}. 
\end{equation*}That is, 
\begin{equation*}
    \frac{\kappa}{2} r^{\frac{2}{d}} \leq \log(\frac{\sqrt{C}}{c}) + (p-2)\log(r). 
\end{equation*} This gives, 
\begin{equation*}
    \frac{\kappa}{2} \leq \frac{\log(\frac{\sqrt{C}}{c})}{r^{\frac{2}{d}}} + (p-2)\frac{\log(r)}{r^{\frac{2}{d}}}. 
\end{equation*}
As $r \rightarrow \infty$, $\frac{\log(\frac{\sqrt{C}}{c})}{r^{\frac{2}{d}}} \rightarrow 0$ and $\lim\limits_{r \rightarrow \infty} \frac{\log(r)}{r^{\frac{2}{d}}} = \lim\limits_{r\rightarrow \infty} \frac{d}{2r^{\frac{2}{d}}} \rightarrow 0$, by L'Hôpital's rule. Therefore, for sufficiently large r, 
\begin{equation*}
    \frac{\log(\frac{\sqrt{C}}{c})}{r^{\frac{2}{d}}} + (p-2)\frac{\log(r)}{r^{\frac{2}{d}}} < \frac{ \kappa}{2}. 
\end{equation*} This is a contradiction. Therefore $q$ is not superharmonic. 
\end{proof}
\subsection{Proof of Theorem 2.5}
\begin{proof}
Under quadratic loss, the Bayes estimator is the posterior mean. So, 
\begin{equation*}
    \delta_{\text{BNN,fixed}} (\boldsymbol{y}) = \mathbb{E}[\boldsymbol{\theta}|\boldsymbol{Y} = \boldsymbol{y}]. 
\end{equation*}
Thus it suffices to compute the posterior mean. Fix $V=v$. Then the model becomes 
\begin{equation*}
    \boldsymbol{Y}|\boldsymbol{\theta} \sim N_{p}(\boldsymbol{\theta}, I_{p}) \text{ and } \boldsymbol{\theta}|V=v \sim N_{p}(\boldsymbol{0}_{p}, vI_{p}). 
\end{equation*}
Hence, 
\begin{align*}
    p(\boldsymbol{\theta} | \boldsymbol{y}, v) &= \frac{p(\boldsymbol{y}, \boldsymbol{\theta}|v)}{p(\boldsymbol{y}|v)}\\ 
    &= \frac{p(\boldsymbol{y}|\boldsymbol{\theta},v)p(\boldsymbol{\theta}|v)}{p(\boldsymbol{y}|v)}\\
    &\propto p(\boldsymbol{y}|\boldsymbol{\theta})p(\boldsymbol{\theta}|v)\\ 
    &\propto \exp \{ - \frac{1}{2}||\boldsymbol{y} - \boldsymbol{\theta}||^{2} \}\exp\{ - \frac{1}{2v}||\boldsymbol{\theta}||^{2} \}\\ 
    &= \exp \{ -\frac{1}{2}||\boldsymbol{y} - \boldsymbol{\theta}||^{2} - \frac{1}{2v}||\boldsymbol{\theta}||^{2} \}\\
    &= \exp \{ - \frac{1}{2}[||\boldsymbol{y}||^{2} - 2\boldsymbol{y}^{T}\boldsymbol{\theta} + (1+\frac{1}{v})||\boldsymbol{\theta}||^{2}] \}. 
\end{align*}
Now complete the square. Let $a:= 1+v^{-1}$. Then the expression becomes 
\begin{equation*}
    a||\boldsymbol{\theta}||^{2}- 2\boldsymbol{y}^{T}\boldsymbol{\theta} + ||\boldsymbol{y}||^{2}. 
\end{equation*}
We want to write this as $a||\boldsymbol{\theta}- \boldsymbol{m}||^{2} +$ a constant, where $\boldsymbol{m}$ and the constant need to be determined. Expanding gives 
\begin{equation*}
    a||\boldsymbol{\theta} - \boldsymbol{m}||^{2} = a||\boldsymbol{\theta}||^{2} - 2a\boldsymbol{m}^{T}\boldsymbol{\theta} + a||\boldsymbol{m}||^{2}. 
\end{equation*}
Matching terms gives $a\boldsymbol{m} = \boldsymbol{y}$ which implies $\boldsymbol{m} = a^{-1}\boldsymbol{y} = (v/1+v)\boldsymbol{y}$. The constant term does not depend on $\boldsymbol{\theta}$ and can be absorbed into the constant of proportionality. Then, 
\begin{equation*}
    p(\boldsymbol{\theta}|\boldsymbol{y},v) \propto \exp \{- \frac{1}{2}(1 + \frac{1}{v})||\boldsymbol{\theta} - \frac{v}{1+v}\boldsymbol{y}||^{2} \}. 
\end{equation*}
This implies 
\begin{equation*}
    \boldsymbol{\theta} | \boldsymbol{Y} = \boldsymbol{y},V=v \sim N_{p}(\frac{v}{1+v}\boldsymbol{y}, \frac{v}{1+v}I_{p}).
\end{equation*}
Then, 
\begin{equation*}
    \mathbb{E}[\boldsymbol{\theta}|\boldsymbol{Y} = \boldsymbol{y}, V=v] = \frac{v}{1+v} \boldsymbol{y}.
\end{equation*}
Then by the law of total expectation, 
\begin{align*}
    \mathbb{E}[\boldsymbol{\theta}|\boldsymbol{Y} = \boldsymbol{y}] &= \mathbb{E}[\mathbb{E}[\boldsymbol{\theta}|\boldsymbol{Y} = \boldsymbol{y}, V=v]|\boldsymbol{Y} = \boldsymbol{y}]\\
    &= \mathbb{E}[\frac{V}{1+V}\boldsymbol{y}|\boldsymbol{Y} = \boldsymbol{y}]\\ 
    &= \mathbb{E}[\frac{V}{1+V}|\boldsymbol{Y} = \boldsymbol{y}]\boldsymbol{y}. 
\end{align*}
Observe that 
\begin{align*}
    p(\boldsymbol{y}|v) &= \int\limits_{\mathbb{R}^{p}} p(\boldsymbol{y}|\boldsymbol{\theta}, v) p(\boldsymbol{\theta}|v)d\boldsymbol{\theta}\\ 
    & = \int\limits_{\mathbb{R}^{p}} \phi_{p}(\boldsymbol{y} - \boldsymbol{\theta}; \boldsymbol{0}_{p}, I_{p}) \phi_{p}(\boldsymbol{\theta}; \boldsymbol{0}, vI_{p}) d\boldsymbol{\theta}\\ 
    &= \phi_{p}(\boldsymbol{y}; \boldsymbol{0}_{p} , (1+v)I_{p}), \text{ by convolution of Gaussians}. 
\end{align*}
That is, $\boldsymbol{Y}|V=v \sim N_{p}(\boldsymbol{0}_{p}, (1+v)I_{p})$. Then, 
\begin{equation*}
    p(\boldsymbol{y}|v) = (2\pi)^{-\frac{p}{2}} (1+v)^{-\frac{p}{2}} \exp \{ - \frac{||\boldsymbol{y}||^{2}}{2(1+v)} \}. 
\end{equation*}
By Bayes' theorem, 
\begin{equation*}
    p(v|\boldsymbol{y}) \propto (1+v)^{-\frac{p}{2}}\exp \{ - \frac{||\boldsymbol{y}||^{2}}{2(1+v)} \} \pi_{V}(dv), 
\end{equation*}
which depends on $\boldsymbol{y}$ only through $||\boldsymbol{y}||^{{2}}$. Hence, 
\begin{equation*}
    \mathbb{E}[\frac{V}{1+V} |\boldsymbol{Y} = \boldsymbol{y}] = \mathbb{E}[\frac{V}{1+V}|||\boldsymbol{Y}||^{2} = ||\boldsymbol{y}||^{2}]. 
\end{equation*}
Therefore, 
\begin{equation*}
    \delta_{\text{BNN, fixed}} (\boldsymbol{y}) = \mathbb{E}[\frac{V}{1+V}|||\boldsymbol{Y}||^{2} = ||\boldsymbol{y}||^{2}]\boldsymbol{y}.
\end{equation*}
\end{proof}
\subsection{Proof of Theorem 2.6}
\begin{proof}
    We know that \begin{equation*}
        \delta(\boldsymbol{Y}) = \delta_{\text{BNN,fixed}}(\boldsymbol{Y}) = \mathbb{E}[\frac{V}{V+1}|||\boldsymbol{Y}||^{2}]\boldsymbol{Y}. 
    \end{equation*} Let $U:=||\boldsymbol{Y}||^{2}  = \sum\limits_{i=1}^{p}Y_{i}^{2}$. Then, \begin{equation*}
        a(u):= \mathbb{E}[\frac{V}{V+1}|U=u] \text{ and } \psi(u) := 1-a(u) = \mathbb{E}[\frac{1}{V+1}|U=u]. 
    \end{equation*} Therefore, \begin{equation*}
        \delta(\boldsymbol{y}) = (1-\psi(||\boldsymbol{y}||^{2}))\boldsymbol{\boldsymbol{y}}, \text{ with } 0 < \psi(u) < 1.  
    \end{equation*} 
Let $\pi(v)$ denote the unconditional density of $V$. Let 
\begin{equation*}
    g(\boldsymbol{y}) = \delta(\boldsymbol{y}) - \boldsymbol{y} =  -\psi(||\boldsymbol{y}||^{2})\boldsymbol{y}
\end{equation*}
Then, 
\begin{align*}
        ||g(\boldsymbol{y})|| &= ||-\psi(||\boldsymbol{y}||^{2})\boldsymbol{y}|| \\ 
        & = |-\psi(||\boldsymbol{y}||^{2})|||\boldsymbol{y}||\\
        & = \psi(||\boldsymbol{y}||^{2})||\boldsymbol{y}|| \\ 
        &\leq ||\boldsymbol{y}||, \text{ since } 0 < \psi(||\boldsymbol{y}||^{2}) < 1. 
    \end{align*} Then, 
    \begin{align*}
        \mathbb{E}_{\boldsymbol{\theta}}[||g(\boldsymbol{Y})||^{2}] & \leq  \mathbb{E}_{\boldsymbol{\theta}}[||\boldsymbol{Y}||^{2}]\\ 
        & = \mathbb{E}_{\boldsymbol{\theta}}[\sum\limits_{i=1}^{p}Y_{i}^{2}] \\
        &= \sum\limits_{i=1}^{p}\mathbb{E}_{\boldsymbol{\theta}}[Y_{i}^{2}]\\ 
        & = \sum\limits_{i=1}^{p} 1+ \boldsymbol{\theta}_{i}^{2}\\ 
        & = p +||\boldsymbol{\theta}||^{2} < \infty.
    \end{align*} Let $\Sigma_{V} := (1+V)I_{p}$. Since $\boldsymbol{Y}|V=v \sim N_{p} (\boldsymbol{0}_{p}, \Sigma_{v})$ we know that $\boldsymbol{Y}|V=v  \overset{d}{=}  \sqrt{1+v}\boldsymbol{Z}$, $\boldsymbol{Z} \sim N_{p}(\boldsymbol{0}_{p}, I_{p}).$ Then, $U = ||\boldsymbol{Y}||^{2} = ||\sqrt{1+v}\boldsymbol{Z}||^{2} = |\sqrt{1+v}|^{2}||\boldsymbol{Z}||^{2} = (1+v)W,$ where $W:= ||\boldsymbol{Z}||^{2}$. That is, $U|V = v \overset{d}{=} (1+v)W$. Since $u = (1+v)w$ we know \begin{equation*}
        w = \frac{u}{1+v} \implies \frac{dw}{du} = \frac{1}{1+v}. 
    \end{equation*} Then, 
\begin{align*}
    f_{U|V=v}(u|v) & = f_{W}(\frac{u}{1+v})|\frac{1}{1+v}|, \text{ where } W \overset{d}{=} \chi_{p}^{2}\\ 
    &= \frac{1}{1+v} \frac{1}{2^{\frac{p}{2}}\Gamma(\frac{p}{2})} (\frac{u}{1+v})^{\frac{p}{2}-1} \exp \{ - \frac{u}{2(1+v)} \}\\ 
    &= \frac{1}{2^{\frac{p}{2}}\Gamma(\frac{p}{2})} u^{\frac{p}{2}-1}(1+v)^{-\frac{p}{2}} \exp\{ - \frac{u}{2(1+v)} \}\\ 
    &\propto u^{\frac{p}{2}-1} (1+v)^{-\frac{p}{2}} \exp \{ - \frac{u}{2(1+v)} \}. 
\end{align*} Therefore, 
\begin{align*}
    f_{V|U=u}(v|u) &\propto u^{\frac{p}{2}-1} (1+v)^{-\frac{p}{2}} \exp \{ - \frac{u}{2(1+v)} \}\pi(v)\\
    & \propto (1+v)^{-\frac{p}{2}}\exp \{ - \frac{u}{2(1+v)} \}\pi(v). 
\end{align*} Let \begin{align*}
    D(u) := \mathbb{E}[(1+V)^{-\frac{p}{2}} \exp \{ - \frac{u}{2(1+V)} \}]  &= \int\limits_{0}^{\infty}\pi(v) (1+v)^{-\frac{p}{2}} \exp \{ - \frac{u}{2(1+v)} \}dv\\ 
    & \text{and}\\ 
    N(u):= \mathbb{E}[(1+V)^{-\frac{p}{2}-1}\exp\{ - \frac{u}{2(1+V)} \}] &= \int\limits_{0}^{\infty} \pi(v) (1+v)^{-\frac{p}{2}-1} \exp \{ - \frac{u}{2(1+v)} \}dv. 
\end{align*} Then \begin{equation*}
    f_{V|U=u}(v|u) = \frac{1}{D(u)} \pi(v) (1+v)^{-\frac{p}{2}} \exp \{ - \frac{u}{2(1+v)} \}. 
\end{equation*} Recall \begin{align*}
    \psi(u) &=  \mathbb{E}[\frac{1}{V+1}|U=u]\\ 
    & = \int\limits_{0}^{\infty} \frac{1}{1+v}\pi(v|u)dv\\
    &= \frac{1}{D(u)} \int\limits_{0}^{\infty} \frac{1}{1+v} \pi(v) (1+v)^{-\frac{p}{2}} \exp \{ - \frac{u}{2(1+v)} \}dv\\ 
    & = \frac{1}{D(u)} \int\limits_{0}^{\infty} \pi(v) (1+v)^{-\frac{p}{2} - 1} \exp \{ - \frac{u}{2(1+v)} \}dv \\ 
    & = \frac{N(u)}{D(u)}. 
\end{align*}
Let \begin{equation*}
    f(u,v):= (1+v)^{-\frac{p}{2}}\exp \{ - \frac{u}{2(1+v)} \}. 
\end{equation*} Note that \begin{align*}
    \frac{\partial }{\partial u} \{ (1+v)^{-\frac{p}{2}} \exp \{ - \frac{u}{2(1+v)} \} \} &= (1+v)^{-\frac{p}{2}} \frac{\partial}{\partial u} \{ \exp\{- \frac{u}{2(1+v)} \} \}\\ 
    & = (1+v)^{-\frac{p}{2}} (- \frac{1}{2(1+v)}) \exp \{ - \frac{u}{2(1+v)} \} \\ 
    &= - \frac{1}{2} (1+v)^{-\frac{p}{2}-1} \exp \{ - \frac{u}{2(1+v)} \}. 
\end{align*} For $h\neq 0$, \begin{equation*}
\frac{D(u+h) - D(u)}{h} = \int\limits_{0}^{\infty}\frac{f(u+h, v) - f(u,v)}{h} \mu(dv). 
\end{equation*} For fixed $v$, apply the single variable mean value theorem to $t \rightarrow f(t,v)$ on the interval between $u$ and $u+h$, $\exists \xi = \xi(h,v) \in (0,1)$ such that \begin{equation*}
    \frac{f(u+h,v) - f(u,v)}{h} = \frac{\partial}{\partial u} f(u+\xi h, v). 
\end{equation*} Therefore, \begin{align*}
    |\frac{f(u+h, v) - f(u,v)}{h}| & = |\frac{\partial}{\partial u} f(u+\xi h, v)| \\ 
    & = \frac{1}{2}(1+v)^{-\frac{p}{2}-1} \exp \{ - \frac{u+\xi h}{2(1+v)} \} \\ 
    &\leq \frac{1}{2} (1+v)^{-\frac{p}{2}-1 }.
\end{align*}
Note that \begin{equation*}
    g(v) := \frac{1}{2}(1+v)^{-\frac{p}{2}-1}
\end{equation*}  is $\mu-$ integrable. Particularly $0 \leq (1+v)^{-\frac{p}{2}-1} \leq 1, \forall v\geq 0$. Therefore, \begin{equation*}
    \int g(v) \mu(dv) \leq \frac{1}{2} \int 1\mu(dv) \leq \frac{1}{2}< \infty. \text{ Since } \lim\limits_{h \rightarrow 0 } \frac{f(u+h, v) - f(u,v)}{h} \rightarrow \frac{\partial}{\partial u} f(u,v), 
\end{equation*} by the dominated convergence theorem, \begin{align*}
    D'(u) & = \lim\limits_{h \rightarrow 0} \frac{D(u+h) - D(u)}{h} \\ 
    & = \int \lim\limits_{h \rightarrow 0} \frac{f(u+h, v) - f(u,v)}{h} \mu(dv)\\ 
    &= \int \frac{\partial}{\partial u} f(u,v) \mu(dv)\\ 
    &= -\frac{1}{2} \int (1+v)^{-\frac{p}{2}-1} \exp\{ - \frac{u}{2(1+v)} \} \mu(dv). 
\end{align*} That is, 
\begin{align*}
    D'(u) &= - \frac{1}{2}\mathbb{E}[(1+V)^{-\frac{p}{2}-1} \exp \{ - \frac{u}{2(1+V)} \}] \\ 
    & = - \frac{1}{2}N(u). 
\end{align*} Similarly, \begin{equation*}
    M(u) := \mathbb{E}[(1+V)^{-\frac{p}{2}-2} \exp \{ - \frac{u}{2(1+V)} \}], \text{ with } N'(u) = - \frac{1}{2} M(u). 
\end{equation*} Then, \begin{align*}
    \psi'(u) &= \frac{d}{du} \{ N(u) D(u)^{-1} \}\\ 
    &= D(u)^{-1} N'(u) + N(u)[-D(u)^{-2}D'(u)] \\ 
    & = \frac{N'(u)}{D(u)} - \frac{N(u)D'(u)}{D(u)^{2}}\\ 
    & = -\frac{1}{2} \frac{M(u)}{D(u)} + \frac{1}{2}(\frac{N(u)}{D(u)})^{2}. 
\end{align*} Observe that $\mathbb{P}(U \in du, V \in dv) =f_{U|V=v}(u|v) du \mu(dv)$. Then,  \begin{equation*}
    \mu(dv|u) = \frac{f_{U|V=v}(u|v)\mu(dv)}{ \int f_{U|V=v}(u|t) \mu(dt)}. 
\end{equation*} Now, \begin{align*}
    \mathbb{E}[(\frac{1}{1+V})^{2}|U=u] &= \int \frac{1}{(1+v)^{2}}\mu(dv|u) \\ 
    & = \frac{\int (1+v)^{-2}f_{U|V=v}(u|v)\mu(dv)}{\int f_{U|V=v} (u|t) \mu(dt)}\\ 
    & = \frac{\frac{u^{\frac{p}{2}-1}}{2^{\frac{p}{2}}\Gamma(\frac{p}{2})}\int (1+v)^{-\frac{p}{2}-2} \exp \{ - \frac{u}{2(1+v) } \} \mu(dv)}{\frac{u^{\frac{p}{2}-1}}{2^{\frac{p}{2}}\Gamma(\frac{p}{2})}\int(1+t)^{-\frac{p}{2}} \exp \{ - \frac{u}{2(1+t)} \}\mu (dt)} \\ 
    & = \frac{M(u)}{D(u)}. 
\end{align*} Similarly \begin{equation*}
    \mathbb{E}[\frac{1}{1+V}|U=u] = \frac{N(u)}{D(u)} = \psi(u). 
\end{equation*} Therefore \begin{align*}
    \psi'(u) & = -\frac{1}{2}(\mathbb{E}[(\frac{1}{1+V})^{2}|U=u] - \mathbb{E}[\frac{1}{1+V}|U=u]^{2})\\ 
    &= - \frac{1}{2} Var(\frac{1}{1+V}|U=u) \leq  0. 
\end{align*} This means \begin{align*}
    0 &\leq Var(\frac{1}{1+V}|U=u) \\ 
    &= \mathbb{E}[(\frac{1}{1+V})^{2}|U=u] - \mathbb{E}[\frac{1}{1+V}|U=u]^{2} \\ 
    &\leq  \mathbb{E}[(\frac{1}{1+V})^{2}|U=u] \\
    & \leq \mathbb{E}[\frac{1}{1+V}|U=u], \text{ since } \frac{1}{1+V} \in (0,1]\\
    &= \psi(u). 
\end{align*}
This implies \begin{align*}
    & - Var(\frac{1}{1+V}|U=u) \geq -\mathbb{E}[\frac{1}{1+V}|U= u]\\ 
    &\iff - \frac{1}{2}Var(\frac{1}{1+V}|U=u) \geq - \frac{1}{2}\mathbb{E}[\frac{1}{1+V}|U=u]\\ 
    &\iff \psi'(u) \geq - \frac{1}{2}\psi(u). 
\end{align*} Therefore, \begin{equation*}
    0 \geq \psi'(u) \geq - \frac{1}{2}\psi(u), \text{ where } \psi(u) \leq 1. 
\end{equation*} 
Note that \begin{align*}
    \frac{\partial}{\partial y_{i}} \{ - \psi(u) y_{i} \}& = y_{i}\frac{\partial}{\partial y_{i}} \{ - \psi(u) \} + (-\psi(u))\frac{\partial}{\partial y_{i}} \{ y_{i} \}\\ 
    &= -y_{i} \frac{\partial \psi (u)}{\partial y_{i}} - \psi(u)\\
    &= - y_{i} \frac{\partial \psi}{\partial u} \frac{\partial u}{\partial y_{i}} - \psi(u)\\ 
    & = -y_{i}\psi'(u) (2y_{i}) - \psi(u)\\ 
    &= -2y_{i}^{2}\psi'(u) - \psi(u). 
\end{align*}  
Therefore, 
\begin{align*}
    div(g(\boldsymbol
    {y})) = \sum\limits_{i=1}^{p} \frac{\partial g_{i}}{\partial y_{i}} &= \sum\limits_{i=1}^{p} [-\psi(u) - 2y_{i}^{2} \psi'(u) ]\\ 
    &= -p\psi(u) -2\psi'(u) \sum\limits_{i=1}^{p}y_{i}^{2}\\ 
    &= - p\psi(u) - 2||\boldsymbol{y}||^{2}\psi'(u). 
\end{align*}
So, 
\begin{equation*}
    |\nabla \cdot g(\boldsymbol{y})| \leq p|\psi(||\boldsymbol{y}||^{2})| + 2||\boldsymbol{y}||^{2} |\psi'(||\boldsymbol{y}||^{2})| \leq p + ||\boldsymbol{y}||^{2}. 
\end{equation*}
Hence, 
\begin{equation*}
    \mathbb{E}_{\theta}|\nabla\cdot g(\boldsymbol{Y})| \leq p + \mathbb{E}_{\theta}[||\boldsymbol{Y}||^{2}]  = 2p + ||\boldsymbol{\theta}||^{2} < \infty. 
\end{equation*}
Recall $\delta(\boldsymbol{y}) = g(\boldsymbol{y}) + \boldsymbol{y} $ with $g(\boldsymbol{y}) = -\psi(||\boldsymbol{y}||^{2})\boldsymbol{y}$.  From above we know $0 \geq \psi'(u) \geq - \frac{1}{2}$, that is $|\psi'(u)| \leq \frac{1}{2}$. From above we know that $g(\boldsymbol{y})$ is differentiable  and thus also weakly differentiable. Therefore we may use Stein's unbiased risk estimator (\cite{stein1981estimation}). Furthermore, 
\begin{align*}
    ||g(\boldsymbol{y})||^{2} &= ||-\psi(||\boldsymbol{y}||^{2}) \boldsymbol{y}||^{2}\\ 
    &= ||-\psi(||\boldsymbol{y}||^{2})||^{2}||\boldsymbol{y}||^{2}\\ 
    &= ||\psi(||\boldsymbol{y}||^{2})|| ^{2}||\boldsymbol{y}||^{2}\\ 
    &= \psi(||\boldsymbol{y}||^{2}) ^{2}||\boldsymbol{y}||^{2}. 
\end{align*} Therefore, 
\begin{align*}
    R(\boldsymbol{\theta}, \boldsymbol{\delta}) &= \mathbb{E}_{\boldsymbol{\theta}}[p+||g(\boldsymbol{Y})||^{2} +2div(g(\boldsymbol{Y}))] \\ 
    &= p + \mathbb{E}_{\boldsymbol{\theta}}[\psi(U)^{2} U - 2p\psi(U) - 4U\psi'(U)]. 
\end{align*} Then the excess risk integrand is given by, 
\begin{equation*}
    B(u) := \psi(u)^{2} u - 2p\psi(u) -4u\psi'(u). 
\end{equation*} Then $R(\boldsymbol{\theta}, \boldsymbol{\delta}) = p + \mathbb{E}_{\boldsymbol{\theta}}[B(U)]$. From above $\psi'(u) \leq 0$. Therefore $-4u\psi'(u) \geq 0$. Thus, \begin{equation*}
    B(u) \geq \psi(u)^{2}u - 2p\psi(u) = \psi(u) (u\psi(u) -2p). 
\end{equation*} Therefore, whenever $u\psi(u) > 2p $ we have $B(u) > 0$. If we can show that $u\psi(u) \rightarrow \infty$ as $u \rightarrow \infty$ then we can deduce non-minimaxity. Fix $\boldsymbol{k} = (k_{1}, \dots,k_{d-1})$. Conditional on $\boldsymbol{K} = \boldsymbol{k}, V = C \prod\limits_{\ell=1}^{d-1} T_{\ell}, T_{\ell}|\boldsymbol{K} = \boldsymbol{k} \sim \Gamma(\alpha_{\ell},1), \alpha_{\ell} = k_{\ell}/2$.
Recall, \begin{align*}
    f_{U|V=v}(u|v) & \propto u^{\frac{p}{2}-1} (1+v)^{-\frac{p}{2}} \exp \{ - \frac{u}{2(1+v)} \} \\
    & \text{ and }\\ 
    f_{U|T=t, \boldsymbol{K} = \boldsymbol{k}}(u|t, \boldsymbol{k}) &\propto u^{\frac{p}{2}-1} (1+C\prod\limits_{\ell=1}^{d-1}t_{\ell})^{-\frac{p}{2}} \exp \{ - \frac{u}{2(1+C\prod\limits_{\ell = 1}^{d-1}t_{\ell})} \}. 
\end{align*} By Bayes' theorem, 
\begin{equation*}
    \pi(t|U=u, \boldsymbol{K} = \boldsymbol{k}) = \frac{f_{U|t, \boldsymbol{k}}(u|t, \boldsymbol{k})\pi_{T| \boldsymbol{K}}(t|\boldsymbol{k})}{m_{U|\boldsymbol{K}= \boldsymbol{k}}(u|\boldsymbol{k})}, m_{U|\boldsymbol{K}}(u) = \int f_{U|t, \boldsymbol{k}}(u|t, \boldsymbol{k})\pi_{T| \boldsymbol{K}}(t|\boldsymbol{k}) dt. 
\end{equation*}
That is $\pi(t|U=u, \boldsymbol{K} = \boldsymbol{k}) \propto f_{U|t, \boldsymbol{k}}(u|t, \boldsymbol{k}) \pi_{T|\boldsymbol{K}}(t)$. Therefore, \begin{equation*}
    \pi(t|U=u, \boldsymbol{K}=\boldsymbol{k}) \propto u^{\frac{p}{2}-1}(1+C\prod\limits_{\ell = 1}^{d-1}t_{\ell})^{-\frac{p}{2}} \exp \{ - \frac{u}{2(1+C\prod\limits_{\ell =1}^{d-1} t_{\ell})} \} \pi_{T|\boldsymbol{K}}(t). 
\end{equation*} Now, 
\begin{align*}
    \pi_{T_{1},\dots, T_{d-1}|\boldsymbol{K}}(t_{1}, \dots, t_{d-1}|\boldsymbol{k}) &= \prod\limits_{\ell = 1}^{d-1} f_{T_{\ell}|{K}_{\ell}}(t_{\ell}|k_{\ell}), \text{ by independence}, \\ 
    &= \prod\limits_{\ell = 1}^{d-1} \frac{1}{\Gamma(\alpha_{\ell})} t_{\ell}^{\alpha_{\ell}-1} \exp \{  - t_{\ell}\}\\ 
    &= (\prod\limits_{\ell=1}^{d-1} \Gamma(\alpha_{\ell})^{-1}) (\prod\limits_{\ell = 1}^{d-1} t_{\ell}^{\alpha_{\ell}-1})\exp\{ - \sum\limits_{\ell = 1}^{d-1} t_{\ell} \} \\ 
    &\propto (\prod\limits_{\ell = 1}^{d-1} t_{\ell}^{\alpha_{\ell}-1}) \exp \{ - \sum\limits_{\ell = 1}^{d-1}t_{\ell} \}. 
\end{align*} Thus, \begin{align*}
    \pi(t|U=u, \boldsymbol{K} = \boldsymbol{k}) &\propto u^{\frac{p}{2}-1} (1+C\prod\limits_{\ell = 1}^{d-1} t_{\ell})^{-\frac{p}{2}} \exp \{ - \frac{u}{2(1+C\prod\limits_{\ell = 1}^{d-1}t_{\ell})} \} \\&\times(\prod\limits_{\ell = 1}^{d-1} t_{\ell}^{\alpha_{\ell}-1})\exp \{ - \sum\limits_{\ell = 1}^{d-1} t_{\ell} \} \\ 
    &\propto (1+C\prod\limits_{\ell = 1}^{d-1} t_{\ell})^{- \frac{p}{2}} (\prod\limits_{\ell = 1}^{d-1} t_{\ell}^{\alpha _{\ell}-1})  \exp \{- \sum\limits_{\ell = 1}^{d-1} t_{\ell} - \frac{u}{2(1+C\prod\limits_{\ell = 1}^{d-1} t_{\ell})} \}. 
\end{align*} 
Let $\boldsymbol{t} = (t_{1}, \dots, t_{d-1}) \in (0,\infty)^{d-1}.$ Let $x = (\prod\limits_{\ell = 1}^{d-1} t_{\ell})^{\frac{1}{d-1}}$. Then $\prod\limits_{\ell = 1}^{d-1} t_{\ell} = x^{d-1}$. Now $s_{j} := x^{-1} t_{j}$ for $j = 1, \dots, d-1$. Then, $t_{j} = xs_{j}$. This implies \begin{equation*}
    \prod\limits_{\ell = 1}^{d-1} s_{\ell} = \prod\limits_{\ell = 1}^{d-1}\frac{t_{\ell}}{x} = \frac{\prod\limits_{\ell = 1}^{d-1}t_{\ell}}{x^{d-1}} = \frac{x^{d-1}}{x^{d-1}} = 1. 
\end{equation*} 
Therefore, $\prod\limits_{\ell = 1}^{d-1}s_{\ell} = 1$. This means not all the $s_{j}$'s are independent. We will keep $s_{1}, \dots, s_{d-2}$ free and define $s_{d-1} := (\prod\limits_{\ell = 1}^{d-2} s_{\ell})^{-1}$. Then $\prod\limits_{\ell = 1}^{d-1} s_{\ell} = (\prod\limits_{\ell = 1}^{d-2}s_{\ell})s_{d-1} = (\prod\limits_{\ell = 1}^{d-2} s_{\ell}) (\prod\limits_{\ell = 1}^{d-2}s_{\ell})^{-1}$ which equals $1$. In summary we have $\boldsymbol{t} \rightarrow (x, s_{1},\dots, s_{d-2})$ where $x = (\prod\limits_{\ell = 1}^{d-1}t_{\ell})^{\frac{1}{d-1}}$, $s_{i} = x^{-1}t_{i}, i = 1,\dots, d-2$. The inverse map is given by $(x, s_{1}, \dots, s_{d-2}) \rightarrow \boldsymbol{t}$ where $t_{i} = xs_{i}, i = 1,\dots, d-2, t_{d-1} = xs_{d-1} = x(\prod\limits_{\ell = 1}^{d-2}s_{\ell})^{-1}$. Let $r:= \log(x), u_{i}:= \log(s_{i}),$ $i = 1, \dots, d-2$. Then
\begin{equation*}
    s_{d-1} = (\prod\limits_{\ell = 1}^{d-2}s_{\ell})^{-1} \implies \log(s_{d-1}) = \log(\prod\limits_{\ell = 1}^{d-2}s_{\ell})^{-1} = - \sum\limits_{\ell = 1}^{d-2} \log(s_{\ell}) = - \sum\limits_{\ell = 1}^{d-2}u_{\ell}.
\end{equation*}
Now $z_{j}:= \log(t_{j}) = \log(xs_{j}) = \log(x) + \log(s_{j}) = r + u_{j}, j = 1, \dots, d-2$. Then  $z_{d-1} = r- \sum\limits_{\ell = 1}^{d-2} u_{\ell} $. Let \begin{equation*}
    A = \frac{\partial(z_{1}, \dots, z_{d-1})}{\partial(r, u_{1}, \dots, u_{d-2})}. 
\end{equation*} Then for $i \leq d-2, $ \begin{equation*}
    \frac{\partial z_{i}}{\partial r} = 1, \frac{\partial z_{i}}{\partial u_{i}} = 1, \text{ and } \frac{\partial z_{i}}{\partial u_{j}} = 0 \text{ for } i\neq j. 
\end{equation*} Then row $i$ of A is given by $[1,\dots,0,1,0,\dots, 0]$ where the second $1$ is in the column corresponding to $u_{i}$. For row $d-1$ of A, \begin{equation*}
    \frac{\partial z_{d-1}}{\partial r} = 1\text{ and } \frac{\partial z_{d-1}}{\partial u_{j}} = -1 \text{ for } j = 1, \dots, d-2. 
\end{equation*} So the last row of A is $[1,-1, \dots, -1]$. Then,  \[
A =
\begin{bmatrix}
1 & 1 & 0 & 0 & \dots & 0\\ 
1 & 0 & 1 & 0 & \dots &0\\ 
1 & 0 & 0 & 1 & \dots & 0 \\ 
\vdots & \vdots & \vdots & \vdots & \vdots & \vdots\\ 
1& 0 & 0 & 0 & \dots & 1\\ 
1 & - 1& -1 & -1 & \dots & -1
\end{bmatrix}. 
\] Now $R_{2}, \dots , R_{d-1} \rightarrow R_{2}, \dots, R_{d-1}-R_{1}$. Then $R_{2}, \dots, R_{d-2} = [0,-1,1,0,\dots, 0]$ with a $-1$ in the column corresponding to $u_{1}$ and a 1 in column $u_{i}, i = 2, \dots, d-2$. Furthermore, $R_{d-1} = [0,-2, -1, \dots, -1]$. Then the transformed matrix is given by \[
A^{'} =
\begin{bmatrix}
1 & 1& 0 & 0 & \dots &0\\ 
0 & -1 & 1&0 & \dots &0\\ 
0 & -1 & 0 & 1 & \dots & 0\\ 
\vdots & \vdots & \vdots & \vdots & \vdots & \vdots \\ 
0 & -2 & -1 & -1 & \dots & -1
\end{bmatrix}. 
\] Then $\det(A) = \det(A^{'})$. We can use Laplace expansion by deleting row $1$ and column $1$. This gives, \[
B =
\begin{bmatrix}
-1 & 1 & 0 & 0 & \dots & 0\\
-1 & 0 & 1 & 0 & \dots &0\\ 
-1 & 0 & 0 & 1 & \dots & 0\\ 
\vdots & \vdots & \vdots & \vdots & \vdots & \vdots\\ 
-1 & 0 & 0 & 0 & \dots & 1\\ 
-2 & -1 & -1 & -1 & \dots & -1
\end{bmatrix}. 
\] Now $R_{d-2} \rightarrow R_{d-2} + R_{1} + R_{2} + \dots+ R_{d-3}.$ So in column 1 row $d-2$,  $-2 + (d-3)(-1)  = -2 - (d-3) = -2 -d+3 = -(d-1)$. Then column $2$ to column $d-2$ we have $-1+1 = 0$. Therefore the last row becomes $(-(d-1), 0, 0 , \dots, 0)$. So the transformed B matrix is lower triangular except the last row. Therefore the determinant is equal to $|\det(A)| = d-1$. We have $t_{j} = e^{z_{j}}$. This gives \begin{equation*}
    \frac{dt_{j}}{dz_{j}} = e^{z_{j}} \text{ i.e } dt_{j} = e^{z_{j}} dz_{j} .
\end{equation*} Then \begin{align*}
    dt_{1}\dots dt_{d-1} &= (e^{z_{1}}dz_{1})(e^{z_{2}}dz_{2}) \dots (e^{z_{d-1}}dz_{d-1})\\
    &= (\prod\limits_{j=1}^{d-1}e^{z_{j}})dz_{1} \dots dz_{d-1}\\
    &= (\prod\limits_{j=1}^{d-1}t_{j}) dz_{1}\dots dz_{d-1}.
\end{align*}
So \begin{equation*}
    dz_{1}\dots dz_{d-1} = |\det(A)|drdu_{1}\dots du_{d-2} = (d-1)drdu_{1}\dots du_{d-2}
\end{equation*}
  We also know $dr = x^{-1}dx$ and $du_{i} = s_{i}^{-1} ds_{i}$. Then  \begin{equation*}
      dt_{1}\dots dt_{d-1} = x^{d-1} (d-1)x^{-1}dx \prod\limits_{j=1}^{d-2} s_{j}^{-1}ds_{j} = (d-1)x^{d-2}(\prod\limits_{j=1}^{d-2} s_{j}^{-1})dxds_{1}\dots ds_{d-2}.
  \end{equation*}
  Now we can rewrite the kernel of $p(t|U=u, \boldsymbol{K} = \boldsymbol{k})$. Observe 
  \begin{equation*}
      (1+C\prod\limits_{\ell = 1}^{d-1} t_{\ell})^{-\frac{p}{2}} = (1+Cx^{d-1})^{-\frac{p}{2}} \text{ and } \exp \{ - \frac{u}{2(1+C \prod\limits_{\ell =1}^{d-1}t_{\ell})} \} = \exp \{ - \frac{u}{2(1+Cx^{d-1})} \}.
  \end{equation*}
Furthermore, $\prod\limits_{\ell = 1}^{d-1} t_{\ell}^{\alpha_{\ell}-1} = \prod\limits_{\ell = 1}^{d-1} (xs_{\ell})^{\alpha_{\ell}-1} = x^{\sum\limits_{\ell = 1}^{d-1}(\alpha_{\ell} -1)} \prod\limits_{\ell = 1}^{d-1} s_{\ell}^{\alpha_{\ell}-1}$. Let $\alpha_{\cdot} = \sum\limits_{\ell=1}^{d-1} \alpha_{\ell}$. Then $\sum\limits_{\ell = 1}^{d-1} (\alpha_{\ell} -1) = \alpha_{\cdot} - (d-1)$. So, $\prod\limits_{\ell = 1}^{d-1} t_{\ell}^{\alpha_{\ell}-1} = x^{\alpha_{\cdot}-(d-1)} \prod\limits_{\ell = 1}^{d-1} s_{\ell}^{\alpha_{\ell}-1}$. Finally \\$\exp \{ -\sum\limits_{\ell = 1}^{d-1} t_{\ell} \} = \exp \{ - xA(\boldsymbol{s}) \}$, where $A(\boldsymbol{s}) := \sum\limits_{\ell = 1}^{d-1} s_{\ell}$. Now, \begin{align*}
    \pi_{u, \boldsymbol{k}}(x,\boldsymbol{s}) &\propto \pi_{u, \boldsymbol{k}} (t(x,\boldsymbol{s})) \cdot (d-1) x^{d-2} \prod\limits_{\ell = 1}^{d-2}s_{\ell}^{-1} \\ 
    &\propto (1+Cx^{d-1}) ^{-\frac{p}{2}} x^{\alpha_{\cdot}-(d-1)} \prod\limits_{\ell=1}^{d-1} s_{\ell}^{\alpha_{\ell} -1} \exp \{ -xA(\boldsymbol{s}) \}\exp \{ - \frac{u}{2(1+Cx^{d-1})} \}\\
    &\times(d-1)x^{d-2}\prod\limits_{\ell = 1}^{d-2}s_{\ell}^{-1}\\ 
    &\propto (1+Cx^{d-1})^{-\frac{p}{2}} \exp \{ -xA(\boldsymbol{s}) - \frac{u}{2(1+Cx^{d-1})} \}x^{\alpha_{\cdot} -d+1+d-2}\\ 
    &\times (\prod\limits_{\ell = 1}^{d-1}s_{\ell}^{\alpha_{\ell}-1}) (\prod\limits_{\ell = 1}^{d-2}s_{\ell}^{-1}) \\ 
    &= (1+Cx^{d-1}) ^{-\frac{p}{2}} \exp \{ -xA(\boldsymbol{s}) - \frac{u}{2(1+Cx^{d-1})}\}x^{\alpha_{\cdot}-1}
    (\prod\limits_{\ell=1}^{d-1}s_{\ell}^{\alpha_{\ell}-1})(\prod\limits_{\ell = 1}^{d-2}s_{\ell}^{-1}). 
\end{align*} Let \begin{align*}
    Q(\boldsymbol{s}) &:= (\prod\limits_{\ell = 1}^{d-1} s_{\ell}^{\alpha_{\ell}-1})(\prod\limits_{\ell =1}^{d-2}s_{\ell}^{-1})\\ 
    & = (\prod\limits_{\ell = 1}^{d-2}s_{\ell}^{\alpha_{\ell}-1})s_{d-1}^{\alpha_{d-1}-1} (\prod\limits_{\ell = 1}^{d-2} s_{\ell}^{-1})\\ 
    & = (\prod\limits_{\ell =1}^{d-2} s_{\ell}^{\alpha_{\ell}-1})[(\prod\limits_{\ell =1}^{d-2}s_{\ell})^{-1}]^{\alpha_{d-1}-1} (\prod\limits_{\ell = 1}^{d-2}s_{\ell}^{-1})\\ 
    &= (\prod\limits_{\ell =1}^{d-2}s_{\ell}^{\alpha_{\ell}-1})(\prod\limits_{\ell = 1}^{d-2}s_{\ell})^{-(\alpha_{d-1}-1)}(\prod\limits_{\ell = 1}^{d-2}s_{\ell}^{-1}) \\ 
    & = (\prod\limits_{\ell = 1}^{d-2} s_{\ell}^{\alpha_{\ell}-1})(\prod\limits_{\ell = 1}^{d-2} s_{\ell}^{-(\alpha_{d-1}-1)})(\prod\limits_{\ell = 1}^{d-2} s_{\ell}^{-1})\\ 
    &= \prod\limits_{\ell = 1}^{d-2}s_{\ell}^{\alpha_{\ell}-1}s_{\ell}^{-\alpha_{d-1}+1}s_{\ell}^{-1}\\ 
    & = \prod\limits_{\ell =1}^{d-2} s_{\ell}^{\alpha_{\ell}-\alpha_{d-1}-1}.
\end{align*} Therefore, \begin{equation*}
    \pi_{u, \boldsymbol{k}}(x,\boldsymbol{s}) \propto (1+Cx^{d-1})^{-\frac{p}{2}}\exp \{ -xA(\boldsymbol{s}) - \frac{u}{2(1+Cx^{d-1})} \}x^{\alpha_{\cdot}-1}Q(\boldsymbol{s}). 
\end{equation*} Then, \begin{align*}
    \pi(x|u, \boldsymbol{k}) &\propto \int (1+Cx^{d-1})^{-\frac{p}{2}}\exp \{ - \frac{u}{2(1+Cx^{d-1})} \} x^{\alpha_{\cdot}-1} \exp \{-xA(\boldsymbol{s}) \}Q(\boldsymbol{s}) d\boldsymbol{s}\\ 
    &= (1+Cx^{d-1})^{-\frac{p}{2}}\exp \{ -\frac{u}{2(1+Cx^{d-1})} \}x^{\alpha_{\cdot}-1} \int \exp \{-xA(\boldsymbol{s}) \} Q(\boldsymbol{s}) ds\\ 
    &= (1+Cx^{d-1})^{-\frac{p}{2}} \exp \{ -\frac{u}{2(1+Cx^{d-1})} \}x^{\alpha_{\cdot}-1}H(x),
\end{align*} 
where 
\begin{equation*}
     H(x) := \int e^{-xA(\boldsymbol{s})}Q(\boldsymbol{s}) d\boldsymbol{s}.
\end{equation*}

Now we need to understand each term in $H(x)$. Firstly we will look at $A(\boldsymbol{s})$. Particularly, 
\begin{equation*}
    A(\boldsymbol{s}) = \sum\limits_{\ell = 1}^{d-1}s_{\ell} = \sum\limits_{\ell = 1}^{d-2}s_{\ell} + s_{d-1} = \sum\limits_{\ell = 1}^{d-2}s_{\ell} + (\prod\limits_{\ell = 1}^{d-2}s_{\ell})^{-1}. 
\end{equation*} By the AM-GM inequality \begin{equation*}
    \frac{1}{d-1}\sum\limits_{\ell = 1}^{d-1} s_{\ell} \geq (\prod\limits_{\ell = 1}^{d-1} s_{\ell})^{\frac{1}{d-1}} = 1. 
\end{equation*} That is $A(\boldsymbol{s}) = \sum\limits_{\ell =1}^{d-1} s_{\ell} \geq d-1$.Note that equality occurs when $s_{1} = \dots = s_{d-1} = \eta$. Then $\prod\limits_{\ell =1}^{d-1} s_{\ell} = \eta^{d-1} = 1 \implies \eta = 1$. Therefore, $s_{1} = \dots = s_{d-1} = 1$. Therefore $A(\boldsymbol{s})$ has a unique global minimizer at $\boldsymbol{s}^{*} = (1,\dots, 1) \in (0, \infty )^{d-2}$ with $A(\boldsymbol{s}^{*}) = d-1. $ Observe that for $i \in \{ 1, \dots, d-2 \}$, \begin{align*}
    \frac{\partial s_{d-1}}{\partial s_{i}} &= \frac{\partial }{\partial s_{i}} \{ (\prod\limits_{\ell =1}^{d-2}s_{\ell})^{-1} \} \\ 
    & = -(\prod\limits_{\ell = 1}^{d-2} s_{\ell })^{-2}\frac{\partial}{\partial s_{i}}\{\prod \limits_{\ell =1}^{d-2} s_{\ell} \} \\ 
    & = - (\prod\limits_{\ell =1}^{d-2} s_{\ell})^{-2} \frac{\partial}{\partial s_{i}} \{ s_{i} \prod\limits_{\ell \neq i}^{d-2} s_{\ell} \} \\ 
    & = -(\prod\limits_{\ell =1}^{d-2}s_{\ell})^{-2}\prod\limits_{\ell \neq i}^{d-2} s_{\ell}\\ 
    & = - (\prod\limits_{\ell =1}^{d-2}s_{\ell})^{-2}s_{i}^{-1}\prod\limits_{\ell = 1}^{d-2}s_{\ell} \\ 
    & = - \frac{(\prod\limits_{\ell =1}^{d-2}s_{\ell})^{-1}}{s_{i}}\\ 
    & = -\frac{s_{d-1}}{s_{i}}. 
\end{align*} Then, \begin{align*}
    \frac{\partial^{2}s_{d-1}}{\partial s_{i}^{2}} & = \frac{\partial}{\partial s_{i}} \{ -s_{i}^{-1}s_{d-1} \}\\ 
    & = -s_{i}^{-1} \frac{\partial s_{d-1}}{\partial s_{i}} + s_{d-1} \frac{\partial }{\partial s_{i}} \{- s_{i}^{-1} \}\\
    & = - \frac{1}{s_{i}}(-\frac{s_{d-1}}{s_{i}}) +s_{d-1}(s_{i}^{-2})\\ 
    & = \frac{s_{d-1}}{s_{i}^{2}} + \frac{s_{d-1}}{s_{i}^{2}}\\ 
    & = \frac{2s_{d-1}}{s_{i}^{2}}. 
\end{align*} Furthermore, \begin{equation*}
    \frac{\partial^{2}s_{d-1}}{\partial s_{i}\partial s_{j}} = \frac{\partial}{\partial s_{j}} \{ -\frac{s_{d-1}}{s_{i}} \} = - \frac{1}{s_{i}} \frac{\partial s_{d-1}}{\partial s_{j}} = - \frac{1}{s_{i}}(-\frac{s_{d-1}}{s_{j}}) = \frac{s_{d-1}}{s_{i}s_{j}}. 
\end{equation*} Then \begin{equation*}
    \frac{\partial A}{\partial s_{i}} = \sum\limits_{\ell = 1}^{d-2} \frac{\partial}{\partial s_{i}}s_{\ell} + \frac{\partial s_{d-1}}{\partial s_{i}} = 1- \frac{s_{d-1}}{s_{i}}. 
\end{equation*} Then at $\boldsymbol{s}^{*}$ we have $\nabla A(\boldsymbol{s}^{*}) = 0$. Then at $\boldsymbol{s}^{*}$ we have $\nabla^{2} A(\boldsymbol{s}^{*}) = I_{d-2} + \boldsymbol{1}\boldsymbol{1}^{T}$. The eigenvalues of $\nabla^{2}A(\boldsymbol{s}^{*})$ are 1 with multiplicity $d-3$ and $d-1$ with multiplicity 1. Therefore $\nabla^{2}A(\boldsymbol{s}^{*})$ is positive definite. Let $w_{i} := s_{i}^{-1}$ and $\boldsymbol{w}= (w_{1}, \dots, w_{d-2})^{T}$. Then the matrix $\boldsymbol{w}\boldsymbol{w}^{T}$ has entries $w_{i}w_{j} = (s_{i}s_{j})^{-1}$. Furthermore $\text{diag}(w_{i}^{2})$ has entries $s_{i}^{-2}$. Then 
\begin{equation*}
    \nabla^{2}A(\boldsymbol{s}) = s_{d-1}(\boldsymbol{w}\boldsymbol{w}^{T} + \text{diag}(w_{1}^{2},\dots, w_{d-2}^{2})).
\end{equation*}
We know $\boldsymbol{w}\boldsymbol{w}^{T} $ is positive semi-definite and $diag(w_{1}^{2},\dots, w_{d-2}^{2})$ is positive definite. Then $\nabla^{2}A(\boldsymbol{s})$ is positive definite for every $\boldsymbol{s} \in (0,\infty)^{d-2}$. Now fix $r \in (0,1)$. If $||\boldsymbol{s}-\boldsymbol{s}_{*}|| \leq r$ then $r^{2} \geq ||\boldsymbol{s}-\boldsymbol{s}_{*}||^{2} = \sum\limits_{i=1}^{d-2} (s_{i}-1)^{2} \geq (s_{i}-1)^{2}, i \in \{ 1, \dots, d-2 \}$. Therefore, $|s_{i}-1| \leq r \iff - r \leq s_{i}-1 \leq r \iff 1-r \leq s_{i} \leq 1+r$. This implies \begin{equation*}
    (1-r)^{d-2} \leq \prod\limits_{i=1}^{d-2}s_{i} \leq (1+r)^{d-2}. 
\end{equation*} Hence, \begin{equation*}
    (1+r)^{-(d-2)} \leq (\prod\limits_{i=1}^{d-2} s_{i})^{-1} \leq (1-r)^{-(d-2)} \text{ i.e. } (1+r)^{-(d-2)} \leq s_{d-1} \leq (1-r)^{-(d-2)}. 
\end{equation*} We know for any vector $\boldsymbol{z}$, \begin{equation*}
    \boldsymbol{z}^{T}(\boldsymbol{w}\boldsymbol{w}^{T} + diag(w_{1}^{2},\dots,w_{d-2}^{2}))\boldsymbol{z} \geq \boldsymbol{z}^{T} diag(w_{1}^{2}, \dots, w_{d-2}^{2}) \boldsymbol{z} \geq (\min\limits_{1 \leq i \leq d-2} w_{i}^{2})||\boldsymbol{z}||^{2}. 
\end{equation*} Then, \begin{align*}
    \lambda_{\text{min}}(\nabla^{2} A(\boldsymbol{s})) &= \min\limits_{||\boldsymbol{z}|| =1} \boldsymbol{z}^{T} \nabla^{2}A(\boldsymbol{s}) \boldsymbol{z} \\ 
    &\geq \min\limits_{||\boldsymbol{z}|| = 1} s_{d-1}(\min\limits_{1\leq i \leq d-2}w_{i}^{2})||\boldsymbol{z}||^{2}\\
    &= s_{d-1}\min\limits_{1 \leq i \leq d-2}w_{i}^{2}\\ 
    &= s_{d-1}\min\limits_{1\leq i \leq d-2} s_{i}^{-2}\\ 
    &= \frac{s_{d-1}}{(\max\limits_{1\leq i \leq d-2}s_{i})^{2}}.
\end{align*} On $||\boldsymbol{s}-\boldsymbol{1}_{d-2}|| \leq r$, $\max\limits_{1 \leq i \leq d-2}s_{i} \leq 1+r$. Furthermore, $s_{d-1} \geq (1+r)^{-(d-2)}$. Then, \begin{equation*}
    \lambda_{\text{min}} (\nabla^{2} A(\boldsymbol{s})) \geq \frac{(1+r)^{-(d-2)}}{(1+r)^{2}} = (1+r)^{-d +2} (1+r)^{-2}= (1+r)^{-d}. 
\end{equation*}  Recall that for PSD matrices $A$ and $B$ $\lambda_{\text{max}}(A+B) \leq \lambda_{\text{max}}(A) + \lambda_{\text{max}}(B)$. Then, \begin{align*}
    \lambda_{\text{max}}(\nabla^{2}A(\boldsymbol{s})) &= \lambda_{\text{max}}(s_{d-1}[\boldsymbol{w}\boldsymbol{w}^{T} + diag(w_{1}^{2}, \dots, w_{d-2}^{2})])\\ 
    & = s_{d-1} \lambda_{\text{max}}(\boldsymbol{w}\boldsymbol{w}^{T} + diag(w_{1}^{2}, \dots, w_{d-2}^{2})) \\ 
    & \leq s_{d-1}[\lambda_{\text{max}}(\boldsymbol{w}\boldsymbol{w}^{T}) + \lambda_{\text{max}}(diag(w_{1}^{2}, \dots, w_{d-2}^{2}))]\\ 
    & = s_{d-1}[||\boldsymbol{w}||^{2} + \max\limits_{1\leq i \leq d-2} w_{i}^{2}] . 
\end{align*} Recall, on $||\boldsymbol{s}-\boldsymbol{1}_{d-2}|| \leq r, s_{i} \geq 1- r$. So $w_{i} = s_{i}^{-1} \leq (1-r)^{-1}$. Therefore, \begin{align*}
    ||\boldsymbol{w}||^{2} = \sum\limits_{i=1}^{d-2} w_{i}^{2} \leq \sum\limits_{i=1}^{d-2} \max\limits_{1\leq i \leq d-2} w_{i}^{2}  &\leq \sum\limits_{i=1}^{d-2} (\frac{1}{1-r})^{2} = \frac{d-2}{(1-r)^{2}},\\ 
    &\text{ and }\\ 
    \max\limits_{1 \leq i \leq d-2} w_{i}^{2} &\leq \frac{1}{(1-r)^{2}}.
\end{align*} 
 Thus, \begin{equation*}
    ||\boldsymbol{w}||^{2} + \max\limits_{1 \leq i \leq d-2} w_{i}^{2} \leq \frac{d-2}{(1-r)^{2}} + \frac{1}{(1-r)^{2}} = \frac{d-2+1}{(1-r)^{2}} = \frac{d-1}{(1-r)^{2}}. 
\end{equation*} Recall, on $||\boldsymbol{s}-\boldsymbol{1}_{d-2}|| \leq r$, $s_{d-1} \leq (1-r)^{-(d-2)}$. Thus, \begin{equation*}
    \lambda_{\text{max}}(\nabla^{2}A(\boldsymbol{s})) \leq (1-r)^{-(d-2)}\frac{d-1}{(1-r)^{2}} = (d-1)(1-r)^{-d+2}(1-r)^{-2}= (d-1)(1-r)^{-d}. 
\end{equation*} Hence, on $||\boldsymbol{s}-\boldsymbol{1}_{d-2}|| \leq r$, \begin{equation*}
    (1+r)^{-d} \leq \lambda_{\min}(\nabla^{2}A(\boldsymbol{s})) \leq\lambda_{\max}(\nabla^{2}A(\boldsymbol{s}))  \leq (d-1)(1-r)^{-d}. 
\end{equation*} Let $\Delta := \boldsymbol{s}-\boldsymbol{s}^{*}$ and recall that $\boldsymbol{s}^{*} = \boldsymbol{1}_{d-2}$. Let $g(t) := A(\boldsymbol{s}^{*} + t\boldsymbol{\Delta}), t \in [0,1]$. By Taylor's integral remainder theorem, \begin{align*}
    g(1) &= g(0) + \frac{g'(0)}{1!}(1-0) + \int\limits_{0}^{1}\frac{g''(t)}{1!} (1-t)^{1} dt\\ 
    &= g(0) + g'(0) + \int\limits_{0}^{1} (1-t)g''(t) dt. 
\end{align*} Note\begin{align*}
    \frac{dg}{dt} = \frac{d}{dt} {A}(\boldsymbol{s}^{*}+t\boldsymbol{\Delta}) &= \nabla {A}(\boldsymbol{s}^{*}+t\boldsymbol{\Delta})^{T}\boldsymbol{\Delta} \\ 
    &\text{and } \\ 
    \frac{d^{2}g}{dt^{2}} = \frac{d}{dt}(\nabla {A}(\boldsymbol{s}^{*}+t\boldsymbol{\Delta})^{T}\boldsymbol{\Delta})& = \boldsymbol{\Delta}^{T}\nabla^{2}A(\boldsymbol{s}^{*}+t\boldsymbol{\Delta}) \boldsymbol{\Delta}. 
\end{align*}  Thus, 
\begin{align*}
    A(\boldsymbol{s}) &= A(\boldsymbol{s}^{*}) + \nabla A(\boldsymbol{s}^{*})^{T} \boldsymbol{\Delta} + \int\limits_{0}^{1} (1-t) \boldsymbol{\Delta}^{T}(\nabla^{2}A(\boldsymbol{s}^{*} + t\boldsymbol{\Delta}))\Delta dt \\ 
    \text{ i.e  } A(\boldsymbol{s}) - A(\boldsymbol{s}^{*}) &= \int\limits_{0}^{1} (1-t)\boldsymbol{\Delta}^{T}(\nabla^{2}A(\boldsymbol{s}^{*} + t \boldsymbol{\Delta})) \boldsymbol{\Delta} dt. 
\end{align*} Then by the eigenvalue bounds, 
\begin{align*}
    \boldsymbol{\Delta}^{T}(\nabla^{2} A(\boldsymbol{s}^{*}+t\boldsymbol{\Delta}))\boldsymbol{\Delta} & \geq \lambda_{\text{min}}(\nabla^{2}A(\boldsymbol{s}^{*} + t\boldsymbol{\Delta})) ||\boldsymbol{\Delta}||^{2} \geq (1+r)^{-d}||\boldsymbol{\Delta}||^{2} \\ 
    & \text{and }\\ 
    \boldsymbol{\Delta}^{T}(\nabla^{2}A(\boldsymbol{s}^{*} + t\boldsymbol{\Delta}))\boldsymbol{\Delta} & \leq (d-1)(1-r)^{-d} ||\boldsymbol{\Delta}||^{2}. 
\end{align*}
Observe that $\int\limits_{0}^{1} (1-t)dt = 0.5$. Hence, \begin{equation*}
     \frac{1}{2}(1+r)^{-d} ||\boldsymbol{\Delta}||^{2} \leq A(\boldsymbol{s}) - A(\boldsymbol{s}^{*}) \leq \frac{1}{2}(d-1)(1-r)^{-d} ||\boldsymbol{\Delta}||^{2}
\end{equation*} 
if and only if 
\begin{equation*}
    (d-1) + \frac{1}{2}(1+r)^{-d} ||\boldsymbol{s}-\boldsymbol{1}_{d-2}||^{2} \leq A(\boldsymbol{s}) \leq (d-1) + \frac{1}{2}(d-1)(1-r)^{-d}||\boldsymbol{s}-\boldsymbol{1}_{d-2}||^{2},
\end{equation*}
on $||\boldsymbol{s}-\boldsymbol{1}_{d-2}|| \leq r.$ That is, on $||\boldsymbol{s}-\boldsymbol{s}^{*}|| \leq r$, 
\begin{equation*}
    (d-1) + C_{-} ||\boldsymbol{s}-\boldsymbol{1}_{d-2}||^{2} \leq A(\boldsymbol{s}) \leq (d-1) + C_{+}||\boldsymbol{s}-\boldsymbol{1}_{d-2}||^{2}, 
\end{equation*} 
\begin{equation*}
    C_{-} := \frac{1}{2}(1+r)^{-d} \text{ and } C_{+} := \frac{1}{2} (d-1)(1-r)^{-d}. 
\end{equation*}
Now we need to bound $Q(\boldsymbol{s})$ which will allow us to bound $H(x)$. Recall \begin{align*}
    Q(\boldsymbol{s})  &= \prod\limits_{\ell = 1}^{d-2}s_{\ell}^{\alpha_{\ell}-\alpha_{d-1} - 1}\\ 
    & = \prod\limits_{\ell = 1}^{d-2} s_{\ell}^{\gamma_{\ell}}, \gamma_{\ell}: = \alpha_{\ell} - \alpha_{d-1}-1, \ell = 1, \dots, d-2. 
\end{align*} Just like for $A(\boldsymbol{s})$ consider the ball $B(r) := \{ \boldsymbol{s} \in (0,\infty)^{d-2}:||\boldsymbol{s}-\boldsymbol{1}_{d-2}|| \leq r \}$. Again, this implies $1-r \leq s_{\ell} \leq 1+r$. Fix some $r_{0} \in (0,1)$. Then on $B(r_{0}), s_{\ell} \in [a,b] \forall \ell$, where $a:= 1- r_{0} >0$ and $b:= 1+r_{0}<\infty$. Now, fix an $\ell$. On the interval $[a,b] \subset (0,\infty)$, $t \rightarrow t^{\gamma_{\ell}}$ is continuous and monotone. This means for $\gamma_{\ell} \geq 0, a^{\gamma_{\ell}} \leq s_{\ell}^{\gamma_{\ell}} \leq b^{\gamma_{\ell}}$ and $ \gamma_{\ell} \leq 0, b^{\gamma_{\ell}} \leq s_{\ell}^{\gamma_{\ell}} \leq a^{\gamma_{\ell}}$. That is, \begin{equation*}
    \min \{ a^{\gamma_{\ell}}, b^{\gamma_{\ell}} \} \leq s_{\ell}^{\gamma_{\ell}} \leq \max \{ a^{\gamma_{\ell}}, b^{\gamma_{\ell}} \}.
\end{equation*} Then \begin{equation*}
    \prod\limits_{\ell = 1}^{d-2} \min \{ a^{\gamma_{\ell}}, b^{\gamma_{\ell}} \} \leq \prod\limits_{\ell = 1}^{d-2} s_{\ell}^{\gamma_{\ell}} \leq \prod\limits_{\ell =1}^{d-2} \max \{ a^{\gamma_{\ell}}, b^{\gamma_{\ell}} \}. 
\end{equation*} That is $q_{-} \leq Q(\boldsymbol{s}) \leq q_{+}, \forall s \in B(r_{0}),$ where $q_{-} := \prod\limits_{\ell = 1}^{d-2} \min \{ (1-r_{0})^{\gamma_{\ell}}, (1+r_{0})^{\gamma_{\ell}} \}$  and $q_{+} := \prod\limits_{\ell =1}^{d-2} \max \{ (1-r_{0})^{\gamma_{\ell}}, (1+r_{0})^{\gamma_{\ell}} \}$. Now, \begin{align*}
    H(x) & = \int e^{-xA(\boldsymbol{s})}Q(\boldsymbol{s}) d\boldsymbol{s}\\ 
    & \geq \int\limits_{B} e^{-xA(\boldsymbol{s})}Q(\boldsymbol{s}) d\boldsymbol{s}\\ 
    &\geq \int\limits_{B} e^{-x[(d-1)+C_{+}||\boldsymbol{s}-\boldsymbol{s}^{*}||^{2}]}q_{-}d\boldsymbol{s}\\ 
    & = q_{-} e^{-(d-1)x} \int\limits_{B} e^{-xC_{+}||\boldsymbol{s}-\boldsymbol{s}^{*}||^{2}}d\boldsymbol{s}. 
\end{align*} Let $\boldsymbol{z} = \sqrt{x} (\boldsymbol{s}-\boldsymbol{s}^{*})$. Then $\boldsymbol{s} = \boldsymbol{s}^{*} + x^{-\frac{1}{2}}\boldsymbol{z}$. Then $d\boldsymbol{s} = x^{-\frac{1}{2}} I_{d-2} d\boldsymbol{z}$. Therefore, \begin{equation*}
    |\det(\frac{d \boldsymbol{s}}{d \boldsymbol{z}})| = |\det(x^{-\frac{1}{2}}I_{d-2})| = x^{-\frac{(d-2)}{2}} \text{ i.e. } d\boldsymbol{s} = x^{- \frac{(d-2)}{2}}d\boldsymbol{z}. 
\end{equation*} Furthermore if $\boldsymbol{s} \in B$, then $||\boldsymbol{z}|| = \sqrt{x} ||\boldsymbol{s}-\boldsymbol{s}^{*}|| \leq \sqrt{x} r_{0}$. Therefore, \begin{align*}
    \int\limits_{B} e^{-xC_{+} ||\boldsymbol{s}-\boldsymbol{s}_{*}||^{2}} &= \int\limits_{||\boldsymbol{z}|| \leq r_{0} \sqrt{x}}e^{-C_{+} ||\boldsymbol{z}||^{2}}x^{- \frac{(d-2)}{2}}d\boldsymbol{z} \\ & = x^{-\frac{(d-2)}{2}} \int\limits_{||\boldsymbol{z}|| \leq r_{0} \sqrt{x}} e^{-C_{+}||\boldsymbol{z}||^{2}}d\boldsymbol{z} \\ 
    & = x^{-\frac{(d-2)}{2}} I(x) , I(x) := \int\limits_{||\boldsymbol{z}|| \leq r_{0}\sqrt{x}} e^{-C_{+}||\boldsymbol{z}||^{2}}d\boldsymbol{z}. 
\end{align*} Observe that $I(x)$ is monotone increasing in $x$. If $x_{2} \geq x_{1}$ then $r_{0}\sqrt{x_{2}} \geq r_{0}\sqrt{x_{1}}$ so $||\boldsymbol{z}|| \leq r_{0} \sqrt{x_{1}} \subseteq ||\boldsymbol{z}|| \leq r_{0}\sqrt{x_{2}}$. Since the integrand is strictly positive $I(x_{2}) \geq I(x_{1})$. By the monotone convergence theorem  \begin{align*}
    \lim\limits_{x \rightarrow \infty} \int\limits_{||\boldsymbol{z}|| \leq r_{0} \sqrt{x}} e^{-C_{+}||\boldsymbol{z}||^{2}}dz &= \int\limits_{\mathbb{R}^{d-2}} e^{-C_{+}||\boldsymbol{z}||^{2}}dz\\ 
    &= \prod\limits_{\ell=1}^{d-2} \int\limits_{\mathbb{R}} e^{-C_{+}t^{2}} dt,\text{ by Tonelli's theorem} \\ 
    &= \prod\limits_{\ell = 1}^{d-2} (\frac{\sqrt{\pi}}{\sqrt{C_{+}}}) \\ 
    & = (\frac{\pi}{C_{+}})^{\frac{d-2}{2}}. 
\end{align*}  Then, $\exists x_{1}:\forall x \geq x_{1}, I(x) \geq b_{0}> 0$ for some number $b_{0}$.  Thus, \begin{equation*}
    \exists b_{1}>0, \exists x_{1}: \forall x\geq x_{1}, H(x) \geq q_{-} e^{-(d-1)x}x^{-\frac{(d-2)}{2}}b_{0} = b_{1}x^{-\frac{(d-2)}{2}} e^{-(d-1)x}, b_{1}:= q_{-}b_{0}. 
\end{equation*}Now we need to upper bound $H(x)$. To do this we will split the integral over $B$ and $B^{c}$. So, \begin{align*}
    H_{B}(x) &= \int\limits_{B} e^{-xA(\boldsymbol{s})} Q(\boldsymbol{s}) d\boldsymbol{s} \\ &\leq q_{+} e^{-(d-1)x} \int\limits_{B}e^{-xC_{-}||\boldsymbol{s}-\boldsymbol{s}^{*}||^{2}} d\boldsymbol{s} \\ 
    & \leq q_{+}e^{-(d-1)x}x^{-\frac{(d-2)}{2}} \int\limits_{\mathbb{R}^{d-2}}e^{-C_{-}||\boldsymbol{z}||^{2}} d\boldsymbol{z}\\ 
    &= b_{2}x^{-\frac{(d-2)}{2}} e^{-(d-1)x}. 
\end{align*} Recall that by the AM-GM inequality $A(\boldsymbol{s}) \geq d-1$ with equality if $\boldsymbol{s}=(1,\dots,1) = \boldsymbol{s}^{*}$. We now aim to show that that $\exists \eta > 0 : \forall \boldsymbol{s} \in B^{c}, A(\boldsymbol{s}) \geq d-1+\eta$. Fix $M>0$ and consider $S_{M} = \{ \boldsymbol{s} \in (0, \infty)^{d-2}: A(\boldsymbol{s}) \leq M \}$. As $A$ is continuous, $S_{M}$ is closed. Now we need boundedness away from $0$ and $\infty$. If $A(\boldsymbol{s}) \leq M$, for each $i$, 
\begin{equation*}
    s_{i} \leq \sum\limits_{\ell = 1}^{d-2} s_{\ell} \leq A(\boldsymbol{s}) \leq M.
\end{equation*}
So $s_{i} \leq M$. Also, $(\prod\limits_{\ell = 1}^{d-2}s_{\ell})^{-1} \leq A(\boldsymbol{s}) \leq M$ which implies $\prod\limits_{\ell = 1}^{d-2}s_{\ell} \geq \frac{1}{M}$. Therefore $\prod\limits_{\ell = 1}^{d-2}s_{\ell} = s_{i}\prod\limits_{\ell \neq i} s_{\ell} \leq s_{i} M^{d-3}.$ So $M^{-1} \leq \prod\limits_{\ell = 1}^{d-2} s_{\ell} \leq s_{i} M^{d-3}$ which implies $s_{i} \geq M^{-(d-2)}$. Thus $\forall \boldsymbol{s} \in S_{M}, M^{-(d-2)} \leq s_{i} \leq M, \forall i$. Therefore, $S_{M} \subseteq [M^{-(d-2)}, M]^{d-2}$ which is compact. As $S_{M}$ is closed $S_{M}$ is compact as a closed subset of a compact set is also compact. Note that $B^{c}:= \{ ||\boldsymbol{s}-\boldsymbol{s}^{*}|| > r_{0} \}$. Let $F:= \{ ||\boldsymbol{s}-\boldsymbol{s}^{*}|| \geq r_{0} \}$. Then $B^{c} \subseteq F$. Now we will show that $\exists \eta >0$ such that $A(\boldsymbol{s}) \geq (d-1) + \eta, \forall \boldsymbol{s} \in F$. Assume no such $\eta$ exists. Then $\inf\limits_{\boldsymbol{s} \in F} A(\boldsymbol{s}) = d-1$. Then we can pick $\boldsymbol{s}^{(n)} \in F$ such that $A(\boldsymbol{s}^{(n)}) \downarrow d-1$. In particular, $\forall$ sufficiently large $n$ we have $A(\boldsymbol{s}^{(n)}) \leq d$. Now consider $K: = F \cap S_{d}$. We know $\boldsymbol{s}^{(n)} \in K$, $\forall$ sufficiently large $n$. We know $F$ and $S_{d}$ are closed, therefore $K$ is closed. Since $S_{d}$ is bounded, $K$ is also bounded. Therefore $K$ is compact. Since $K$ is compact $\{ \boldsymbol{s}^{(n)} \} \subseteq K$ has a convergent subsequence $\boldsymbol{s}^{n_{k}} \rightarrow \bar{\boldsymbol{s}} \in K$. By continuity of $A, A(\bar{\boldsymbol{s}}) = \lim\limits_{k \rightarrow \infty} A(\boldsymbol{s}^{n_{k}}) = d-1$. From the AM-GM inequality we know $A(\boldsymbol{s}) = d-1$ iff $\boldsymbol{s}=\boldsymbol{s}^{*}$. Therefore $\bar{\boldsymbol{s}} = \boldsymbol{s}^{*}$. But $\bar{\boldsymbol{s}} \in K \subset F$ and every point in $F$ satisfies $||\boldsymbol{s}-\boldsymbol{s}^{*}|| \geq r_{0}$. Therefore, $\boldsymbol{s} \neq \boldsymbol{s}^{*}$ is a contradiction. Therefore $\inf\limits_{\boldsymbol{s} \in F} A(\boldsymbol{s}) > d-1$. Let $\eta : = \inf\limits_{\boldsymbol{s} \in F} A(\boldsymbol{s}) -(d-1) > 0$. Therefore, $A(\boldsymbol{s}) \geq (d-1) + \eta$. Now we are in a position to upper bound $H(x)$ on $B^{c}$. Note that \begin{align*}
    e^{-xA(\boldsymbol{s})} & = e^{-\frac{x}{2}((d-1) + \eta)}e^{-x(A(\boldsymbol{s}) - \frac{(d-1)+\eta}{2})}\\ 
    &\leq e^{-\frac{x}{2}((d-1) + \eta)} e^{-\frac{x}{2}A(\boldsymbol{s})}.
\end{align*} Then, 
\begin{align*}
    H_{B^{c}}(x) &= \int\limits_{B^{c}} e^{-xA(\boldsymbol{s})}Q(\boldsymbol{s}) ds \\ 
    &\leq e^{-\frac{x}{2} (d-1+\eta)} \int\limits_{B^{c}} e^{-\frac{x}{2}A(\boldsymbol{s})} Q(\boldsymbol{s}) d\boldsymbol{s}\\ 
    &\leq e^{-\frac{x}{2}(d-1+\eta)} \int\limits_{(0,\infty)^{d-2}} e^{-\frac{x}{2} A(\boldsymbol{s})}Q(\boldsymbol{s}) d\boldsymbol{s}\\ 
    & = e^{-\frac{x}{2}(d-1+\eta)} H(\frac{x}{2}).
\end{align*} Then, \begin{align*}
    H(x) &= H_{B}(x) + H_{B^{c}}(x) \\ 
    & \leq K_{B} x^{-\frac{d-2}{2}}e^{-(d-1)x} + e^{-\frac{x}{2}(d-1+\eta)}H(\frac{x}{2}), K_{B} = b_{2}. 
\end{align*} Let $M(x):= x^{\frac{d-2}{2}}e^{(d-1)x}H(x)$. This implies \begin{equation*}
    M(x) = x^{\frac{(d-2)}{2}}e^{(d-1)x} H(x) \leq K_{B}+x^{\frac{(d-2)}{2}}e^{(d-1)x}e^{-\frac{x}{2}(d-1+\eta)} H(\frac{x}{2}). 
\end{equation*} Observe that \begin{equation*}
    H(\frac{x}{2}) = (\frac{x}{2})^{-\frac{(d-2)}{2}}e^{-\frac{(d-1)x}{2}}M(\frac{x}{2}). 
\end{equation*} Hence, 
\begin{align*}
    M(x) &\leq K_{B} + x^{\frac{(d-2)}{2}}e^{(d-1)x}e^{-\frac{x}{2}(d-1+\eta)} (\frac{x}{2})^{- \frac{(d-2)}{2}}e^{- \frac{(d-1)x}{2}}M(\frac{x}{2})\\ 
    &= K_{B} + 2^{\frac{(d-2)}{2}}e^{-\frac{\eta x}{2}}M(\frac{x}{2}). 
\end{align*} That is \begin{equation*}
    M(x) \leq K_{B} + q(x) M(\frac{x}{2}), q(x) := 2^{\frac{(d-2)}{2}}e^{-\frac{\eta x}{2}}. 
\end{equation*} Choose $x_{0}$ such that $2^{\frac{d-2}{2}}e^{-\frac{\eta x_{0}}{2}} \leq \frac{1}{2}$. Then, \begin{equation*}
    M(x) \leq K_{B} + \frac{1}{2}M(\frac{x}{2}
    ) \forall x \geq x_{0}. 
\end{equation*} Since $A(\boldsymbol{s}) \geq 0$, $x \rightarrow H(x)$ is nonincreasing in $x$. So for any $x \in [x_{0}, 2x_{0}]$,\\ $ H(x) \leq H(x_{0})$. Therefore \begin{equation*}
    M(x) = x^{\frac{(d-2)}{2}}e^{(d-1)x}H(x) \leq (2x_{0})^{\frac{(d-2)}{2}}e^{2(d-1)x_{0}}H(x_{0}). 
\end{equation*} Let $M_{*}:= (2x_{0})^{\frac{(d-2)}{2}}e^{2(d-1)x_{0}}H(x_{0})$. Now we need to prove $M_{*}$ is finite. Once this is done we can utilize a recursive application of the $M(x) \leq K_{B} + \frac{1}{2}M(\frac{x}{2})$ inequality which will yield an upper bound on $H(x)$. Recall $H(x) = \int\limits_{(0,\infty)^{d-2}} e^{-xA(\boldsymbol{s})}Q(\boldsymbol{s}) d\boldsymbol{s}$ where\\ $A(\boldsymbol{s}) = \sum\limits_{\ell =1}^{d-2} s_{\ell} + (\prod\limits_{\ell=1}^{d-2}s_{\ell})^{-1}$ and $Q(\boldsymbol{s}) = \prod\limits_{\ell = 1 }^{d-2} s_{\ell}^{\beta_{\ell}}, \beta_{\ell} \in \mathbb{R}$. For each subset \\$J \subseteq \{ 1,\dots, d-2\}$ let $R_{J}= \{\boldsymbol{s}: 0 < s_{i} < 1, \text{ for } i \in J, s_{i} \geq 1 \text{ for } i \in J^{c} \}$. Then the $2^{d-2}$ sets are disjoint and their union is $(0, \infty)^{d-2}$. So $H(x) = \sum\limits_{J} \int\limits_{R_{J}} e^{-xA(\boldsymbol{s})}Q(\boldsymbol{s}) d\boldsymbol{s}$. On $R_{\phi} = [1,\infty) ^{d-2}, A(\boldsymbol{s}) = \sum\limits_{\ell = 1}^{d-2} s_{\ell} + (\prod\limits_{\ell = 1}^{d-2} s_{\ell})^{-1} \geq \sum\limits_{\ell = 1}^{d-2} s_{\ell}$.  Then $e^{-xA(\boldsymbol{s})} \leq e^{-x \sum\limits_{\ell = 1}^{d-2} s_{\ell}} $ since $x>0$. Therefore, \begin{align*}
    \int\limits_{R_{\phi}} e^{-xA(\boldsymbol{s})}Q(\boldsymbol{s}) &\leq \int\limits_{[1,\infty)^{d-2}} e^{-x \sum\limits_{\ell =1}^{d-2} s_{\ell} } \prod\limits_{\ell =1}^{d-2}  s_{\ell}^{\beta_{\ell}}d\boldsymbol{s} \\ 
    &= \prod\limits_{\ell = 1}^{d-2} \int\limits_{1}^{\infty} e^{-xs_{\ell}}s_{\ell}^{\beta_{\ell}} ds_{\ell}. 
\end{align*} Let $g(s) = {s}^{-N} e^{\frac{x}{2}{s}}$. Then \begin{equation*}
    \log(g({s})) = \frac{x}{2}{s} - N\log({s}) \rightarrow \infty \text{ as } s \rightarrow \infty. 
\end{equation*} Therefore $\exists S$ such that $\forall s \geq S, \log(g(s)) \geq 0.$ Then $g(s) \geq e^{0} = 1, \forall s \geq S$. Then, $\exists S$ such that $ \forall s \geq S, s^{-N}e^{\frac{x}{2}s} \geq 1$. That is $s^{N} \leq e^{\frac{x}{2}s}$. If $s \geq 1$ and $N > \beta $ then $s^{N - \beta} \geq 1. $ Then, $s^{\beta}s^{N-\beta} \geq s^{\beta}$. That is $s^{\beta} \leq s^{N}$. Then $e^{-xs}s^{\beta} \leq e^{-xs}s^{N} \leq e^{-xs}e^{\frac{x}{2}s}  = e^{-\frac{x}{2}s}$. This implies \begin{align*}
    \int\limits_{1}^{\infty} e^{-xs} s^{\beta} ds &= \int\limits_{1}^{S}e^{-xs}s^{\beta}ds + \int\limits_{S}^{\infty} e^{-xs}s^{\beta}ds \\ 
    & \leq \int\limits_{1}^{S} e^{-xs}s^{\beta} ds + \int\limits_{S}^{\infty} e^{- \frac{x}{2}s}ds \\ 
    &= \int\limits_{1}^{S} e^{-xs}s^{\beta}ds + \frac{2}{x} e^{-\frac{x}{2}S}< \infty. 
\end{align*} This implies $\int\limits_{R_{\phi}} e^{-xA(\boldsymbol{s})}Q(\boldsymbol{s}) d\boldsymbol{s} < \infty$. For a non-empty $J \subset \{ 1, \dots, d-2 \}$ with \\$|J| = k \geq 1$. Let $\boldsymbol{v} = (v_{\ell})_{\ell \in J^{c}} \in [1,\infty)^{d-2-k}$ and $\boldsymbol{u} = (u_{j})_{j \in J} \in (0,1)^{k}$. So on $R_{J}, s= (\boldsymbol{v},\boldsymbol{u})$ and $\prod\limits_{\ell =1}^{d-2} s_{\ell} = (\prod\limits_{\ell \in J^{c}} v_{\ell})(\prod\limits_{j \in J}u_{j})$ On $R_{J}$, all $u_{j} >0$ so $\sum\limits_{j \in J} u_{j} \geq 0$. Hence \begin{align*}
    A(\boldsymbol{v},\boldsymbol{u}) &= \sum\limits_{\ell \in J^{c}} v_{\ell} + \sum\limits_{j \in J} u_{j} + (\prod\limits_{\ell \in J^{c}}v_{\ell} \prod\limits_{j \in J} u_{j})^{-1} \\ 
    &\geq \sum\limits_{\ell \in J^{c}} v_{\ell} + (\prod\limits_{\ell  \in J^{c}} v_{\ell} \prod\limits_{j \in J} u_{j})^{-1}. 
\end{align*} Since $x>0$, \begin{equation*}
    \exp \{ - x A(\boldsymbol{v},\boldsymbol{u}) \} \leq \exp \{ - x \sum\limits_{\ell \in J^{c}} v_{\ell} \} \exp \{ - x(\prod\limits_{\ell \in J^{c}} v_{\ell} \prod\limits_{j \in J}u_{j})^{-1} \}. 
\end{equation*} So  \begin{equation*}
    \int\limits_{R_{J}}e^{-xA(\boldsymbol{s})}Q(\boldsymbol{s}) d\boldsymbol{s} \leq \int\limits_{[1,\infty)^{d-2-k}} \exp \{ - x \sum\limits_{\ell \in J^{c}} v_{\ell} \}(\prod\limits_{\ell \in J^{c}} v_{\ell}^{\beta_{\ell}})I_{J}(c(\boldsymbol{v}))d\boldsymbol{v},
\end{equation*} 
by Tonelli's theorem, where
\begin{equation*}
    c_{v}: = x \prod\limits_{\ell \in J^{c}} v_{\ell}^{-1} \in (0,x] \text{ and } I_{J}(c) : = \int\limits_{(0,1)^{k}} \exp \{ - c(\prod\limits_{j \in J} u_{j})^{-1} \} \prod\limits_{j \in J} u_{j}^{\beta_{j}}d\boldsymbol{u}. 
\end{equation*} Let $t_{j} = u_{j}^{-1} \in [1,\infty)$. Then $u_{j} = t_{j}^{-1}, du_{j} = -t_{j}^{-2}dt_{j}$, and $u_{j}^{\beta_{j}} = t_{j}^{-\beta_{j}}$. Furthermore $(\prod u_{j})^{-1} = \prod t_{j}$. Then, \begin{equation*}
    I_{J}(c) = \int\limits_{[1,\infty)^{k}} \exp \{ - c\prod\limits_{j=1}^{k}t_{j} \} \prod\limits_{j=1}^{k} t_{j}^{-(\beta_{j}+2)}d\boldsymbol{t}. 
\end{equation*} Let $M_{j} := \max \{ 0, -(\beta_{j}+2) \} \geq 0$. Since $t_{j} \geq 1, t_{j}^{- (\beta_{j}+2)} \leq t_{j}^{M_{j}}. $ Hence \begin{equation*}
    I_{J}(c) \leq \int\limits_{[1, \infty)^{k}} \exp \{ - c \prod\limits_{j=1}^{k}t_{j} \} \prod\limits_{j=1}^{k} t_{j}^{M_{j}}d\boldsymbol{t}. 
\end{equation*} Let $S_{\ell} := \{ \boldsymbol{t} \in [1, \infty)^{k}: t_{\ell} = \max\limits_{1\leq j \leq k} t_{j} \}$. On $S_{\ell}$, $\prod\limits_{j=1}^{k} t_{j} \geq t_{\ell}$ with $[1,\infty)^{k} = \bigcup\limits_{\ell = 1}^{k}S_{\ell}$. So 
\begin{equation*}
    \exp \{ - c\prod\limits_{j=1}^{k}t_{j} \} \leq \exp \{ - ct_{\ell}\} \text{ and }\prod\limits_{j=1}^{k}t_{j}^{M_{j}} \leq t_{\ell}^{\sum\limits_{j=1}^{k}M_{j}}.
\end{equation*}
Therefore, \begin{equation*}
    \int\limits_{S_{\ell}} \exp \{ - c \prod\limits_{j=1}^{k} t_{j} \} \prod\limits_{j=1}^{k} t_{j}^{M_{j}} d\boldsymbol{t} \leq \int\limits_{S_{\ell}} e^{-ct_{\ell}} t_{\ell}^{\sum\limits_{j=1}^{k}M_{j}}d\boldsymbol{t}. 
\end{equation*} For fixed $t_{\ell} = t$, the remaining $k-1$ coordinates lie in $[1,t]^{k-1}$ whose volume is \\$(t-1)^{k-1} \leq t^{k-1}$. So, 
\begin{align*}
    \int\limits_{S_{\ell}} \exp \{ - c \prod\limits_{j=1}^{k} t_{j} \} \prod\limits_{j=1}^{k}t_{j}^{M_{j}} d\boldsymbol{t} &\leq \int\limits_{1}^{\infty}e ^{-ct}t^{\sum\limits_{j=1}^{k}M_{j}}t^{k-1}dt\\
    &= \int\limits_{1}^{\infty} e^{-ct}t^{P}dt, P:= \sum\limits_{j=1}^{k}M_{j} + k-1 \geq 0. 
\end{align*} Then, 
\begin{equation*}
    \int\limits_{1}^{\infty} e^{-ct}t^{P} dt \leq \int\limits_{0}^{\infty} e^{-ct}t^{P}dt. 
\end{equation*}
Let $u = ct$ then $t = c^{-1}u$ and $dt = c^{-1}du$. So 
\begin{equation*}
    \int\limits_{0}^{\infty}e^{-ct}t^{P} dt = \int\limits_{0}^{\infty} e^{-u}(\frac{u}{c})^{P} \frac{du}{c} = c^{-(P+1)}\int\limits_{0}^{\infty} e^{-u}u^{P+1-1} du = c^{-(P+1)}\Gamma(P+1). 
\end{equation*} Then \begin{equation*}
    I_{J}(c) \leq \sum\limits_{\ell = 1}^{k}  c^{-(P+1)} \Gamma(P+1) = k \Gamma(P+1)  c^{-(P+1)}. 
\end{equation*} That is \begin{equation*}
    I_{J}(c) \leq C_{J} c^{-q_{J}}, q_{J} = P+1 = \sum\limits_{j=1}^{k} M_{j} + k, C_{J} = k\Gamma(P+1).  
\end{equation*} Recall $c(\boldsymbol{v}) = x\prod\limits_{\ell \in J^{c}} v_{\ell}^{-1}$. This gives \begin{equation*}
    I_{J}(c(\boldsymbol{v})) \leq C_{J} c(\boldsymbol{v})^{-q_{J}} = C_{J} x^{-q_{J}} \prod\limits_{\ell \in J^{c}}v_{\ell}^{q_{J}}. 
\end{equation*} So, 
\begin{align*}
    \int\limits_{R_{J}} e^{-xA(\boldsymbol{s})}Q(\boldsymbol{s}) d\boldsymbol{s} & \leq C_{J}x^{-q_{J}} \int\limits_{[1,\infty)^{d-2-k}} \exp \{ - x \sum\limits_{\ell \in J^{c}} v_{\ell}\} \prod\limits_{\ell \in J^{c}} v_{\ell}^{\beta_{\ell} +q_{J}}d\boldsymbol{v} \\ 
    & = C_{J}x^{-q_{J}} \prod\limits_{\ell \in J^{c}} \int\limits_{1}^{\infty} e^{-xv_{\ell}}v_{\ell}^{\beta_{\ell} + q_{J}} dv_{\ell}. 
\end{align*}
Note \begin{equation*}
    t^{N} \leq e^{\frac{x}{2}t}, N > 0 \iff \log(t^{N}) \leq \log(e^{\frac{x}{2}t}) \iff N\log(t) \leq \frac{x}{2}t \iff 0 \leq \frac{x}{2}t - N\log(t). 
\end{equation*}
Let \begin{equation*}
    h(t):= \frac{x}{2}t - N\log(t), t \geq 1 \implies h'(t) = \frac{x}{2} - \frac{N}{t}. 
\end{equation*} 
Let $T_{1}:= x^{-1} 2N$. Then for any $t \geq T_{1}$, \begin{equation*}
    h'(t) = \frac{x}{2} - \frac{N}{t} \geq \frac{x}{2} - \frac{N}{T_{1}} = \frac{x}{2} - \frac{N}{\frac{2N}{x}} = \frac{x}{2} - \frac{x}{2} = 0. 
\end{equation*} So for $t \geq T_{1}, h(t)$ is nondecreasing. Furthermore, \begin{equation*}
    \frac{h(t)}{t} = \frac{x}{2} - N\frac{\log(t)}{t} \rightarrow \frac{x}{2} >0. 
\end{equation*} Since $h(t) \rightarrow \infty, \exists T_{2}$ such that $h(T_{2}) \geq 0 $. Now set $T:= \max \{1, T_{1}, T_{2} \}$. Since $T \geq T_{1}, h $ is nondecreasing on $[T,\infty)$. Since $T \geq T_{2}, h(T) \geq 0$. Therefore, $\forall t \geq T, h(t) \geq h(T) \geq 0$. This means $\forall t \geq T,$ \begin{equation*}
    \frac{x}{2}t - N\log(t) \geq 0 \iff N\log(t) \leq \frac{x}{2}t \iff t^{N} \leq e^{\frac{x}{2}t}. 
\end{equation*}  Since $ t \geq 1$ and for $N > \beta_{\ell} + q_{J}$ we have $t^{\beta_{\ell} + q_{J}} \leq t^{N}$. Hence, \begin{equation*}
    e^{-x v_{\ell}} v_{\ell}^{\beta_{\ell} + q_{J}} \leq e^{-x v_{\ell}}v_{\ell}^{N} \leq e^{-xv_{\ell}}e^{(\frac{x}{2})v_{\ell}} = e^{-(\frac{x}{2}) v_{\ell}}.
\end{equation*} Then, \begin{align*}
    \int\limits_{1}^{\infty} e^{-x v_{\ell}}v_{\ell}^{\beta_{\ell} + q_{J}} dv_{\ell} & = \int\limits_{1}^{T} e^{-xv_{\ell}}v_{\ell}^{\beta_{\ell} + q_{J}} dv_{\ell} + \int\limits_{T}^{\infty}e^{-xv_{\ell}}v_{\ell}^{\beta_{\ell}+q_{J}} dv_{\ell} \\ 
    &\leq \int\limits_{1}^{T} e^{-xv_{\ell}}v_{\ell}^{\beta_{\ell} + q_{J}}dv_{\ell} + \int\limits_{T}^{\infty} e^{-(\frac{x}{2})v_{\ell}} dv_{\ell}\\ 
    &= \int\limits_{1}^{T} e^{-xv_{\ell}} v_{\ell}^{\beta_{\ell} + q_{J}} dv_{\ell} + \frac{2}{x} e^{-\frac{x}{2}T} \\ 
    & < \infty.
\end{align*} Thus, $\int\limits_{R_{J}} e^{-xA(\boldsymbol{s})} Q(\boldsymbol{s}) d\boldsymbol{s} < \infty$ for every nonempty $J$. Therefore each region is finite and there are only finitely many subsets of $J \subseteq \{1, \dots, d-2 \}$. Therefore, summing over finitely regions we have finite integrals. Thus $H(x) < \infty$ for $x>0$. Therefore \begin{equation*}
    M_{*} = (2x_{0})^{\frac{(d-2)}{2}}e^{2(d-1)x_{0}}H(x_{0}) < \infty. 
\end{equation*} Then $M(x) \leq M_{*} \forall x \in [x_{0}, 2x_{0}]$. Fix any $x \geq x_{0}$. Then choose $k \in \mathbb{N}$ such that $x2^{-k} \in [x_{0}, 2x_{0}]$. Particularly, let $r:= x_{0}^{-1}x \geq 1$. Then \begin{equation*}
    \frac{x}{2^{k}} \in [x_{0}, 2x_{0}] \iff 1 \leq \frac{x}{2^{k}x_{0}} \leq 2 \iff 2^{k} \leq r \leq 2^{k+1}. 
\end{equation*} Let \begin{equation*}
    K: = \lfloor \log_{2}(\frac{x}{x_{0}}) \rfloor. 
\end{equation*} Then, \begin{equation*}
    K \leq \log_{2}(\frac{x}{x_{0}}) < K+1. 
\end{equation*} So, \begin{equation*}
    2^{K} \leq \frac{x}{x_{0}} \leq 2^{K+1} \iff 2^{K}x_{0} \leq x \leq 2^{K+1}x_{0} \iff x_{0} \leq \frac{x}{2^{K}} \leq 2x_{0}. 
\end{equation*}
So we have $M(x) \leq K_{B} + \frac{1}{2} M(\frac{x}{2})$ for $x \geq x_{0}, M(x) \leq M_{*} \forall x \in [x_{0}, 2x_{0}]$, and \\$2^{-k}x \in [x_{0}, 2x_{0}]$. Therefore \begin{align*}
    M(x) & \leq K_{B} + \frac{1}{2}M(\frac{x}{2})\\ 
    & \leq K_{B} + \frac{1}{2}(K_{B} + \frac{1}{2} M(\frac{x}{4})) \\ 
    & = K_{B} + \frac{1}{2}K_{B} + \frac{1}{4}M(\frac{x}{4})\\ 
    & \leq K_{B} (1+ \frac{1}{2} + \dots \frac{1}{2^{K-1}}) + \frac{1}{2^{K}}M(\frac{x}{2^{K}}). 
\end{align*} We know that $\sum\limits_{j=0}^{K-1} 2^{-j} = 2-2^{-K+1} \leq 2. $ So, \begin{equation*}
    M(x) \leq 2K_{B} + \frac{1}{2^{K}}M(\frac{x}{2^{K}}) \leq 2K_{B} + \frac{1}{2^{K}} M_{*}. 
\end{equation*} Since $K \geq 0 \iff - K \leq 0 \implies 2^{-K} \leq 2^{0} = 1$. Therefore, \begin{equation*}
    M(x) \leq 2K_{B} + M_{*} \implies \sup\limits_{x\geq x_{0}} M(x) \leq 2K_{B} + M_{*} < \infty. 
\end{equation*} Recall $M(x) = x^{\frac{d-2}{2}}e^{(d-1)x}H(x)$. This implies \begin{align*}
    H(x) & = x^{-\frac{d-2}{2}} e^{-(d-1)x}M(x)\\ 
    & \leq (2K_{B} + M_{*}) x^{-\frac{(d-2)}{2}}e^{-(d-1)x}\\ 
    &= \tilde{b}_{2}x^{-\frac{d-2}{2}} e^{-(d-1)x}, \tilde{b}_{2} := 2K_{B} + M_{*}. 
\end{align*} Therefore we have shown that \begin{equation*}
    b_{1}x^{-\frac{d-2}{2}}e^{-(d-1)x} \leq H(x) \leq \tilde{b}_{2}x^{-\frac{d-2}{2}}e^{-(d-1)x}.
\end{equation*} That is, \begin{equation*}
    H(x) \asymp x^{-\frac{d-2}{2}}e^{-(d-1)x}. 
\end{equation*} Recall \begin{equation*}
    \pi(x|u,\boldsymbol{k}) \propto (1+Cx^{d-1})^{-\frac{p}{2}} \exp \{ - \frac{u}{2(1+Cx^{d-1})} \}x^{\alpha_{\cdot}-1}H(x). 
\end{equation*} For sufficiently large $x, H(x) \asymp x^{-\frac{d-2}{2}}e^{-(d-1)x}$ which implies \begin{align*}
    K_{1} x^{\alpha_{\cdot}-1-\frac{d-2}{2}}(1+Cx^{d-1})^{-\frac{p}{2}} \exp \{ -f_{u}(x) \} &\leq \pi (x|u, \boldsymbol{k})\\ &\leq K_{2}x^{\alpha_{\cdot}-1-\frac{d-2}{2}}(1+Cx^{d-1})^{-\frac{p}{2}} \exp \{ - f_{u}(x) \}
\end{align*} 
where
\begin{equation*}
    f_{u}(x) : = (d-1)x + \frac{u}{2(1+Cx^{d-1})}. 
\end{equation*} 
Let $m: = d-1, x_{u} = (\frac{u}{2C})^{\frac{1}{m+1}}$ and $A_{u} := Cx_{u}^{m}$. For $x = \lambda x_{u}, \lambda >0$, \begin{equation*}
    \frac{u}{2(1+Cx^{m})} = \frac{u}{2(1+C\lambda^{m}x_{u}^{m})} = \frac{u}{2(1+\lambda^{m}A_{u})} = \frac{u}{2A_{u}} \cdot \frac{1}{\lambda^{m} + \frac{1}{A_{u}}}. 
\end{equation*} We know $u(2A_{u})^{-1} = u(2Cx_{u}^{m})^{-1}.$ Note, \begin{equation*}
    x_{u} = (\frac{u}{2C})^{\frac{1}{m+1}} \iff x_{u}^{m+1} = \frac{u}{2C} \iff 2C x_{u}^{m+1} = u.
\end{equation*} This gives \begin{equation*}
    \frac{u}{2A_{u}} = \frac{2Cx_{u}^{m+1}}{2Cx_{u}^{m}} = x_{u}. 
\end{equation*} Therefore \begin{equation*}
    f_{u}(\lambda x_{u}) = m\lambda x_{u} + \frac{x_{u}}{\lambda^{m} + \frac{1}{A_{u}}} = x_{u}\phi_{u}(\lambda), \phi_{u}(\lambda): = m\lambda + \frac{1}{\lambda^{m} + \frac{1}{A_{u}}}. 
\end{equation*} Also, 
\begin{align*}
    A_{u} = Cx_{u}^{m} &= C(\frac{u}{2C})^{\frac{m}{m+1}}\\ 
    &= Cu^{\frac{m}{m+1}} (2C)^{- \frac{m}{m+1}} \\ 
    & = Cu^{\frac{m}{m+1}} 2^{-\frac{m}{m+1}} C^{-\frac{m}{m+1}}\\ 
    &= 2^{- \frac{m}{m+1}} C^{1- \frac{m}{m+1}} u^{\frac{m}{m+1}}\\ 
    & = 2^{-\frac{m}{m+1}} C^{\frac{1}{m+1}} u^{\frac{m}{m+1}} \rightarrow \infty  \text{ as } u \rightarrow \infty. 
\end{align*} Therefore $\phi_{u}(\lambda) \rightarrow \phi_{\infty}(\lambda) = m\lambda + \lambda^{-m}$. Observe that for $\lambda > 0$, $\lim\limits_{\lambda \rightarrow 0^{+}} \phi_{\infty}(\lambda) = \infty$ and $\lim\limits_{\lambda \rightarrow \infty} \phi_{\infty}(\lambda) = \infty$. Note that \begin{equation*}
    \phi_{\infty}'(\lambda) = m - m\lambda^{-m-1}. 
\end{equation*} Setting equal to zero gives \begin{equation*}
    0 = m - m\lambda^{-m-1} \iff m = m\lambda^{-m-1} \iff 1 = \lambda^{-m-1} \iff \lambda = 1. 
\end{equation*} Furthermore, \begin{equation*}
    \phi''_{\infty}(\lambda) = - m(-m-1)\lambda^{-m-2} = m(m+1)\lambda^{-m-2}>0, \forall \lambda >0. 
\end{equation*} Therefore, \begin{equation*}
    \Delta_{\infty}(\epsilon) : = \inf\limits_{|\lambda -1| \geq \epsilon} (\phi_{\infty}(\lambda) - \phi_{\infty}(1)) >0. 
\end{equation*} Now we will show that for fixed $\epsilon \in (0,1),\exists u_{0}$ and $c(\epsilon) >0$ such that $\forall u\geq u_{0},$
\begin{equation*}
    \inf\limits_{|\lambda -1| \geq \epsilon}(\phi_{u}(\lambda) - \phi_{u}(1)) \geq c(\epsilon) >0.
\end{equation*}
Set $K := \phi_{\infty} (1) + 2\Delta_{\infty}(\epsilon)$. We know that $\lim\limits_{\lambda \rightarrow \infty} \phi_{\infty}(\lambda) = \infty$. For every number $K, \exists R>0$ such that $\lambda \geq R \implies \phi_{\infty}(\lambda) \geq K$.  We also know that $\lim\limits_{\lambda\rightarrow 0^{+}} \phi_{\infty}(\lambda) = \infty$. Then, $\exists \delta> 0$ such that $0 < \lambda \leq \delta \implies \phi_{\infty} (\lambda) \geq K$. Let $M_{0}:= \delta^{-1}$ then $\lambda \leq M_{0}^{-1} \iff \lambda \leq \delta$. Thus, if $0 < \lambda \leq M_{0}^{-1}$ then $\phi_{\infty} (\lambda) \geq K$. Now let $M:= \max \{ R, M_{0}, 1 \}$. Then $M > 1$. If $\lambda \geq M$ then $\lambda \geq R$ so $\phi_{\infty}(\lambda) \geq K$. If $0< \lambda < M^{-1}$ then $\lambda \leq M^{-1} \leq M_{0}^{-1}$. Hence $\phi_{\infty}(\lambda) \geq K$. Therefore, when $\lambda \notin [M^{-1}, M]$ i.e. $\lambda \geq M$ or $\lambda \leq M^{-1}$,\\ $\phi_{\infty} \geq K = \phi_{\infty}(1) + 2\Delta_{\infty}(\epsilon)$.  This implies $\phi_{\infty}(\lambda) - \phi_{\infty}(1) \geq 2\Delta_{\infty} (\epsilon)$. On $[M^{-1}, M],$ \\ $\phi_{u}(\lambda)\rightarrow \phi_{\infty}(\lambda)$ uniformly because \begin{align*}
    |\frac{1}{\lambda^{m} + \frac{1}{A_{u}}} - \lambda^{-m}| &= |\frac{\lambda^{m}}{\lambda^{m}(\lambda^{m} + \frac{1}{A_{u}})} - \frac{\lambda^{m} + \frac{1}{A_{u}}}{\lambda^{m}(\lambda^{m} + \frac{1}{A_{u}})}|\\ 
    & = |\frac{-\frac{1}{A_{u}}}{\lambda^{m}(\lambda^{m} + \frac{1}{A_{u}})}|\\ 
    & = \frac{\frac{1}{A_{u}}}{\lambda^{m}(\lambda^{m} + \frac{1}{A_{u}})} \\ 
    &= \frac{1}{A_{u}\lambda^{m}(\lambda^{m} + \frac{1}{A_{u}})}\\ 
    &= \frac{1}{\lambda^{m}(A_{u}\lambda^{m} + 1)} \\ 
    &= \frac{1}{\lambda^{m}} \cdot \frac{1}{A_{u}\lambda^{m}+1}. 
\end{align*} If $\lambda \in [M^{-1}, M]$ then $\lambda \geq M^{-1}$. Therefore $\lambda^{m} \geq M^{-m} $ which implies $\lambda^{-m} \leq M^{m}$. Since $A_{u}\lambda^{m} + 1 \geq A_{u}\lambda^{m}, (A_{u}\lambda^{m} + 1)^{-1} \leq (A_{u}\lambda^{m})^{-1} \leq A_{u}^{-1} M^{m}$. Therefore, \begin{equation*}
    |\frac{1}{\lambda^{m} + \frac{1}{A_{u}}} - \frac{1}{\lambda^{m}}| \leq (M^{m}) (\frac{M^{m}}{A_{u}}) = \frac{M^{2m}}{A_{u}}.
\end{equation*} Thus, \begin{equation*}
    \sup\limits_{\lambda \in [\frac{1}{M}, M]} |\phi_{u}(\lambda) - \phi_{\infty}(\lambda)| \leq A_{u}^{-1} M^{2m}. 
\end{equation*} Then for sufficiently large $u, A_{u}$ is large enough. So \begin{equation*}
    \sup\limits_{\lambda \in [M^{-1}, M]}|\phi_{u}(\lambda) - \phi_{\infty}(\lambda)| \leq \frac{\Delta_{\infty}(\epsilon)}{4}. 
\end{equation*} If $\lambda \in[M^{-1}, M]$ and $|\lambda -1| \geq \epsilon$, then \begin{align*}
    \phi_{u}(\lambda) - \phi_{u}(1)  &= (\phi_{u}(\lambda) - \phi_{\infty}(\lambda)) + (\phi_{\infty}(\lambda) - \phi_{\infty}(1)) + (\phi_{\infty}(1) - \phi_{u}(1))\\ 
    &= (\phi_{\infty}(\lambda) - \phi_{\infty}(1)) + (\phi_{u}(\lambda) - \phi_{\infty}(\lambda)) - (\phi_{u}(1)- \phi_{\infty}(1))\\ 
    &\geq (\phi_{\infty}(\lambda) - \phi_{\infty}(1)) - |\phi_{u}(\lambda) - \phi_{\infty}(\lambda)|  - |\phi_{u}(1) - \phi_{\infty}(1)|\\ 
    &\geq \Delta_{\infty} (\epsilon) - |\phi_{u}(\lambda) - \phi_{\infty}(\lambda)| - |\phi_{u}(1) - \phi_{\infty}(1)|, \text{ by definition of } \Delta_{\infty}(\epsilon) \\ 
    & \geq \Delta_{\infty}(\epsilon) - \frac{\Delta_{\infty}(\epsilon)}{4} - \frac{\Delta_{\infty}(\epsilon)}{4} \\ 
    &= \frac{\Delta_{\infty}(\epsilon)}{2}\\ 
    &>0.
\end{align*} That is $\phi_{u} (\lambda)  - \phi_{u}(1) > 0 $ for $\lambda \in [M^{-1}, M]$ and $|\lambda - 1| \geq \epsilon. $ Note that \begin{align*}
    \phi_{u}(1) &= m + \frac{1}{1+\frac{1}{A_{u}}}\\ 
    & \leq m + 1,\forall u, \text{ since } 1+ \frac{1}{A_{u}} \geq 1, \frac{1}{1+\frac{1}{A_{u}}} \leq 1. 
\end{align*} Now, lower bound $\phi_{u}(\lambda)$ when $\lambda \notin [M^{-1}, M]$. For $\lambda \geq M, $\begin{equation*}
    \phi_{u}(\lambda) = m\lambda + \frac{1}{\lambda^{m} + \frac{1}{A_{u}}} \geq m\lambda. 
\end{equation*} That is, $\phi_{u}(\lambda) \geq mM$ if $\lambda \geq M$. Then, $\phi_{u}(\lambda) - \phi_{u}(1) \geq mM - (m+1)$. Then by choosing $M$ large enough, $mM - (m+1) \geq c$ for $c>0, \forall \lambda \geq M$. Now, consider $0 < \lambda < M^{-1}$. If $0 < \lambda < M^{-1}$, then $\lambda^{m} \leq M^{-m}$. Hence $\lambda^{m} + A_{u}^{-1} \leq M^{-m} + A_{u}^{-1}$. This implies \begin{equation*}
    \frac{1}{\lambda^{m} + \frac{1}{A_{u}}} \geq \frac{1}{\frac{1}{M^{m}} + \frac{1}{A_{u}}}. 
\end{equation*} Now choose $u$ large enough so $A_{u} \geq M^{m}$. Then $A_{u}^{-1} \leq M^{-m}$. So \begin{equation*}
    \frac{1}{\frac{1}{M^{m}} + \frac{1}{A_{u}}} \geq \frac{1}{\frac{1}{M^{m}}+ \frac{1}{M^{m}}} = \frac{M^{m}}{2}. 
\end{equation*} Therefore for large $u$ such that $A_{u} \geq M^{m}$ and $\lambda \leq M^{-1}$ we have $\phi_{u}(\lambda) \geq 2^{-1}M^{m}$. This implies $\phi_{u}(\lambda) - \phi_{u}(1) \geq 2^{-1}M^{m} - (m+1)$. Then choosing M large enough \\$\phi_{u}(\lambda) - \phi_{u}(1) \geq C_{tail} := \min \{mM-(m+1), 2^{-1}M^{m}-(m+1) \}>0$. So we have that \begin{align*}
    \Delta_{\infty}(\epsilon) &:= \inf\limits_{|\lambda -1| \geq \epsilon} (\phi_{\infty}(\lambda) - \phi_{\infty}(1))>0 \\ 
    \phi_{\infty}(\lambda) - \phi_{\infty}(1) & \geq 2\Delta_{\infty}(\epsilon), \text{ for } \lambda \notin [\frac{1}{M}, M], \\ 
    \sup\limits_{\lambda \in [\frac{1}{M}, M]} |\phi_{u}(\lambda) - \phi_{\infty}(\lambda)| &\leq \frac{\Delta_{\infty}(\epsilon)}{4}, \text{ which gives}\\ 
    |\phi_{u}(1) - \phi_{\infty}(1)| &\leq \frac{\Delta_{\infty}(\epsilon)}{4}, \text{ and } \\ 
    \phi_{u}(\lambda) - \phi_{u}(1)  &\geq  (\phi_{\infty}(\lambda) - \phi_{\infty}(1)) - |\phi_{u} (\lambda) - \phi_{\infty}(\lambda)| - |\phi_{u}(1) - \phi_{\infty}(1)|. 
\end{align*} This gives \begin{equation*}
    \phi_{u}(\lambda) - \phi_{u}(1) \geq \frac{\Delta_{\infty}(\epsilon)}{2}>0. 
\end{equation*} Now choose $u_{0}$ such that both large $u$ inequalities hold. Consider $\lambda \in [M^{-1}, M]$ and $|\lambda - 1| \geq \epsilon$. By definition of $\Delta_{\infty}(\epsilon), \phi_{\infty}(\lambda) - \phi_{\infty}(1)   \geq \Delta_{\infty}(\epsilon).$ Then \\$|\phi_{u}(\lambda) - \phi_{\infty}(\lambda)| \leq 4^{-1} \Delta_{\infty}(\epsilon)$ and $|\phi_{u}(1) - \phi_{\infty}(1)| \leq 4^{-1} \Delta_{\infty}(\epsilon)$. Then $\phi_{u}(\lambda) - \phi_{u}(1) > 0$. For $\lambda \notin [M^{-1}, M]$ we know $\phi_{u}(\lambda) - \phi_{u}(1) \geq C_{tail}$. Therefore $\forall u \geq u_{0}$ and $\forall \lambda$ with $|\lambda - 1| \geq \epsilon,  \phi_{u}(\lambda) - \phi_{u}(1) \geq C(\epsilon) := \min\{\frac{\Delta_{\infty}(\epsilon)}{2}, C_{tail} \}>0$. Now take any $x>0$ with $x \notin [(1-\epsilon)x_{u}, (1+\epsilon)x_{u}].$ Let $\lambda:= x_{u}^{-1}x>0$. Then $x = \lambda x_{u}$. This implies $x \notin [(1-\epsilon)x_{u}, (1+\epsilon)x_{u}] \iff \lambda \notin [1-\epsilon, 1+\epsilon] \iff |\lambda -1| \geq \epsilon.$ Therefore \begin{align*}
    & \phi_{u}(\lambda) \geq \phi_{u}(1) + c(\epsilon) \\ 
&\iff x_{u}\phi_{u}(\lambda) \geq x_{u}\phi_{u}(1) + x_{u}c(\epsilon)\\ 
&\iff f_{u}(x) = f_{u}(\lambda x_{u}) \geq f_{u}(x_{u}) + c(\epsilon)x_{u}. 
\end{align*} That is $\forall \epsilon \in (0,1), \exists u_{0}, c(\epsilon) >0: \forall u \geq u_{0}, \forall x \notin [(1-\epsilon)x_{u}, (1+\epsilon)x_{u}], $\\$ f_{u}(x) \geq f_{u}(x_{u}) + c(\epsilon)x_{u}$. Fix $\epsilon \in (0,2^{-1})$. Recall \begin{equation*}
    x_{u} = (\frac{u}{2C})^{\frac{1}{m+1}}, A_{u} = Cx_{u}^{m} \text{ such that as }  x_{u} \rightarrow \infty, A_{u} \rightarrow \infty. 
\end{equation*} Then $x_{u}A_{u} = Cx_{u}^{m+1} = 2^{-1}u$. Let $\Pi_{u, \boldsymbol{k}}(\cdot) = \mathbb{P}(X \in \cdot|U=u, \boldsymbol{K} = \boldsymbol{k}).$ We will prove that for large u \begin{equation*}
    \Pi_{u, \boldsymbol{k}}(|\frac{X}{x_{u}}-1| \geq \epsilon) \leq \exp \{ - c(\epsilon) x_{u} \}. 
\end{equation*} Since $x_{0}$ is fixed and $x_{u} \rightarrow \infty$, for all large $u$ we know $(1-\epsilon) x_{u} \geq x_{0}$. Hence \begin{align*}
    \{ |\frac{X}{x_{u}}-1| \geq \epsilon \} &= \{ \{ |\frac{X}{x_{u}}-1| \geq \epsilon \} \cap \{ X \leq x_{0} \}\} \cup \{ \{ |\frac{X}{x_{u}}-1| \geq \epsilon \} \cap \{ X \geq x_{0} \} \} \\
    &\subseteq  \{ X \leq x_{0} \} \cup \{\{ |\frac{X}{x_{u}}-1| \geq \epsilon \} \cap \{ X \geq x_{0} \}\}. 
\end{align*} We will show that $\Pi_{u, \boldsymbol{k}}((0, x_{0}))$ is negligible.  Let \begin{equation*}
    g(x) := \frac{1}{2(1+Cx^{m})} \text{ on } x\in (0, x_{0}), g(x) \geq g(x_{0}) =: c_{0} > 0. 
\end{equation*} Hence \begin{equation*}
    \exp \{ - \frac{u}{2(1+Cx^{m})} \} = \exp \{ - ug(x) \} \leq \exp \{ - c_{0} u \} \text{ for } x\in (0, x_{0}). 
\end{equation*} Therefore up to a normalizing constant \begin{align*}
    \int\limits_{0}^{x_{0}} \pi_{u, \boldsymbol{k}}(x) &\leq e^{-c_{0}u} \int\limits_{0}^{x_{0}}x^{\alpha_{\cdot}-1}H(x) dx \\ 
    &= e^{-c_{0}u} \int\limits_{(0,\infty)^{m-1}} Q(\boldsymbol{s})[\int\limits_{0}^{x_{0}} x^{\alpha_{\cdot}-1}e^{-xA(\boldsymbol{s})}dx]d\boldsymbol{s}, \text{ by Tonelli's theorem,}\\  
    &\leq e^{-c_{0}u} \int\limits_{(0,\infty)^{m-1}}Q(\boldsymbol{s}) [\int\limits_{0}^{\infty}(\frac{t}{A(\boldsymbol{s})})^{\alpha_{\cdot}-1}e^{-t}\frac{dt}{A(\boldsymbol{s})}]d\boldsymbol{s}, t = xA(\boldsymbol{s}), \\ 
    &= e^{-c_{0}u}\int\limits_{(0,\infty)^{m-1}}Q(\boldsymbol{s}) [A(\boldsymbol{s})^{-\alpha_{\cdot}}\int\limits_{0}^{\infty} t^{\alpha_{\cdot}-1}e^{-t}dt]d\boldsymbol{s}\\ 
    &= e^{-c_{0}u}\int\limits_{(0,\infty)^{m-1}}Q(\boldsymbol{s}) [A(\boldsymbol{s})^{-\alpha_{\cdot}}\Gamma(\alpha_{\cdot})]d\boldsymbol{s}\\ 
    &= \Gamma(\alpha_{\cdot})e^{-c_{0}u}\int\limits_{(0,\infty)^{m-1}}Q(\boldsymbol{s}) A(\boldsymbol{s}) ^{-\alpha_{\cdot}}d\boldsymbol{s}. 
\end{align*}
Recall \begin{equation*}
    f_{T_{1},\dots, T_{m}|\boldsymbol{K}}(t_{1},\dots, t_{m}|\boldsymbol{k}) = \prod\limits_{i=1}^{m} \frac{1}{\Gamma(\alpha_{i})}t_{i}^{\alpha_{i}-1}e^{-t_{i}}. 
\end{equation*}
Then under the reparameterization the joint density becomes \begin{equation*}
    f_{X,\boldsymbol{S}|\boldsymbol{K}}(x,\boldsymbol{s}|\boldsymbol{k}) = \frac{mx^{\alpha_{\cdot}-1}e^{-xA(\boldsymbol{s})}Q(\boldsymbol{s})}{\prod\limits_{i=1}^{m}\Gamma(\alpha_{i})}. 
\end{equation*}  Then the marginal density of $\boldsymbol{S}$ is given by 
\begin{align*}
    f_{\boldsymbol{S}}(\boldsymbol{s}) &= \frac{m}{\prod\limits_{i=1}^{m}\Gamma(\alpha _{i})}Q(\boldsymbol{s}) \int\limits_{0}^{\infty} x^{\alpha_{\cdot}-1} e^{-xA(\boldsymbol{s})}dx \\ 
    &=\frac{m}{\prod\limits_{i=1}^{m}\Gamma(\alpha_{i})} Q(\boldsymbol{s}) \int\limits_{0}^{\infty} (\frac{u}{A(\boldsymbol{s})})^{\alpha_{\cdot}-1}e^{-u}\frac{du}{A(\boldsymbol{s})}, u = xA(\boldsymbol{s}), \\ 
    & = \frac{m}{\prod\limits_{i=1}^{m}\Gamma(\alpha_{i})}Q(\boldsymbol{s}) A(\boldsymbol{s})^{-\alpha_{\cdot}} \int\limits_{0}^{\infty} u^{\alpha_{\cdot}-1}e^{-u}du\\  
    & = \frac{m\Gamma(\alpha_{\cdot})}{\prod\limits_{i=1}^{m}\Gamma(\alpha_{i})}Q(\boldsymbol{s}) A(\boldsymbol{s}) ^{-\alpha_{\cdot}}, \boldsymbol{s} \in (0,\infty)^{m-1 }.
\end{align*}
We know the density integrates to 1. So \begin{equation*}
    1 = \int\limits_{(0,\infty)^{m-1}} f_{\boldsymbol{S}}(\boldsymbol{s}) d\boldsymbol{s}  = \frac{m\Gamma(\alpha_{\cdot})}{\prod\limits_{i=1}^{m}\Gamma(\alpha_{i})} \int\limits_{(0,\infty)^{m-1}} Q(\boldsymbol{s}) A(\boldsymbol{s}) ^{-\alpha_{\cdot}}d\boldsymbol{s}. 
\end{equation*} Therefore \begin{equation*}
    \frac{\prod\limits_{i=1}^{m}\Gamma(\alpha_{i})}{m\Gamma(\alpha_{\cdot})} = \int\limits_{(0,\infty)^{m-1}}Q(\boldsymbol{s}) A(\boldsymbol{s}) ^{-\alpha_{\cdot}}d\boldsymbol{s}. 
\end{equation*} 
Hence \begin{align*}
    \int\limits_{0}^{x_{0}}\pi_{u,\boldsymbol{k}}(x) dx &\leq e^{-c_{0}u}\Gamma(\alpha_{\cdot}) \frac{\prod\limits_{\ell = 1}^{m}\Gamma(\alpha_{\ell})}{m\Gamma(\alpha_{\cdot})} \\ 
    & = \frac{\prod\limits_{\ell = 1}^{m}\Gamma(\alpha_{\ell})}{m}e^{-c_{0}u}\\ 
    &= K_{0}e^{-c_{0}u}, K_{0}:= \frac{\prod\limits_{\ell=1}^{m}\Gamma(\alpha_{\ell})}{m}. 
\end{align*}
Note that \begin{align*}
    \gamma > -1 &\iff \alpha_{\cdot} - 1- \frac{m-1}{2} > -1 \iff \alpha_{\cdot} - \frac{m-1}{2} > 0 \\ 
    &\iff \frac{1}{2}\sum\limits_{\ell = 1}^{m}k_{\ell} - \frac{m-1}{2} > 0 \iff \frac{1}{2} \sum\limits_{\ell = 1}^{m} k_{\ell} > \frac{m-1}{2} \iff \sum\limits_{\ell = 1}^{m} k_{\ell} > m-1. 
\end{align*} Since each $k_{\ell}$ is an integer $\sum\limits_{\ell = 1}^{m}k_{\ell} > m \implies \sum\limits_{\ell = 1}^{m} k_{\ell} > m-1. $ Now fix $\delta \in (0,1)$ and consider $I_{\delta} := [x_{0} +1- \delta, x_{0}+1] \subset [x_{0}, \infty). $ On $I_{\delta}$ \begin{equation*}
     (1+Cx^{m})^{-\frac{p}{2}} \geq (1+C(x_{0} + 1)^{m}) ^{-\frac{p}{2}} =: c_{*}, \exp \{ - mx \} \geq \exp \{ -m(x_{0}+1) \}. 
\end{equation*} Furthermore $H(x) \geq h_{*} = b_{1} \min\limits_{x \in I_{\delta}} (x^{-\frac{(m-1)}{2}}e^{-mx})$. Recall \begin{equation*}
    \pi_{u,\boldsymbol{k}}(x) \geq K_{1} x^{\gamma} (1+Cx^{m})^{-\frac{p}{2}} e^{-mx} \exp \{ - \frac{u}{2(1+Cx^{m})} \}. 
\end{equation*} Let $A(x) := K_{1} x^{\gamma} (1+Cx^{m})^{-\frac{p}{2}} e^{-mx}$. Observe $A(x)$ is a continuous, strictly positive function for $x>0$. Note that $I_{\delta}$ is compact and bounded away from $0$. Then by the extreme value theorem $A_{\delta} := \inf\limits_{x \in I_{\delta}} A(x) > 0$. Then for every $x \in I_{\delta}$ we have $K_{1} x^{\gamma}(1+Cx^{m}) ^{-\frac{p}{2}} e^{-mx} \geq A_{\delta}$. Therefore \begin{equation*}
    \pi_{u, \boldsymbol{k}}(x) \geq A_{\delta} \exp \{ - \frac{u}{2(1+Cx^{m})} \}.
\end{equation*} On $I_{\delta}, g(x)$ is decreasing in $x$ so for $x \in I_{\delta}$ \begin{equation*}
    \exp \{ - \frac{u}{2(1+Cx^{m})} \} = \exp \{ - ug(x) \} \geq \exp \{ -ug(x_{0} + 1 - \delta) \} =: e^{-c_{1}u}, 
\end{equation*}
where $c_{1} := g(x_{0} + 1- \delta).$
Then \begin{equation*}
    \int\limits_{0}^{\infty}\pi_{u, \boldsymbol{k}} (x) dx \geq \int\limits_{I_{\delta}} \pi_{u, \boldsymbol{k}}(x) dx \geq \int\limits_{I_{\delta}} A_{\delta}e^{-c_{1}u}dx = A_{\delta}\cdot\delta e^{-c_{1}u} =: K_{1}e^{-c_{1}u}. 
\end{equation*} Then \begin{equation*}
    \Pi_{u, \boldsymbol{k}}((0, x_{0})) = \frac{\int\limits_{0}^{x_{0}}\pi_{u}(x)dx}{ \int\limits_{0}^{\infty} \pi_{u}(x)dx} \leq \frac{K_{0}e^{-c_{0}u}}{K_{1}e^{-c_{1}u}} = \frac{K_{0}}{K_{1}}e^{-(c_{0}-c_{1})u}. 
\end{equation*} That is for large $u$ and $x_{0}$ $\Pi_{u, \boldsymbol{k}}((0,x_{0})) \leq e^{-cu}$ for some $c:= c_{0}-c_{1}>0$. For \\$x = \lambda x_{u} \geq x_{0}$. Then $dx = x_{u}d\lambda$. Recall \begin{equation*}
    \phi_{u}(\lambda) = m\lambda + \frac{1}{\lambda^{m} + \frac{1}{A_{u}}}. 
\end{equation*} Then \begin{align*}
    -x_{u} \phi_{u}(\lambda) & = -x_{u}(m\lambda + \frac{1}{\lambda^{m} + \frac{1}{A_{u}}}) \\ 
    &= -(m(\lambda x_{u}) + \frac{x_{u}A_{u}}{\lambda^{m} A_{u} + 1}) \\ 
    & = - (m(\lambda x_{u}) + \frac{u}{2(1+\lambda^{m}A_{u})}) \\ 
    & = -(m(\lambda x_{u}) + \frac{u}{2(1+C(\lambda x_{u})^{m})}) \\ 
    & = -f_{u}(\lambda x_{u}). 
\end{align*} Therefore $\exp \{ - f_{u}(\lambda x_{u}) \} = \exp \{ - x_{u}\phi_{u}(\lambda) \}. $ Then \begin{align*}
    \tilde{\pi}_{u, \boldsymbol{k}}(\lambda) := x_{u}\pi_{u, \boldsymbol{k}}(\lambda x_{u}) &\leq K_{2} x_{u} (\lambda x_{u})^{\gamma} (1+A_{u}\lambda ^{m})^{-\frac{p}{2}} \exp \{ -x_{u}\phi_{u}(\lambda) \}\\ 
    & = K_{2}x_{u}^{\gamma + 1} \lambda ^{\gamma} (1+A_{u}\lambda^{m})^{-\frac{p}{2}} \exp \{  - x_{u}\phi_{u}(\lambda) \} \\ 
    &= K_{2} x_{u}^{\gamma+1}q_{u}(\lambda), q_{u}(\lambda):= \lambda^{\gamma}(1+A_{u}\lambda^{m})^{-\frac{p}{2}} \exp \{ -x_{u}\phi_{u}(\lambda) \}. 
\end{align*} Similarly, $\tilde{\pi}_{u, \boldsymbol{k}} (\lambda) \geq K_{1}x_{u}^{\gamma + 1}q_{u}(\lambda)$. That is \begin{equation*}
    K_{1}x_{u}^{\gamma + 1}q_{u}(\lambda) \leq \tilde{\pi} _{u, \boldsymbol{k}}(\lambda) \leq K_{2}x_{u}^{\gamma + 1} q_{u}(\lambda). 
\end{equation*} Consider measurable $S \subseteq [x_{0}x_{u}^{-1}, \infty ). $ Then \begin{align*}
    \Pi_{u, \boldsymbol{k}}(S|X\geq x_{0}) & = \mathbb{P}(\lambda \in S|X \geq x_{0}, U= u) \\ 
    &= \frac{\int\limits_{S} \tilde{\pi}_{u, \boldsymbol{k}}(\lambda)d\lambda}{\int\limits_{\frac{x_{0}}{x_{u}}}^{\infty} \tilde{\pi}_{u, \boldsymbol{k}}(\lambda) d\lambda}\\ 
    &\leq \frac{K_{2}x_{u}^{\gamma + 1} \int\limits_{S}q_{u}(\lambda)d\lambda}{ K_{1}x_{u}^{\gamma+1} \int\limits_{\frac{x_{0}}{x_{u}}}^{\infty} q_{u}(\lambda) d\lambda} \\ 
    & = \frac{K_{2}}{K_{1}}\frac{\int\limits_{S} q_{u}(\lambda) d\lambda}{\int\limits_{\frac{x_{0}}{x_{u}}} ^{\infty} q_{u}(\lambda) d\lambda }. 
\end{align*} Similarly, \begin{equation*}
    \Pi_{u|\boldsymbol{k}}(S|X\geq x_{0}) \geq \frac{K_{1}}{K_{2}} \frac{\int\limits_{S} q_{u}(\lambda) d\lambda}{ \int\limits_{\frac{x_{0}}{x_{u}}}^{\infty}q_{u}(\lambda)d\lambda} . 
\end{equation*} That is \begin{equation*}
    \frac{K_{1}}{K_{2}} \frac{\int\limits_{S} q_{u}(\lambda
    ) d\lambda}{ \int\limits_{\frac{x_{0}}{x_{u}}}^{\infty} q_{u}(\lambda) d\lambda} \leq \Pi_{u, \boldsymbol{k}}(S|X \geq x_{0}) \leq \frac{K_{2}}{K_{1}} \frac{\int\limits_{S}q_{u}(\lambda)d\lambda}{\int\limits_{\frac{x_{0}}{x_{u}}}^{\infty} q_{u}(\lambda) d\lambda}. 
\end{equation*} Now set $S = S_{\xi} := \{ |\lambda - 1| \geq \xi \} \cap [ x_{u}^{-1} x_{0}, \infty)$. Recall $\phi_{u}(\lambda) = m\lambda + (\lambda^{m} + A_{u}^{-1})^{-1}.$ Then \begin{align*}
    \phi_{u}^{'}(\lambda) &= m - (\lambda^{m} + A_{u}^{-1})^{-2}(m\lambda^{m-1}) \\ 
    & = m - \frac{m \lambda^{m-1}}{(\lambda ^{m} + A_{u}^{-1})^{2}}\\ 
    & = m - (m\lambda^{m-1})(\lambda^{m} + A_{u}^{-1})^{-2}. 
\end{align*} This also implies \begin{align*}
    \phi_{u}^{''}(\lambda) &= [-m\lambda^{m-1}][-2(\lambda^{m} + A_{u}^{-1})^{-3}](m\lambda^{m-1}) + (\lambda^{m} + A_{u}^{-1})^{-2} [-m(m-1)\lambda^{m-2}]\\ 
    & = 2m\lambda^{m-1}(\lambda^{m} + A_{u}^{-1})^{-3}m\lambda^{m-1} - (\lambda^{m} + A_{u}^{-1})^{-2} m(m-1)\lambda^{m-2}\\ 
    &= 2m^{2}\lambda^{2m - 2} (\lambda^{m} + A_{u}^{-1})^{-3} - m(m-1) \lambda ^{m-2}(\lambda^{m} + A_{u}^{-1})^{-2} \\ 
    &= \lambda^{m-2}[2m^{2} \lambda ^{m} (\lambda^{m} + A_{u}^{-1})^{-3} - m(m-1)(\lambda^{m} + A_{u}^{-1})^{-2}]\\ 
    &= m\lambda^{m-2} [2m\lambda ^{m} (\lambda^{m} + A_{u}^{-1})^{-3} - (m-1)(\lambda^{m} + A_{u}^{-1})^{-2}]\\ 
    & = \frac{m\lambda^{m-2}}{(\lambda^{m} + A_{u}^{-1})^{3}} (2m\lambda^{m} - (m-1)(\lambda^{m} + A_{u}^{-1})) \\ 
    & = \frac{m \lambda ^{m-2}}{(\lambda ^{m} + A_{u}^{-1})^{3}} (2m \lambda^{m} - m \lambda^{m} -mA_{u}^{-1} +\lambda^{m} + A_{u}^{-1})\\ 
    & = \frac{m \lambda ^{m-2}}{(\lambda^{m} + A_{u}^{-1})^{3}}((m+1)\lambda^{m} - (m-1) A_{u}^{-1})\\ 
    & = \frac{m \lambda^{m-2}}{(\lambda^{m} + A_{u}^{-1})^{3}}((m+1)\lambda^{m} - (m-1)A_{u}^{-1})\\ 
    &= \frac{m\lambda^{m-2}}{(\lambda^{m} + \frac{1}{A_{u}})^{3}} ((m+1)\lambda^{m} - \frac{m-1}{A_{u}}). 
\end{align*} For $\lambda \geq 2^{-1}$ we have $\lambda^{m} \geq 2^{-m}$. Since $A_{u} \rightarrow \infty$, for all large $u$ \begin{equation*}
    (m+1)\lambda^{m} - \frac{m-1}{A_{u}} \geq \frac{m+1}{2^{m}} - \frac{m-1}{A_{u}} > \frac{m+1}{2^{m+1}} >0. 
\end{equation*} Therefore $\phi_{u}^{''}(\lambda) >0$ on $[2^{-1}, \infty)$. So $\phi_{u}$ is strictly convex and has at most one critical point on $[2^{-1}, \infty).$ At $\lambda = 1, $ \begin{equation*}
    \phi_{u}^{'}(1) = m - \frac{m}{(1+\frac{1}{A_{u}})^{2}} >0 \text{ since } (1+\frac{1}{A_{u}})^{2}>1. 
\end{equation*} For $\lambda = 2^{-1}$ and sufficiently large $u$, $A_{u} \geq 2^{(m+1)}$ iff $A_{u}^{-1} \leq 2^{-(m+1)}$. Therefore, \begin{equation*}
    (\frac{1}{2})^{m} + \frac{1}{A_{u}} \leq (\frac{1}{2})^{m} + (\frac{1}{2})^{m+1} = (\frac{1}{2})^{m} + \frac{1}{2}(\frac{1}{2})^{m} = \frac{3}{2}(\frac{1}{2})^{m}. 
\end{equation*} Thus, \begin{equation*}
    \frac{(\frac{1}{2})^{m-1}}{((\frac{1}{2})^{m} + \frac{1}{A_{u}})^{2}} \geq \frac{(\frac{1}{2})^{m-1}}{[\frac{3}{2}(\frac{1}{2})^{m}]^{2}} = (\frac{1}{2})^{m-1} \frac{4}{9}(\frac{1}{2})^{-2m} = \frac{4}{9}(\frac{1}{2})^{-m-1} = \frac{4}{9} 2^{m+1}. 
\end{equation*} Hence for large $u$, \begin{equation*}
    \phi_{u}^{'}(\frac{1}{2}) = m(1-\frac{(\frac{1}{2})^{m-1}}{((\frac{1}{2})^{m} + \frac{1}{A_{u}})^{2}}) \leq m (1-\frac{4}{9}2^{m+1}) <0, \text{ since }
\end{equation*} \begin{align*}
    1- \frac{4}{9}2^{m+1} <0 &\iff 1 < \frac{4}{9}2^{m+1}\\ &\iff \frac{9}{4} < 2^{m+1}\\ &\iff \log(\frac{9}{4}) < (m+1)\log(2)\\
    &\iff \frac{\log(\frac{9}{4})}{\log(2)} - 1 < m,
\end{align*} which is true since $m \geq 1$. So for sufficiently large $u$ $\phi_{u}^{'}(2^{-1}) <0$ and $\phi_{u}^{'}(1) >0$. By continuity $\phi_{u}^{'}$ has a zero in $(2^{-1}, 1)$. By strict convexity on $[2^{-1}, \infty)$ that zero is unique and the unique minimizer $\lambda_{u}^{*} \in (2^{-1}, 1).$ From above we know as $A_{u} \rightarrow \infty,$\\$ \phi_{u}(\lambda) \rightarrow \phi_{\infty}(\lambda)$ pointwise on $(0,\infty). $ Furthermore the convergence is uniform on $[2^{-1}, 2]$ because \begin{equation*}
    |\frac{1}{\lambda^{m} + \frac{1}{A_{u}}} - \lambda^{-m}| = \frac{\frac{1}{A_{u}}}{\lambda^{m}(\lambda^{m}+ \frac{1}{A_{u}})} \leq \frac{\frac{1}{A_{u}}}{(\frac{1}{2})^{2m}} = \frac{2^{2m}}{A_{u}} \rightarrow 0 . 
\end{equation*} We know $\phi_{\infty}(\lambda)$ has a unique minimizer at $\lambda = 1$. Therefore we know that \\$|\lambda_{u}^{*}-1|<\epsilon, \epsilon > 0, $ with $\lambda_{u}^{*} \rightarrow 1.$ Therefore for fixed $\epsilon \in (0,2^{-1})$ for all sufficiently large u, \begin{equation*}
    |\lambda -1| = |\lambda - \lambda_{u}^{*} + \lambda_{u}^{*} -1| \leq |\lambda - \lambda_{u}^{*}| + |\lambda_{u}^{*} -1|. 
\end{equation*} Then $|\lambda - 1| \geq \epsilon, |\lambda -\lambda_{u}^{*}| \geq \epsilon - |\lambda_{u}^{*}-1|.$ Then for sufficiently large $u$ we have \\$|\lambda_{u}^{*}-1| \leq 2^{-1} \epsilon.$ This gives $|\lambda - \lambda_{u}^{*}| \geq \epsilon - 2^{-1}\epsilon = 2^{-1} \epsilon.$ That is for sufficiently large $u$, $|\lambda -1| \geq \epsilon$ which implies $|\lambda - \lambda_{u}^{*}| \geq 2^{-1}\epsilon.$ Therefore we can focus on bounding the posterior mass outside $[\lambda_{u}^{*} - 2^{-1}\epsilon, \lambda{u}^{*} + 2^{-1}\epsilon].$ Let $\eta_{u} := x_{u}^{-\frac{1}{2}}$ and $B_{u}:= [\lambda_{u}^{*}- \eta_{u}, \lambda_{u}^{*} + \eta_{u}]. $ For all sufficiently large $u$, $\lambda_{u}^{*} \in (2^{-1}, 1)$ and $\eta_{u} \rightarrow 0$. Hence $B_{u} \subset [2^{-1}, 2]$ and \\$B_{u} \subset [x_{u}^{-1}x_{0}, \infty). $ We will show that $\exists c_{in}>0$ such that for all large $u$ \begin{equation*}
    \int\limits_{\frac{x_{0}}{x_{u}}}^{\infty} q_{u}(\lambda)d\lambda \geq \int\limits_{B_{u}} q_{u}(\lambda) d\lambda \geq c_{in} \eta_{u} (1+A_{u})^{-\frac{p}{2}}\exp\{ - x_{u}\phi_{u}(\lambda_{u}^{*}) \}. 
\end{equation*}  Since $B_{u} \subset [2^{-1}, 2], \lambda^{\gamma} \geq \min\limits_{\lambda \in [2^{-1},2]} \lambda^{\gamma} = 2^{-|\gamma|} =: c_{\gamma} >0, \forall \lambda \in B_{u}$. For $\lambda \in [2^{-1}, 2],$\\$ 1+A_{u}\lambda^{m} \leq 1+A_{u}2^{m} \leq 2^{m} + A_{u}2^{m}  = 2^{m}(1+A_{u})$ for $A_{u} \geq 1$ which is true for sufficiently large $u$. Hence $(1+A_{u}\lambda^{m})^{-\frac{p}{2}} \geq 2^{-\frac{mp}{2}}(1+A_{u})^{-\frac{p}{2}} =: c_{p}(1+A_{u})^{-\frac{p}{2}}$. On $[2^{-1}, 2]$ for $A_{u} \geq 1$, \begin{align*}
    \lambda^{m-2} &\leq 2^{|m-2|},\\ (\lambda^{m} + \frac{1}{A_{u}}) ^{-3} &\leq \lambda ^{-3m} \leq 2^{3m},\\ &\text{ and }\\ (m+1)\lambda^{m} - \frac{m-1}{A_{u}} &\leq (m+1) \lambda^{m} \leq (m+1)2^{m}. 
\end{align*} Therefore, for all large $u$, \begin{align*}
    \sup\limits_{\lambda \in [2^{-1},2]} \phi_{u}^{''}(\lambda) \leq M &:= m2^{|m-2|}2^{3m}(m+1)2^{m}\\
    & = m(m+1)2^{|m-2|+3m + m}\\ 
    & = m(m+1)2^{|m-2| + 4m}< \infty, \text{ which is independent of } u. 
\end{align*} By Taylor's theorem,  \begin{align*}
    \phi_{u}(\lambda) &\leq \phi_{u}(\lambda_{u}^{*}) + \phi_{u}^{'}(\lambda_{u}^{*}) (\lambda - \lambda_{u}^{*}) + \frac{1}{2}\phi_{u}^{''}(\xi) (\lambda - \lambda_{u}^{*})^{2} \\ 
    &\leq \phi_{u}(\lambda_{u}^{*}) + \frac{1}{2}\sup\limits_{\xi \in[\frac{1}{2}, 2]} \phi_{u}^{''}(\xi) (\lambda - \lambda_{u}^{*})^{2}\\ 
    &\leq \phi_{u}(\lambda_{u}^{*}) + \frac{M}{2}(\lambda- \lambda_{u}^{*})^{2}. 
\end{align*} For $\lambda \in B_{u}, \lambda_{u}^{*} - \eta_{u} \leq \lambda \leq \lambda_{u}^{*} + \eta_{u}$ iff $|\lambda - \lambda_{u}^{*}| \leq \eta_{u}$. Then, \begin{equation*}
    \phi_{u}(\lambda) \leq \phi_{u}(\lambda_{u}^{*}) + \frac{M}{2} \eta_{u}^{2} = \phi_{u}(\lambda_{u}^{*}) + \frac{M}{2x_{u}}. 
\end{equation*} Thus \begin{equation*}
    \exp \{ - x_{u}\phi_{u}(\lambda) \} \geq \exp \{ - x_{u} \phi_{u}(\lambda_{u}^{*}) \} \exp \{ - \frac{M}{2} \}, \forall \lambda \in B_{u}. 
\end{equation*} So \begin{equation*}
    q_{u}(\lambda ) \geq c_{\gamma}c_{p}(1+A_{u})^{-\frac{p}{2}} e^{-\frac{M}{2}} \exp \{ - x_{u}\phi_{u}(\lambda_{u}^{*}) \}, \text{ for } \lambda \in B_{u}. 
\end{equation*} Then \begin{equation*}
    \int\limits_{B_{u}}q_{u}(\lambda) d\lambda \geq 2\eta_{u} c_{\gamma} c_{p} e^{-\frac{M}{2}}(1+A_{u})^{-\frac{p}{2}} \exp \{ - x_{u} \phi_{u}(\lambda_{u}^{*}) \}, |B_{u}| = 2\eta_{u}. 
\end{equation*} That is \begin{equation*}
    \int\limits_{B_{u}} q_{u}(\lambda) d\lambda \geq c_{in}\eta_{u}(1+A_{u})^{-\frac{p}{2}} \exp \{ -x_{u}\phi_{u}(\lambda_{u}^{*}) \}, c_{in} := 2c_{\gamma}c_{p}e^{-\frac{M}{2}}>0. 
\end{equation*}
Fix $\epsilon \in (0,2^{-1})$. Furthermore, $O_{u}:= \{ \lambda \geq x_{u}^{-1}x_{0}: |\lambda - \lambda_{u}^{*}| \geq 2^{-1}\epsilon \}.$ Choose $b>2$ and $a \in (0,2^{-1})$ such that $(2a^{m})^{-1} > m+2$. Then $O_{u} = O_{u}^{(L)} \cup O_{u}^{(M)} \cup O_{u}^{(R)}$ where $O_{u}^{(L)} := O_{u} \cap (0,a], O_{u}^{(M)} := O_{u} \cap [a,b], $ and $O_{u}^{(R)} := O_{u} \cap [b,\infty).$ On $[a,b]$ for large $u$, since $\lambda \in [a,b],$ \[
\lambda^{m-2} \geq \ell_{1}=
\begin{cases}
a^{m-2}, & m \geq 2,\\
b^{-1},  & m=1
\end{cases}.
\] For sufficiently large $u, A_{u} \geq 1$. Hence $A_{u}^{-1} \leq 1$. Then for $\lambda \in [a,b],$ \begin{equation*}
    \lambda^{m} + \frac{1}{A_{u}} \leq b^{m} + 1\implies (\lambda^{m} + \frac{1}{A_{u}})^{-3} \geq (b^{m} + 1)^{-3}. 
\end{equation*} For $\lambda \in [a,b], \lambda^{m} \geq a^{m}.$ So \begin{equation*}
    (m+1)\lambda^{m} - \frac{m-1}{A_{u}} \geq (m+1)a^{m} - \frac{m-1}{A_{u}}. 
\end{equation*}
If $m = 1, A_{u}^{-1}(m-1) = 0$. If $m \geq 2,$ then for sufficiently large $u$, \begin{equation*}
    A_{u} \geq \frac{2(m-1)}{(m+1)a^{m}}.
\end{equation*} Then \begin{equation*}
    \frac{m-1}{A_{u}} \leq \frac{m-1}{\frac{2(m-1)}{(m+1)a^{m}}} = \frac{(m+1)a^{m}}{2}. 
\end{equation*} This implies \begin{equation*}
    (m+1)\lambda^{m} - \frac{m-1}{A_{u}} \geq (m+1)a^{m} - \frac{(m+1)a^{m}}{2} = \frac{(m+1)a^{m}}{2}. 
\end{equation*} Therefore \begin{equation*}
    \phi_{u}^{''}(\lambda) \geq m\ell_{1} (b^{m}+1)^{-3}\frac{(m+1)a^{m}}{2} =: c_{0}. 
\end{equation*} So, $\inf\limits_{\lambda \in [a,b]} \phi_{u}^{''}(\lambda) \geq c_{0}>0$. By Taylor's theorem \begin{equation*}
    \phi_{u}(\lambda ) \geq \phi_{u}(\lambda_{u}^{*}) + \phi_{u}'(\lambda_{u}^{*}) (\lambda - \lambda_{u}^{*}) + \frac{\phi_{u}^{''}(\xi)}{2}(\lambda - \lambda_{u}^{*})^{2} \geq  \phi_{u}(\lambda_{u}^{*}) + \frac{c_{0}}{2}(\lambda - \lambda_{u}^{*})^{2}. 
\end{equation*} Hence on $O_{u}^{(M)}, |\lambda - \lambda_{u}^{*}| \geq 2^{-1}\epsilon.$ Then, \begin{equation*}
    \phi_{u}(\lambda) \geq \phi_{u}(\lambda_{u}^{*}) + \frac{c_{0}}{2}(\frac{\epsilon}{2})^{2} = \phi_{u}(\lambda_{u}^{*}) + c_{m}(\epsilon), c_{m}(\epsilon):= 8^{-1} c_{0}\epsilon^{2}. 
\end{equation*} Note that on $[a,b]$ \[
\lambda ^{\gamma} \leq c_{a,b} :=
\begin{cases}
b^{\gamma}, & \gamma > 0,\\
1 & \gamma = 0, \\
a^{\gamma},  & \gamma < 0
\end{cases}.
\] Furthermore $(1+A_{u}\lambda^{m})^{-\frac{p}{2}} \leq (1+A_{u}a^{m})^{-\frac{p}{2}}.$ Observe \\$1+A_{u}a^{m} \geq a^{m} + A_{u}a^{m} = a^{m}(1+A_{u}).$ This implies \begin{equation*}
    (1+A_{u}a^{m})^{-\frac{p}{2}} \leq a^{-\frac{mp}{2}}(1+A_{u})^{-\frac{p}{2}} = (1+A_{u})^{-\frac{p}{2}} c''(a),a^{-\frac{mp}{2}} := c''(a). 
\end{equation*} Therefore \begin{align*}
    \int\limits_{O_{u}^{(M)}} q_{u}(\lambda )d\lambda & = \int\limits_{O_{u}^{(M)}} \lambda ^{\gamma}(1+A_{u}\lambda ^{m})^{-\frac{p}{2}} \exp \{ - x_{u} \phi_{u}(\lambda) \} d\lambda\\ 
    & \leq c_{a,b}c''(a) (1+A_{u})^{-\frac{p}{2}} \int\limits_{O_{u}^{(M)}} \exp \{ - x_{u}\phi_{u}(\lambda) \} d\lambda. 
\end{align*} Note that \begin{align*}
    \exp\{ - x_{u}\phi_{u}(\lambda) \} &\leq \exp \{ -x_{u}(\phi_{u}(\lambda_{u}^{*})  + c_{m}(\epsilon)) \}\\ 
    &= \exp \{ - x_{u}\phi_{u} (\lambda_{u}^{*})\} \exp \{ - x_{u}c_{m}(\epsilon) \}. 
\end{align*} Then \begin{align*}
    \int\limits_{O_{u}^{(M)}} \exp \{ -x_{u}\phi_{u}(\lambda) \} & \leq |O_{u}^{(M)}| \exp \{ - x_{u}\phi_{u}(\lambda_{u}^{*}) \} \exp \{ - x_{u}c_{m}(\epsilon) \}\\ 
    &\leq (b-a)\exp \{ - x_{u} \phi_{u}(\lambda_{u}^{*}) \} \exp \{ - x_{u}c_{m}(\epsilon) \}. 
\end{align*} Therefore \begin{align*}
    \int\limits_{O_{u}^{(M)}} q_{u}(\lambda)d\lambda &\leq c_{a,b} c''(a) (1+A_{u})^{-\frac{p}{2}}(b-a) \exp \{ - x_{u}(\phi_{u}(\lambda_{u}^{*} )+c_{m}(\epsilon)) \}\\ 
    & = K_{m}(\epsilon) (1+A_{u})^{-\frac{p}{2}} \exp \{- x_{u}(\phi_{u}(\lambda_{u}^{*})+c_{m}(\epsilon)) \},
\end{align*} 
$ K_{m}(\epsilon) := c_{a,b}c''(a) (b-a).$ We will now show that \begin{equation*}
    \int\limits_{O_{u}^{(R)}} q_{u}(\lambda) d\lambda \leq K_{R} (1+A_{u})^{-\frac{p}{2}} \exp \{-x_{u}(\phi_{u}(\lambda_{u}^{*}) + C_{R}) \}. 
\end{equation*} Consider $\lambda \geq b$. We know that $\phi_{u}(\lambda) \geq m\lambda$ and for large $u$, \\$(1+A_{u}\lambda^{m})^{-\frac{p}{2}} \leq (A_{u}\lambda^{m})^{-\frac{p}{2}} = A_{u}^{-\frac{p}{2}}\lambda^{-\frac{mp}{2}}. $
For large $u, A_{u} \geq 1$. Then, \begin{equation*}
    1+A_{u} \leq 2A_{u} \implies (1+A_{u})^{-\frac{p}{2}} \geq 2^{-\frac{p}{2}}A_{u}^{-\frac{p}{2}} \implies A_{u}^{-\frac{p}{2}} \leq 2^{\frac{p}{2}} (1+A_{u})^{-\frac{p}{2}}. 
\end{equation*} Therefore, \begin{equation*}
    (1+A_{u}\lambda^{m})^{-\frac{p}{2}} \leq 2^{\frac{p}{2}} (1+A_{u})^{-\frac{p}{2}}\lambda^{-\frac{mp}{2}} = c_{p} (1+A_{u})^{-\frac{p}{2}} \lambda^{-\frac{mp}{2}}, c_{p}:= 2^{\frac{p}{2}}. 
\end{equation*} Thus for $\lambda \geq b$, \begin{align*}
    q_{u}(\lambda) &\leq \lambda^{\gamma} c_{p} (1+A_{u})^{-\frac{p}{2}}\lambda^{-\frac{mp}{2}}\exp \{ - m \lambda x_{u} \} \\ 
    & = c_{p}(1+A_{u})^{-\frac{p}{2}} \lambda ^{\gamma - \frac{mp}{2}} \exp \{ - m \lambda x_{u} \}. 
\end{align*} Let $r : = \gamma - 2^{-1} (mp)$, $a := mx_{u} >0 $ and $I:= \int\limits_{b}^{\infty} \lambda^{r} e^{-a\lambda} d\lambda.$ Let $s := a(\lambda -b) = a\lambda - ab$ which implies $\lambda = b + a^{-1}s$. Then $d\lambda = a^{-1} ds$. Then \begin{align*}
    I &= \int\limits_{0}^{\infty}(b+\frac{s}{a})^{r} \exp \{ - a(b + \frac{s}{a}) \} \frac{ds}{a}\\ 
    &= \frac{1}{a} \int\limits_{0}^{\infty} (b+ \frac{s}{a})^{r} \exp \{ - ab-s \} ds \\ 
    & = \frac{e^{-ab}}{a}\int\limits_{0}^{\infty}(b + \frac{s}{a})^{r}e^{-s}ds. 
\end{align*} Firstly consider $r\leq 0.$ Since $b + a^{-1}s \geq b$ and $ x \rightarrow x^{r}$ is decreasing, $(b + a^{-1}s)^{r} \leq b^{r}.$ So, \begin{align*}
    I & \leq \frac{e^{-ab}}{a} \int \limits_{0}^{\infty} b^{r}e^{-s}ds\\ 
    & = \frac{e^{-ab}b^{r}}{a} \int \limits_{0}^{\infty} e^{-s}ds \\ 
    & = \frac{e^{-ab}b^{r}}{a}[-e^{-s}]\\ 
    & = \frac{b^{r}e^{-ab}}{a}. 
\end{align*}
Now consider $r \geq 0$. Recall $(x+y)^{r} \leq 2^{r}(x^{r} + y^{r})$ for $x,y\geq 0.$ So \begin{equation*}
    (b + \frac{s}{a})^{r} \leq 2^{r} (b^{r} + (\frac{s}{a})^{r}). 
\end{equation*} Then \begin{align*}
    I &\leq \frac{e^{-ab}}{a}2^{r} (b^{r} \int\limits_{0}^{\infty} e^{-s} ds + \frac{1}{a^{r}} \int\limits_{0}^{\infty}s^{r}e^{-s}ds)\\ 
    &= \frac{e^{-ab}2^{r}}{a}(b^{r} + \frac{1}{a^{r}}\Gamma(r+1)). 
\end{align*} For sufficiently large $u, a\geq 1, $ so $a^{-r}\Gamma(r+1) \leq \Gamma(r+1)$. Hence \begin{align*}
    I &\leq \frac{e^{-ab}}{a}2^{r}(b^{r} + \Gamma(r+1))\\ 
    &= \frac{C_{r,b}}{mx_{u}} e^{-mbx_{u}}, C_{r,b}:= 2^{r}(b^{r} + \Gamma(r+1)). 
\end{align*} So for $r \leq 0$ \begin{equation*}
    I \leq \frac{b^{r}}{mx_{u}}e^{-mbx_{u}} \text{ and } r \geq 0 \text{ implies } I \leq \frac{C_{r,b}}{mx_{u}} e^{-mbx_{u}}. 
\end{equation*} As $x_{u} \rightarrow \infty$ as $u \rightarrow \infty$, we have for sufficiently large $u_{0}, x_{u}^{-1} \leq x_{u_{0}}^{-1}$. So \begin{equation*}
    I \leq \frac{C_{r,b}}{mx_{u_{0}}}e^{-mbx_{u}} =: Ke^{-mbx_{u}}. 
\end{equation*} So \begin{align*}
    I &\leq \frac{\tilde{C}_{r,b}}{a} e^{-ab}, \tilde{C}_{r,b} = \begin{cases}
b^{r}, & r \leq 0,\\
2^{r}(b^{r} + \Gamma(r+1)),& r\geq 0
\end{cases} \\ 
&= K_{r,b}e^{-ab}, \text{ with } K_{r,b} : = \frac{C_{r,b}}{mx_{u_{0}}} \\ 
&= K_{r,b}e^{-mbx_{u}}. 
\end{align*} Hence \begin{equation*}
    \int\limits_{O_{u}^{(R)}}q_{u}(\lambda) d\lambda \leq C(1+A_{u}) ^{-\frac{p}{2}}K_{r,b}e^{-mbx_{u}} = K(1+A_{u})^{-\frac{p}{2}}e^{-mbx_{u}}, K:= CK_{r,b}. 
\end{equation*}
We know that $\lambda_{u}^{*} \rightarrow 1$. Then for sufficiently large $u$, $\lambda_{u}^{*} \in [0.5,1.5]$. On that interval \begin{equation*}
    \phi_{u}(\lambda_{u}^{*}) = m\lambda_{u}^{*} + \frac{1}{\lambda_{u}^{*^{m}} + \frac{1}{A_{u}}} \leq \frac{3m}{2} + \frac{1}{(\frac{1}{2})^{m}} = \frac{3m}{2} + 2^{m}. 
\end{equation*} Therefore $\phi_{u}(\lambda_{u}^{*})\leq M$ for sufficiently large $u$. Since $b>2, mb>m+1$. So \\$C_{R}:= 2^{-1}(mb - (m+1)) >0$. Then $2C_{R} + (m+1) = mb.$ Since $\lambda_{u}^{*} \rightarrow 1$ which implies $\phi_{u}(\lambda_{u}^{*}) \rightarrow \phi_{\infty}(1) = m+1$. Then $\exists u_{1}$ such that $\forall u \geq u_{1}, \phi_{u}(\lambda_{u}^{*}) \leq (m+1) + C_{R}$ which implies $\phi_{u}(\lambda_{u}^{*}) +C_{R} \leq (m+1) + 2C_{R}$ which implies $\phi_{u}(\lambda_{u}^{*}) + C_{R} \leq mb$. Then \begin{align*}
    &x_{u}(\phi_{u}(\lambda_{u}^{*}) + C_{R}) \leq mbx_{u}\\ 
    \implies -&x_{u}(\phi_{u}(\lambda_{u}^{*}) + C_{R}) \geq -mbx_{u}\\ 
    \implies & \exp \{ - x_{u}(\phi_{u}(\lambda_{u}^{*}) + C_{R}) \} \geq \exp\{-mbx_{u}  \}. 
\end{align*} Therefore \begin{equation*}
    \int\limits_{O_{u}^{(R)}} q_{u}(\lambda) d\lambda \leq K_{R}(1+A_{u})^{-\frac{p}{2}} \exp \{ - x_{u}(\phi_{u}(\lambda_{u}^{*})+C_{R}) \}.
\end{equation*}
Now we will show that \begin{equation*}
    \int\limits_{O_{u}^{(L)}} q_{u}(\lambda) \leq K_{L} \exp \{ -x_{u}(\phi_{u}(\lambda_{u}^{*}) +C_{L}) \}. 
\end{equation*} Note that \begin{equation*}
    \int\limits_{O_{u}^{(L)}} q_{u}(\lambda) d\lambda \leq \int\limits_{0}^{a}q_{u}(\lambda) d\lambda. 
\end{equation*} Let $\lambda_{*}:= A_{u}^{-\frac{1}{m}}$ which implies $\lambda_{*}^{m} = A_{u}^{-1}$. Observe that for sufficiently large $u, \lambda_{*}<a.$ Then \begin{equation*}
    \int\limits_{O_{u}^{(L)}} q_{u}(\lambda) d\lambda \leq \int\limits_{0}^{\lambda_{*}} q_{u}(\lambda) d\lambda + \int\limits_{\lambda_{*}}^{a} q_{u}(\lambda) d\lambda. 
\end{equation*} If $\lambda \leq \lambda_{*}$ then $\lambda ^{m} \leq A_{u}^{-1}.$ So $\lambda^{m} + A_{u}^{-1} \leq 2A_{u}^{-1}. $ Hence \begin{equation*}
    \phi_{u}(\lambda) \geq \frac{1}{\lambda^{m} + \frac{1}{A_{u}}} \geq \frac{1}{\frac{2}{A_{u}}}  = \frac{A_{u}}{2}. 
\end{equation*} Furthermore $(1+A_{u}\lambda ^{m})^{-\frac{p}{2}} \leq 1. $ Therefore \begin{align*}
    \int\limits_{0}^{\lambda_{*}} q_{u}(\lambda) d\lambda &\leq \int\limits_{0}^{\lambda_{*}} \lambda ^{\gamma} \exp \{ - x_{u} \frac{A_{u}}{2} \} d\lambda \\ 
    &= \exp \{ - \frac{x_{u}A_{u}}{2} \} \int\limits_{0}^{\lambda_{*}} \lambda ^{\gamma} d\lambda \\ 
    & = \exp \{ - \frac{x_{u}A_{u}}{2} \} [\frac{\lambda ^{\gamma + 1}}{\gamma + 1}]_{0}^{\lambda _{*}}\\ 
    &= \exp \{ -\frac{x_{u}A_{u}}{2} \} \frac{\lambda_{*}^{\gamma + 1}}{ \gamma +1} \\ 
    &= \exp \{ - \frac{u}{4} \} \frac{\lambda_{*}^{\gamma +1}}{ \gamma + 1}. 
\end{align*} Since $\lambda_{*} = A_{u}^{-\frac{1}{m}} \rightarrow 0$ and $\gamma +1> 0, $ we have $\lambda_{*}^{\gamma +1} \leq 2^{-1}$ for sufficiently large $u$. So for large $u$, $\lambda ^{\gamma + 1}(\gamma +1)^{-1} \leq (\gamma + 1)^{-1} =: c$. Therefore \begin{equation*}
    \int\limits_{0}^{\lambda_{*}} q_{u}(\lambda) d\lambda \leq ce^{- \frac{u}{4}}. 
\end{equation*} If $\lambda \geq \lambda_{*}$ then $\lambda^{m} \geq A_{u}^{-1}$. Hence $2\lambda ^{m} \geq \lambda ^{m} + A_{u}^{-1}. $ Therefore $\phi_{u}(\lambda) \geq (2\lambda^{m})^{-1}.$ Thus \begin{align*}
    \int\limits_{\lambda_{*}}^{a} q_{u}(\lambda)d\lambda &\leq \int\limits_{\lambda_{*}} ^{a} \lambda ^{\gamma} (1+A_{u}\lambda^{m})^{-\frac{p}{2}} \exp \{ - \frac{x_{u}}{2\lambda^{m}}  \}d\lambda \\ 
    &\leq \int\limits_{\lambda_{*}}^{a} \lambda ^{\gamma} \exp \{ - \frac{x_{u}}{2\lambda^{m}} \} d\lambda.
\end{align*}
Let \begin{equation*}
    t := \frac{x_{u}}{2\lambda^{m}} \implies \lambda = (\frac{x_{u}}{2t})^{\frac{1}{m}} = (\frac{x_{u}}{2})^{\frac{1}{m}}t^{-\frac{1}{m}}. 
\end{equation*}  Then \begin{equation*}
    d\lambda = (\frac{x_{u}}{2})^{\frac{1}{m}}(- \frac{1}{m}t^{-\frac{1}{m}-1})dt \text{ and } \lambda^{\gamma } = (\frac{x_{u}}{2})^{\frac{\gamma}{m}}t^{-\frac{\gamma}{m}}. 
\end{equation*} So \begin{equation*}
    \lambda^{\gamma} d\lambda = (\frac{x_{u}}{2})^{\frac{\gamma}{m}}t^{-\frac{\gamma}{m}}(\frac{x_{u}}{2})^{\frac{1}{m}}(-\frac{1}{m})t^{-\frac{1}{m}-1}dt = - \frac{1}{m}(\frac{x_{u}}{2})^{\frac{(\gamma+1)}{m}} t^{-\frac{\gamma +1}{m}-1}dt. 
\end{equation*} When $\lambda = a, t_{a} := (2a^{m})^{-1}x_{u}$ and when $\lambda = \lambda_{*},t_{*} = (2\lambda_{*}^{m})^{-1}x_{u}$. Then \begin{align*}
    \int\limits_{\lambda_{*}}^{a} \lambda^{\gamma} \exp \{ - \frac{x_{u}}{2 \lambda ^{m}} \} d\lambda &= \int\limits_{t_{*}}^{t_{a}} -\frac{1}{m} (\frac{x_{u}}{2})^{\frac{\gamma +1}{m}}t^{-\frac{\gamma  + 1}{m}-1}e^{-t} dt \\ 
    &= \frac{1}{m}(\frac{x_{u}}{2})^{\frac{\gamma +1}{m}} \int\limits_{t_{a}} ^{t_{*}} t^{-\frac{\gamma +1}{m}-1} e^{-t} dt.  
\end{align*} Since $\gamma >-1, m^{-1}(\gamma +1) >0$. So $-m^{-1} (\gamma + 1) < 0$. Hence $\forall t\geq 1, t^{-\frac{\gamma +1}{m} - 1} \leq 1. $ We know that $t_{a} = (2a^{m})^{-1}x_{u} \rightarrow \infty$ as $u \rightarrow \infty. $ So for all sufficiently large u $t_{a} \geq 1.$ Therefore \begin{equation*}
    \int\limits_{t_{a}}^{t_{*}}t^{- \frac{\gamma + 1}{m}-1}e^{-t}dt \leq \int\limits_{t_{a}}^{t_{*}} e^{-t}dt \leq \int\limits_{t_{a}}^{\infty}e^{-t}dt = [-e^{-t}]_{t_{a}}^{\infty} = e^{-t_{a}}. 
\end{equation*} Therefore \begin{align*}
    \int\limits_{\lambda_{*}}^{a} \lambda ^{\gamma} \exp \{ - \frac{x_{u}}{2\lambda ^{m}} \} d\lambda &\leq \frac{1}{m}(\frac{x_{u}}{2})^{\frac{\gamma +1}{m}} e^{-t_{a}} \\ 
    &= \frac{1}{m2^{\frac{\gamma +1}{m}}}x_{u}^{\frac{\gamma +1}{m}} \exp \{ - \frac{x_{u}}{2a^{m}} \} \\ 
    &= Kx_{u}^{\frac{\gamma +1}{m}} \exp \{ - \frac{x_{u}}{2a^{m}} \}, K:= \frac{1}{m}2^{-\frac{\gamma +1}{m}}. 
\end{align*} Let \begin{equation*}
    4C_{L}:= \frac{1}{2a^{m}} - (m+2).
\end{equation*}  
Then,
\begin{equation*}
4C_{L} > 0 \iff \frac{1}{2a^{m}} - (m+2) > 0 \iff \frac{1}{2a^{m}} > m+2, \text{by definition of } a. 
\end{equation*} Let \begin{equation*}
    M_{u} := \min\limits_{\lambda \in [\frac{1}{2}, 2]}\phi_{u}(\lambda) = \phi_{u}(\lambda_{u}^{*}) \text{ and } M_{\infty} := \min\limits_{\lambda \in [\frac{1}{2}, 2]} \phi_{\infty}(\lambda) = \phi_{\infty}(1) = m+1. 
\end{equation*} Recall for functions $f$ and $g$ on a compact set $K$ that $|\min\limits_{k}f - \min\limits_{k}g| \leq \sup\limits_{k}|f-g|. $Then $|\min\limits_{\lambda \in [\frac{1}{2},2]}\phi_{u} - \min\limits_{\lambda \in [\frac{1}{2},2]}\phi_{\infty}| = |\phi_{u}(\lambda_{u}^{*}) - (m+1)| \leq \sup\limits_{\lambda \in [\frac{1}{2},2]}|\phi_{u}(\lambda) - \phi_{\infty}(\lambda)| \rightarrow 0.$ Therefore $\phi_{u}(\lambda_{u}^{*}) \rightarrow m+1. $ Then for all large $u, \phi_{u}(\lambda_{u}^{*}) \leq m+2$. Hence \begin{align*}
    \exp \{ - \frac{x_{u}}{2a^{m}} \} &\leq \exp \{ -x_{u}((m+2) + 4C_{L}) \} \\ 
    &\leq \exp \{ - x_{u} (\phi_{u}(\lambda_{u}^{*})+4C_{L}) \}. 
\end{align*} Therefore \begin{align*}
    \int\limits_{\lambda_{*}}^{a} \lambda^{\gamma} \exp \{ - \frac{x_{u}}{2\lambda^{m}} \} d\lambda &\leq Kx_{u}^{\frac{\gamma +1}{m}}\exp \{- x_{u}(\phi_{u}(\lambda_{u}^{*})+4C_{L}) \} \\ 
    &= K \exp \{ - x_{u}\phi_{u}(\lambda_{u}^{*}) \}[x_{u}^{\frac{\gamma +1}{m}}e^{-4C_{L}x_{u}}]. 
\end{align*} Now, for all sufficiently large $u$ we will show that $x_{u}^{\frac{\gamma +1}{m}} \exp \{ - 4C_{L}x_{u} \} \leq \exp \{ - C_{L}x_{u} \}$. That is \begin{align*}
    x_{u}^{\frac{\gamma+1}{m}} &\leq \exp \{ 3C_{L}x_{u} \}\\ 
    \iff \frac{\gamma +1}{m}\log(x_{u}) &\leq 3C_{L}x_{u}\\ 
    \iff \frac{\log(x_{u})}{x_{u}} &\leq \frac{3mC_{L}}{\gamma +1} \text{ which is true for sufficiently large }u. 
\end{align*} So , \begin{equation*}
    \int\limits_{\lambda_{*}}^{a} q_{u}(\lambda)d\lambda \leq K \exp \{ - x_{u}(\phi_{u}(\lambda_{u}^{*}) + C_{L}) \}. 
\end{equation*} Then \begin{equation*}
    \int\limits_{O_{u}^{(L)}} q_{u}(\lambda) d\lambda \leq ce^{-\frac{u}{4}} + K \exp \{ - x_{u}(\phi_{u}(\lambda_{u}^{*}) + C_{L}) \}. 
\end{equation*} We know $\phi_{u}(\lambda_{u}^{*}) \leq \phi_{u}(1),$ but $\phi_{u}(1)  \leq m+1. $ Hence $\phi_{u}(\lambda_{u}^{*}) + C_{L} \leq m+1 + C_{L}$. Let $B := m+1 +C_{L} >0$. We want \begin{align*}
    \exp \{ -\frac{u}{4} \} &\leq \exp \{ - x_{u}(\phi_{u}(\lambda_{u}^{*}) + C_{L}) \} \text{ i.e. } \\
    \frac{u}{4} & \geq x_{u}(\phi_{u}(\lambda_{u}^{*}) + C_{L}). 
\end{align*} Then it is sufficient to check that $4^{-1}u \geq Bx_{u}.$ That is \begin{equation*}
    \frac{C}{2}x_{u}^{m+1} \geq Bx_{u} \iff \frac{C}{2}x_{u}^{m} \geq B \text{ which holds for sufficiently large }u. 
\end{equation*} 
Then, 
\begin{align*}
    \int\limits_{O_{u}^{(L)}} q_{u}(\lambda) d\lambda & \leq (c+K) \exp \{ - x_{u}(\phi_{u}(\lambda_{u}^{*}) + C_{L}) \}\\ 
    &= K_{L}\exp\{ - x_{u}(\phi_{u}(\lambda_{u}^{*})+C_{L}) \}.\\ 
\end{align*} This gives \begin{align*}
    \int\limits_{O_{u}}q_{u}(\lambda)d\lambda &= \int\limits_{O_{u}^{(L)}}q_{u}(\lambda) d\lambda + \int\limits_{O_{u}^{(M)}}q_{u}(\lambda) d\lambda + \int\limits_{O_{u}^{(R)}}q_{u}(\lambda) d\lambda \\
    &\leq (1+A_{u})^{-\frac{p}{2}}\exp \{ -x_{u}\phi_{u}(\lambda_{u}^{*}) \} [K_{M} \exp\{ - C_{M} x_{u} \} \\&+ K_{R}\exp\{ -C_{R}x_{u} \}]+ K_{L}\exp\{ - x_{u}(\phi_{u}(\lambda_{u}^{*}) +C_{L}) \} .
\end{align*} Let $C_{\text{min}} := \min \{ C_{M}, C_{R}, C_{L} \}$. Then $e^{-C_{M}x_{u}} \leq e^{-C_{\text{min}}x_{u}}, e^{-C_{R}x_{u}}\leq e^{-C_{\text{min}}x_{u}}$, \\and $e^{-C_{L}x_{u}} \leq e^{-C_{\text{min}}x_{u}}$. So \begin{align*}
    \int\limits_{O_{u}} q_{u}(\lambda) d\lambda &\leq (1+A_{u})^{-\frac{p}{2}}e^{-x_{u}(\phi_{u}(\lambda_{u}^{*}) + C_{\text{min}})} (K_{M}+K_{R}) +K_{L}e^{-x_{u}(\phi_{u}(\lambda_{u}^{*}) + C_{\text{min}})}\\
    &= e^{-x_{u}(\phi_{u}(\lambda_{u}^{*}) + C_{\text{min}})} [(K_{M} + K_{R})(1+A_{u})^{-\frac{p}{2}} + K_{L}]\\ 
    &\leq e^{-x_{u}(\phi_{u}(\lambda_{u}^{*}) + C_{\text{min}})}[K_{M} + K_{R} + K_{L}]\\ 
    &= K_{\epsilon} e^{-x_{u}(\phi_{u}(\lambda_{u}^{*}) + C_{\text{min}})}, K_{\epsilon}:=K_{M} + K_{R} + K_{L}. 
\end{align*} Since $|\lambda -1| \geq \epsilon $ which  implies $|\lambda - \lambda_{u}^{*}| \geq 2^{-1}\epsilon$ for sufficiently large $u$ and \begin{equation*}
    O_{u}:=\{ \lambda \geq \frac{x_{0}}{x_{u}} : |\lambda - \lambda_{u}^{*}| \geq \frac{\epsilon}{2} \}. 
\end{equation*} Then \begin{equation*}
    \Pi_{u,\boldsymbol{k}}(|\lambda -1| \geq \epsilon|X\geq x_{0}) \leq \Pi_{u,\boldsymbol{k}}(\lambda \in O_{u}|X\geq x_{0}). 
\end{equation*} We showed that \begin{equation*}
    \int\limits_{\frac{x_{0}}{x_{u}}}^{\infty} q_{u}(\lambda) d\lambda \geq \int\limits_{B_{u}}q_{u}(\lambda)d\lambda \geq C_{in} \eta_{u} (1+A_{u})^{-\frac{p}{2}} e^{-x_{u}\phi_{u}(\lambda_{u}^{*})}. 
\end{equation*} So \begin{align*}
    \Pi_{u, \boldsymbol{k}}(\lambda \in O_{u}|X \geq x_{0}) &\leq \frac{K_{2}}{K_{1}}\frac{K_{\epsilon} \exp \{ -x_{u}(\phi_{u}(\lambda_{u}^{*})+C_{\text{min}}) \}}{C_{in}\eta_{u}(1+A_{u})^{-\frac{p}{2}} \exp \{ -x_{u}\phi_{u}(\lambda_{u}^{*}) \}}  \\ 
    &= \frac{K_{2}}{K_{1}} \frac{K_{\epsilon}}{C_{in}}\eta_{u}^{-1} (1+A_{u})^{\frac{p}{2}} e^{-x_{u}C_{\text{min}}}. 
\end{align*} Recall $\eta_{u}^{-1} = \sqrt{x_{u}}$ and $A_{u} = Cx_{u}^{m}$. Then $\eta_{u}^{-1}(1+A_{u})^{\frac{p}{2}}.$ For sufficiently large $u$, $A_{u} \geq 1.$ Hence $1+A_{u} \leq 2A_{u} = 2Cx_{u}^{m}.$ Then $(1+A_{u})^{\frac{p}{2}} \leq (2C)^{\frac{p}{2}} x_{u}^{\frac{mp}{2}}. $ Then \begin{align*}
    \eta_{u}^{-1} (1+A_{u})^{\frac{p}{2}} &\leq x_{u}^{\frac{1}{2}}(2C)^{\frac{p}{2}}x_{u}^{\frac{mp}{2}}\\ &=(2C)^{\frac{p}{2}}x_{u}^{\frac{mp+1}{2}}\\ &= C_{0}x_{u}^{B}, C_{0}:=(2C)^{\frac{p}{2}}, B:=\frac{mp+1}{2}.
\end{align*} Therefore \begin{equation*}
    \Pi_{u, \boldsymbol{k}}(\lambda \in O_{u}|X\geq x_{0}) \leq \frac{K_{2}}{K_{1}} \frac{K_{\epsilon}}{C_{in}} x_{u}^{B} e^{-C_{\text{min}}x_{u}} = C_{1}x_{u}^{B} e^{-C_{\text{min}}x_{u}}. 
\end{equation*} We know for sufficiently large $u, C_{1}x_{u}^{B} \leq e^{\frac{C_{\text{min}}}{2}x_{u}}. $ Then \begin{equation*}
    \Pi_{u, \boldsymbol{k}}(\lambda \in O_{u}|X\geq x_{0}) \leq e^{\frac{C_{\text{min}}}{2}x_{u}}e^{-C_{\text{min}}x_{u}} = e^{-\frac{C_{\text{min}}}{2}x_{u}}. 
\end{equation*} Therefore, \begin{equation*}
    \Pi_{u, \boldsymbol{k}}(|\lambda -1| \geq \epsilon|X\geq x_{0}) \leq e^{-c^{'}x_{u}}, c':=\frac{C_{\text{min}}}{2}. 
\end{equation*} Then \begin{equation*}
    \Pi_{u, \boldsymbol{k}}(|\frac{X}{x_{u}} -1| \geq \epsilon |X\geq x_{0}) \leq e^{-c^{'}x_{u}}. 
\end{equation*} Note that \begin{equation*}
    \Pi_{u, \boldsymbol{k}}(|\frac{X}{x_{u}}-1| \geq \epsilon) = \Pi_{u, \boldsymbol{k}}(|\frac{X}{x_{u}}-1| \geq \epsilon , X < x_{0}) + \Pi_{u, \boldsymbol{k}}(|\frac{X}{x_{u}}-1| \geq \epsilon, X \geq x_{0}). 
\end{equation*} Then \begin{align*}
    \Pi_{u, \boldsymbol{k}}(|\frac{X}{x_{u}}-1| \geq \epsilon , X\leq x_{0}) &\leq \Pi_{u, \boldsymbol{k}}(X<x_{0}) \\ 
    &\text{ and }\\
\Pi_{u, \boldsymbol{k}}(|\frac{X}{x_{u}}-1| \geq \epsilon, X\geq x_{0}) &= \Pi_{u, \boldsymbol{k}}(X \geq x_{0}) \Pi_{u, \boldsymbol{k}}(|\frac{X}{x_{u}}-1| \geq \epsilon|X\geq x_{0})\\ &\leq \Pi_{u, \boldsymbol{k}}(|\frac{X}{x_{u}}-1| \geq \epsilon|X\geq x_{0}). 
\end{align*} So \begin{equation*}
    \Pi_{u, \boldsymbol{k}}(|\frac{X}{x_{u}}-1| \geq \epsilon) \leq \Pi_{u, \boldsymbol{k}}(X<x_{0})+e^{-c'x_{u}}. 
\end{equation*} Note that $e^{-C_{0}u} \leq e^{-C_{0} Kx_{u}}$ for sufficiently large $u$ and constant $K$. Then for sufficiently large $u$ \begin{equation*}
    \Pi_{u, \boldsymbol{k}}(|\frac{X}{x_{u}}-1| \geq \epsilon) \leq e^{-c_{0}Kx_{u}} + e^{-c^{'}x_{u}} = 2e^{-c_{\epsilon}x_{u}}, c_{\epsilon} := \min \{c_{0}K,c' \}. 
\end{equation*} We want \begin{equation*}
    2e^{-c_{\epsilon}x_{u}} \leq e^{-\frac{c_{\epsilon}}{2}x_{u}} \iff 2 \leq e^{\frac{c_{\epsilon}x_{u}}{2}} \iff \log(2) \leq \frac{c_{\epsilon}}{2}x_{u} \iff \frac{2}{c_{\epsilon}}\log(2) \leq x_{u},
\end{equation*} which is true for sufficiently large $u$. Therefore \begin{equation*}
    \Pi_{u, \boldsymbol{k}}(|\frac{X}{x_{u}}-1| \geq \epsilon) \leq e^{-c_{\epsilon}x_{u}}. 
\end{equation*} That is \begin{equation*}
    \mathbb{P}(|\frac{X}{x_{u}}-1| \geq \epsilon|U=u, \boldsymbol{K} = \boldsymbol{k}) \leq e^{-c_{\epsilon}x_{u}}. 
\end{equation*} Observe that \begin{equation*}
    |\frac{X}{x_{u}}-1| < \epsilon \iff 1-\epsilon < \frac{X}{x_{u}} < 1+\epsilon.  
\end{equation*} By taking complements \begin{equation*}
    \{ |\frac{X}{x_{u}}-1| \geq \epsilon \} = \{ X\leq (1-\epsilon)x_{u} \} \cup \{ X \geq (1+\epsilon)x_{u} \}. 
\end{equation*} Therefore \begin{equation*}
    \mathbb{P}((1-\epsilon) x_{u} < X < (1+\epsilon)x_{u}|U=u, \boldsymbol{K} = \boldsymbol{k}) \geq 1-e^{-c_{\epsilon}x_{u}}. 
\end{equation*} Note that \begin{align*}
    (1-\epsilon)x_{u} < X < (1+\epsilon)x_{u} &\iff (1-\epsilon)^{m}x_{u}^{m} < X^{m} < (1+\epsilon)^{m} x_{u}^{m}\\ & \iff C(1-\epsilon)^{m}x_{u}^{m} < CX^{m} < C(1+\epsilon)^{m}x_{u}^{m}. 
\end{align*} Therefore, \begin{equation*}
    \mathbb{P}(V \in [C(1-\epsilon)^{m}x_{u}^{m}, C(1+\epsilon)^{m}x_{u}^{m}]|U=u, \boldsymbol{K} = \boldsymbol{k}) \geq 1- e^{-c_{\epsilon}x_{u}}. 
\end{equation*} Observe that \begin{equation*}
    Cx_{u}^{m} = C(\frac{u}{2C}) ^{\frac{m}{m+1}} = 2^{-\frac{m}{m+1}}C^{\frac{1}{m+1}} u^{\frac{m}{m+1}}. 
\end{equation*} Recall $m = d-1$ so $m(m+1)^{-1} = d^{-1}(d-1) = 1- d^{-1}. $ Therefore 
\begin{equation*}
    Cx_{u}^{m} = 2^{-(1-\frac{1}{d})} C^{\frac{1}{d}} u^{1-\frac{1}{d}} = C_{d}u^{1-\frac{1}{d}}, C_{d}:=2^{-(1-\frac{1}{d})}C^{\frac{1}{d}}
\end{equation*}
So 
\begin{equation*}
    \mathbb{P}(V \asymp u^{1-\frac{1}{d}}|U=u, \boldsymbol{K} = \boldsymbol{k}) \geq 1- e^{-c_{\epsilon}x_{u}}. 
\end{equation*} Hence for sufficiently large $u$ \begin{equation*}
    \mathbb{P}(V \asymp u^{1-\frac{1}{d}}|U=u, \boldsymbol{K} = \boldsymbol{k}) \geq \frac{1}{2}. 
\end{equation*} 
That is, there exists constants $c_{1,k}, c_{2,k}>0$ such that for sufficiently large $u$
\begin{equation*}
    \mathbb{P}(c_{1,k} u^{1-\frac{1}{d}} \leq V \leq c_{2,k} u^{1-\frac{1}{d}}|U=u, \boldsymbol{K} = \boldsymbol{k}) \geq \frac{1}{2}.
\end{equation*}
Let $\mathcal{A}_{u} := \{ c_{1}u^{1- \frac{1}{d}} \leq V \leq c_{2}u^{1-\frac{1}{d}} \}, c_{1} = \min\limits_{k} c_{1,k}, c_{2} = \max\limits_{k} c_{2,k}$. Observe that 
\begin{align*}
    \mathbb{P}(\mathcal{A}_{u} |U = u) &= \sum\limits_{k} \mathbb{P}(\mathcal{A}_{u}|U=u, \boldsymbol{K} = \boldsymbol{k})\mathbb{P}(\boldsymbol{K} = \boldsymbol{k}|U = u)\\
    &\geq \sum\limits_{k} \frac{1}{2} \mathbb{P}(\boldsymbol{K} = \boldsymbol{k}|U=u)\\ 
    &= \frac{1}{2}\sum\limits_{k} \mathbb{P}(\boldsymbol{K} = \boldsymbol{k}|U= u ) \\ 
\end{align*}
Therefore, $\mathbb{P}(\mathcal{A}_{u}|U=u)\geq \frac{1}{2}$.
Since $1_{\mathcal{A}_{u}} \geq 0$ and $(1+V)^{-1} \geq 0$, \begin{equation*}
    \psi(u) = \mathbb{E}[\frac{1}{1+V}|U=u] \geq \mathbb{E}[\frac{1}{1+V}1_{\mathcal{A}_{u}}|U=u]. 
\end{equation*} On $\mathcal{A}_{u}$ we have $V \leq c_{2}u^{1-\frac{1}{d}}.$ Since $V \rightarrow (1+V)^{-1}$ is decreasing on $[0,\infty)$ we have \begin{equation*}
    \frac{1}{1+V} \geq \frac{1}{1+c_{2}u^{1-\frac{1}{d}}} \text{ on } \mathcal{A}_{u}. 
\end{equation*} Equivalently \begin{equation*}
    \frac{1}{1+V} 1_{\mathcal{A}_{u}} \geq \frac{1}{1+c_{2}u^{1-\frac{1}{d}}} 1_{\mathcal{A}_{u}}. 
\end{equation*} Then, \begin{equation*}
    \mathbb{E}[\frac{1}{1+V}1_{\mathcal{A}_{u}}|U=u] \geq \frac{1}{1+c_{2}u^{1-\frac{1}{d}}} \mathbb{E}[1_{\mathcal{A}_{u}}|U=u]. 
\end{equation*} Note that $\mathbb{E}[1_{\mathcal{A}_{u}}|U=u] = \mathbb{P}(\mathcal{A}_{u}|U=u) \geq 2^{-1}$. Hence \begin{equation*}
    \psi(u) \geq \frac{1}{2(1+c_{2}u^{1-\frac{1}{d}})}, \text{ for sufficiently large }u. 
\end{equation*} Let $a:= 1-d^{-1} \in (0,1).$ For any $u$ such that $c_{2} u^{a} \geq 1 $ iff $u \geq c_{2}^{-\frac{1}{a}}$ which is true for sufficiently large $u$. Then $1+c_{2}u^{a} \leq 2c_{2}u^{a}$ which implies $(1+c_{2}u^{a})^{-1} \geq (2c_{2}u^{a})^{-1}.$ Hence for all sufficiently large $u,$ \begin{equation*}
    \psi(u) \geq \frac{1}{2}\frac{1}{1+c_{2}u^{a}} \geq \frac{1}{2}\frac{1}{2c_{2}u^{a}} = \frac{1}{4c_{2}}u^{-a} = \frac{1}{4c_{2}}u^{-(1-\frac{1}{d})}. 
\end{equation*} This implies \begin{equation*}
    u\psi(u) \geq \frac{1}{4c_{2}}u^{\frac{1}{d}} \rightarrow \infty \text{ as } u \rightarrow \infty. 
\end{equation*} Then $\exists u_{*}<\infty$ such that $\forall u \geq u_{*}, u\psi(u) \geq (4c_{2})^{-1}u^{\frac{1}{d}}.$ Now let $u_{0}:= \max\{ u_{*}, (16pc_{2})^{d}\}.$ Then for any $u \geq u_{0}$\begin{equation*}
    u\psi(u) \geq \frac{1}{4c_{2}}u^{\frac{1}{d}} \geq \frac{1}{4c_{2}}u_{0}^{\frac{1}{d}} \geq \frac{1}{4c_{2}} (16pc_{2}) = 4p. 
\end{equation*} Recall $B(u) \geq \psi(u)(u\psi(u) - 2p).$ For $u \geq u_{0}, u\psi(u) - 2p \geq 2p$. Hence $u  \geq u_{0} $ implies 
\begin{equation*}
    B(u) \geq 2p\psi(u) \geq \frac{2p}{4c_{2}}u^{\frac{1}{d}-1} = \frac{p}{2c_{2}}u^{\frac{1}{d}-1}. 
\end{equation*} Let $A_{\boldsymbol{\theta}} := \{ u_{0}  \leq U \leq b_{\boldsymbol{\theta}}\}$ with $b_{\boldsymbol{\theta}}$ to be chosen. Note $B(u) \geq 0$ is not guaranteed everywhere. Therefore we will lower bound the expectation on $A_{\boldsymbol{\theta}}.$ In addition, observe that $B(u) \geq -2p$. So, 
\begin{align*}
    \mathbb{E}_{\boldsymbol{\theta}}[B(U)] & = \mathbb{E}_{\boldsymbol{\theta}}[B(U)\boldsymbol{1}_{A_{\boldsymbol{\theta}}}] + \mathbb{E}_{\boldsymbol{\theta}}[B(U)\boldsymbol{1}_{A_{\boldsymbol{\theta}}^{c}}]\\
    &\geq \mathbb{E}_{\boldsymbol{\theta}}[B(U) \boldsymbol{1}_{A_{\boldsymbol{\theta}}}]-2p\mathbb{P}_{\boldsymbol{\theta}}(A_{\boldsymbol{\theta}}^{c}). 
\end{align*}
On $A_{\boldsymbol{\theta}}, U \geq u_{0}$, so $B(U)1_{A_{\boldsymbol{\theta}}} \geq (2c_{2})^{-1}pU^{\frac{1}{d}-1}1_{A_{\boldsymbol{\theta}}}.$ Now we will choose $b_{\boldsymbol{\theta}}$ and use the fact that $u\rightarrow u^{\frac{1}{d}-1}$ is decreasing since $d\geq 2$. Thus on the event $\{ U \leq b_{\boldsymbol{\theta}} \}$, $U^{\frac{1}{d}-1} \geq b_{\boldsymbol{\theta}}^{\frac{1}{d}-1}$. Hence on $A_{\boldsymbol{\theta}}$, $U^{\frac{1}{d}-1}1_{A_{\boldsymbol{\theta}}} \geq b_{\boldsymbol{\theta}}^{\frac{1}{d}-1}1_{A_{\boldsymbol{\theta}}}.$ Therefore, $B(u) 1_{A_{\boldsymbol{\theta}}} \geq p(2c_{2})^{-1} b_{\boldsymbol{\theta}}^{\frac{1}{d}-1}1_{A_{\boldsymbol{\theta}}}.$ Then, \begin{equation*}
    \mathbb{E}_{\boldsymbol{\theta}}[B(U)] \geq \frac{p}{2c_{2}} b_{\boldsymbol{\theta}}^{\frac{1}{d}-1}\mathbb{E}[1_{A_{\boldsymbol{\theta}}}] -2p\mathbb{P}_{\boldsymbol{\theta}}(A_{\boldsymbol{\theta}}^{c}) = \frac{p}{2c_{2}}b_{\boldsymbol{\theta}}^{\frac{1}{d}-1}\mathbb{P}_{\boldsymbol{\theta}}(A_{\boldsymbol{\theta}}) - 2p \mathbb{P}_{\boldsymbol{\theta}}(A_{\boldsymbol{\theta}}^{c}). 
\end{equation*} We know $\boldsymbol{Y}= \boldsymbol{\theta} +\boldsymbol{Z}$ with $\boldsymbol{Z} \sim N_{p}(\boldsymbol{0}_{p},I_{p}).$ The $U = ||\boldsymbol{Y}||^{2} = ||\boldsymbol{\theta} + \boldsymbol{Z}||^{2}.$ By the triangle inequality $||\boldsymbol{\theta} + \boldsymbol{Z}|| \leq ||\boldsymbol{\theta}|| + ||\boldsymbol{Z}||.$ By the reverse triangle inequality \\$|||\boldsymbol{\theta}+\boldsymbol{Z}|| - ||\boldsymbol{Z}||| \leq ||\boldsymbol{\theta}+\boldsymbol{Z}-\boldsymbol{Z}|| =||\boldsymbol{\theta}|| .$ This implies $||\boldsymbol{\theta} + \boldsymbol{Z}|| \geq ||\boldsymbol{\theta}|| - ||\boldsymbol{Z}||.$Therefore $||\boldsymbol{\theta}|| - ||\boldsymbol{Z}|| \leq ||\boldsymbol{\theta}+\boldsymbol{Z}|| \leq ||\boldsymbol{\theta}|| + ||\boldsymbol{Z}||.$\\ This implies $(||\boldsymbol{\theta}||-||\boldsymbol{Z}||)^{2}\leq ||\boldsymbol{\theta}+\boldsymbol{Z}||^{2} \leq (||\boldsymbol{\theta}|| + ||\boldsymbol{Z}||)^{2}. $ Consider the event \\$E_{\boldsymbol{\theta}} := \{ ||\boldsymbol{Z}|| \leq 2^{-1}||\boldsymbol{\theta}|| \}.$ On $E_{\boldsymbol{\theta}}, U \geq (||\boldsymbol{\theta}-2^{-1}||\boldsymbol{\theta}||)^{2} = 4^{-1}||\boldsymbol{\theta}||^{2}.$ Similarly, \\$U \leq (||\boldsymbol{\theta}|| + 2^{-1}||\boldsymbol{\theta}||)^{2} = 2.25||\boldsymbol{\theta}||^{2}.$ Now choose $b_{\boldsymbol{\theta} } = 2.25||\boldsymbol{\theta}||^{2}.$ Then \\$E_{\boldsymbol{\theta}} \subseteq \{ 4^{-1}||\boldsymbol{\theta}||^{2} \leq U \leq b_{\boldsymbol{\theta}} \}.$ Now pick any $\boldsymbol{\theta}$ such that $4^{-1}||\boldsymbol{\theta}||^{2} \geq u_{0}.$ That is, \\$||\boldsymbol{\theta}|| \geq 2 \sqrt{u_{0}}.$  This implies $E_{\boldsymbol{\theta}} \subseteq \{ u_{0} \leq U \leq b_{\boldsymbol{\theta}} \} = A_{\boldsymbol{\theta}}.$ Therefore, whenever \\$||\boldsymbol{\theta}|| \geq 2\sqrt{u_{0}}, \mathbb{P}_{\boldsymbol{\theta}}(A_{\boldsymbol{\theta}}) \geq \mathbb{P}(E_{\boldsymbol{\theta}}) = \mathbb{P}(||\boldsymbol{Z}|| \leq 2^{-1} ||\boldsymbol{\theta}|| ).$ Similarly, $\mathbb{P}_{\boldsymbol{\theta}}(A_{\boldsymbol{\theta}}^{c}) \leq \mathbb{P}(E_{\boldsymbol{\theta}}^{c})$. Then, 
\begin{equation*}
    \mathbb{E}_{\boldsymbol{\theta}}[B(U)] \geq \frac{p}{2C_{2}}(\frac{9}{4}||\boldsymbol{\theta}||^{2})^{\frac{1}{d}-1}\mathbb{P}(E_{\boldsymbol{\theta}}) - 2p \mathbb{P}(E_{\boldsymbol{\theta}}^{c}).
\end{equation*}
Let 
\begin{equation*}
    C_{*} := \frac{p}{2c_{2}} (\frac{9}{4})^{\frac{1}{d}-1}. 
\end{equation*}
So, 
\begin{equation*}
    \mathbb{E}_{\boldsymbol{\theta}}[B(U)] \geq C_{*} ||\boldsymbol{\theta}||^{\frac{2}{d}-2} \mathbb{P}(E_{\boldsymbol{\theta}})-2p\mathbb{P}(E_{\boldsymbol{\theta}}^{c}). 
\end{equation*}
Let $S = ||\boldsymbol{Z}||^{2}.$ Then $S \sim \chi_{p}^{2}.$ Observe \begin{equation*}
    \mathbb{P}(E_{\boldsymbol{\theta}}) = \mathbb{P}(S \leq (\frac{1}{2}||\boldsymbol{\theta}||)^{2}) = \mathbb{P}(S \leq \frac{1}{4}||\boldsymbol{\theta}||^{2}) = 1- \mathbb{P}(S \geq \frac{1}{4}||\boldsymbol{\theta}||^{2}). 
\end{equation*} Let $x = 0.25||\boldsymbol{\theta}||^{2}.$ By Markov's inequality, \begin{equation*}
    \mathbb{P}(S \geq x) = \mathbb{P}(e^{\lambda S} \geq e^{\lambda x}) \leq e^{-\lambda x}\mathbb{E}[e^{\lambda S}] = e^{-\lambda x} (1-2\lambda)^{-\frac{p}{2}}, \text{ for } \lambda < \frac{1}{2}. 
\end{equation*} Now optimize over $\lambda \in (0,2^{-1}).$ Let \begin{equation*}
    \phi(\lambda) := - \lambda x - \frac{p}{2} \log(1-2\lambda). 
\end{equation*} So $e^{-\lambda x}(1-2\lambda)^{-\frac{p}{2}} = e^{\phi(\lambda)}.$ Then, \begin{equation*}
    \phi'(\lambda) = -x - \frac{p}{2}(-\frac{2}{1-2\lambda}) = -x + \frac{p}{1-2\lambda}. 
\end{equation*} Then setting the first derivative equal to 0 gives \begin{equation*}
    -x+ \frac{p}{1-2\lambda} = 0 \implies 1- 2\lambda = \frac{p}{x} \implies \lambda^{*} = \frac{1}{2}(1-\frac{p}{x}) \in (0,\frac{1}{2}). 
\end{equation*} Furthermore \begin{equation*}
    \phi''(\lambda) = -p(1-2\lambda)^{-2}(-2) = \frac{2p}{(1-2\lambda)^{2}} >0. 
\end{equation*} Therefore $\lambda^{*}$ minimizes $\phi$. Therefore, \begin{align*}
    e^{-\lambda^{*}x} = \exp\{ -\frac{x}{2}(1-\frac{p}{x}) \} &= \exp \{ -\frac{x-p}{2} \} \\ 
    & \text{ and } \\ 
    (1-2\lambda^{*})^{-\frac{p}{2}} = (\frac{p}{x})^{-\frac{p}{2}} & = (\frac{x}{p})^{\frac{p}{2}}. 
\end{align*} Then \begin{equation*}
    \mathbb{P}(S \geq x) \leq \exp \{ -\frac{x-p}{2} \} (\frac{x}{p})^{\frac{p}{2}}. 
\end{equation*} Then, \begin{align*}
    \mathbb{P}(S \geq \frac{1}{4}||\boldsymbol{\theta}||^{2}) & \leq \exp \{ - \frac{\frac{1}{4}||\boldsymbol{\theta}||^{2}-p}{2} \} (\frac{\frac{1}{4}||\boldsymbol{\theta}||^{2}}{p})^{\frac{p}{2}} \\ 
    &= \exp\{ - \frac{||\boldsymbol{\theta}||^{2}-4p}{8} \} (\frac{||\boldsymbol{\theta}||^{2}}{4p})^{\frac{p}{2}}\\ 
    &= \exp \{ - \frac{||\boldsymbol{\theta}||^{2}}{8} + \frac{p}{2} \} (\frac{||\boldsymbol{\theta}||^{2}}{4p})^{\frac{p}{2}}\\
    &= \exp \{ - \frac{||\boldsymbol{\theta}||^{2}}{8} + \frac{p}{2} + \frac{p}{2}\log(\frac{||\boldsymbol{\theta}||^{2}}{4p}) \}. 
\end{align*} 
Choose a constant e.g. $1/16$. Since $\log(x)/x \rightarrow 0$ as $x \rightarrow \infty, \exists x_{0}>0$ such that $\log(x) \leq x/8, \forall x \geq x_{0}$. Now choose $||\boldsymbol{\theta}||$ large enough such that $||\boldsymbol{\theta}||^{2}/4p \geq x_{0}$. Then 
\begin{equation*}
    \log(\frac{||\boldsymbol{\theta}||^{2}}{4p}) \leq \frac{1}{8} \frac{||\boldsymbol{\theta}||^{2}}{4p} = \frac{||\boldsymbol{\theta}||^{2}}{32p}. 
\end{equation*}
So 
\begin{equation*}
    \frac{p}{2}\log(\frac{||\boldsymbol{\theta}||^{2}}{4p}) \leq \frac{p}{2}\frac{||\boldsymbol{\theta}||^{2}}{32p} = \frac{||\boldsymbol{\theta}||^{2}}{64}.
\end{equation*}
Then 
\begin{equation*}
    -\frac{||\boldsymbol{\theta}||^{2}}{8} + \frac{p}{2}+ \frac{p}{2}\log(\frac{||\boldsymbol{\theta}||^{2}}{4p}) \leq - \frac{||\boldsymbol{\theta}||^{2}}{8} + \frac{p}{2} + \frac{||\boldsymbol{\theta}||^{2}}{64}= - \frac{7}{64}||\boldsymbol{\theta}||^{2} + \frac{p}{2}. 
\end{equation*}
Then for large enough $\||\boldsymbol{\theta}||$ such that $p/2 \leq 3||\boldsymbol{\theta}||^{2}/64$ we obtain 
\begin{equation*}
    -\frac{7}{64} ||\boldsymbol{\theta}||^{2} + \frac{p}{2} \leq -\frac{7}{64} ||\boldsymbol{\theta}||^{2} + \frac{3}{64}||\boldsymbol{\theta}||^{2} \leq - \frac{1}{16} ||\boldsymbol{\theta}||^{2}. 
\end{equation*}
That is, for sufficiently large $||\boldsymbol{\theta}||$, 
\begin{equation*}
    \mathbb{P}(E_{\boldsymbol{\theta}}^{c}) \leq \exp \{- \frac{1}{16}||\boldsymbol{\theta}||^{2} \}. 
\end{equation*}
So, 
\begin{align*}
    \mathbb{E}_{\boldsymbol{\theta}}[B(U)] &\geq C_{*}||\boldsymbol{\theta}||^{\frac{2}{d}-2}\mathbb{P}(E_{\boldsymbol{\theta}}) -2p\mathbb{P}(E_{\boldsymbol{\theta}}^{c})\\
    &= C_{*} ||\boldsymbol{\theta}||^{\frac{2}{d}-2}(1-\mathbb{P}(E_{\boldsymbol{\theta}}^{c})) -2p\mathbb{P}(E_{\boldsymbol{\theta}}^{c})\\
    &\geq C_{*}||\boldsymbol{\theta}||^{\frac{2}{d}-2}(1-\exp\{ - \frac{1}{16}||\boldsymbol{\theta}||^{2} \})-2p\exp\{ - \frac{1}{16}||\boldsymbol{\theta}||^{2} \}.
\end{align*}
For sufficiently large $||\boldsymbol{\theta}||$,
\begin{equation*}
    \exp \{- \frac{1}{16}||\boldsymbol{\theta}||^{2} \} \leq \frac{1}{2}. 
\end{equation*}
So for sufficiently large $||\boldsymbol{\theta}||$, 
\begin{equation*}
    \mathbb{E}[B(U)] \geq \frac{C_{*}}{2}||\boldsymbol{\theta}||^{\frac{2}{d}-2} - 2p\exp\{ - \frac{1}{16}||\boldsymbol{\theta}||^{2}\}. 
\end{equation*}
We know 
\begin{equation*}
    \exp\{ \frac{||\boldsymbol{\theta}||^{2}}{16} \} \geq \frac{1}{2}(\frac{||\boldsymbol{\theta}||^{2}}{16})^{2} = \frac{||\boldsymbol{\theta}||^{4}}{512}. 
\end{equation*}
Hence 
\begin{equation*}
    \exp\{ - \frac{||\boldsymbol{\theta}||^{2}}{16}\} \leq \frac{512}{||\boldsymbol{\theta}||^{4}}.
\end{equation*}
So
\begin{equation*}
    2p\exp \{ - \frac{||\boldsymbol{\theta}||^{2}}{16} \} \leq \frac{1024p}{||\boldsymbol{\theta}||^{4}}. 
\end{equation*}
Now choose $||\boldsymbol{\theta}||$ large enough such that $||\boldsymbol{\theta}||^{2+\frac{2}{d}} \geq 4096p/C_{*}$. Then,  
\begin{equation*}
    1024p \leq \frac{C_{*}}{4}||\boldsymbol{\theta}||^{2+\frac{2}{d}}.
\end{equation*}
This gives 
\begin{equation*}
    \frac{1024p}{||\boldsymbol{\theta}||^{4}} \leq \frac{C_{*}}{4}||\boldsymbol{\theta}||^{\frac{2}{d}-2}. 
\end{equation*}
So for sufficiently large $||\boldsymbol{\theta}||$
\begin{equation*}
    2p\exp\{ - \frac{1}{16} ||\boldsymbol{\theta}||^{2} \} \leq \frac{C_{*}}{4}||\boldsymbol{\theta}||^{\frac{2}{d}-2}. 
\end{equation*}
Therefore 
\begin{align*}
    \mathbb{E}_{\boldsymbol{\theta}}[B(U)] &\geq \frac{C_{*}}{2}||\boldsymbol{\theta}||^{\frac{2}{d}-2} - \frac{C_{*}}{4}||\boldsymbol{\theta}||^{\frac{2}{d}-2}\\
    &= \frac{C_{*}}{4}||\boldsymbol{\theta}||^{\frac{2}{d}-2}\\
    &>0. 
\end{align*}
Therefore $\mathbb{E}_{\boldsymbol{\theta}}[B(u)] >0$ for sufficiently large $\boldsymbol{\theta}.$ Thus by Stein's identity \begin{equation*}
    R(\boldsymbol{\theta}, \boldsymbol{\delta}) = p + \mathbb{E}_{\boldsymbol{\theta}}[B(U)]>p \text{ for sufficiently large } ||\boldsymbol{\theta}||. 
\end{equation*} Therefore $\sup\limits_{\boldsymbol{\theta} \in \Theta} R(\boldsymbol{\theta}, \boldsymbol{\delta})>p.$ Thus $\boldsymbol{\delta}$ is not minimax. 
\end{proof}
\section{Proofs of Section 3}
\subsection{Proof of Theorem 3.1}
\begin{proof}
From the proof of Lemma 1.1
\begin{equation*}
    p_{d}^{lin}(\boldsymbol{\theta}|\boldsymbol{x}) = [\prod\limits_{\ell=1}^{d-1} \int_{0}^{\infty} dt_{\ell} \frac{t_{\ell}^{\frac{n_{\ell}}{2}-1} e^{-t_{\ell}}}{\Gamma(\frac{n_{\ell}}{2})}] (2^{d}\pi \kappa_{d}^{2})^{-\frac{p}{2}}(t_{1}\dots t_{d-1})^{-\frac{p}{2}} \exp \{- \frac{||\boldsymbol{\theta}||^{2}}{2^{d}\kappa_{d}^{2}t_{1}\dots t_{d-1}}\}. 
\end{equation*} Let $S:= t_{1}\dots t_{d-1}$ and $W:= 2^{d-1}\kappa_{d}^{2}S$. Then, $2^{d}\kappa_{d}^{2}S = 2W$. Therefore, \begin{equation*}
    \exp \{- \frac{||\boldsymbol{\theta}||^{2}}{2^{d}\kappa_{d}^{2}S} \}= \exp \{ -\frac{||\boldsymbol{\theta}||^{2}}{2W} \}. 
\end{equation*} Furthermore, 
\begin{equation*}
    (2\pi W)^{-\frac{p}{2}} = (2\pi 2^{d-1} \kappa_{d}^{2}S)^{-\frac{p}{2}} = (2^{d}\pi \kappa_{d}^{2})^{-\frac{p}{2}}S^{-\frac{p}{2}}.
\end{equation*} Thus, 
\begin{equation*}
    (2\pi W) ^{-\frac{p}{2}} \exp \{ - \frac{||\boldsymbol{\theta}||^{2}}{2W} \} = \phi_{p}(\boldsymbol{\theta}; 0 , WI_{p}),
\end{equation*}
where $\phi_{p}$ is the Gaussian probability density function. Then, \begin{equation*}
    p_{d}^{lin}(\boldsymbol{\theta}|\boldsymbol{x}) = [\prod\limits_{\ell = 1}^{d-1} \int\limits_{0}^{\infty} dt_{\ell} \frac{t_{\ell}^{\frac{n_{\ell}}{2}-1}e^{-t_{\ell}}}{\Gamma(\frac{n_{\ell}}{2})}] \phi_{p}(\boldsymbol{\theta}; 0 , WI_{p}). 
\end{equation*}
Observe that $f_{T_{\ell}} (t_{\ell}):=\int\limits_{0}^{\infty} dt_{\ell} (\Gamma(\frac{n_{\ell}}{2}))^{-1}t_{\ell}^{\frac{n_{\ell}}{2}-1}e^{-t_{\ell}}$ is the integral of the probability density function of $T_{\ell} \sim \Gamma(\frac{n_{\ell}}{2},1)$. Therefore, 
\begin{align*}
    p_{d}^{lin}(\boldsymbol{\theta}|\boldsymbol{x}) &= \int\limits_{0}^{\infty}\dots\int\limits_{0}^{\infty} \phi_{p}(\boldsymbol{\theta}; \boldsymbol{0}_{p} , (2^{d-1}\kappa_{d}^{2}\prod\limits_{\ell=1}^{d-1}t_{\ell})I_{p}) \prod\limits_{\ell = 1}^{d-1} f_{T_{\ell}}(t_{\ell}) dt_{1}\dots dt_{d-1}\\
    & = \mathbb{E}[\phi_{p}(\boldsymbol{\theta}; \boldsymbol{0}_{p}, (2^{d-1}\kappa_{d}^{2} \prod\limits_{\ell = 1}^{d-1}T_{\ell})I_{p})] \\ 
    & = \mathbb{E}[\phi_{p}(\boldsymbol{\theta}; \boldsymbol{0}_{p} , WI_{p})]. 
\end{align*} That is, \begin{equation*}
    \boldsymbol{\theta}|W \sim N_{p}(0, WI_{p}). 
\end{equation*}
We know W is a function of S. Therefore, \begin{equation*}
    \boldsymbol{\theta}| S \sim N_{p}(0, (2^{d-1}\kappa_{d}^{2}S)I_{p}). 
\end{equation*}
Let $U:= 2^{d-1}||\boldsymbol{x}||^{2}\prod\limits_{\ell = 1}^{d}\sigma_{\ell}^{2} = 2^{d-1} \kappa_{d}^{2}$. Then,\begin{equation*}
    \boldsymbol{\theta}|U,S \sim N_{p}(\boldsymbol{0}_{p}, (US)I_{p}). 
\end{equation*} Then for fixed k, \begin{equation*}
    \boldsymbol{\theta} |U,S,k \sim N_{p}(\boldsymbol{0}_{p}, (US)I_{p}), S = \prod\limits_{\ell = 1}^{d-1} T_{\ell}, T_{\ell} \sim \Gamma(\frac{k_{\ell}}{2},1). 
\end{equation*}
Let $b:= \frac{p}{2} - 2>0 \iff p \geq 5$ which is the minimum dimension for the existence of proper minimax Bayes estimators. Now define $W \sim \mathrm{BetaPrime}(1,b)$ where $ h(w) = b(1+w)^{-(b+1)} = b(1+w)^{1- \frac{p}{2}}$. Now we define the hyperprior on $U$ conditional on $(S, k)$ by  observing $U = WS^{-1}$. Note $f_{W|S=s, k} = h(w)$. Now, $w = us, u>0$. This implies $dw = sdu$. Therefore, 
\begin{equation*}
    f_{U|S=s, k}(u|S, k) = f_{W|S=s, k} (w = us|S,k)|\frac{dw}{du}| = h(us)s, u>0. 
\end{equation*} Then, 
\begin{equation*}
    f_{U|k}(u) = \int\limits_{0}^{\infty} h(us)sf_{S|k}(s) ds \geq 0, u>0. 
\end{equation*} We also know \begin{equation*}
    \int\limits_{0}^{\infty} f_{U|k}(u) du = \int\limits_{0}^{\infty}\int\limits_{0}^{\infty}h(us)sf_{S|k}(s) dsdu = \int\limits_{0}^{\infty}(\int\limits_{0}^{\infty} h(w) dw) f_{S|k}(s) ds = 1, 
\end{equation*}
as $S$ is a product of Gamma distributed random variables which have a proper probability density function. Therefore, $f_{U|k}$ is a proper density.  Observe that, $US = WS^{-1}S = W$. This implies $US \sim \mathrm{BetaPrime}(1,b)$. Hence, $\boldsymbol{\theta} | U, S, k \sim N_{p}(\boldsymbol{0}_{p}, (US)I_{p}) = N_{p}(\boldsymbol{0}_{p}, WI_{p})$. Then, $\pi_{k}(\boldsymbol{\theta}) = \int\limits_{0}^{\infty} \phi_{p}(\boldsymbol{\theta}; \boldsymbol{0}_{p}, wI_{p})h(w)dw$, where the right hand side does not depend on k. So, 
\begin{align*}
    \pi(\boldsymbol{\theta}) &= \sum\limits_{k}\omega_{k}\pi_{k}(\boldsymbol{\theta}) \\ 
    & = \sum\limits_{k}\omega_{k} \int\limits_{0}^{\infty}\phi_{p}(\boldsymbol{\theta} ; \boldsymbol{0}_{p} , wI_{p})h(w)dw \\ 
    & = \int\limits_{0}^{\infty} \phi_{p}(\boldsymbol{\theta}; \boldsymbol{0}_{p}, wI_{p})h(w) dw \sum\limits_{k} \omega_{k}. 
\end{align*}
For a given $\ell$, 
\begin{equation*}
    \sum\limits_{k_{\ell}=1}^{n_{\ell}} \begin{pmatrix}
        n_{\ell}\\ 
        k_{\ell}
    \end{pmatrix} = (\sum\limits_{k_{\ell = 0}}^{n_{\ell}} \begin{pmatrix}
        n_{\ell}\\ 
        k_{\ell}
    \end{pmatrix}) - \begin{pmatrix}
        n_{\ell}\\
        0
    \end{pmatrix} = 2^{n_{\ell}}-1. 
\end{equation*} Hence, 
\begin{equation*}
    \sum\limits_{k_{1} = 1}^{n_{1}} \dots \sum\limits_{k_{d-1} = 1}^{n_{d-1}}\prod\limits_{\ell = 1}^{d-1}\begin{pmatrix}
        n_{\ell}\\ 
        k_{\ell}
    \end{pmatrix} = \prod\limits_{\ell =1}^{d-1}(\sum\limits_{k_{\ell} = 1}^{n_{\ell}} \begin{pmatrix}
        n_{\ell}\\ 
        k_{\ell}
    \end{pmatrix}) = \prod\limits_{\ell = 1}^{d-1} (2^{n_{\ell}}-1).
\end{equation*} Therefore, 
\begin{equation*}
    N= \sum\limits_{k} \omega _{k}=\frac{\prod\limits_{\ell = 1}^{d-1}(2^{n_{\ell}}-1)}{2^{n_{1}+\dots+n_{d-1}}}<\infty. 
\end{equation*} So, 
\begin{equation*}
    \pi(\boldsymbol{\theta}) = N\int\limits_{0}^{\infty} \phi_{p}(\boldsymbol{\theta}; \boldsymbol{0}_{p}, wI_{p}) h(w)dw. 
\end{equation*} Then, 
\begin{align*}
    m(\boldsymbol{y}) &= \int\limits_{\mathbb{R}^{p}} \phi_{p}(\boldsymbol{y}-\boldsymbol{\theta}; \boldsymbol{0}_{p}, I_{p}) \pi (\boldsymbol{\theta}) d\boldsymbol{\theta} \\ 
    &= N\int\limits_{\mathbb{R}^{p}} \phi_{p}(\boldsymbol{y}- \boldsymbol{\theta}; \boldsymbol{0}_{p}, I_{p})[\int\limits_{0}^{\infty}\phi_{p}(\boldsymbol{\theta}; \boldsymbol{0}_{p}, wI_{p})h(w)dw]d\boldsymbol{\theta}\\ 
    & = N\int\limits_{0}^{\infty}[\int\limits_{\mathbb{R}^{p}} \phi_{p}(\boldsymbol{y}- \boldsymbol{\theta}; \boldsymbol{0}_{p}, I_{p}) \phi_{p}(\boldsymbol{\theta}; \boldsymbol{0}_{p} , wI_{p}) d\theta]h(w) dw, \text{ by Tonelli's theorem,}\\ 
    & = N\int\limits_{0}^{\infty} \phi_{p}(\boldsymbol{y}; \boldsymbol{0}_{p}, (1+w)I_{p})h(w)dw, \text{ by convolution of Gaussians}\\ 
    & = N\int\limits_{0}^{\infty}(2\pi(1+w))^{- \frac{p}{2}} \exp \{ - \frac{||\boldsymbol{y}||^{2}}{2(1+w)} \} h(w) dw\\ 
    & = Nb(2\pi)^{-\frac{p}{2}}\int\limits_{0}^{\infty} (1+w)^{1-p} \exp \{ - \frac{||\boldsymbol{y}||^{2}}{2(1+w)} \} dw
\end{align*}
Let $u = (1+w)^{-1}$, so $w = (1-u)u^{-1}, dw = - u^{-2}$, and $(1+w)^{1-p} = u^{p-1}$, Then,  \begin{align*}
    m(\boldsymbol{y}) &= Nb(2\pi)^{- \frac{p}{2}} \int\limits_{1}^{0} u^{p-1}\exp \{ - \frac{u||\boldsymbol{y}||^{2}}{2} \} (-u^{-2})du\\ 
    & =  Nb(2\pi)^{- \frac{p}{2}} \int\limits_{0}^{1} u^{p-3} \exp \{ - \frac{u||\boldsymbol{y}||^{2}}{2} \}du. 
\end{align*} 
Let $C:= Nb(2\pi)^{- \frac{p}{2}}>0$ and $I_{0}(\lambda) := \int\limits_{0}^{1} u^{p-3}e^{-\lambda u}du$ with $\lambda = \frac{1}{2}r^{2}$ and $r = ||\boldsymbol{y}||$. Then $m(r) = CI_{0}(\lambda)$. Similarly, let $I_{1}(\lambda) := \int\limits_{0}^{1} u^{p-2} e^{-\lambda u}du$ and $I_{2}(\lambda) = \int\limits_{0}^{1} u^{p-1}e^{-\lambda u}du$. Then, \begin{equation*}
    \frac{d}{d\lambda} I_{0}(\lambda) = - I_{1}(\lambda) \text{ and } \frac{d}{d\lambda} I_{1}(\lambda) = - I_{2}(\lambda), 
\end{equation*}
since $\frac{d}{d\lambda} (u^{a} e^{-\lambda u}) = - u^{a+1}e^{-\lambda u}$ and $|u^{a+1}e^{-\lambda u}| \leq u^{a+1} \in L^{1}(0,1), a >2$ and using the Dominated Convergence Theorem. Then \begin{equation*}
    m'(r) = C \frac{dI_{0}}{d\lambda} \frac{d \lambda}{dr} = - CrI_{1}(\lambda)
\end{equation*} and 
\begin{align*}
    m''(r) &= - CI_{1}(\lambda) - Cr \frac{d}{dr}\{I_{1}(\lambda) \}\\
    &= -CI_{1}(\lambda) +Cr^{2}I_{2}(\lambda). 
\end{align*}
For radial $m$ in $\mathbb{R}^{p}$ we know 
\begin{align*}
    \Delta m(r) &= m''(r) + \frac{p-1}{r}m'(r)\\ 
    & = -CI_{1}(\lambda) + Cr^{2}I_{2}(\lambda) + \frac{p-1}{r}(-Cr I_{1}(\lambda))\\ 
    & = - CI_{1}(\lambda) + Cr^{2}I_{2}(\lambda) - C(p-1)I_{1}(\lambda
    ) \\ 
    & = C[-I_{1}(\lambda) + r^{2}I_{2}(\lambda) - pI_{1}(\lambda) + I_{1}(\lambda)]\\ 
    & = C[r^{2}I_{2}(\lambda) - pI_{1}(\lambda)]. 
\end{align*} Observe \begin{equation*}
    \nabla (\sqrt{m}) = \frac{1}{2\sqrt{m}}\nabla m. 
\end{equation*} Then, 
\begin{equation*}
    \Delta (\sqrt{m}) = \nabla (\frac{1}{2\sqrt{m}}\nabla m) = \frac{1}{2\sqrt{m}}\Delta m + \nabla(\frac{1}{2\sqrt{m}})\cdot\nabla m. 
\end{equation*} We also have \begin{equation*}
    \nabla (\frac{1}{2\sqrt{m}}) = - \frac{1}{4}m^{-\frac{3}{2}}\nabla m. 
\end{equation*} Then 
\begin{equation*}
    \Delta (\sqrt{m}) = \frac{1}{2\sqrt{m}}\Delta m - \frac{1}{4}m^{-\frac{3}{2}} ||\nabla m ||^{2} \iff 4m^{\frac{3}{2}} \Delta (\sqrt{m}) = 2m\Delta m - ||\nabla m||^{2}. 
\end{equation*} Thus, \begin{equation*}
    \Delta (\sqrt{m}) \leq 0 \iff 2m\Delta m - ||\nabla m||^{2} \leq 0 \iff 2m\Delta m\leq ||\nabla m||^{2}. 
\end{equation*}
Let $m(\boldsymbol{y}) = \tilde{m}(r)$. By the chain rule, \begin{equation*}
    \frac{\partial m}{\partial y_{i}} = \tilde{m}'(r)\frac{\partial r}{\partial y_{i}} = \tilde{m}'(r)\frac{y_{i}}{r}, r>0. 
\end{equation*}So, \begin{equation*}
    \nabla m(\boldsymbol{y}) = \tilde{m}'(r) (\frac{y_{1}}{r}, \dots, \frac{y_{p}}{r}) = \tilde{m}'(r)\frac{\boldsymbol{y}}{r}. 
\end{equation*} Then, \begin{equation*}
    ||\nabla m(\boldsymbol{y})||^{2} = (\tilde{m}'(r))^{2}||\frac{\boldsymbol{y}}{r}||^{2}= (\tilde{m}'(r))^{2}\frac{||\boldsymbol{y}||^{2}}{r^{2}} = (\tilde{m}'(r))^{2} = (m'(r))^{2}. 
\end{equation*} Therefore, 
\begin{equation*}
    \Delta \sqrt{m} \leq 0 \iff 2m\Delta m \leq (m')^{2}. 
\end{equation*} Thus, 
\begin{equation*}
    \Delta \sqrt{m} \leq 0 \iff 2C^{2}I_{0}(\lambda)(r^{2}I_{2}(\lambda)- pI_{1}(\lambda)) \leq C^{2} r^{2}I_{1}(\lambda)^{2}. 
\end{equation*} That is, 
\begin{equation*}
    2I_{0}(\lambda) (r^{2}I_{2}(\lambda) - pI_{1}(\lambda)) \leq r^{2}I_{1}(\lambda)^{2}. 
\end{equation*} Recall $r^{2} = 2\lambda$ which gives \begin{equation*}
    2I_{0}(\lambda) (2\lambda I_{2}(\lambda) - pI_{1} (\lambda)) \leq 2 \lambda I_{1}(\lambda)^{2}. 
\end{equation*} This becomes \begin{equation*}
    I_{0}(\lambda)(2\lambda I_{2}(\lambda) - pI_{1}(\lambda)) \leq \lambda I_{1}(\lambda)^{2}. 
\end{equation*}
Let $a:= p-2$ so $p = a+2$ with $a \geq 3$. Also let $t = \lambda u$ i.e. $u = t \lambda^{-1}$ with $du = \lambda^{-1} dt$.  Therefore, \begin{align*}
    I_{0}(\lambda) &= \int\limits_{0}^{1} u^{p-2-1}e^{-\lambda u}du\\ &= \lambda^{-1} \int\limits_{0}^{\lambda} (\frac{t}{\lambda})^{a-1}e^{-t} dt \\ 
    & = \lambda^{-a} \int\limits_{0}^{\lambda} t^{a-1}e^{-t}dt\\ 
    & = \lambda^{-a} \gamma(a, \lambda), \text{ where } \gamma(.,.) \text{ is the lower incomplete gamma function. }
\end{align*} Similarly, $I_{1}(\lambda) = \lambda ^{-(a+1)}\gamma(a+1, \lambda)$ and $I_{2}(\lambda) = \lambda ^{-(a+2)}\gamma (a+2, \lambda)$. Thus, \begin{align*}
    & \lambda^{-a}\gamma(a, \lambda) (2\lambda \lambda ^{-(a+2)} \gamma (a+2, \lambda) - p \lambda^{-(a+1)}\gamma(a+1, \lambda)) \leq \lambda \lambda^{-2(a+1)}\gamma(a+1, \lambda)^{2}\\
    \iff &  \lambda^{-a}\gamma(a,\lambda) (2\lambda^{-(a+1)}\gamma(a+2, \lambda) - p\lambda^{-(a+1)}\gamma(a+1, \lambda)) \leq \lambda^{-2a-1}\gamma(a+1, \lambda)^{2} \\ 
    \iff &\lambda^{-a}\lambda^{-(a+1)} \gamma(a,\lambda)(2\gamma(a+2, \lambda )) - p \gamma(a+1, \lambda))\leq \lambda^{-2a-1} \gamma(a+1, \lambda)^{2} \\ 
    \iff &\lambda^{-2a-1}\gamma(a, \lambda) (2\gamma(a+2, \lambda) - p\gamma(a+1, \lambda)) \leq \lambda^{-2a-1}\gamma(a+1, \lambda)^{2} \\ 
    \iff & \gamma(a, \lambda)(2\gamma(a+2, \lambda) - (a+2)\gamma(a+1, \lambda)) \leq \gamma(a+1, \lambda)^{2}. 
\end{align*} Note that \begin{equation*}
    \gamma(a+1, \lambda) = a \gamma(a, \lambda) - \lambda^{a}e^{-\lambda} \text{ and } \gamma(a+2, \lambda) = (a+1) \gamma(a+1, \lambda)- \lambda^{a+1}e^{- \lambda}. 
\end{equation*} 
Let $G:= \gamma(a, \lambda)$ and $E:= \lambda^{a} e^{-\lambda}$. Then, $\gamma(a+1, \lambda) = aG - E$. Furthermore, $\gamma (a+2, \lambda) = (a+1)(aG - E) - \lambda E = a(a+1)G - (a+1+\lambda)E$. Therefore, 
\begin{align*}
    & G(2[a(a+1)G - (a+1+\lambda)E] - (a+2)[aG-E]) \leq  (aG - E)^{2}\\ 
    \iff & G(2a(a+1)G - 2(a+1+\lambda)E - (a+2)aG + (a+2)E) \leq (aG - E)^{2}\\ 
    \iff & G([2a(a+1) - (a+2)a] G + [a+2 - 2(a+1 + \lambda)]E) \leq (aG  - E)^{2} \\ 
    \iff & G([2(a+1) - (a+2)] aG + [a+2 - 2a - 2- 2\lambda]E) \leq (aG - E)^{2} \\ 
    \iff & G([2a + 2 - a- 2]aG + [-a-2\lambda]E) \leq (aG - E)^{2}\\ 
    \iff & G(a^{2}G - (a+2\lambda)E) \leq a^{2}G^{2}-2aGE + E^{2} \\ 
    \iff &a^{2}G^{2}  - aGE - 2\lambda GE \leq a^{2}G^{2} - 2aGE + E^{2}\\ 
    \iff & 0 \leq -aGE + 2\lambda GE + E^{2}\\ 
    \iff & 0 \leq E^{2} + (2\lambda - a)GE\\ 
    \iff & 0 \leq E(E + (2\lambda - a)G)\\ 
    \iff & 0 \leq E + (2\lambda -a) G \\ 
    \iff & 0 \leq \lambda^{a}e^{-\lambda} + (2\lambda - a)\gamma(a, \lambda) =:F
\end{align*}
Note that $G'(\lambda) = \lambda^{a-1}e^{- \lambda}$ and $E'(\lambda) = a\lambda^{a-1} e^{-\lambda} - \lambda^{a}e^{-\lambda}= \lambda^{a-1} e^{-\lambda}(a-\lambda)$. Therefore, \begin{align*}
    F'(\lambda) &= \lambda^{a-1} e^{-\lambda}(a-\lambda) + 2\gamma(a,\lambda) + (2\lambda - a) \lambda^{a-1}e^{- \lambda}\\ 
    & = \lambda^{a-1}e^{- \lambda}[a-\lambda + 2\lambda - a] + 2\gamma(a, \lambda) \\ 
    & = \lambda^{a}e^{-\lambda} + 2\gamma(a, \lambda)\\
    & >0,  \forall \lambda > 0.
\end{align*}
Recall, $\gamma(a, \lambda) = \int\limits_{0}^{\lambda} t^{a-1}e^{-t} dt$. For $0 \leq t \leq \lambda, e^{-t}$ is decreasing. Therefore, $e^{-\lambda} \leq e^{-t} \leq 1$. Then, $e^{-\lambda} \int\limits_{0}^{\lambda} t^{a-1}dt  \leq \gamma(a,\lambda ) \leq \int\limits_{0}^{\lambda} t^{a-1} dt$. Observe that $\int\limits_{0}^{\lambda} t^{a-1} dt = a^{-1} \lambda ^{a}$. Therefore, \begin{equation*}
    e^{-\lambda} \frac{\lambda^{a}}{a} \leq \gamma(a, \lambda) \leq \frac{\lambda^{a}}{a} \iff e^{-\lambda} \leq \frac{\gamma(a,\lambda)}{\frac{\lambda ^{a}}{a}} \leq 1.
\end{equation*} This implies, \begin{equation*}
    \lim\limits_{\lambda \rightarrow 0 } \frac{\gamma(a, \lambda)}{\frac{\lambda^{a}}{a}} = 1 \text{ i.e }\lim\limits_{\lambda \rightarrow 0 } \frac{\gamma(a, \lambda)}{\lambda^{a}} = \frac{1}{a}. 
\end{equation*} Note that \begin{equation*}
    \frac{F(\lambda)}{\lambda^{a}} = e^{-\lambda} + (2\lambda - a) \frac{\gamma(a, \lambda)}{\lambda^{a}}. 
\end{equation*}Then, 
\begin{equation*}
    \lim\limits_{\lambda \rightarrow 0 } \frac{F(\lambda)}{\lambda^{a}} = 1- \frac{a}{a} = 0. 
\end{equation*} 
Since $F'(\lambda) > 0,  \forall \lambda > 0$ and $\lim\limits_{\lambda \rightarrow 0} F(\lambda)  = 0$ we have that $F(\lambda) > 0, \forall \lambda > 0$. Therefore, $\Delta \sqrt{m(\boldsymbol{y})} \leq 0, \forall \boldsymbol{y} \neq \boldsymbol{0}_{p}$.  Recall, \begin{equation*}
    m(\boldsymbol{y}) = N\int\limits_{0}^{\infty}\phi_{p}(\boldsymbol{y}; \boldsymbol{0}_{p}, (1+w)I_{p})h(w)dw. 
\end{equation*} For $j = 1,\dots, p$, \begin{align*}
    \frac{\partial }{\partial y_{j}} \phi_{p}(\boldsymbol{y}; \boldsymbol{0}_{p} , (1+w)I_{p}) &= \frac{\partial }{\partial y_{j}} \{ (2\pi)^{- \frac{p}{2}} (1+w)^{-\frac{p}{2}} \exp \{ - \frac{||\boldsymbol{y}||^{2}}{2(1+w)} \} \} \\ 
    &= (2\pi)^{-\frac{p}{2}} (1+w)^{-\frac{p}{2}} \frac{\partial}{\partial y_{j}} \exp \{ - \frac{||\boldsymbol{y}||^{2}}{2(1+w)} \}\\ 
    & = - \frac{y_{j}}{(1+w)} \phi_{p}(\boldsymbol{y}; \boldsymbol{0}_{p} , (1+w)I_{p}). 
\end{align*}
Consider compact set $K \subset \mathbb{R}^{p}$ and $M:= \sup\limits_{x \in K} |x_{j}| < \infty$. Since \begin{equation*}
    \exp \{ - \frac{||\boldsymbol{y}||^{2}}{2(1+w)} \} \leq 1 \text{ we know } \phi_{p}(\boldsymbol{y}; \boldsymbol{0}_{p} , (1+w)I_{p}) \leq (2\pi)^{-\frac{p}{2}} (1+w)^{- \frac{p}{2}}. 
\end{equation*} Hence, $\forall \boldsymbol{y} \in K$ \begin{align*}
    |\frac{\partial}{\partial y_{j}} \phi_{p}(\boldsymbol{y}; \boldsymbol{0}_{p} , (1+w)I_{p})h(w)| &= \frac{|y_{j}|}{1+w} \phi_{p}(\boldsymbol{y}; 0, (1+w)I_{p})h(w)\\ 
    &\leq M(2\pi) ^{-\frac{p}{2}} (1+w)^{-\frac{p}{2}-1}h(w), M  = |y_{j}|.
\end{align*} Let $g(w):= M(2\pi)^{-\frac{p}{2}} (1+w)^{-\frac{p}{2}-1}h(w)$. Then $g(w) \geq 0$ and integrable since $(1+w)^{-\frac{p}{2}-1} \leq 1$ and $h$ is a probability density function. That is, \begin{equation*}
    \int\limits_{0}^{\infty} g(w) dw \leq M(2\pi)^{-\frac{p}{2}} \int\limits_{0}^{\infty} h(w) dw = M(2\pi)^{-\frac{p}{2}} < \infty.  
\end{equation*}
Let $\boldsymbol{e}_{j}$ be the j-th unit vector and fix $\boldsymbol{y} \in \mathbb{R}^{p}$. For $t \neq 0$, \begin{equation*}
    \frac{m(\boldsymbol{y}+t\boldsymbol{e}_{j})-m(\boldsymbol{y})}{t} = N\int\limits_{0}^{\infty} \frac{f(\boldsymbol{y}+t\boldsymbol{e}_{j}, w) - f(\boldsymbol{y},w)}{t}dw
\end{equation*} where 
\begin{equation*}
    f(\boldsymbol{y},w):= \phi_{p}(\boldsymbol{y}; \boldsymbol{0}_{p} , (1+w)I_{p})h(w).
\end{equation*}
Let $\psi(s) = f(\boldsymbol{y}+s\boldsymbol{e}_{j}, w).$ By the mean value theorem \begin{equation*}
    \frac{f(\boldsymbol{y}+t\boldsymbol{e}_{j} , w)-f(\boldsymbol{y}, w)}{t} = \frac{\partial}{\partial y_{j}} f(\boldsymbol{y} + \eta t \boldsymbol{e}_{j}, w), \text{ for some } \eta \in (0,1). 
\end{equation*} Now, choose $K$ containing $\{ \boldsymbol{y} + s\boldsymbol{e}_{j}: |s| \leq 1 \}$. Then, $\forall |t| \leq 1$ we have $\boldsymbol{y}+\eta t \boldsymbol{e}_{j} \in K$. Then, \begin{equation*}
    |\frac{f(\boldsymbol{y}+t\boldsymbol{e}_{j}, w)-f(\boldsymbol{y},w)}{t}| = |\frac{\partial}{\partial y_{j}} f(\boldsymbol{y} + \eta t \boldsymbol{e}_{j}, w)| \leq g(w). 
\end{equation*} For fixed $w$, as $t \rightarrow 0$ \begin{equation*}
    \frac{f(\boldsymbol{y} + t\boldsymbol{e}_{j} , w) - f(\boldsymbol{y},w)}{t} \rightarrow \frac{\partial}{\partial y_{j}} f(\boldsymbol{y},w). 
\end{equation*} Then by the dominated convergence theorem, 
\begin{equation*}
    \lim\limits_{t \rightarrow 0} \frac{m(\boldsymbol{y} + t\boldsymbol{e}_{j}) - m(\boldsymbol{y})}{t} = N\int\limits_{0}^{\infty} \lim\limits_{t \rightarrow 0 } \frac{f(\boldsymbol{y} + t\boldsymbol{e}_{j},w) - f(\boldsymbol{y},w)}{t} dw = N\int\limits_{0}^{\infty} \frac{\partial}{\partial y_{j}} f(\boldsymbol{y},w)dw. 
\end{equation*} That is, \begin{align*}
    \frac{\partial}{\partial y_{j}} m(\boldsymbol{y}) &= N\int\limits_{0}^{\infty} \frac{\partial }{\partial y_{j}} f(\boldsymbol{y},w) dw \\ 
    & = N\int\limits_{0}^{\infty} \frac{\partial }{\partial y_{j}} \phi_{p}(\boldsymbol{y}; \boldsymbol{0}_{p} , (1+w)I_{p})h(w) dw \\ 
    & = - Ny_{j} \int\limits_{0}^{\infty} \frac{1}{1+w} \phi_{p}(\boldsymbol{y}; \boldsymbol{0}_{p} , (1+w)I_{p})h(w) dw. 
\end{align*}
Note that for $i \neq j$, 
\begin{equation*}
    \frac{\partial ^{2}}{\partial y_{i} \partial y_{j}} \phi_{p}(\boldsymbol{y}; \boldsymbol{0}_{p}, (1+w)I_{p}) = \frac{y_{i}y_{j}}{(1+w)^{2}}\phi_{p}(\boldsymbol{y}; \boldsymbol{0}_{p} , (1+w)I_{p}), 
\end{equation*}
and if $i = j$, 
\begin{align*}
    \frac{\partial ^{2}}{\partial y_{i}^{2}} \phi_{p} &= - \frac{\partial}{\partial y_{j}} \{ \frac{y_{i}}{(1+w)}\phi_{p}(\boldsymbol{y}; 0, (1+w)I_{p}) \}\\ 
    &= - \frac{1}{(1+w)} \frac{\partial }{\partial y_{i}} \{ y_{i} \phi_{p}(\boldsymbol{y}; \boldsymbol{0}_{p}, (1+w)I_{p}) \} \\ 
    & = - \frac{1}{(1+w)}[\phi_{p}(\boldsymbol{y}; \boldsymbol{0}_{p} , (1+w)I_{p}) - \frac{y_{i}^{2}}{(1+w)}\phi_{p}(\boldsymbol{y}; \boldsymbol{0}_{p} , (1+w)I_{p})] \\ 
    & = (\frac{y_{i}^{2}}{(1+w)^{2}}-\frac{1}{1+w}) \phi_{p}(\boldsymbol{y}; \boldsymbol{0}_{p}, (1+w)I_{p}). 
\end{align*}
For $i \neq j$, 
\begin{align*}
    |\frac{\partial ^{2}}{\partial y_{i}\partial y_{j}} \phi_{p}(\boldsymbol{y}; \boldsymbol{0}_{p} , (1+w)I_{p})h(w)| & = \frac{|y_{i}y_{j}|}{(1+w)^{2}} \phi_{p}(\boldsymbol{y}; \boldsymbol{0}_{p}, (1+w)I_{p})h(w) \\
    & \leq \frac{M^{2}}{(1+w)^{2}} \phi_{p}(\boldsymbol{y}; \boldsymbol{0}_{p}, (1+w)I_{p})h(w) \\ 
    & \leq \frac{M^{2}}{(1+w)^{2}} (2\pi)^{-\frac{p}{2}}(1+w)^{-\frac{p}{2}}h(w) \\ 
    &= M^{2}(2\pi)^{-\frac{p}{2}} (1+w)^{-\frac{p}{2}-2}h(w) \\ 
    &\leq M^{2} (2\pi)^{-\frac{p}{2}} (1+w)^{-\frac{p}{2}-1}h(w), 
\end{align*}
and for $i = j$, 
\begin{align*}
    |\frac{\partial ^{2}}{\partial y_{i}^{2}}\phi_{p}(\boldsymbol{y}; \boldsymbol{0}_{p}, (1+w)I_{p})| &= |\frac{y_{i}^{2}}{(1+w)^{2}}-\frac{1}{1+w}| \phi_{p}(\boldsymbol{y}; \boldsymbol{0}_{p} , (1+w)I_{p})h(w) \\ 
    & \leq (\frac{y_{i}^{2}}{(1+w)^{2}} + \frac{1}{1+w}) \phi_{p}(\boldsymbol{y}; \boldsymbol{0}_{p}, (1+w)I_{p})h(w) \\ 
    &\leq (\frac{M^{2}}{(1+w)^{2}} + \frac{1}{1+w}) (2\pi)^{-\frac{p}{2}}(1+w)^{-\frac{p}{2}}h(w) \\ 
    & = (2\pi)^{-\frac{p}{2}} [M^{2}(1+w)^{-\frac{p}{2}-2} + (1+w)^{-\frac{p}{2}-1}]h(w) \\ 
    &\leq (2\pi)^{-\frac{p}{2}} (M^{2} + 1) (1+w)^{-\frac{p}{2} - 1} h(w). 
\end{align*}
Then, 
\begin{equation*}
    |\frac{\partial ^{2}}{\partial y_{i}\partial y_{j}} \phi_{p} (\boldsymbol{y};\boldsymbol{ 0}_{p} , (1+w)I_{p})h(w)| \leq C_{k}(2\pi)^{-\frac{p}{2}} (1+w)^{- \frac{p}{2}-1} h(w), C_{k} = (1+M^{2}). 
\end{equation*}
Therefore, by a similar dominated convergence theorem argument, $m \in C^{2}(\mathbb{R}^{p})$. Since $m \in C^{2}(\mathbb{R}^{p}), m(\boldsymbol{y}) > 0$, and $\phi(t) = \sqrt{t}$ is $C^{\infty}((0, \infty))$, so $\sqrt{m} \in C^{2}(\mathbb{R}^{p})$. Then $\Delta \sqrt{m}$ is continuous on $\mathbb{R}^{p}$. We know $\Delta \sqrt{m(\boldsymbol{y})} \leq 0, \forall \boldsymbol{y} \neq \boldsymbol{0}_{p}$. Now let $\boldsymbol{y}_{n} \rightarrow \boldsymbol{0}_{p}$ with $\boldsymbol{y}_{n} \neq \boldsymbol{0}_{p}$. By continuity, $\Delta \sqrt{m}(0) = \lim\limits_{n \rightarrow \infty} \Delta \sqrt{m}(\boldsymbol{y}_{n}) \leq 0$. That is $\Delta \sqrt{m}(\boldsymbol{y}) \leq 0 , \forall \boldsymbol{y} \in \mathbb{R}^{p}$. 
Now, 
\begin{equation*}
    \nabla m(\boldsymbol{y})  = - N\boldsymbol{y} \int\limits_{0}^{\infty} \frac{1}{1+w} \phi_{p}(\boldsymbol{y}; 0 , (1+w)I_{p}) h(w) dw. 
\end{equation*} Now, \begin{equation*}
    \pi(w|y) : = \frac{\phi_{p}(\boldsymbol{y}; \boldsymbol{0}_{p} , (1+w)I_{p})h(w)}{\int\limits_{0}^{\infty} \phi_{p}(\boldsymbol{y}; \boldsymbol{0}_{p}, (1+w)I_{p})h(w)dw}. 
\end{equation*} Then, 
\begin{equation*}
    \frac{\nabla m(\boldsymbol{y})}{m(\boldsymbol{y})} = - \boldsymbol{y} \int\limits_{0}^{\infty} \frac{1}{1+w} \pi (w|\boldsymbol{y}) dw = - \boldsymbol{y} \mathbb{E}[\frac{1}{1+W}|\boldsymbol{Y}= \boldsymbol{y}]. 
\end{equation*} Since \begin{equation*}
    0 < \frac{1}{1+w} \leq 1 \text{ we have } 0 < \mathbb{E}[\frac{1}{1+W}|\boldsymbol{Y}=\boldsymbol{y}] \leq 1. 
\end{equation*} Therefore, 
\begin{equation*}
    ||\frac{\nabla m(\boldsymbol{y})}{m(\boldsymbol{y})}|| = ||\boldsymbol{y}|| \mathbb{E}[\frac{1}{1+W} | \boldsymbol{Y} = \boldsymbol{y}] \leq ||\boldsymbol{y}||. 
\end{equation*} This implies, \begin{equation*}
    ||\frac{\nabla m(\boldsymbol{y})}{m(\boldsymbol{y})}||^{2} \leq ||\boldsymbol{y}||^{2}. 
\end{equation*} Hence, 
\begin{align*}
    \mathbb{E}_{\boldsymbol{\theta}} [||\frac{\nabla m(\boldsymbol{Y})}{m(\boldsymbol{Y})}||^{2}] &\leq \mathbb{E}_{\boldsymbol{\theta}}[||\boldsymbol{Y}||^{2}]\\ &= \mathbb{E}_{\boldsymbol{\theta}}[\sum\limits_{j=1}^{p}Y_{j}^{2}]\\
    &= \sum\limits_{j=1}^{p}\mathbb{E}_{\theta}[Y_{j}^{2}]\\
    &= \sum\limits_{j=1}^{p} (1+\theta_{j}^{2}) \\
    &= (p + ||\boldsymbol{\theta}||^{2})\\
    &< \infty. 
\end{align*} 
Then by Theorem 3.1 in \cite{fourdrinier2018shrinkage} \begin{equation*}
    \delta_{\pi}(\boldsymbol{Y})  = \boldsymbol{Y} + \frac{\nabla m(\boldsymbol{Y})}{m(\boldsymbol{Y})} \text{ is minimax. }
\end{equation*}
\end{proof}
\subsection{Proof of Corollary 3.2}
\begin{proof}
    From the proof of Theorem 3.1 the prior density can be written as \begin{equation*}
        \pi(\boldsymbol{\theta}) = N\int\limits_{0}^{\infty} \phi_{p}(\boldsymbol{\theta}; \boldsymbol{0}_{p}, wI_{p}) h(w) dw, \text{ where } h(w) = b(1+w)^{-(b+1)}, b = \frac{p}{2}-2. 
    \end{equation*}
That is \begin{align*}
    \pi(\boldsymbol{\theta}) &= N\int\limits_{0}^{\infty} (2\pi)^{-\frac{p}{2}} \det(wI_{p})^{-\frac{1}{2}}\exp \{ - \frac{1}{2}\boldsymbol{\theta}^{T}(wI_{p})^{-1}\boldsymbol{\theta} \}h(w) dw\\ 
    & = N(2\pi)^{-\frac{p}{2}} \int\limits_{0}^{\infty} w^{-\frac{p}{2}} \exp \{ - \frac{||\boldsymbol{\theta}||^{2}}{2w}\}h(w) dw. 
\end{align*} This is exactly of the form of expression (3.4) in \cite{fourdrinier2018shrinkage}. Note that \begin{align*}
    \lim\limits_{w \rightarrow \infty} 
    \frac{h(w)}{bw^{-(b+1)}} &= \lim\limits_{w \rightarrow \infty} \frac{b(1+w)^{-(b+1)}}{bw^{-(b+1)}} \\
    & = \lim\limits_{w \rightarrow \infty} \frac{(1+w)^{-(b+1)}}{w^{-(b+1)}}\\ 
    & = \lim\limits_{w \rightarrow \infty} (\frac{1+w}{w})^{-(b+1)}\\ 
    & = \lim\limits_{w \rightarrow \infty}(\frac{1}{w}+1)^{-(b+1)}\\
    &=1. 
\end{align*}
That is $h(w) \sim bw^{-(b+1)}$. In view of Theorem 3.15 from \cite{fourdrinier2018shrinkage} $a = -(b+1)= - (\frac{p}{2} - 2+ 1) = 1- \frac{p}{2}<-1 \iff 4 <p$ which is true for our case of $p \geq 5$. Therefore, by Theorem 3.15 of \cite{fourdrinier2018shrinkage} the induced decision rule is admissible. 
\end{proof}
\section{Proofs of Section 4}
\subsection{Proof of Theorem 4.1}
\begin{proof}
Fix $v>0$. Consider \begin{equation*}
    m_{\pi}(\boldsymbol{z};v) = \int\limits_{\mathbb{R}^{p}} \phi_{p}(\boldsymbol{z}-\boldsymbol{\theta}; \boldsymbol{0}_{p}, vI_{p}) \pi(d\boldsymbol{\theta}), 
\end{equation*}
where 
\begin{equation*}
    \phi_{p}(\boldsymbol{z}-\boldsymbol{\theta}; \boldsymbol{0}_{p}, vI_{p}) = (2\pi v)^{-\frac{p}{2}}\exp \{ - \frac{||\boldsymbol{z}-\boldsymbol{\theta}||^{2}}{2v} \}. 
\end{equation*}
Now define the push forward measure $\pi^{(v)}(A) := \pi(\sqrt{v}A), A \subseteq \mathbb{R}^{p}$. Let $\boldsymbol{w} := v^{-\frac{1}{2}}\boldsymbol{z}$. For $\boldsymbol{\theta} = \sqrt{v}\boldsymbol{\eta}$. Then \begin{align*}
    m_{\pi}(\boldsymbol{z}; v) &= \int\limits_{\mathbb{R}^{p}}(2\pi v)^{-\frac{p}{2}} \exp \{ - \frac{||\boldsymbol{z}-\boldsymbol{\theta}||^{2}}{2v} \}\pi(d\boldsymbol{\theta})\\
    & = v^{-\frac{p}{2}}\int\limits_{\mathbb{R}^{p}} (2\pi)^{-\frac{p}{2}}\exp \{ - \frac{||\boldsymbol{z}-\sqrt{v}\boldsymbol{\eta}||^{2}}{2v} \} \pi^{(v)}(d\boldsymbol{\eta})\\
    & = v^{-\frac{p}{2}} \int\limits_{\mathbb{R}^{p}} (2\pi)^{-\frac{p}{2}} \exp \{ - \frac{||\sqrt{v}\boldsymbol{w} - \sqrt{v}\boldsymbol{\eta}||^{2}}{2v} \} \pi^{(v)}(d\boldsymbol{\eta})\\ 
    & = v^{-\frac{p}{2}} \int\limits_{\mathbb{R}^{p}} (2\pi)^{-\frac{p}{2}} \exp \{ - \frac{||\sqrt{v}(\boldsymbol{w}-\boldsymbol{\eta})||^{2}}{2v} \} \pi^{(v)}(d\boldsymbol{\eta})\\ 
    & = v^{-\frac{p}{2}} \int\limits_{\mathbb{R}^{p}} (2\pi)^{-\frac{p}{2}} \exp \{ - \frac{||\boldsymbol{w} - \boldsymbol{\eta}||^{2}}{2} \}\pi^{(v)}(d\boldsymbol{\eta}). 
\end{align*}
That is \begin{equation*}
    m_{\pi}(\boldsymbol{z}; v)= v^{-\frac{p}{2}} m_{\pi^{(v)}} (\frac{\boldsymbol{z}}{\sqrt{v}};1). 
\end{equation*}
Then \begin{equation*}
    \sqrt{m_{\pi}(\boldsymbol{z};v)} = \sqrt{v^{-\frac{p}{2}} m_{\pi^{(v)}}(\frac{\boldsymbol{z}}{\sqrt{v}};1)} = v^{-\frac{p}{4}}\sqrt{m_{\pi^{(v)}}(\boldsymbol{w};1)}. 
\end{equation*}
Let $g(\boldsymbol{w}) := \sqrt{m_{\pi^{(v)}}(\boldsymbol{w};1)}$. Then $\sqrt{m_{\pi^{}}(\boldsymbol{z};v)} = v^{-\frac{p}{4}} g(\boldsymbol{w})$. Since \begin{equation*}
    \boldsymbol{w} = \frac{\boldsymbol{z}}{\sqrt{v}}, \frac{\partial w_{i}}{\partial z_{j}} = \frac{1}{\sqrt{v}} \delta_{ij}. 
\end{equation*}
Furthermore \begin{equation*}
    \nabla_{\boldsymbol{z}}\sqrt{m_{\pi}(\boldsymbol{z};v)} = v^{-\frac{p}{4}}\frac{1}{\sqrt{v}} \nabla_{\boldsymbol{w}} g(\boldsymbol{w}) = v^{-\frac{p}{4}-\frac{1}{2}}\nabla_{\boldsymbol{w}}g(\boldsymbol{w}). 
\end{equation*}
Observe \begin{equation*}
    \frac{\partial ^{2}}{\partial z_{i}^{2}} \sqrt{m_{\pi}(\boldsymbol{z};v)} = v^{-\frac{p}{4}-\frac{1}{2}}v^{-\frac{1}{2}}\frac{\partial ^{2}}{\partial w_{i}^{2}} g(\boldsymbol{w}) = v^{-\frac{p}{4}-1}\frac{\partial^{2}}{\partial w_{i}^{2}}g(\boldsymbol{w}). 
\end{equation*}
Then \begin{align*}
    \Delta_{\boldsymbol{z}}\sqrt{m_{\pi}(\boldsymbol{z};v)} & = \sum\limits_{i=1}^{p} \frac{\partial^{2}}{\partial z_{i}^{2}} \sqrt{m_{\pi}(\boldsymbol{z};v)}\\
    & = \sum\limits_{i=1}^{p} v^{-\frac{p}{4}-1}\frac{\partial ^{2}}{\partial w_{i}^{2}} g(\boldsymbol{w})\\ 
    &= v^{-\frac{p}{4} -1} \sum\limits_{i=1}^{p} \frac{\partial ^{2}}{\partial w_{i}^{2}}g(\boldsymbol{w})\\ 
    & = v^{-\frac{p}{4}-1} \Delta_{\boldsymbol{w}}g(\boldsymbol{w}). 
\end{align*} That is \begin{equation*}
    \Delta_{\boldsymbol{z}}\sqrt{m_{\pi}(\boldsymbol{z};v)} = v^{-\frac{p}{4}-1}\Delta_{\boldsymbol{w}}\sqrt{m_{\pi^{(v)}}(\boldsymbol{w};1)} , \boldsymbol{w} = \frac{\boldsymbol{z}}{\sqrt{v}}. 
\end{equation*}
Note that \begin{equation*}
    \Delta_{\boldsymbol{z}}\sqrt{m_{\pi}(\boldsymbol{z};v)} \leq 0 \iff v^{- \frac{p}{4}-1} \Delta_{\boldsymbol{w}}\sqrt{m_{\pi^{(v)}}(\boldsymbol{w};1)} \leq 0 \iff \Delta_{\boldsymbol{w}}\sqrt{m_{\pi^{(v)}}(\boldsymbol{w};1)} \leq 0. 
\end{equation*} By Theorem 3.1 $\Delta_{\boldsymbol{w}} \sqrt{m_{\pi^{(v)}}(\boldsymbol{w};1)} \leq 0, \forall \boldsymbol{w}$. Therefore $\Delta_{\boldsymbol{z}}\sqrt{m_{\pi}(\boldsymbol{z};v)} \leq 0, \forall \boldsymbol{z}, \forall v>0$. In particular, 
\begin{equation*}
    \Delta_{\boldsymbol{z}}\sqrt{m_{\pi}(\boldsymbol{z};v)} \leq 0, \forall v \in [v_{w}, v_{x}]. 
\end{equation*}
Note that for fixed $\boldsymbol{z}, \boldsymbol{\theta} \in \mathbb{R}^{p}, v>0,$ 
\begin{equation*}
    0 < \exp \{ - \frac{||\boldsymbol{z}-\boldsymbol{\theta}||^{2}}{2v} \} \leq 1. 
\end{equation*}
Therefore, \begin{equation*}
    0 < \phi_{p}(\boldsymbol{z}-\boldsymbol{\theta}; \boldsymbol{0}_{p} , vI_{p}) \leq (2\pi v)^{-\frac{p}{2}}. 
\end{equation*}
Then \begin{equation*}
    0 \leq m_{\pi}(\boldsymbol{z};v) = \int\limits_{\mathbb{R}^{p}} \phi_{p}(\boldsymbol{z}-\boldsymbol{\theta}; \boldsymbol{0}_{p}, vI_{p}) \pi(d\boldsymbol{\theta}) \leq (2\pi v)^{-\frac{p}{2}}\int \pi(d\boldsymbol{\theta}) = N(2\pi v)^{-\frac{p}{2}}
\end{equation*}
since $\pi(d\boldsymbol{\theta})$ is a finite measure. Therefore 
\begin{equation*}
    0 \leq m_{\pi}(\boldsymbol{z};v) \leq N(2\pi v)^{-\frac{p}{2}}<\infty. 
\end{equation*}
Thus $m_{\pi}(\boldsymbol{z}; v)$ is finite for every $\boldsymbol{z}$ and every $v>0$. Then by Theorem 1 (ii) of \cite{george2006improved} $\hat{p}_{\pi}(\boldsymbol{y}|\boldsymbol{x})$ is minimax. 
\end{proof}
\subsection{Proof of Corollary 4.2}
\begin{proof}
Consider $A_{0} = \{ g: \mathbb{R}^{p} \rightarrow \mathbb{R}, g\geq 0 \text{ and } \int g(\boldsymbol{y}) d\boldsymbol{y} = 1 \}$. We know $\mathbb{\pi}(\boldsymbol{\theta}) >0, \forall \boldsymbol{\theta} \in \mathbb{R}^{p}$. The predictive Bayes rule in view of \cite{brown2008admissible} can be written as \begin{equation*}
    \hat{p}_{M}(\boldsymbol{y}|\boldsymbol{x}) = \frac{\int p(\boldsymbol{x}|\boldsymbol{\theta})p(\boldsymbol{y}|\boldsymbol{\theta})M(d\boldsymbol{\theta})}{\int p(\boldsymbol{x}|\boldsymbol{\theta}) M(d\boldsymbol{\theta})} = \int p(\boldsymbol{y}|\boldsymbol{\theta}) M(d\boldsymbol{\theta} |\boldsymbol{x}).
\end{equation*}
We know $p(\boldsymbol{x}|\boldsymbol{\theta}) \leq (2\pi v_{x})^{-\frac{p}{2}}$ and $p(\boldsymbol{y}|\boldsymbol{\theta}) \leq (2\pi v_{y})^{-\frac{p}{2}}$. Then, \begin{equation*}
    p(\boldsymbol{x}|\boldsymbol{\theta})p(\boldsymbol{y}|\boldsymbol{\theta}) \leq (2\pi v_{x})^{-\frac{p}{2}}(2\pi v_{y})^{-\frac{p}{2}}. 
\end{equation*}
This implies \begin{equation*}
    \int p(\boldsymbol{x}|\boldsymbol{\theta}) p(\boldsymbol{y}|\boldsymbol{\theta})\pi(\boldsymbol{\theta})d\boldsymbol{\theta}  \leq (2\pi v_{x})^{-\frac{p}{2}}(2\pi v_{y})^{-\frac{p}{2}} \int \pi(\boldsymbol{\theta})d\boldsymbol{\theta} = N(2\pi v_{x})^{-\frac{p}{2}}(2\pi v_{y})^{-\frac{p}{2}}<\infty. 
\end{equation*}
Similarly, \begin{equation*}
    \int p(\boldsymbol{x}|\boldsymbol{\theta}) \pi(\boldsymbol{\theta}) d\boldsymbol{\theta} \leq (2\pi v_{x})^{-\frac{p}{2}} \int \pi(\boldsymbol{\theta}) d\boldsymbol{\theta} = N(2\pi v_{x})^{-\frac{p}{2}} < \infty. 
\end{equation*}
From \cite{aitchison1975goodness} $\hat{p}_{\pi}(\boldsymbol{y}|\boldsymbol{x})$ minimizes the average KL risk with respect to the finite prior measure $\pi(d\boldsymbol{\theta})$ 
\begin{equation*}
    B_{KL}(\pi, \hat{p}) = \int R_{KL}(\boldsymbol{\theta}, \hat{p}) \pi(d\boldsymbol{\theta}). 
\end{equation*}
Assume for contradiction there exists some predictive rule $\tilde{p}$ which dominates $\hat{p}_{\pi}$. Then \begin{equation*}
    R_{KL}(\boldsymbol{\theta}, \tilde{p}) \leq R_{KL}(\boldsymbol{\theta}, \hat{p}_{\pi}), \forall \boldsymbol{\theta}, 
\end{equation*}
with strict inequality for at least one $\boldsymbol{\theta}$. Integrating against $\pi(\boldsymbol{\theta}) d\boldsymbol{\theta}$ gives \begin{equation*}
    B_{KL}(\pi, \tilde{p}) \leq B_{KL}(\pi,\hat{p}_{\pi}). 
\end{equation*}
But we also know that $\hat{p}_{\pi}$ minimizes $B_{KL}(\pi, \cdot)$. Hence $B_{KL}(\pi, \tilde{p}) = B_{KL}(\pi, \hat{p}_{\pi})$. Then 
\begin{align*}
    0 &= B_{KL}(\pi, \tilde{p}) - B_{KL}(\pi, \hat{p}_{\pi})\\
    & = \int \int\int p(\boldsymbol{y}|\boldsymbol{\theta}) \log(\frac{p(\boldsymbol{y}|\boldsymbol{\theta})}{\tilde{p}(\boldsymbol{y}|\boldsymbol{x})})d\boldsymbol{y}p(\boldsymbol{x}|\boldsymbol{\theta})d\boldsymbol{x} \pi(d\boldsymbol{\theta})\\ &- \int\int\int p(\boldsymbol{y}|\boldsymbol{\theta})\log(\frac{p(\boldsymbol{y}|\boldsymbol{\theta})}{\hat{p}_{\pi}(\boldsymbol{y}|\boldsymbol{x})})d\boldsymbol{y} p(\boldsymbol{x}|\boldsymbol{\theta}) d\boldsymbol{x} \pi(d\boldsymbol{\theta})\\ 
    & = \int\int\int p(\boldsymbol{y}|\boldsymbol{\theta}) \log(\frac{\hat{p}_{\pi}(\boldsymbol{y}|\boldsymbol{x})}{\tilde{p}(\boldsymbol{y}|\boldsymbol{x})}) d\boldsymbol{y} p(\boldsymbol{x}|\boldsymbol{\theta}) d\boldsymbol{x} \pi(d\boldsymbol{\theta}). 
\end{align*}
Now we will show that 
\begin{equation*}
    \int\pi(d\boldsymbol{\theta}) \int p(\boldsymbol{x}|\boldsymbol{\theta}) \int p(\boldsymbol{y}|\boldsymbol{\theta}) |\log(\frac{\hat{p}(\boldsymbol{y}|\boldsymbol{x})}{\tilde{p}(\boldsymbol{y}|\boldsymbol{x})})| d\boldsymbol{y}d\boldsymbol{x} < \infty. 
\end{equation*}
By the triangle inequality it suffices to show 
\begin{align*}
    \int \pi(d \boldsymbol{\theta}) \int p(\boldsymbol{x}|\boldsymbol{\theta}) \int &p(\boldsymbol{y}|\boldsymbol{\theta}) |\log(\hat{p}(\boldsymbol{y}|\boldsymbol{x}))|d\boldsymbol{y}d\boldsymbol{x} < \infty\\ 
    &\text{and }\\ 
    \int \pi (d\boldsymbol{\theta}) \int p(\boldsymbol{x}|\boldsymbol{\theta}) \int &p(\boldsymbol{y}|\boldsymbol{\theta})|\log(\tilde{p}(\boldsymbol{y}|\boldsymbol{x}))|d\boldsymbol{y}d\boldsymbol{x} < \infty. 
\end{align*}
Let $(\log(t))^{+} := \max \{ \log(t),0 \}$ and $(\log(t))^{-}:= \max\{ -\log(t),0 \} $ with \\$\log(t) = (\log(t))^{+} - (\log(t))^{-}$. Then for any density function \\$q(\boldsymbol{y}|\boldsymbol{x}), (\log(q(\boldsymbol{y}|\boldsymbol{x}))) ^{+} \leq q(\boldsymbol{y}|\boldsymbol{x}).$ Therefore, 
\begin{equation*}
    \int p(\boldsymbol{y}|\boldsymbol{\theta}) (\log(q(\boldsymbol{y}|\boldsymbol{x})))^{+}d\boldsymbol{y} \leq (2\pi v_{y})^{-\frac{p}{2}} \int q(\boldsymbol{y}|\boldsymbol{x})d\boldsymbol{y} = (2\pi v_{y}) ^{-\frac{p}{2}}. 
\end{equation*}
Then, 
\begin{equation*}
    \int \pi(d\boldsymbol{\theta}) \int p(\boldsymbol{x}|\boldsymbol{\theta})\int p(\boldsymbol{y}|\boldsymbol{\theta})(\log(q(\boldsymbol{y}|\boldsymbol{x})))^{+} d\boldsymbol{y}d\boldsymbol{x} \leq N(2 \pi v_{y}) ^{-\frac{p}{2}} < \infty, 
\end{equation*} 
which holds for both $q = \hat{p}_{\pi}$ and $q = \tilde{p}$. Observe $(\log(q(\boldsymbol{y}|\boldsymbol{x})))^{-} = (\log(q(\boldsymbol{y}|\boldsymbol{x})))^{+} - \log(q(\boldsymbol{y}|\boldsymbol{x}))$. 
Therefore, 
\begin{align*}
    &\int p(\boldsymbol{x}|\boldsymbol{\theta}) \int p(\boldsymbol{y}|\boldsymbol{\theta})(\log(q(\boldsymbol{y}|\boldsymbol{x})))^{-}d\boldsymbol{y}d\boldsymbol{x}\\ &= \int p(\boldsymbol{x}|\boldsymbol{\theta}) \int p(\boldsymbol{y}|\boldsymbol{\theta}) (\log(q(\boldsymbol{y}|\boldsymbol{x})))^{+}d\boldsymbol{y}d\boldsymbol{x}
    - \int p(\boldsymbol{x}|\boldsymbol{\theta}) \int p(\boldsymbol{y}|\boldsymbol{\theta}) (\log(q(\boldsymbol{y}|\boldsymbol{x})))d\boldsymbol{y}d\boldsymbol{x}\\ 
    &\leq (2\pi v_{y})^{-\frac{p}{2}}+ \int p(\boldsymbol{x}|\boldsymbol{\theta}) [\int p(\boldsymbol{y}|\boldsymbol{\theta})\log(\frac{p(\boldsymbol{y}|\boldsymbol{\theta})}{q(\boldsymbol{y}|\boldsymbol{x})})d\boldsymbol{y}- \int p(\boldsymbol{y}|\boldsymbol{\theta}) \log(p(\boldsymbol{y}|\boldsymbol{\theta}))d\boldsymbol{y}]d\boldsymbol{x}\\ 
    & = (2\pi v_{y})^{-\frac{p}{2}} + R_{KL}(\boldsymbol{\theta}, q)- \int p(\boldsymbol{y}|\boldsymbol{\theta})\log(p(\boldsymbol{y}|\boldsymbol{\theta}))d\boldsymbol{y} \\ 
    & = (2\pi v_{y})^{-\frac{p}{2}} + R_{KL}(\boldsymbol{\theta}, q) + \frac{p}{2}\log(2 \pi v_{y}) + \frac{1}{2v_{y}} \int p(\boldsymbol{y}|\boldsymbol{\theta})||\boldsymbol{y}\boldsymbol{- \theta}||^{2}d\boldsymbol{y}\\ 
    & = (2\pi v_{y})^{- \frac{p}{2}} + R_{KL}(\boldsymbol{\theta}, q) + \frac{p}{2}\log(2 \pi v_{y})+ \frac{1}{2v_{y}} \mathbb{E}[||\boldsymbol{Y}- \boldsymbol{\theta}||^{2}]\\ 
    & = (2\pi v_{y})^{- \frac{p}{2}} + R_{KL}(\boldsymbol{\theta}, q) + \frac{p}{2}\log(2 \pi v_{y}) + \frac{1}{2v_{y}}tr(v_{y}I_{p})\\ 
    & =  (2\pi v_{y})^{-\frac{p}{2}} + R_{KL}(\boldsymbol{\theta}, q) + \frac{p}{2}\log(2\pi v_{y}) + \frac{p}{2}\\ 
    & = (2\pi v_{y})^{-\frac{p}{2}} + R_{KL}(\boldsymbol{\theta}, q) + \frac{p}{2}[log(2\pi v_{y}) + \log(e)] \\ 
    & = (2\pi v_{y})^{-\frac{p}{2}} + R_{KL}(\boldsymbol{\theta}, q) + \frac{p}{2}\log(2 \pi e v_{y}). 
\end{align*}
Thus, 
\begin{align*}
    &\int \pi(d\boldsymbol{\theta}) \int p(\boldsymbol{x}|\boldsymbol{\theta}) \int p(\boldsymbol{y}|\boldsymbol{\theta}) (\log(q(\boldsymbol{y}|\boldsymbol{x})))^{-} d\boldsymbol{y}d\boldsymbol{x}\\ &\leq N(2\pi v_{y})^{-\frac{p}{2}} + \int R_{KL}(\boldsymbol{\theta}, q) \pi (d\boldsymbol{\theta}) + N\frac{p}{2}\log(2 \pi ev_{y})\\ 
    & = N(2 \pi v_{y})^{- \frac{p}{2}} + B_{KL}(\pi, q) + N\frac{p}{2} \log (2 \pi e v_{y}) \\ 
    & < \infty,
\end{align*}
since $B_{KL}(\pi, \hat{p}_{\pi}) < \infty$ by Theorem 4.1 and $B_{KL} (\pi, \tilde{p}) \leq B_{KL}(\pi, \hat{p}) $ by assumption. Since both positive and negative parts are finite and $|\log(t)| = (\log(t))^{+} + (\log(t))^{-}$, 
\begin{equation*}
    \int \pi (d\boldsymbol{\theta}) \int p(\boldsymbol{x}|\boldsymbol{\theta}) \int p(\boldsymbol{y}|\boldsymbol{\theta}) |\log(q(\boldsymbol{y}|\boldsymbol{x}))|d\boldsymbol{y}d\boldsymbol{x} < \infty. 
\end{equation*}
Thus, 
\begin{equation*}
    \int \pi (d\boldsymbol{\theta}) \int p(\boldsymbol{x}|\boldsymbol{\theta}) \int p(\boldsymbol{y}|\boldsymbol{\theta})|\log(\frac{\hat{p}_{\pi}(\boldsymbol{y}|\boldsymbol{x})}{\tilde{p}(\boldsymbol{y}|\boldsymbol{x})})| d\boldsymbol{y}d\boldsymbol{x} < \infty. 
\end{equation*}
Then by Fubini's theorem \begin{equation*}
    0 = \int [\int\int p(\boldsymbol{y}|\boldsymbol{\theta})p(\boldsymbol{x}|\boldsymbol{\theta})\pi(d\boldsymbol{\theta})]\log(\frac{\hat{p}_{\pi}(\boldsymbol{y}|\boldsymbol{x})}{\tilde{p}(\boldsymbol{y}|\boldsymbol{x})}) d\boldsymbol{y}d\boldsymbol{x}. 
\end{equation*}
Observe that \begin{align*}
    \hat{p}_{\pi}(\boldsymbol{y}|\boldsymbol{x}) & = \int p(\boldsymbol{y}|\boldsymbol{\theta}) \pi(\boldsymbol{\theta} |\boldsymbol{x}) d\boldsymbol{\theta} \\ 
    & = \int p(\boldsymbol{y}|\boldsymbol{\theta})\frac{p(\boldsymbol{x}|\boldsymbol{\theta})\pi(\boldsymbol{\theta})}{m_{\pi}(\boldsymbol{x})} d\boldsymbol{\theta}\\ 
    & = \frac{1}{m_{\pi}(\boldsymbol{x})} \int p(\boldsymbol{y}|\boldsymbol{\theta})p(\boldsymbol{x}|\boldsymbol{\theta}) \pi(d \boldsymbol{\theta}). 
\end{align*}
Then, \begin{equation*}
    \int p(\boldsymbol{y}|\boldsymbol{\theta}) p(\boldsymbol{x}|\boldsymbol{\theta}) \pi(d\boldsymbol{\theta}) = m_{\pi}(\boldsymbol{x})\hat{p}_{\pi}(\boldsymbol{y}|\boldsymbol{x}). 
\end{equation*}
So, \begin{align*}
    0 &= \int \int m_{\pi}(\boldsymbol{x})\hat{p}_{\pi}(\boldsymbol{y}|\boldsymbol{x}) \log(\frac{\hat{p}_{\pi}(\boldsymbol{y}|\boldsymbol{x})}{\tilde{p}(\boldsymbol{y}|\boldsymbol{x})})d\boldsymbol{y}d\boldsymbol{x}\\ 
    & = \int m_{\pi}(\boldsymbol{x}) [\int \hat{p}_{\pi}(\boldsymbol{y}|\boldsymbol{x})\log(\frac{\hat{p}_{\pi}(\boldsymbol{y}|\boldsymbol{x})}{\tilde{p}(\boldsymbol{y}|\boldsymbol{x})})d\boldsymbol{y}]d\boldsymbol{x}\\ 
    & = \int m_{\pi}(\boldsymbol{x}) KL(\hat{p}_{\pi}, \tilde{p})d\boldsymbol{x}. 
\end{align*}
Since the integrand is non-negative, it follows that 
\begin{equation*}
    KL(\hat{p}_{\pi}(\boldsymbol{y}|\boldsymbol{x}) , \tilde{p}(\boldsymbol{y}|\boldsymbol{x})) = 0 \text{ for } m_{\pi}-a.e. \boldsymbol{x}. 
\end{equation*}
Hence, for $m_{\pi}-a.e. \boldsymbol{x}$,
\begin{equation*}
    \tilde{p}(\boldsymbol{y}|\boldsymbol{x}) = \hat{p}_{\pi}(\boldsymbol{y}|\boldsymbol{x}), a.e. \boldsymbol{y}. 
\end{equation*}
Note that $m_{\pi}(\boldsymbol{x}) > 0, \forall \boldsymbol{x} \in \mathbb{R}^{p}$ because $p(\boldsymbol{x}|\boldsymbol{\theta}) >0$ and $\pi (\boldsymbol{\theta}) >0$. Therefore, $m_{\pi}-a.e.\boldsymbol{x}$ implies Lebesgue-a.e.$\boldsymbol{x}$. Since $p(\boldsymbol{x}|\boldsymbol{\theta}) d\boldsymbol{x} = p(d\boldsymbol{x}|\boldsymbol{\theta})$ is absolutely continuous with respect to the Lebesgue measure, for every $\boldsymbol{\theta}$, 
\begin{equation*}
    \tilde{p}(\boldsymbol{y}|\boldsymbol{x}) = \hat{p}_{\pi}(\boldsymbol{y}|\boldsymbol{x}) a.e. \text{ in } \boldsymbol{y}, p(d\boldsymbol{x}|\boldsymbol{\theta}) -a.e. \boldsymbol{x}. 
\end{equation*}
Therefore, for every $\boldsymbol{\theta},$
\begin{equation*}
    R_{KL}(\boldsymbol{\theta}, \tilde{p}) = R_{KL}(\boldsymbol{\theta}, \hat{p}_{\pi}). 
\end{equation*}
This contradicts strict domination which requires strict inequality for at least one $\boldsymbol{\theta}$. Therefore, no such $\tilde{p}$ exists and $\hat{p}_{\pi}$ is admissible. 
\end{proof}
\bibliographystyle{plainnat}
\bibliography{bibliography}       

\end{document}